\theoremstyle{theorem}
\newtheorem{coro}[equation]{Corollary}
\newtheorem{fact}[equation]{Fact}
\newtheorem{lemm}[equation]{Lemma}
\newtheorem{prop}[equation]{Proposition}
\newtheorem*{conjA}{Conjecture~$\ConjA$}
\newtheorem*{conjB}{Conjecture~$\ConjB$}
\newtheorem*{conjC}{Conjecture~$\ConjC$}
\newtheorem*{conjCunif}{Conjecture~$\ConjCunif$}
\theoremstyle{definition}
\newtheorem{defi}[equation]{Definition}
\newtheorem{exam}[equation]{Example}
\newtheorem{nota}[equation]{Notation}
\newtheorem{ques}[equation]{Question}
\newtheorem{rema}[equation]{Remark}
\def\AArrow(#1,#2){\ncline[nodesep=0.3mm, linewidth=0.8pt, border=1.2pt]{->}{#1}{#2}}
\def\BArrow(#1,#2){\ncline[linecolor=blue, linewidth=1.2pt, linestyle=dotted, nodesep=0.3mm, border=1.2pt]{->}{#1}{#2}}
\def\CArrow(#1,#2){\ncline[linecolor=red, nodesep=0.3mm, linewidth=0.8pt, border=1.2pt]{->}{#1}{#2}}
\definecolor{color1}{rgb}{.88,.88,.88}
\definecolor{color2}{rgb}{1,.75,.5}
\definecolor{color2}{rgb}{1,.87,.83}
\definecolor{color4}{rgb}{.9,.9,.9}
\definecolor{color2}{rgb}{0.85,.92,1}
\definecolor{color20}{rgb}{0.9,.95,1}
\definecolor{color21}{rgb}{0.8,.9,1}
\definecolor{color22}{rgb}{0.75,.85,1}
\definecolor{color3}{rgb}{0,0,1}
\definecolor{color2}{rgb}{1,.85,0.85}
\definecolor{color20}{rgb}{.88,.88,.88}
\definecolor{color21}{rgb}{1,.8,.8}
\definecolor{color22}{rgb}{1,.75,.75}
\definecolor{color3}{rgb}{1,0,0}
\def\BPoint(#1,#2,#3){\cnode[style=thin,fillcolor=black,fillstyle=solid](#1,#2){0.5}{#3}}
\def\GPoint(#1,#2,#3){\cnode[style=thin,fillcolor=lightgray,fillstyle=solid](#1,#2){0.5}{#3}}
\def\WPoint(#1,#2,#3){\cnode[style=thin,fillcolor=white,fillstyle=solid](#1,#2){0.7}{#3}}
\def\AArrow(#1,#2){\ncline[nodesep=1mm, linewidth=0.8pt, border=2pt]{->}{#1}{#2}}
\def\BArrow(#1,#2){\ncline[linecolor=blue, linewidth=1.2pt, linestyle=dotted, nodesep=0.3mm, border=1.2pt]{->}{#1}{#2}}
\def\CArrow(#1,#2){\ncline[linecolor=red, nodesep=0.3mm, linewidth=0.8pt, border=1.2pt]{->}{#1}{#2}}
\def\DArrow(#1,#2){\ncline[linecolor=red,style=thickexist,nodesep=0.5mm,border=2pt]{->}{#1}{#2}}
\def\PArrow(#1,#2){\ncline[linestyle=dashed, nodesep=0.3mm, linewidth=0.8pt, border=0.8pt]{->}{#1}{#2}}
\newcommand\Vertex[2]{\psframebox[fillstyle=solid,fillcolor=color4]{\scriptsize $\aav_{#1}$: $\fr{#2}$}}
\newcommand\vertex[2]{\psframebox[linestyle=dashed]{\scriptsize $\aav_{#1}$: $\fr{#2}$}}
\newcommand\Vertexx[2]{\psframebox[linestyle=dashed]{\psframebox[fillstyle=solid,fillcolor=color4]{\scriptsize $\aav_{#1}$: $\fr{#2}$}}}
\newcommand\Vertexxx[2]{\psframebox[linestyle=dashed]{\psframebox[fillstyle=solid,fillcolor=color2]{\scriptsize $\aav_{#1}$: $\fr{#2}$}}}
\numberwithin{equation}{section}
\newcounter{ITEM}
\newcommand\ITEM[1]{\setcounter{ITEM}{#1}\leavevmode\hbox{\rm(\roman{ITEM})}}
\newcommand\1{\mathrm{1}}
\renewcommand\aa{a}
\renewcommand\AA{A}
\newcommand\aav{\U\aa}
\newcommand\act{\mathbin{\scriptscriptstyle\bullet}}
\newcommand\Aft[2]{\widetilde{\VR(2.1,0)\smash{\mathrm{#1}}}_{#2}}
\renewcommand\and{\text{and}}
\newcommand\antird{\Leftarrow}
\newcommand\antirds{\mathrel{{}^*\hspace{-1ex}\antird\hspace{0.5ex}}}
\newcommand\Att{\widetilde{\VR(2.1,0)\smash{\mathrm{A}}}_2}
\newcommand\bb{b}
\newcommand\bbv{\U\bb}
\newcommand\can{\iota}
\newcommand\canp{\iota^{\scriptscriptstyle+}}
\newcommand\cc{c}
\newcommand\CC{C}
\newcommand\ccv{\U\cc}
\newcommand\clp[1]{[#1]^{\HS{-0.2}\scriptscriptstyle+}}
\newcommand\ConjA{\mathbf{A}}
\newcommand\ConjB{\mathbf{B}}
\newcommand\ConjC{\mathbf{C}}
\newcommand\ConjCunif{\mathbf{C}_{\mathrm{unif}}}
\newcommand\dd{d}
\newcommand\DDD{\mathcal{D}}
\newcommand\ddv{\U\dd}
\newcommand\der{\partial}
\newcommand\derdiv{\partial}
\renewcommand\dh[1]{\Vert#1\Vert}
\renewcommand\div{<}
\newcommand\Div{\mathrm{Div}}
\newcommand\dive{\le}
\newcommand\divet{\mathrel{\widetilde{\VR(1.8,0)\smash{\dive}}}}
\newcommand\Divmax[1]{D_{#1}^\smax}
\newcommand\divt{\mathrel{\widetilde{\VR(1.8,0)\smash{<}}}}
\newcommand\ee{e}
\newcommand\eg{\textit{e.g.}}
\newcommand\EG[1]{\mathcal{U}(#1)}
\newcommand\ef{\varnothing}
\newcommand\equ{\bowtie}
\newcommand\ew{\varepsilon}
\newcommand\ff{f}
\newcommand\FF{F}
\newcommand\FRb[1]{\mathcal{F}^{\scriptscriptstyle\pm}_{\HS{-0.6}#1}}
\newcommand\FRp[1]{\mathcal{F}_{\HS{-0.4}#1}}
\newcommand\fr{\mathtt}
\renewcommand\gcd{\wedge}
\newcommand\gcdt{\mathbin{\widetilde\wedge}}
\renewcommand\ge{\geqslant}
\renewcommand\gg{g}
\newcommand\GR[2]{\langle#1\mid#2\rangle}
\newcommand\hh{h}
\newcommand\HS[1]{\hspace{#1ex}}
\newcommand\ie{\textit{i.e.}}
\newcommand\ii{i}
\newcommand\II{I}
\newcommand\inv{^{-1}}
\newcommand\INV[1]{\overline{#1}}
\newcommand\Irr{\mathrm{Irr}}
\newcommand\jj{j}
\newcommand\kk{k}
\newcommand\lcm{\vee}
\newcommand\lcmt{\mathbin{\widetilde\lcm}}
\renewcommand\le{\leqslant}
\newcommand\mm{m}
\newcommand\MM{M}
\newcommand\MON[2]{\langle#1\mid#2\rangle^{\HS{-0.5}\scriptscriptstyle+}}
\newcommand\nn{n}
\newcommand\NNNN{\mathbb{N}}
\newcommand\One[1]{\underline1_{#1}}
\newcommand\one{\underline1}
\newcommand\opp{\cdot}
\newcommand\pdots{\HS{0.2}{\cdot}{\cdot}{\cdot}\HS{0.2}}
\newcommand\pp{p}
\newcommand\PropH{\mathrm{H}}
\newcommand\qq{q}
\newcommand\quand{\quad\text{and}\quad}
\newcommand\qquand{\qquad\text{and}\qquad}
\newcommand\rd{\Rightarrow}
\newcommand\RDatp[1]{\RRR_{\HS{-0.2}\mathrm{at}}}
\newcommand\RDb[1]{\RRR_{\HS{-0.2}#1}^{\scriptscriptstyle\pm}}
\newcommand\RDbh[1]{\widehat\RRR_{\HS{-0.2}#1}^{\scriptscriptstyle\pm}}
\newcommand\rddiv{\mathbin{\rd_{\HS{-0.6}\scriptscriptstyle\mathrm{d\HS{-0.1}i\HS{-0.1}v}}}}
\newcommand\rddivs{\mathbin{\rd_{\HS{-0.6}\scriptscriptstyle\mathrm{d\HS{-0.1}i\HS{-0.1}v}}^*}}
\newcommand\rdh{\mathrel{\rd\hspace{-2ex}\widehat{\VR(1.7,0)}\HS{1}}}
\newcommand\rdhs{\mathrel{\rdh{\hspace{-0.5ex}}^*}}
\newcommand\Rdiv[1]{D_{#1}}
\newcommand\RDiv[1]{\DDD_{\HS{-0.2}#1}}
\newcommand\RDivb[1]{\DDD_{\HS{-0.2}#1}^{\scriptscriptstyle\pm}}
\newcommand\RDp[1]{\RRR_{\HS{-0.3}#1}}
\newcommand\rds{\rd^{\HS{-0.3}*}}
\newcommand\rdt{\mathrel{\HS{1}\widetilde\ \HS{-2}\rd}}
\newcommand\RDtb[1]{\RRRt^{\HS{-0.2}\scriptscriptstyle\pm}_{\HS{-0.3}#1}}
\newcommand\RDtp[1]{\RRRt_{\HS{-0.3}#1}}
\newcommand\rdts{\rdt^{\HS{-0.3}*}}
\renewcommand\red{\mathrm{red}}
\newcommand\Red[1]{R_{#1}}
\newcommand\Redmax[1]{R_{#1}^\smax}
\newcommand\redt{\widetilde{\VR(1.8,0)\smash\red}}
\newcommand\Redt[1]{\widetilde{R}_{#1}}
\newcommand\redtame{\mathrm{red}_{\mathrm{t}}}
\newcommand\resp{\mbox{\it resp}.,\ }
\newcommand\REV[1]{\widetilde{\VR(1.5,0){#1}}}
\newcommand\rr{r}
\newcommand\RR{R}
\newcommand\RRR{\mathcal{R}}
\newcommand\RRRh{\widehat{\VR(2.2,0)\smash\RRR}}
\newcommand\RRRt{\widetilde{\VR(2.2,0)\smash\RRR}}
\newcommand\sdots{ / \pdots / }
\newcommand\simeqb{\simeq^{\HS{-0.2}\scriptscriptstyle\pm}}
\newcommand\simeqp{\simeq}
\newcommand\smax{{\hbox{\rm\tiny max}}}
\renewcommand\ss{s}
\renewcommand\SS{S}
\newcommand\SSb{\overline{S}}
\renewcommand\tt{t}
\newcommand\TT{T}
\newcommand\tta{\mathtt{a}}
\newcommand\ttb{\mathtt{b}}
\newcommand\ttc{\mathtt{c}}
\let\U=\underline
\newcommand\UG[1]{\Gamma_{\!#1}}
\newcommand\univ[1]{U(#1)}
\newcommand\uu{u}
\def\VR(#1,#2){\vrule width0pt height#1mm depth#2mm}
\newcommand\vs{{\it vs.}\xspace}
\newcommand\vv{v}
\newcommand\wdots{, ...\HS{0.2},}
\newcommand\wit{\lambda}
\newcommand\ww{w}
\newcommand\xx{x}
\newcommand\XX{X}
\newcommand\xxh{\widehat\xx}
\newcommand\xxv{\underline\xx}
\newcommand\yy{y}
\newcommand\yyh{\widehat\yy}
\newcommand\zz{z}
\newcommand\ZZZZ{\mathbb{Z}}
\title[Multifraction reduction II]{Multifraction reduction II: conjectures for Artin-Tits groups}
\author{Patrick Dehornoy}
\address{Laboratoire de Math\'ematiques Nicolas Oresme, CNRS UMR 6139, Universit\'e de Caen, 14032 Caen cedex, France, and Institut Universitaire de France}
\email{patrick.dehornoy@unicaen.fr}
\urladdr{www.math.unicaen.fr/$\sim$dehornoy}
\keywords{Artin-Tits monoid; Artin-Tits group; gcd-monoid; enveloping group; word problem; multifraction; reduction; semi-convergence; cross-confluence; tame reduction; van Kampen diagram; embeddability}
\subjclass{20F36, 20F10, 20M05, 68Q42, 18B40}
\begin{document}

\maketitle

\begin{abstract}
Multifraction reduction is a new approach to the word problem for Artin-Tits groups and, more generally, for the enveloping group of a monoid in which any two elements admit a greatest common divisor. This approach is based on a rewrite system (``reduction'') that extends free group reduction. In this paper, we show that assuming that reduction satisfies a weak form of convergence called semi-convergence is sufficient for solving the word problem for the enveloping group, and we connect semi-convergence with other conditions involving reduction. We conjecture that these properties are valid for all Artin-Tits monoids, and provide partial results and numerical evidence supporting such conjectures. 
\end{abstract}

\section{Introduction}

Artin-Tits groups are those groups that admit a positive presentation with at most one relation $\ss ... = \tt ...$ for each pair of generators~$\ss, \tt$ and, in this case, the relation has the form $\ss \tt \ss \tt ... = \tt \ss \tt \ss ...$, both sides of the same length~\cite{BriBou, GoP2}. It is still unknown whether the word problem is decidable for all Artin--Tits groups as, at the moment, decidability was established for particular families only: braid groups (E.\,Artin \cite{Art} in~1947), spherical type (P.\,Deligne~\cite{Dlg} and E.\,Brieskorn--K.\,Saito~\cite{BrS} in~1972), large type (K.I.\,Appel--P.E.\,Schupp \cite{ApS} in~1983), triangle-free (S.\,Pride \cite{Pri} in~1986), FC type (J.\,Altobelli~\cite{Alt} and A.\,Chermak~\cite{Che} in~1998). Later on, some of these groups were proved to be biautomatic or automatic~\cite{Eps, Chc}. 

This paper, which follows~\cite{Dit} but is self-contained, continues the investigation of multifraction reduction, a new approach to the word problem for Artin-Tits groups and, more generally, for the enveloping group~$\EG\MM$ of a cancellative monoid~$\MM$ in which any two elements admit a left and a right greatest common divisor (``gcd-monoid''). This approach is based on a certain algebraic rewrite system, called \emph{reduction}, which unifies and (properly) extends all previous rewrite systems based on exploiting the Garside structure of Artin--Tits monoids~\cite{Dez, Tat, HeM}. It is proved in~\cite{Dit} that, if the monoid~$\MM$ satisfies various properties involving the divisibility relations, all true in every Artin--Tits monoid, together with an additional assumption called the $3$-Ore condition, then reduction is convergent and every element of the enveloping group of~$\MM$ admits a unique representation by an irreducible multifraction, directly extending the classical result by \O.\,Ore about representation by irreducible fractions. 

In the current paper, we address the case of a general gcd-monoid, when the $3$-Ore condition is not assumed. In this case, reduction is not convergent, and there is no unique representation of the elements of the group~$\EG\MM$ by irreducible multifractions. However, we introduce a new, weaker condition called \emph{semi-convergence}, and prove that most of the applications of the convergence of reduction can be derived from its semi-convergence, in particular a solution of the word problem for~$\EG\MM$ whenever convenient finiteness conditions are satisfied. This makes the following conjecture crucial:

\begin{conjA}
 Reduction is semi-convergent for every Artin-Tits monoid.
\end{conjA}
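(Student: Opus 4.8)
The plan is to read Conjecture~$\ConjA$ as the assertion that from every multifraction~$\aav$ with $\aav\equ\one$ there is \emph{some} reduction path ending at~$\one$: reduction preserves $\equ$-classes and $\one$ admits no reduction step, so nothing stronger is needed for semi-convergence and nothing weaker suffices for the word-problem algorithm obtained from it in this paper. The base case of any attack is then free of difficulty, since reduction extends free reduction: a multifraction that is already \emph{freely} equivalent to~$\one$ reduces to~$\one$ by the free-cancellation moves alone. All the content lies in steering an arbitrary $\equ$-trivial~$\aav$ down to such a freely trivial state.

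To organise that descent I would argue by induction on the number of $2$-cells of a van Kampen diagram~$\Delta$ witnessing the corresponding equality $a_1a_2^{-1}a_3a_4^{-1}\cdots=1$ over the Artin--Tits presentation. The case where $\aav$ has depth at most~$2$ is immediate from the embeddability of the monoid in its group ($a_1\equ\one$ forces $a_1=1$; $a_1/a_2\equ\one$ forces $a_1=a_2$, after which one reduction step gives~$\one$), and the case of zero $2$-cells is the freely trivial base case above. For the inductive step one is given $\aav$ of depth~$\ge3$ with a diagram~$\Delta$ having at least one $2$-cell, and the aim is to produce an elementary reduction step $\aav\rd\aav'$ --- a move pulling a nontrivial factor across the junction between two consecutive components of~$\aav$ --- together with a van Kampen diagram for~$\aav'$ having strictly fewer $2$-cells, so that the inductive hypothesis applies to~$\aav'$ and $\aav\rd\aav'\rds\one$. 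Concretely, after possibly rearranging~$\Delta$ one locates a $2$-cell touching the boundary at a junction; it is labelled by an Artin relation $sts\cdots=tst\cdots$, and the cells lying behind it supply a common multiple of a divisor of one component and a factor of the neighbouring one --- exactly the divisibility data that activates such a move --- and performing the move absorbs that slab of~$\Delta$ into the boundary, lowering the cell count.

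A purely algebraic route should be developed in parallel and feed the same induction: show that semi-convergence follows from a \emph{cross-confluence} property --- whenever $\aav\rd\bbv$ and $\aav\rd\ccv$ with $\bbv\neq\ccv$, the multifractions $\bbv$ and $\ccv$ admit a common reduct --- and then verify this local property using only the defining relations and the behaviour of least common multiples of two or three elements. In the same spirit I would first attack the more rigid sub-system of \emph{tame} reduction~$\redtame$, where there are far fewer clashes to resolve, prove its semi-convergence, and then try to transfer the conclusion to full reduction; and I would keep track throughout of the logical links with the companion conjectures~$\ConjB$ and~$\ConjC$ introduced below, so that progress on any one of these conjectures propagates to the others.

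The hard part, in every version of the argument, is the inductive step in the absence of the $3$-Ore condition. The convergence theorem of~\cite{Dit} succeeds precisely because that condition forces any three elements with pairwise common multiples to have a global common multiple, so that the relevant van Kampen diagrams are assembled from ``cubical'' lcm-cells that can be peeled off one at a time and the local clashes close up automatically; equivalently, cross-confluence there reduces to a finite check on triples. For a general Artin--Tits monoid these global lcm's need not exist, the clash diagrams need not close, and one is confronted with the unrestricted combinatorics of van Kampen diagrams over Artin--Tits presentations --- the very phenomenon responsible for the word problem being open in general. I therefore do not expect an unconditional proof by these means. The realistic aim, pursued in the rest of the paper, is to establish the local/diagrammatic condition --- hence~$\ConjA$ --- for the families where the structure is under control (spherical type, FC type, small Coxeter graphs checked by computer), and to accumulate numerical evidence that~$\one$ is the only $\equ$-trivial irreducible multifraction in the cases that remain open.
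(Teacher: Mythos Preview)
The statement is a \emph{conjecture}, and the paper offers no proof of it: it is presented as open, with partial results (FC type, small depths) and computer evidence, and the paper explicitly says that no proof is in view in the general case. Your proposal is accordingly not a proof either --- you say so yourself in the final paragraph --- so the honest summary is that you have sketched a research programme, not an argument.

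Two concrete problems with the programme as stated. First, what you call ``cross-confluence'' --- the existence of a common reduct for any two one-step left reducts $\bbv,\ccv$ of~$\aav$ --- is simply (local) \emph{confluence} of~$\RDp\MM$. In a terminating system this is equivalent to convergence, and Proposition~\ref{P:UR} shows that convergence of~$\RDp\MM$ forces the right $3$-Ore condition; for Artin--Tits monoids this means FC type. So the property you propose to verify provably fails outside FC type, and cannot be the route to~$\ConjA$ in general. The paper's notion of cross-confluence (Definition~\ref{D:CrossConf}) is different: it mixes \emph{right} reduction in the hypothesis with \emph{left} reduction in the conclusion, and it is this asymmetric property that sits strictly between convergence and semi-convergence (Proposition~\ref{P:ConvCC}) and underlies Conjecture~$\ConjC$.

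Second, the van Kampen induction is where the real difficulty hides, and your inductive step is only a hope. ``Locate a $2$-cell touching the boundary at a junction'' and ``the cells behind it supply a common multiple'' are exactly the assertions that fail without $3$-Ore control: a boundary cell need not sit at a junction between consecutive entries, and even when it does, the required lcm with the adjacent entry need not exist, so no rule~$\Red{\ii,\xx}$ is activated. The paper exploits van Kampen diagrams only in the reverse direction (Proposition~\ref{P:Tiling}): assuming Conjecture~$\ConjB$, one \emph{obtains} a diagram of prescribed shape~$\UG\nn$, rather than using an arbitrary diagram to drive reduction. Your plan to attack~$\redtame$ first and transfer is likewise already explored in Section~\ref{S:Tame}: tame reduction alone does not reach~$\one$ in general (Example~\ref{X:Tame2}), which is why Conjecture~$\ConjB$ is a separate, stronger statement.
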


A proof of Conjecture~$\ConjA$ would imply the decidability of the word problem for every Artin-Tits group. The reasons for believing in Conjecture~$\ConjA$ are multiple. One abstract reason is that reduction is really specific and uses both the whole Garside structure of Artin--Tits monoids and, for the finiteness of the set of basic elements, some highly nontrivial properties of the associated Coxeter groups~\cite{Din, DyH}: this may be seen as more promising than a generic approach based on, say, a ``blind'' Knuth--Bendix completion. Next, we state several related conjectures (``$\ConjB$'', ``$\ConjC$'', ``$\ConjCunif$''), of which some partial cases are proven and which suggest the existence of a rich rigid structure. Another reason is the existence of massive computer tests supporting all the conjectures and, at the same time, efficiently discarding wrong variations and dead-ends. Finally, the existence of a proof in the special case of FC~type is a positive point. In the same direction, a weak version of Conjecture~$\ConjA$ (sufficient for solving the word problem) was recently established for all Artin--Tits groups of sufficiently large type~\cite{Dix}: although saying nothing about~$\ConjA$ itself, this shows that reduction is relevant for a new family of Artin--Tits groups.

We present below a state-of-the-art description of the known results involving multifraction reduction, and report about computer experiments supporting Conjecture~$\ConjA$ and its variants. The paper is organized as follows. We gather in Section~\ref{S:Red} the needed prerequisites about multifractions, gcd-monoids, and reduction. Semi-convergence and Conjecture~$\ConjA$ are introduced in Section~\ref{S:Semi}, and their consequences are established. In Section~\ref{S:Tame}, we analyze specific cases of reduction, namely divisions and their extensions, tame reductions. This leads to a new property, stated as Conjecture~$\ConjB$, which is stronger than Conjecture~$\ConjA$ but easier to experimentally check and maybe to establish. Then, we introduce in Section~\ref{S:CrossConf} cross-confluence, a new property of reduction that involves both reduction and a symmetric counterpart of it. This leads to Conjecture~$\ConjC$ and its uniform version~$\ConjCunif$, again stronger than Conjecture~$\ConjA$ but possibly more accessible. In Section~\ref{S:SmallDepth}, we analyze the case of small depth multifractions. We prove in particular that semi-convergence for multifractions of depth~$2$ is equivalent to $\MM$ embedding into its enveloping group, and semi-convergence for multifractions of depth~$4$ is equivalent to a unique decomposition property for fractions in~$\EG\MM$. Finally, we gather in Section~\ref{S:Misc} reports about computer experiments and a few comments about further possible developments. 

\subsection*{Acknowledgments}

The author thanks Friedrich Wehrung for many discussions about the content of this paper. In particular, the notion of a lcm-expansion mentioned in Sec.~\ref{SS:Options} appeared during our joint work of interval monoids~\cite{Div}. The author also thanks both the editor and the referee, whose many suggestions certainly improved the exposition significantly.

\section{Multifraction reduction}\label{S:Red}

In this introductory section, we recall the notions of a multifraction and of a gcd-monoid, as well as the definition of multifraction reduction~\cite{Dit}.

\subsection{Multifractions}\label{SS:Multifrac}

If $\MM$ is a monoid, we denote by~$\EG\MM$ and~$\can$ the enveloping group of~$\MM$ and the canonical morphism from~$\MM$ to~$\EG\MM$, characterized by the universal property that every morphism from~$\MM$ to a group factors through~$\can$. By definition, every element~$\gg$ of~$\EG\MM$ can be expressed as
\begin{equation}\label{E:Fraction}
\can(\aa_1) \can(\aa_2)\inv \can(\aa_3) \can(\aa_4)\inv \pdots \text{\quad or \quad} \can(\aa_1)\inv \can(\aa_2) \can(\aa_3)\inv \can(\aa_4) \pdots,
\end{equation}
with $\aa_1 \wdots \aa_\nn$ in~$\MM$. We shall investigate~$\EG\MM$ using such expressions. In~\cite{Dit}, without loss of generality, we only considered, expressions~\eqref{E:Fraction} where the first term~$\can(\aa_1)$ is positive (possibly trivial, that is, equal to~$1$). Here, in particular in view of Section~\ref{S:CrossConf}, we skip that condition, and allow for both signs in the first entry. 

\begin{defi}\label{D:Multifrac}
Let $\MM$ be a monoid. Let $\INV\MM$ be a disjoint copy of~$\MM$; call the elements of~$\MM$ (\resp $\INV\MM$) positive (\resp negative). For $\nn \ge 1$, a \emph{positive} (\resp \emph{negative}) \emph{$\nn$-multifraction} on~$\MM$ is a finite sequence $(\aa_1 \wdots \aa_\nn)$ with entries in~$\MM \cup \INV\MM$, alternating signs, and $\aa_1$ in~$\MM$ (\resp $\INV\MM$). The set of all multifractions (\resp all positive multifractions) completed with the empty sequence~$\ef$ is denoted by~$\FRb\MM$ (\resp $\FRp\MM$). A multiplication is defined by
\begin{equation*}\label{E:SignedProd}
(\aa_1 \wdots \aa_\nn) \opp (\bb_1 \wdots \bb_\pp) = 
\begin{cases}
(\aa_1 \wdots \aa_{\nn-1}, \aa_\nn \bb_1, \bb_2 \wdots \bb_\pp)
&\quad\text{for $\aa_\nn$ and $\bb_1$ in~$\MM$,}\\
(\aa_1 \wdots \aa_{\nn-1}, \bb_1\aa_\nn, \bb_2 \wdots \bb_\pp)
&\quad\text{for $\aa_\nn$ and $\bb_1$ in~$\INV\MM$,}\\
(\aa_1 \wdots \aa_\nn , \bb_1 \wdots \bb_\pp)
&\quad\text{otherwise,}\\
\end{cases}
\end{equation*}
extended with $\aav \opp \ef = \ef \opp \aav = \aav$ for every~$\aav$. 
\end{defi}

We use $\aav, \bbv, ...$ as generic symbols for multifractions, and~$\aa_\ii$ for the $\ii$th entry of~$\aav$ counted from~$1$. For~$\aav$ in~$\FRb\MM$, the length of~$\aav$ (number of entries) is called its \emph{depth}, written~$\dh\aav$. We identify an element~$\aa$ of~$\MM$ with the depth one positive multifraction~$(\aa)$. Multfractions will play the role of iterated fractions, and the following convention is then convenient:

\begin{nota}\label{N:Frac}
For $\aa_1 \wdots \aa_\nn$ in~$\MM$, we put
\begin{equation}\label{E:Frac}
\aa_1 \sdots \aa_\nn:= (\aa_1, \INV{\aa_2}, \aa_3, \INV{\aa_4}, ...) \quand 
/ \aa_1 \sdots \aa_\nn:= (\INV{\aa_1}, \aa_2, \INV{\aa_3}, \aa_4, ...).
\end{equation}
We say that $\ii$ is \emph{positive} (\resp \emph{negative}) \emph{in}~$\aav$ if $\aa_\ii$ (\resp $\INV{\aa_\ii}$) occurs in the expansion of~$\aav$.
\end{nota}

With this convention, we recover the notation of~\cite{Dit}, where only positive multifractions are considered and $\INV\MM$ remains hidden. Multifractions are adequately illustrated by associating with every element~$\aa$ of~$\MM$ an arrow labeled~$\aa$, concatenating arrows to represent the product in~$\MM$, and associating with every multifraction the path made of (the arrows of) the successive entries with alternating orientations. The rules for the multiplication of~$\aav$ and~$\bbv$ can be read in the following diagrams:

- $\nn$ positive in~$\aav$, $1$ positive in~$\bbv$: 
\begin{picture}(68,4)(-2,0)
\psset{nodesep=0.5mm}
\psset{xunit=0.85mm}
\pcline[style=etc](0,0)(4,0)
\pcline{<-}(4,0)(12,0)
\pcline{->}(12,0)(20,0)\taput{$\aa_\nn$}
\pcline{->}(24,0)(32,0)\taput{$\bb_1$}
\pcline{<-}(32,0)(40,0)
\psline[style=etc](40,0)(44,0)
\pcline[style=etc](52,0)(60,0)
\pcline{<-}(60,0)(68,0)
\pcline{->}(68,0)(84,0)\taput{$\aa_\nn\bb_1$}
\pcline{<-}(84,0)(92,0)
\psline[style=etc](92,0)(96,0)
\put(18,-1){$\opp$}
\put(40,-1){$=$}
\end{picture}

- $\nn$ positive in~$\aav$, $1$ negative in~$\bbv$: 
\begin{picture}(68,4)(-1,0)
\psset{nodesep=0.5mm}
\psset{xunit=0.85mm}
\pcline[style=etc](0,0)(4,0)
\pcline{<-}(4,0)(12,0)
\pcline{->}(12,0)(20,0)\taput{$\aa_\nn$}
\pcline{<-}(24,0)(32,0)\taput{$\bb_1$}
\pcline{->}(32,0)(40,0)
\psline[style=etc](40,0)(44,0)
\pcline[style=etc](52,0)(60,0)
\pcline{<-}(60,0)(68,0)
\pcline{->}(68,0)(76,0)\taput{$\aa_\nn$}
\pcline{<-}(76,0)(84,0)\taput{$\bb_1$}
\pcline{->}(84,0)(92,0)
\psline[style=etc](92,0)(96,0)
\put(18,-1){$\opp$}
\put(40,-1){$=$}
\end{picture}

- $\nn$ negative in~$\aav$, $1$ positive in~$\bbv$: 
\begin{picture}(68,4)(-1,0)
\psset{nodesep=0.5mm}
\psset{xunit=0.85mm}
\pcline[style=etc](0,0)(4,0)
\pcline{->}(4,0)(12,0)
\pcline{<-}(12,0)(20,0)\taput{$\aa_\nn$}
\pcline{->}(24,0)(32,0)\taput{$\bb_1$}
\pcline{<-}(32,0)(40,0)
\psline[style=etc](40,0)(44,0)
\pcline[style=etc](52,0)(60,0)
\pcline{->}(60,0)(68,0)
\pcline{<-}(68,0)(76,0)\taput{$\aa_\nn$}
\pcline{->}(76,0)(84,0)\taput{$\bb_1$}
\pcline{<-}(84,0)(92,0)
\psline[style=etc](92,0)(96,0)
\put(18,-1){$\opp$}
\put(40,-1){$=$}
\end{picture}

- \VR(0,4)$\nn$ negative in~$\aav$, $1$ negative in~$\bbv$: 
\begin{picture}(68,4)(0,0)
\psset{nodesep=0.5mm}
\psset{xunit=0.85mm}
\pcline[style=etc](0,0)(4,0)
\pcline{->}(4,0)(12,0)
\pcline{<-}(12,0)(20,0)\taput{$\aa_\nn$}
\pcline{<-}(24,0)(32,0)\taput{$\bb_1$}
\pcline{->}(32,0)(40,0)
\psline[style=etc](40,0)(44,0)
\pcline[style=etc](52,0)(60,0)
\pcline{->}(60,0)(68,0)
\pcline{<-}(68,0)(84,0)\taput{$\bb_1\aa_\nn$}
\pcline{->}(84,0)(92,0)
\psline[style=etc](92,0)(96,0)
\put(18,-1){$\opp$}
\put(40,-1){$=$}
\end{picture}

\begin{prop}\label{P:EnvGroup}
\ITEM1 The set~$\FRb\MM$ equipped with~$\opp$ and~$\ef$ is a monoid generated by $\MM \cup \INV\MM$, and $\FRp\MM$ is the submonoid of~$\FRb\MM$ generated by $\MM \cup \{1 / \aa \mid \aa \in \MM\}$. The family of depth~one multifractions is a submonoid isomorphic to~$\MM$.

\ITEM2 Let $\simeqb$ be the congruence on~$\FRb\MM$ generated by $(1, \ef)$ and the pairs $(\aa / \aa, \ef)$ and $(/ \aa / \aa, \ef)$ with~$\aa$ in~$\MM$. For~$\aav$ in~$\FRb\MM$, let~$\can(\aav)$ be the $\simeqb$-class of~$\aav$. Then the group~$\EG\MM$ is (isomorphic to) $\FRb\MM{/}{\simeqb}$ and, for all $\aa_1 \wdots \aa_\nn$ in~$\MM$, we have
\begin{equation}\label{E:Eval}
\can(\aa_1 \sdots \aa_\nn) = \can(\aa_1) \can(\aa_2)\inv \can(\aa_3) \pdots, \quad \can(/ \aa_1 \sdots \aa_\nn) = \can(\aa_1)\inv \can(\aa_2) \can(\aa_3)\inv \pdots. 
\end{equation}

\ITEM3 The restriction of~$\simeqb$ to~$\FRp\MM$ is the congruence~$\simeqp$ generated by $(1, \ef)$ and the pairs $(\aa / \aa, \ef)$ and $(1 / \aa / \aa, \ef)$ with~$\aa$ in~$\MM$. The group~$\EG\MM$ is also isomorphic to~$\FRp\MM{/}{\simeqp}$. The translation $\aav \mapsto 1 \opp \aav$ maps~$\FRb\MM$ onto~$\FRp\MM$ and, for all~$\aav, \bbv$ in~$\FRb\MM$, the relation~$\aav \simeqb \bbv$ is equivalent to $1 \opp \aav \simeqp 1 \opp \bbv$.
\end{prop}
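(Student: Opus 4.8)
The plan is to prove Proposition~\ref{P:EnvGroup} in three stages, tracking carefully the fact that $\FRb\MM$ is a genuine (non-commutative, non-cancellative) monoid and that $\simeqb$ is merely the congruence making the formal inverses into actual inverses.

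For~\ITEM1, I would first check that $\opp$ is associative and that $\ef$ is a two-sided identity. Associativity is a case analysis on the signs of the last entry of the first factor, the first entry of the second, and so on; the only delicate point is what happens when a product $\aa_\nn\bb_1$ (or $\bb_1\aa_\nn$) equals~$1$ in~$\MM$ — but since we are only multiplying entries, never cancelling, this causes no collapse of depth, so the three defining clauses compose cleanly. I would then observe that $\MM \cup \INV\MM$ generates $\FRb\MM$: a multifraction $(\aa_1 \wdots \aa_\nn)$ is the product $\aa_1 \opp \INV{\aa_2} \opp \aa_3 \opp \cdots$ taken with the appropriate signs, because consecutive factors of \emph{opposite} sign trigger the third clause (mere concatenation). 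For $\FRp\MM$: every positive multifraction is $\aa_1 \opp (1/\aa_2) \opp \aa_3 \opp (1/\aa_4) \opp \cdots$, using that $\aa \opp (1/\bb) = (\aa, \INV\bb)$ and $(1/\bb) \opp \cc$ concatenates; conversely the stated generators are positive, so they generate a submonoid of $\FRp\MM$, and one checks it is all of it. Finally the depth-one multifractions are closed under $\opp$ (first clause) and the map $\aa \mapsto (\aa)$ is an injective monoid homomorphism onto them, by the very definition of the first clause.

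For~\ITEM2, the heart of the matter, I would use the universal property of $\EG\MM$. On one hand $\FRb\MM/{\simeqb}$ is a group: every generator $\can(\aa)$ (image of a depth-one multifraction) has inverse $\can(1/\aa) = \can((\INV\aa))$ because $(\aa)\opp(\INV\aa) = (\aa,\INV\aa) = \aa/\aa \simeqb \ef$ and similarly $(\INV\aa)\opp(\aa) = /\aa/\aa \simeqb \ef$ using $(\INV\aa)\opp(\aa) = (\INV\aa,\aa)$; and these generators generate, by \ITEM1. The composite $\MM \to \FRb\MM \to \FRb\MM/{\simeqb}$ is a monoid homomorphism into a group, hence factors through~$\can\colon \MM \to \EG\MM$, giving $\phi\colon \EG\MM \to \FRb\MM/{\simeqb}$. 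Conversely, the universal property applied to the group $\EG\MM$ and the map $\MM \cup \INV\MM \to \EG\MM$ sending $\aa \mapsto \can(\aa)$ and $\INV\aa \mapsto \can(\aa)\inv$ extends to a monoid homomorphism $\FRb\MM \to \EG\MM$ (here I must verify it respects $\opp$: the three clauses correspond to $\can(\aa_\nn)\can(\bb_1)$, $\can(\aa_\nn)\inv\can(\bb_1)\inv = (\can(\bb_1)\can(\aa_\nn))\inv$, and plain juxtaposition — all consistent in the group), and it kills $(1,\ef)$, $(\aa/\aa,\ef)$, $(/\aa/\aa,\ef)$, hence descends to $\psi\colon \FRb\MM/{\simeqb} \to \EG\MM$. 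One checks $\phi$ and $\psi$ are mutually inverse on the generating set, hence everywhere. Formula~\eqref{E:Eval} is then just the computation of $\psi$ (equivalently $\can$) on $\aa_1\sdots\aa_\nn = (\aa_1,\INV{\aa_2},\aa_3,\dots)$, unwinding Notation~\ref{N:Frac}.

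For~\ITEM3, I would first identify the restricted congruence: $\simeqb$ restricted to $\FRp\MM$ clearly contains $(1,\ef)$, $(\aa/\aa,\ef)$, and $(1/\aa/\aa,\ef)$ (the latter because $1/\aa/\aa = (1,\INV\aa,\aa)$ and $(\INV\aa,\aa) = /\aa/\aa$, so it reduces via a $\simeqb$-step inside), so $\simeqp \subseteq {\simeqb}\restriction\FRp\MM$. For the reverse inclusion, and to establish the bijection $\EG\MM \cong \FRp\MM/{\simeqp}$, I would analyze the translation $T\colon \aav \mapsto 1\opp\aav$. Note $T$ maps $\FRb\MM$ into $\FRp\MM$ (prepending $1$ forces the first entry positive; if $\aav$ already starts positive, $1\opp\aav$ just absorbs into $\aa_1$ and depth may drop by adjusting, but the result is positive in either case), it is surjective onto $\FRp\MM$ (a positive multifraction is its own image, modulo the harmless leading $1$), and — crucially — $T$ is \emph{not} a homomorphism but satisfies $T(\aav\opp\bbv) \simeqp T(\aav)\opp T(\bbv)$ after inserting a cancelling pair, because $1\opp\aav\opp\bbv$ versus $(1\opp\aav)\opp(1\opp\bbv)$ differ by a factor $\cdots\opp(1/1)\opp\cdots$ hidden in the middle, and $1 \simeqp \ef$. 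The key claim is then $\aav \simeqb \bbv \iff T(\aav) \simeqp T(\bbv)$. The forward direction follows by pushing each generating relation of $\simeqb$ through $T$; the backward direction uses that applying $\psi$ from \ITEM2 gives $\can(T(\aav)) = \can(1)\can(\aav) = \can(\aav)$, so $T(\aav)\simeqp T(\bbv)$ implies (via the \ITEM2 isomorphism, now restricted) $\can(\aav) = \can(\bbv)$, i.e.\ $\aav\simeqb\bbv$. Combining, $\FRp\MM/{\simeqp} \to \FRb\MM/{\simeqb} \cong \EG\MM$ induced by inclusion is a bijection.

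The main obstacle I anticipate is bookkeeping rather than conceptual: verifying that the formal map $\FRb\MM \to \EG\MM$ respects $\opp$ across all four sign-configurations (including the degenerate cases where an entry becomes $1$, or where prepending $1$ in \ITEM3 shortens a multifraction), and correctly handling the ``hidden $1$'' when comparing $T(\aav\opp\bbv)$ with $T(\aav)\opp T(\bbv)$ — several of the asserted equalities hold only up to $\simeqp$-insertion of a trivial cancelling pair, not on the nose, and one must be disciplined about not claiming $T$ is a homomorphism. None of this is deep, but it is exactly the place where a hand-wave would hide an error.
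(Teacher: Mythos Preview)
Your approach is correct and essentially matches the paper's: both use the universal property of~$\EG\MM$ for~\ITEM2 and the translation $\aav \mapsto 1 \opp \aav$ for~\ITEM3, with the key step being that this translation induces a congruence (your $T(\aav\opp\bbv) \simeqp T(\aav)\opp T(\bbv)$ is exactly the paper's verification that $\approx$ is a congruence ``by considering all sign combinations''). The only cosmetic difference is that the paper establishes $\FRp\MM/{\simeqp} \cong \EG\MM$ directly by the universal property alongside~\ITEM2, whereas you deduce it at the end of~\ITEM3; and for the backward implication in~\ITEM3 the paper uses $1 \simeqb \ef$ to cancel on the left rather than routing through~$\psi$, but your route via $\simeqp \subseteq \simeqb$ and $\can(T(\aav)) = \can(\aav)$ is equally valid.
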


\begin{proof}
The argument is similar to the proof of~\cite[Proposition~2.4]{Dit}, and we only point the differences due to using signed multifractions. For~\ITEM1, associativity is checked directly, and the generating subsets for~$\FRb\MM$ and~$\FRp\MM$ follow from the equalities
\begin{equation}\label{E:Decomp}
\aa_1 \sdots \aa_\nn = \aa_1 \opp \INV{\aa_2} \opp \aa_3 \opp \INV{\aa_4} \opp \pdots
= \aa_1 \opp 1/\aa_2 \opp \aa_3 \opp 1/\aa_4 \opp \pdots:
\end{equation}
both hold in~$\FRb\MM$, and the second only involves positive multifractions.

For~\ITEM2 and~\ITEM3, for every~$\aa$ in~$\FRb\MM$, the definition of~$\simeqb$ implies $\can(\INV\aa) = \can(\aa)\inv$ and, writing $\canp(\aav)$ for the $\simeqp$-class of~$\aav$, that of~$\simeqp$ implies $\canp(1 / \aa) = \canp(\aa)\inv$. Hence both $\FRb\MM{/}{\simeqb}$ and $\FRp\MM{/}{\simeqp}$ are groups generated by~$\MM$. One easily checks that the latter groups satisfy the universal property defining~$\EG\MM$, and are therefore isomorphic to~$\EG\MM$. Then \eqref{E:Eval} directly follows from~\eqref{E:Decomp}.

Next, for every~$\aav$ in~$\FRb\MM$, the product $1 \opp \aav$ belongs to~$\FRp\MM$. Then, for~$\aav, \bbv$ in~$\FRb\MM$, write $\aav \approx \bbv$ for $1 \opp \aav \simeqp 1 \opp \bbv$. By considering all sign combinations and using relations like $1 / \aa\bb \simeqp 1/\bb \opp 1/ \aa$, one checks that $\approx$ is a congruence on~$\FRb\MM$, and it contains the pairs $(1, \ef)$, $(\aa / \aa, \ef)$, and $( / \aa/ \aa, \ef)$ that generate~$\simeqb$, as one finds for instance $1 \opp / \aa / \aa = 1 / \aa / \aa \simeqp \ef = 1 \opp \ef$. Hence $\approx$ includes~$\simeqb$. 
In the other direction, $\simeqb$ contains pairs that generate~$\simeqp$ and, being compatible with multiplication in~$\FRb\MM$, it is in particular compatible with multiplication in~$\FRp\MM$. So $\simeqp$ is included in~$\simeqb$, and $\aav \simeqb \bbv$ implies $1 \opp \aav \simeqp 1 \opp \bbv$ since $1$ is invertible mod~$\simeqb$. Hence, $\simeqb$ is included in~$\approx$ and, finally, $\simeqb$ and~$\approx$ coincide, which means that, for all~$\aav, \bbv$ in~$\FRb\MM$, we have
\begin{equation}\label{E:EquPos}
\aav \simeqb \bbv \quad \Longleftrightarrow\quad 1 \opp \aav \simeqp 1 \opp \bbv.
\end{equation} 
For~$\aav$ positive, we have $1 \opp \aav = \aav$, so \eqref{E:EquPos} implies in particular that $\simeqp$ is the restriction of~$\simeqb$ to~$\FRp\MM$, and the rest follows easily.
\end{proof}

Hereafter, we identify $\EG\MM$ with $\FRb\MM{/}{\simeqb}$ and~$\FRp\MM{/}{\simeqp}$. This representation is redundant in that, for every~$\aa$ in~$\MM$, the inverse~$\can(\aa)\inv$ of~$\can(\aav)$ is represented both by the depth~$1$ negative multifraction~$/ \aa$ and the depth~$2$ positive multifraction~$1/\aa$.

We conclude this introduction with some terminology that will be used frequently:

\begin{defi}
A multifraction~$\aav$ is called \emph{unital} if $\aav \simeqb \one$ holds, \ie, if $\aav$ represents~$1$ in the group~$\EG\MM$. It is called \emph{trivial} if all entries are equal to~$1$ or~$\INV1$. For $\nn > 0$, we write $\One\nn$ for $1 \sdots 1$, $\nn$ terms; for $\nn < 0$, we write $\One{\nn}$ for $/ 1 / 1 \sdots 1$, $\vert\nn\vert$ terms; in practice, we shall often omit the index and write~$\one$ for a trivial multifraction. 
\end{defi}

\subsection{Gcd-monoids}\label{SS:Gcd}

The reduction process we investigate requires that the ground mon\-oid is a gcd-monoid. We recall the basic definitions, referring to~\cite{Dit} (and~\cite{Dir}) for more details.

Let~$\MM$ be a monoid. For~$\aa, \bb$ in~$\MM$, we say that $\aa$ \emph{left divides}~$\bb$ or, equivalently, that $\bb$ is a \emph{right multiple} of~$\aa$, written~$\aa \dive \bb$, if $\aa\xx = \bb$ holds for some~$\xx$ in~$\MM$. If $\MM$ is a cancellative monoid and $1$ is the only invertible element in~$\MM$, the left divisibility relation is a partial order on~$\MM$. In this case, when they exist, the greatest common $\dive$-lower bound of two elements~$\aa, \bb$ is called their \emph{left gcd}, denoted by~$\aa \gcd \bb$, and their least common $\dive$-upper bound is called their \emph{right lcm}, denoted by~$\aa \lcm \bb$. 

Symmetrically, we say that $\aa$ \emph{right divides}~$\bb$ or, equivalently, that $\bb$ is a \emph{left multiple} of~$\aa$, written~$\aa \divet \bb$, if $\xx\aa = \bb$ holds for some~$\xx$. Under the same hypotheses, $\divet$ is a partial order on~$\MM$, with the derived right gcd and left lcm written~$\gcdt$ and~$\lcmt$.

\begin{defi}\label{D:GcdMon}
We say that $\MM$ is a \emph{gcd-monoid} if $\MM$ is a cancellative monoid, $1$ is the only invertible element in~$\MM$, and any two elements of~$\MM$ admit a left gcd and a right gcd.
\end{defi}

Typical examples of gcd-monoids are Artin-Tits monoids. Many more examples are known. In particular, every Garside or preGarside monoid~\cite{Dgk, Dir, GoP3} is a gcd-monoid.

Standard arguments \cite[Lemma\,2.15]{Dit} show that a gcd-monoid admits \emph{conditional right and left lcms}: it need not be true that any two elements admit a right lcm, but any two elements that admit a common right multiple admit a right lcm, and similarly on the left. 

The gcd and lcm operations of a gcd-monoid are connected in several ways with the product. We refer to~\cite[section~II.2]{Dir} for the (easy) proof of the rule for an iterated lcm:

\begin{lemm}\label{L:IterLcm}
If $\MM$ is a gcd-monoid and $\aa, \bb, \cc$ belong to~$\MM$, then $\aa \lcm \bb\cc$ exists if and only if $\aa \lcm \bb$ and $\aa' \lcm \cc$ do, where~$\aa'$ is defined by $\aa \lcm \bb = \bb \aa'$, and then we have
\begin{equation}\label{E:IterLcm}
\aa \lcm \bb\cc = \aa \opp \bb'\cc' = \bb\cc \opp \aa''.
\end{equation}
with $\aa \lcm \bb = \bb \aa' = \aa \bb'$ and $\aa' \lcm \cc = \aa' \cc' = \cc \aa''$. 
\end{lemm}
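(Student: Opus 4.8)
The statement is an "iterated lcm" formula: one wants to reduce the existence and computation of $\aa \lcm \bb\cc$ to two successive ordinary lcms. I would set this up as two implications, proving the equivalence and then extracting the formula~\eqref{E:IterLcm}. The key structural fact I intend to use repeatedly is the defining universal property of the lcm: $\aa \lcm \dd$ is (when it exists) the $\div$-least common right multiple of $\aa$ and~$\dd$, and in a cancellative monoid with trivial units this least upper bound is unique; equivalently, an element is $\aa \lcm \dd$ iff it is a right multiple of both $\aa$ and~$\dd$ and it left-divides every common right multiple of $\aa$ and~$\dd$. I will also use the conditional-lcm principle quoted just before the lemma (two elements with a common right multiple have a right lcm), so that all existence claims can be phrased as "admits a common right multiple".

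\emph{Forward direction.} Assume $\aa \lcm \bb\cc$ exists; call it $\mm$. Since $\mm$ is a common right multiple of $\aa$ and $\bb$ (because $\bb \div \bb\cc \div \mm$), conditional-lcm existence gives that $\aa \lcm \bb$ exists; write $\aa \lcm \bb = \bb\aa' = \aa\bb'$ as in the statement. Now $\mm$ is a right multiple of $\aa \lcm \bb$ (it is a common right multiple of $\aa$ and $\bb$, and $\aa\lcm\bb$ is the least such), so write $\mm = (\aa\lcm\bb)\,\uu = \bb\aa'\uu$; cancelling $\bb$ on the left from $\mm = \bb(\aa'\uu)$ and $\bb\cc \div \mm$ gives $\cc \div \aa'\uu$. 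On the other hand $\mm$ is a right multiple of $\aa\lcm\bb = \aa\bb'$ and of $\bb\cc$; I want to see that $\aa'\uu$ is a common right multiple of $\aa'$ and $\cc$. That $\cc \div \aa'\uu$ we just saw; that $\aa' \div \aa'\uu$ is trivial. Hence $\aa'\lcm\cc$ exists, say $\aa'\lcm\cc = \aa'\cc' = \cc\aa''$, and $\aa'\cc' \div \aa'\uu$, i.e. $\cc'\div\uu$ after left-cancelling $\aa'$. The reverse divisibility $\uu \div \cc'$ (up to the right factor) will follow from minimality of $\mm$: $(\aa\lcm\bb)(\aa'\lcm\cc)/\aa'$ — more precisely $\bb(\aa'\lcm\cc) = \bb\aa'\cc' = (\aa\lcm\bb)\cc'$ — is a common right multiple of $\aa$ and of $\bb\cc$, hence a right multiple of $\mm$; combined with $\mm = (\aa\lcm\bb)\uu$ and cancellation this forces $\uu = \cc'$, giving $\mm = \bb\aa'\cc' = (\aa\lcm\bb)\opp\cc' = \aa\opp\bb'\cc'$, which is the first half of~\eqref{E:IterLcm}; the expression $\bb\cc\opp\aa''$ comes from $\mm = \bb\aa'\cc' = \bb\cc\aa''$ using $\aa'\cc' = \cc\aa''$.

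\emph{Converse direction.} Assume $\aa\lcm\bb$ and $\aa'\lcm\cc$ exist, with the same notation. Set $\mm := \bb\aa'\cc' = \aa\bb'\cc' = \bb\cc\aa''$ (the three expressions agree by the defining relations of the two lcms). Then $\mm$ is visibly a common right multiple of $\aa$ and of $\bb\cc$, so $\aa\lcm\bb\cc$ exists and left-divides $\mm$. For the reverse, let $\dd$ be any common right multiple of $\aa$ and $\bb\cc$; in particular $\dd$ is a common right multiple of $\aa$ and $\bb$, so $(\aa\lcm\bb)\div\dd$, say $\dd = \bb\aa'\vv$. From $\bb\cc\div\dd = \bb\aa'\vv$ and left-cancellation, $\cc\div\aa'\vv$, so $\aa'\vv$ is a common right multiple of $\aa'$ and $\cc$, whence $(\aa'\lcm\cc)\div\aa'\vv$, i.e. $\aa'\cc'\div\aa'\vv$, i.e. $\cc'\div\vv$; therefore $\mm = \bb\aa'\cc' \div \bb\aa'\vv = \dd$. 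So $\mm$ is the least common right multiple of $\aa$ and $\bb\cc$, i.e. $\mm = \aa\lcm\bb\cc$, which is~\eqref{E:IterLcm}.

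\emph{Main obstacle.} The calculations themselves are elementary; the one point that needs care is the systematic use of \emph{left}-cancellation at every step where a common left factor ($\bb$, or $\aa'$, or $\aa\lcm\bb$) is stripped off — this is where the cancellativity hypothesis in the definition of a gcd-monoid is genuinely used, and it is easy to apply it on the wrong side. A secondary subtlety is bookkeeping: one must keep the three equal expressions for $\aa\lcm\bb$ and the two for $\aa'\lcm\cc$ consistent, so that the two displayed forms $\aa\opp\bb'\cc'$ and $\bb\cc\opp\aa''$ in~\eqref{E:IterLcm} both come out correctly; I would fix the notation once at the start (exactly as in the statement) and never re-solve the same equation twice.
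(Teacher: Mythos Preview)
Your proof is correct and is the standard universal-property argument one expects here. The paper does not actually supply a proof of this lemma: it simply refers to \cite[section~II.2]{Dir} and calls the result ``easy'', so there is nothing to compare against beyond noting that your two-implication argument, driven by left-cancellation and the least-upper-bound characterisation of the right lcm, is exactly the kind of verification the cited reference carries out.
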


This implies in particular that $\aa \dive \bb\cc$ holds if and only if $\aa \lcm \bb$ exists and $\aa' \dive \cc$ holds, with~$\aa'$ defined by~$\aa \lcm \bb = \bb\aa'$.

\begin{lemm}\label{L:IterGcd}
If $\MM$ is a gcd-monoid and $\aa, \bb, \aa', \bb', \cc$ belong to~$\MM$ and satisfy $\aa\bb' = \bb \aa'$, then $\aa \gcd \bb = \aa' \gcd \cc = 1$ implies $\aa \gcd \bb\cc = 1$.
\end{lemm}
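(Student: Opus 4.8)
\textbf{Proof plan for Lemma~\ref{L:IterGcd}.}

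The plan is to derive $\aa \gcd \bb\cc = 1$ from the two hypotheses $\aa \gcd \bb = 1$ and $\aa' \gcd \cc = 1$ by following the structure of the previous two lemmas. First I would observe that, by Lemma~\ref{L:IterLcm} and the remark following it, the equality $\aa \gcd \bb = 1$ is exactly the statement that $\aa$ and $\bb$ admit a right lcm $\aa \lcm \bb = \bb\aa' = \aa\bb'$, which we are already given via the relation $\aa\bb' = \bb\aa'$; so this $\aa'$ is precisely the cofactor occurring in the iterated-lcm formula. Then, using that $\aa' \gcd \cc = 1$, the right lcm $\aa' \lcm \cc$ exists and equals $\aa'\cc' = \cc\aa''$ for suitable $\cc', \aa''$, and Lemma~\ref{L:IterLcm} yields the existence of $\aa \lcm \bb\cc$ together with the two expressions in~\eqref{E:IterLcm}.

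The heart of the argument is then to show that the left gcd of $\aa$ and $\bb\cc$ is trivial. Suppose $\dd$ left divides both $\aa$ and $\bb\cc$; I want to conclude $\dd = 1$. From $\dd \dive \bb\cc$ and the criterion following Lemma~\ref{L:IterLcm}, the right lcm $\dd \lcm \bb$ exists and, writing $\dd \lcm \bb = \bb\dd_1$, we get $\dd_1 \dive \cc$. Since $\dd \dive \aa$ and $\dd \dive \bb$ forces $\dd \dive (\aa \gcd \bb) = 1$... wait — that is not quite what I have; $\dd$ divides $\aa$ but only the lcm $\dd \lcm \bb$ is controlled, not $\dd \gcd \bb$. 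The cleaner route is: from $\dd \dive \aa$ and $\dd \dive \bb\cc$, and using the fact that left multiples and right lcms interact well in a gcd-monoid, push $\dd$ through the lcm decomposition. Concretely, because $\aa \dive \aa\lcm\bb = \bb\aa'$ and $\dd \dive \aa$, one gets $\dd \dive \bb\aa'$, hence (again by the criterion) $\dd \lcm \bb$ exists with cofactor dividing $\aa'$; combined with $\dd_1 \dive \cc$ from the previous paragraph — after checking the two cofactors of $\dd$ relative to $\bb$ coincide by cancellativity — we obtain a common left divisor of $\aa'$ and $\cc$, which must be $1$ since $\aa' \gcd \cc = 1$. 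Feeding $\dd_1 = 1$ back means $\dd \dive \bb$, so $\dd$ is a common left divisor of $\aa$ and $\bb$, whence $\dd \dive (\aa \gcd \bb) = 1$.

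I expect the main obstacle to be bookkeeping the cofactors correctly: one must verify that the element $\dd_1$ produced from $\dd \dive \bb\cc$ and the element arising from $\dd \dive \bb\aa'$ are genuinely the same, which requires a left-cancellation step ($\bb\dd_1 = \bb\,(\text{the other cofactor})$ forces equality), and one must be careful that $\aa'$ here is the \emph{same} $\aa'$ as in the hypothesis $\aa\bb' = \bb\aa'$ — this is where the assumption $\aa\bb' = \bb\aa'$ is used, namely to identify $\bb\aa'$ as (a right multiple realizing) $\aa \lcm \bb$, so that $\aa'$ is the correct cofactor. Everything else is a routine application of the divisibility criterion extracted from Lemma~\ref{L:IterLcm} together with cancellativity and the fact that the left gcd is the meet for~$\dive$.
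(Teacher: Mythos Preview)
Your core argument in the second paragraph is correct and is essentially the paper's proof: take a common left divisor $\dd$ of $\aa$ and $\bb\cc$; from $\dd \dive \bb\cc$ extract the cofactor $\dd_1$ with $\dd_1 \dive \cc$; from $\dd \dive \aa \dive \aa\bb' = \bb\aa'$ deduce that the lcm $\bb \lcm \dd = \bb\dd_1$ divides $\bb\aa'$, hence $\dd_1 \dive \aa'$ by left cancellation; then $\aa' \gcd \cc = 1$ gives $\dd_1 = 1$, so $\dd \dive \bb$, and finally $\aa \gcd \bb = 1$ gives $\dd = 1$. Your worry about ``two cofactors'' is unnecessary: there is a single $\dd_1$ determined by $\bb \lcm \dd = \bb\dd_1$, and the two divisibilities $\dd_1 \dive \cc$ and $\dd_1 \dive \aa'$ are read off from the two common multiples $\bb\cc$ and $\bb\aa'$ of $\bb$ and $\dd$.

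There is, however, a genuine error in your first paragraph: the claim that $\aa \gcd \bb = 1$ together with $\aa\bb' = \bb\aa'$ forces $\aa \lcm \bb = \bb\aa'$ is false. Lemma~\ref{L:Lcm} says $\aa\bb'$ is the right lcm of $\aa$ and $\bb$ if and only if $\aa' \gcdt \bb' = 1$ (\emph{right} gcd), which is unrelated to the hypothesis $\aa \gcd \bb = 1$ (\emph{left} gcd). A concrete counterexample in the free commutative monoid on $\xx,\yy$: take $\aa = \xx$, $\bb = \yy$, $\bb' = \yy^2$, $\aa' = \xx\yy$; then $\aa\bb' = \bb\aa' = \xx\yy^2$ and $\aa \gcd \bb = 1$, yet $\aa \lcm \bb = \xx\yy \ne \xx\yy^2$. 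Fortunately this claim is never used in your actual argument: in the second paragraph you only need $\aa \dive \bb\aa'$, which follows directly from $\bb\aa' = \aa\bb'$, not from any lcm identification. The paper's proof proceeds exactly this way and never mentions $\aa \lcm \bb$. So drop the first paragraph entirely (and the corresponding remark in the third), and your proof is the paper's proof.
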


\begin{proof}
Assume $\xx \dive \aa$ and $\xx \dive \bb\cc$. By Lemma~\ref{L:IterLcm}, $\bb \lcm \xx$ must exist and, writing $\bb \lcm \xx = \bb\xx'$, we must have $\xx' \dive \cc$. On the other hand, $\xx \dive \aa$ implies $\xx \dive \aa\bb' = \bb\aa'$. So $\bb\aa'$ is a common right multiple of~$\bb$ and~$\xx$, hence it is a right multiple of their right lcm~$\bb \xx'$. As $\MM$ is left cancellative, $\bb \xx' \dive \bb \aa'$ implies $\xx' \dive \aa'$. Hence $\xx'$ left divides both~$\aa'$ and~$\cc$, and $\aa' \gcd \cc = 1$ implies $\xx' = 1$, whence $\xx \dive \bb$. Then $\xx$ left divides both~$\aa$ and~$\bb$, and $\aa \gcd \bb = 1$ implies $\xx = 1$, hence $\aa \gcd \bb\cc = 1$.
\end{proof}

We shall also need the notion of a noetherian monoid. If $\MM$ is a gcd-monoid, we use~$\div$ for the proper version of left divisibility: $\aa \div \bb$ holds if we have $\bb = \aa\xx$ for some non-invertible~$\xx$, \ie, for $\xx \not= 1$, and similarly for~$\divt$ vs.~$\divet$. 

\begin{defi}
A monoid~$\MM$ is called \emph{noetherian} if the relations~$\div$ and~$\divt$ are well-founded, \ie, every nonempty subset of~$\MM$ has a $\div$-minimal element and a $\divt$-minimal element.
\end{defi} 

Note that a monoid~$\MON\SS\RR$ is noetherian whenever each relation in~$\RR$ is \emph{homogeneous}, \ie, it has the form $\uu = \vv$ with~$\uu, \vv$ of the same length: indeed, $\aa \div\bb$ implies that any word in~$\SS$ representing~$\aa$ is shorter than any word representing~$\bb$, and an infinite $\div$-descending sequence cannot exist. Artin-Tits monoids are typical examples.

\subsection{Reduction of multifractions}\label{SS:Reduction}

Introduced in~\cite{Dit}, our tool for investigating the congruence~$\simeqb$ on~$\FRb\MM$ is \emph{reduction}, a family of partial depth-preserving transformations that, when defined, map a multifraction to a $\simeqb$-equivalent multifraction. These transformations are written as an action on the right: when defined, $\aav \act \RR$ is the result of applying~$\RR$ to~$\aav$. 

\begin{defi}\label{D:Red}
If $\MM$ is a gcd-monoid and $\aav, \bbv$ lie in~$\FRb\MM$, then, for $\ii \ge 1$ and $\xx$ in~$\MM$, we declare that $\bbv = \aav \act \Red{\ii, \xx}$ holds if we have $\dh\bbv = \dh\aav$, $\bb_\kk = \aa_\kk$ for $\kk \not= \ii - 1, \ii, \ii + 1$, and there exists~$\xx'$ (necessarily unique) satisfying
$$\begin{array}{lccc}
\text{for $\ii = 1$ positive in~$\aav$:\quad}
&&\bb_\ii \xx = \aa_\ii, 
&\bb_{\ii+1} \xx = \aa_{\ii+1},\\
\text{for $\ii = 1$ negative in~$\aav$:\quad}
&&\xx \bb_\ii = \aa_\ii, 
&\xx \bb_{\ii+1} = \aa_{\ii+1},\\
\text{for $\ii \ge 2$ positive in~$\aav$:\quad}
&\bb_{\ii-1} = \xx' \aa_{\ii-1}, 
&\bb_\ii \xx = \xx' \aa_\ii = \xx \lcmt \aa_\ii, 
&\bb_{\ii+1} \xx = \aa_{\ii+1},\\
\text{for $\ii \ge 2$ negative in~$\aav$:\quad}
&\smash{\bb_{\ii-1} = \aa_{\ii-1} \xx'}, 
&\smash{\xx \bb_\ii = \aa_\ii \xx' = \xx \lcm \aa_\ii}, 
&\xx \bb_{\ii+1} = \aa_{\ii+1}.
\end{array}$$
We write $\aav \rd \bbv$ if $\aav \act \Red{\ii, \xx}$ holds for some~$\ii$ and some $\xx \not= 1$, and use $\rds$ for the reflexive--transitive closure of~$\rd$. The rewrite system~$\RDb\MM$ so obtained on~$\FRb\MM$ is called \emph{reduction}, and its restriction to~$\FRp\MM$ (positive multifractions) is denoted by~$\RDp\MM$. A multifraction~$\aav$ is called \emph{$\RRR$-reducible} if $\aav \rd \bbv$ holds for at least one~$\bbv$, and \emph{$\RRR$-irreducible} otherwise.
\end{defi}

 The system~$\RDp\MM$ is the one investigated in~\cite{Dit}, where only positive multifractions are considered: the only difference between~\cite[Def.~3.4]{Dit} and~Def.~\ref{D:Red} is the adjunction in~$\RDb\MM$ of a rule for the reduction at level~$1$ of a negative multifraction. As $\Red{\ii, \xx}$ preserves the sign of multifractions, no specific notation is needed for the restriction of~$\Red{\ii, \xx}$ to positive multifractions.

The reduction systems~$\RDp\MM$ and~$\RDb\MM$ extend free reduction (deletion of factors~$\xx\inv \xx$ or~$\xx \xx\inv$): applying~$\Red{\ii, \xx}$ to~$\aav$ consists in removing~$\xx$ from~$\aa_{\ii + 1}$ and pushing it through~$\aa_\ii$ using an lcm operation, see Figure~\ref{F:Red}. A multifraction~$\aav$ is eligible for~$\Red{1, \xx}$ if and only if $\xx$ divides both~$\aa_1$ and~$\aa_2$, on the side coherent with their signs, and, eligible for~$\Red{\ii, \xx}$ with $\ii \ge 2$ if and only if $\xx$ divides~$\aa_{\ii + 1}$ and admits a common multiple with~$\aa_\ii$, on the due side again.

\begin{figure}[htb]
\begin{picture}(105,17)(0,-2)
\psset{nodesep=0.7mm}
\put(-7,6){$...$}
\psline[style=back,linecolor=color2]{c-c}(0,6)(15,0)(30,0)(45,6)
\psline[style=back,linecolor=color1]{c-c}(0,6)(15,12)(30,12)(45,6)
\pcline{->}(0,6)(15,12)\taput{$\aa_{\ii - 1}$}
\pcline{<-}(15,12)(30,12)\taput{$\aa_\ii$}
\pcline{->}(30,12)(45,6)\taput{$\aa_{\ii + 1}$}
\pcline{->}(0,6)(15,0)\tbput{$\bb_{\ii - 1}$}
\pcline{<-}(15,0)(30,0)\tbput{$\bb_\ii$}
\pcline{->}(30,0)(45,6)\tbput{$\bb_{\ii + 1}$}
\pcline[linewidth=1.5pt,linecolor=color3,arrowsize=1.5mm]{->}(30,12)(30,0)\trput{$\xx$}
\pcline{->}(15,12)(15,0)\tlput{$\xx'$}
\psarc[style=thin](15,0){3}{0}{90}
\put(21,5){$\Leftarrow$}
\put(51,6){$...$}
\psline[style=back,linecolor=color2]{c-c}(60,6)(75,0)(90,0)(105,6)
\psline[style=back,linecolor=color1]{c-c}(60,6)(75,12)(90,12)(105,6)
\pcline{<-}(60,6)(75,12)\taput{$\aa_{\ii - 1}$}
\pcline{->}(75,12)(90,12)\taput{$\aa_\ii$}
\pcline{<-}(90,12)(105,6)\taput{$\aa_{\ii + 1}$}
\pcline{<-}(60,6)(75,0)\tbput{$\bb_{\ii - 1}$}
\pcline{->}(75,0)(90,0)\tbput{$\bb_\ii$}
\pcline{<-}(90,0)(105,6)\tbput{$\bb_{\ii + 1}$}
\pcline[linewidth=1.5pt,linecolor=color3,arrowsize=1.5mm]{<-}(90,12)(90,0)\trput{$\xx$}
\pcline{<-}(75,12)(75,0)\tlput{$\xx'$}
\psarc[style=thin](75,0){3}{0}{90}
\put(81,5){$\Leftarrow$}
\put(109,6){$...$}
\end{picture}
\caption{\small The reduction rule~$\Red{\ii, \xx}$: starting from~$\aav$ (grey), we extract $\xx$ from~$\aa_{\ii + 1}$, push it through~$\aa_\ii$ by taking the lcm of~$\xx$ and~$\aa_\ii$ (indicated by the small round arc), and incorporate the remainder~$\xx'$ in~$\aa_{\ii - 1}$ to obtain~$\bbv = \aav \act \Red{\ii, \xx}$ (colored). The left hand side diagram corresponds to the case when $\ii$ is negative in~$\aav$, \ie, $\aa_\ii$ is crossed negatively, the right hand one to the case when $\ii$ is positive in~$\aav$, \ie, $\aa_\ii$ is crossed positively, with opposite orientations of the arrows.}
\label{F:Red}
\end{figure}

\begin{exam}\label{X:Att}
Let $\MM$ be the Artin-Tits monoid of type~$\Att$, here written 
$$\MON{\fr{a, b, c}}{\fr{aba=bab, \ bcb=cbc, \ cac=aca}},$$
and let $\aav := 1 /\ttc/\tta\ttb\tta$. Then $\aav$ is eligible for~$\Red{2,\tta}$ and~$\Red{2, \ttb}$, since $\tta$ and $\ttb$ left divide~$\tta\ttb\tta$ and admit a common right multiple with~$\ttc$. We find $\aav \act \Red{2,\tta} = \tta\ttc / \ttc\tta / \ttb\tta$ and $\aav \act \Red{2, \ttb} = \ttb\ttc / \ttc\ttb / \tta\ttb$. The latter are eligible for no reduction~$\Red{1, \xx}$, since $\tta\ttc$ and~$\ttc\tta$ (\resp $\ttb\ttc$ and $\ttc\ttb$) admit no nontrivial common right divisor, and for no reduction~$\Red{2, \xx}$, since the only nontrivial left divisors of~$\ttb\tta$ (\resp $\tta\ttb$) are $\ttb$ and~$\ttb\tta$ (\resp $\tta$ and~$\tta\ttb$), which admit no common right multiple with~$\ttc\tta$ (\resp $\tta\ttc$). Hence these multifractions are $\RRR$-irreducible.
\end{exam}


We now state the basic properties of reduction needed below. They directly extend those established for positive multifractions in~\cite{Dit}. Verifying them in the general case is easy.

\begin{lemm}\label{L:Basics}
Assume that $\MM$ be a gcd-monoid.

\ITEM1 The relation~$\rds$ is included in~$\simeqb$, \ie, $\aav \rds \bbv$ implies $\aav \simeqb \bbv$.

\ITEM2 The relation~$\rds$ is compatible with the multiplication of~$\FRb\MM$.

\ITEM3 For all~$\aav, \bbv$ and~$\pp, \qq$, the relation~$\aav \rds \bbv$ is equivalent to $\One\pp \opp \aav \opp \One\qq \rds \One\pp \opp \bbv \opp \One\qq$.
\end{lemm}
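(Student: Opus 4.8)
The plan is to prove the three items essentially by reducing everything to a single-step analysis and then invoking the universal property of~$\EG\MM$. For~\ITEM1, it suffices to check that a single reduction step $\aav \act \Red{\ii, \xx}$, when defined, yields a multifraction $\simeqb$-equivalent to~$\aav$; the general statement then follows by induction along a reduction sequence, using that $\simeqb$ is a congruence (hence transitive). To verify the single-step claim, I would read off from the defining equations in Definition~\ref{D:Red} (or equivalently from Figure~\ref{F:Red}) that $\aav$ and $\bbv = \aav \act \Red{\ii,\xx}$ differ only in the entries $\ii-1, \ii, \ii+1$, and that the corresponding products of generators in $\EG\MM$ agree: in the positive-$\ii$ case one has $\can(\bb_{\ii-1})\can(\bb_\ii)\inv\can(\bb_{\ii+1})\inv = \can(\bb_{\ii-1})\can(\xx)\can(\xx)\inv\can(\bb_\ii)\inv\can(\bb_{\ii+1})\inv$ and the equalities $\bb_{\ii-1} = \xx'\aa_{\ii-1}$, $\bb_\ii\xx = \xx'\aa_\ii$, $\bb_{\ii+1}\xx = \aa_{\ii+1}$ let one rewrite this as $\can(\aa_{\ii-1})\can(\aa_\ii)\inv\can(\aa_{\ii+1})\inv$ after cancelling $\can(\xx')$; the sign bookkeeping (four cases, plus the level-$1$ cases) is routine but must be done. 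This is exactly the argument of the corresponding lemma in~\cite{Dit}, extended by the one extra level-$1$ rule for negative multifractions, so I would say so and only spell out that one new case.

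\textbf{Items \ITEM2 and \ITEM3.} For~\ITEM2, I would argue that if $\aav \rd \bbv$ via $\Red{\ii,\xx}$, then for any $\ccv$ the product $\ccv \opp \aav$ reduces to $\ccv \opp \bbv$: multiplying on the left by $\ccv$ (of depth $\pp$, say) shifts the active level from $\ii$ to $\ii + \pp - 1$ when the signs concatenate without merging, or to $\ii + \pp - 2$ when $\aa_1$ merges with the last entry of $\ccv$; in the merging case one must check that the defining equations still hold with $\aa_1$ replaced by $\cc_\pp \aa_1$ or $\aa_1 \cc_\pp$ — this works precisely because the relevant divisibility/lcm conditions only involve the side of $\aa_1$ touched by $\xx$, which is unaffected by the merge. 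Right multiplication is symmetric, affecting only the last entry (and the active level never sits at the far right unless $\dh\aav$ is reached, a boundary case to note). Passing to reflexive–transitive closures gives compatibility of $\rds$. Item~\ITEM3 is then the special case $\ccv = \One\pp$ on the left and $\ccv = \One\qq$ on the right for the forward direction; for the converse, multiplying $\One\pp \opp \aav \opp \One\qq \rds \One\pp \opp \bbv \opp \One\qq$ on the left by $\One{-\pp}$ (or rather by a trivial multifraction of the opposite orientation) and on the right by $\One{-\qq}$ cancels the padding, using that trivial multifractions act invertibly and that a reduction step at an interior level is unaffected by trivial boundary entries.

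\textbf{Main obstacle.} None of the three items is deep; the real work is the case analysis in~\ITEM2 when the boundary entry of $\aav$ merges with an adjacent entry of the multiplier and the active reduction level is near that boundary (levels $1$ or $2$, or the top level $\dh\aav$). One has to verify that "being eligible for $\Red{\ii,\xx}$" is stable under these merges — in particular that taking an lcm or checking a divisibility on one side of an entry is unchanged when the entry is multiplied on the \emph{other} side — and that the output entries transform consistently. I expect this to be the one place where a few lines of genuine (though elementary) checking are unavoidable; everything else follows formally from the fact that $\simeqb$ is a congruence and from the single-step computation in~\ITEM1. Since the paper asserts these facts "directly extend those established \dots\ in~\cite{Dit}" and "verifying them in the general case is easy," I would present the proof at the level of: prove the single-step version of~\ITEM1 carefully for the new level-$1$ negative rule, remark that the rest is as in~\cite{Dit}, and indicate the shift-of-level bookkeeping for~\ITEM2 with the merging subtlety flagged explicitly.
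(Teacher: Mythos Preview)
Your approach to~\ITEM1 and~\ITEM2 is essentially the paper's: a single-step analysis reading off the defining equations of~$\Red{\ii,\xx}$, followed by a sign/merge case split for compatibility with multiplication. (Your level shifts are off by one --- no merge gives~$\ii+\pp$, merge gives~$\ii+\pp-1$ --- but that is harmless bookkeeping.)

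The gap is in the converse of~\ITEM3. Trivial multifractions do \emph{not} act invertibly in~$\FRb\MM$: the monoid~$\FRb\MM$ has~$\ef$ as its only unit, and~$\One\pp \opp \One{-\pp}$ is a trivial multifraction of depth~$2\pp$ or~$2\pp-1$, never~$\ef$. So ``multiplying by~$\One{-\pp}$ to cancel the padding'' is not a valid move for~$\rds$ (it would be fine for~$\simeqb$, but that is not what is at stake). The paper's argument is instead a direct level analysis: if $\One\pp \opp \bbv \opp \One\qq = (\One\pp \opp \aav \opp \One\qq) \act \Red{\ii,\xx}$ with~$\xx \neq 1$, then the~$(\ii+1)$st entry must be nontrivial, which forces~$\ii$ to lie in the range~$[\rr+1,\rr+\nn)$ corresponding to the entries of~$\aav$. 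At the boundary level~$\ii = \rr+1$ one must further observe that the~$\rr$th entry of the output is trivial, which forces the lcm-remainder~$\xx'$ to be~$1$, i.e., $\xx$ divides~$\aa_1$ --- precisely the eligibility condition for the level-$1$ rule on~$\aav$. Only then does the reduction restrict to~$\aav \act \Red{\ii-\rr,\xx} = \bbv$. Your phrase ``a reduction step at an interior level is unaffected by trivial boundary entries'' gestures at the right idea, but the work is in proving the level \emph{is} effectively interior, and in handling the~$\ii = \rr+1$ boundary case, not in any cancellation.
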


\begin{proof}
\ITEM1 It directly follows from the definition (and from Fig.~\ref{F:Red}) that $\bbv = \aav \act \Red{\ii, \xx}$ implies 
$$\can(\aa_{\ii - 1}) \can(\aa_\ii)\inv \can(\aa_{\ii + 1}) = \can(\bb_{\ii - 1}) \can(\bb_\ii)\inv \can(\bb_{\ii + 1}) $$
(\resp $\can(\aa_{\ii - 1})\inv \can(\aa_\ii) \can(\aa_{\ii + 1})\inv = \can(\bb_{\ii - 1})\inv \can(\bb_\ii) \can(\bb_{\ii + 1})\inv$) for $\ii$ negative (\resp positive) in~$\aav$.

\ITEM2 Assume $\bbv = \aav \act \Red{\ii, \xx}$, and let $\ccv$ be an $\rr$-multifraction. If the signs of~$\rr$ in~$\ccv$ and~$1$ in~$\aav$ are different, then $\ccv \opp \aav$ is the concatenation of~$\ccv$ and~$\aav$, similarly $\ccv \opp \bbv$ is the concatenation of~$\ccv$ and~$\bbv$, and $\ccv \opp \bbv = (\ccv \opp \aav) \act \Red{\ii + \rr, \xx}$ trivially holds. 

If $\rr$ is positive in~$\ccv$ and $1$ is positive in~$\aav$, we have $\ccv \opp \aav = \cc_1 \sdots \cc_{\rr- 1} / \cc_\rr\aa_1 / \aa_2 / \pdots$ and $\ccv \opp \bbv = \cc_1 \sdots \cc_{\rr- 1} / \cc_\rr\bb_1 / \bb_2 / \pdots$, and we obtain $\ccv \opp \bbv = (\ccv \opp \aav) \act \Red{\ii + \rr - 1, \xx}$: the point is that, if $\xx$ both right divides~$\aa_1$ and~$\aa_2$, it a fortiori right divides~$\cc_\rr \aa_1$ and~$\aa_2$. 

Finally, assume that $\rr$ negative in~$\ccv$ and $1$ is negative in~$\aav$. Then we find $\ccv \opp \aav = \cc_1 \sdots \aa_1\cc_\rr / \aa_2 / \pdots$ and $\ccv \opp \bbv = \cc_1 \sdots \bb_1\cc_\rr / \bb_2 / \pdots$: the argument is the same as above, mutatis mutandis: the assumption that $\aav \act \Red{1, \xx}$ is defined means that $\xx$ both left divides~$\aa_1$ and~$\aa_2$, which implies that it a fortiori left divides~$\aa_1 \cc_\rr$ and~$\aa_2$. Hence $(\ccv \opp \aav) \act \Red{\ii + \rr - 1, \xx}$ is defined and we find $\ccv \opp \bbv = (\ccv \opp \aav) \act \Red{\ii + \rr - 1, \xx}$ again. This completes compatibility with left multiplication.

The compatibility on the right is similar: adding extra entries cannot destroy the eligibility for reduction. Let $\nn = \dh\aav$. Everything is trivial for $\ii < \nn - 1$, so we assume $\ii = \nn - 1$. If the signs of~$\nn$ in~$\aav$ and of~$1$ in~$\ccv$ are different, the multiplication is a concatenation, and we obtain $\bbv \opp \ccv = (\aav \opp \ccv) \act \Red{\nn - 1, \xx}$ trivially. If $\nn$ is positive in~$\aav$ and $1$ is positive in~$\ccv$, the argument is the same as for~$\Red{\nn - 1, \xx}$. Finally, assume that $\nn$ is negative in~$\aav$ and $1$ is negative in~$\ccv$. Then we find $\aav \opp \ccv = \aa_1 \sdots \aa_{\nn - 1} / \cc_1\aa_\nn / \cc_2 / \pdots$ and $\bbv \opp \ccv = \bb_1 \sdots \bb_{\nn - 1} / \cc_1\bb_\nn / \cc_2 / \pdots$. The assumption that $\aav \act \Red{\nn - 1, \xx}$ is defined means that $\xx \lcmt \aa_{\nn - 1}$ exists and $\xx$ right divides~$\aa_\nn$, which implies that it a fortiori right divides~$\cc_1 \aa_\nn$. Hence $(\aav \opp \ccv) \act \Red{\nn - 1, \xx}$ is defined, yielding $\bbv \opp \ccv = (\aav \opp \ccv) \act \Red{\nn - 1, \xx}$ again. Thus reduction is compatible with multiplication on the right.

\ITEM3 That $\aav \rd \bbv$ implies $\One\pp \opp \aav \opp \One\qq \rd \One\pp \opp \bbv \opp \One\qq$ follows from~\ITEM2 directly. Conversely, assume $\One\pp \opp \bbv \opp \One\qq = (\One\pp \opp \aav \act \One\qq) \act \Red{\ii, \xx}$ with $\xx \not= 1$. Let $\nn = \dh\aav = \dh\bbv$, and assume that the entries of~$\aav$ occur in $\One\pp \opp \bbv \opp \One\qq$ from~$\rr + 1$ to~$\rr + \nn$ (with $\rr = \pp$ or $\rr = \pp - 1$ according to the sign of~$\pp$ in~$\One\pp$ and that of~$1$ in~$\aav$). Then we necessarily have $\rr + 1 \le \ii < \rr + \nn$. Indeed, $\ii < \rr$ and $\ii \ge \rr + \nn$ are impossible, since the $(\ii + 1)$st entry of $\One\pp \opp \aav \opp \One\qq$ is trivial. Moreover, in the case $\ii = \rr + 1$, the element~$\xx$ necessarily divides~$\aa_1$, since, otherwise, the $\rr$th entry of $\One\pp \opp \bbv \opp \One\qq$ could not be trivial. Hence, $\aav \act \Red{\ii, \xx}$ is defined, and it must be equal to~$\bbv$.
\end{proof}

Finally, we have a simple sufficient condition for termination.

\begin{lemm}\cite[Proposition~3.13]{Dit}\label{L:Termin}
If $\MM$ is a noetherian gcd-monoid, then $\RDb\MM$ is terminating: every sequence of reductions leads in finitely many steps to an $\RRR$-irreducible multifraction.
\end{lemm}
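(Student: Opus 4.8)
The plan is to exhibit a well-founded strict partial order on the set of multifractions of a fixed depth and sign pattern for which every single reduction step is strictly decreasing; termination then follows at once. The mechanism to exploit is that, although a step $\Red{\ii,\xx}$ may \emph{enlarge} the entry of index $\ii-1$ (it gets replaced by a right or left multiple of itself) and may change the entry of index $\ii$ unpredictably, it leaves every entry of index greater than $\ii+1$ unchanged and \emph{strictly shrinks} the entry of index $\ii+1$ (an lcm remainder is removed from it). So a lexicographic comparison that scans the entries \emph{from the right} will decrease at each step, the harmless growth at index $\ii-1$ being overruled by the strict drop at the more significant index $\ii+1$.

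First I would record that, by Definition~\ref{D:Red} and the remarks following it, reduction preserves both the depth and the sign pattern of a multifraction, so a reduction sequence issued from a depth-$\nn$ multifraction $\aav$ never leaves the set $\FF$ of all depth-$\nn$ multifractions with the same sign pattern as $\aav$. On $\MM$, attach to each index $\kk\in\{1,\dots,\nn\}$ the relation $\prec_\kk:=\divt$ (proper right divisibility) if $\kk$ is a positive index, and $\prec_\kk:=\div$ (proper left divisibility) if $\kk$ is a negative index; by the noetherianity hypothesis on $\MM$, both $\div$ and $\divt$ are well-founded. Order $\FF$ by: $\bbv\sqsubset\aav$ iff there is $\kk$ with $\bb_\jj=\aa_\jj$ for all $\jj>\kk$ and $\bb_\kk\prec_\kk\aa_\kk$. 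This is precisely the lexicographic product of the $\prec_\kk$ with the highest index most significant; a finite lexicographic product of well-founded strict partial orders is well-founded, so $\sqsubset$ is well-founded on $\FF$.

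Next I would verify that $\aav\rd\bbv$ implies $\bbv\sqsubset\aav$. Writing $\bbv=\aav\act\Red{\ii,\xx}$ with $\xx\ne1$, Definition~\ref{D:Red} forces $\ii+1\le\nn$ and $\bb_\jj=\aa_\jj$ for $\jj\notin\{\ii-1,\ii,\ii+1\}$, hence in particular for all $\jj>\ii+1$. Moreover, inspecting the four cases of Definition~\ref{D:Red}, the equation governing the entry of index $\ii+1$ is $\bb_{\ii+1}\xx=\aa_{\ii+1}$ when $\ii+1$ is negative and $\xx\bb_{\ii+1}=\aa_{\ii+1}$ when $\ii+1$ is positive; since $\xx\ne1$ and $1$ is the only invertible element of the gcd-monoid $\MM$, in either case $\bb_{\ii+1}\prec_{\ii+1}\aa_{\ii+1}$. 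Thus $\bbv\sqsubset\aav$ with distinguishing index $\ii+1$, irrespective of how the entries of index $\ii-1$ and $\ii$ have been altered. A reduction sequence is therefore a $\sqsubset$-descending sequence inside the well-founded set $\FF$, hence finite, and every such sequence ends at an $\RRR$-irreducible multifraction.

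There is no genuine obstacle here; the only points requiring attention are the two features that make the order work: scanning from the right, so that the unavoidable enlargement of the $(\ii-1)$st entry sits at a coordinate less significant than the strict decrease of the $(\ii+1)$st entry, and matching the divisibility order to the sign of each position, so that the decrease at index $\ii+1$ is literally a $\prec_{\ii+1}$-decrease. The parity bookkeeping for the four cases of Definition~\ref{D:Red} and the standard fact about finite lexicographic products of well-founded orders are the only mildly technical ingredients.
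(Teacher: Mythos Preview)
Your proof is correct and takes essentially the same approach as the paper: the paper explicitly says the argument ``consists in observing that $\aav \rd \bbv$ forces $\bbv$ to be strictly smaller than $\aav$ for some antilexicographical ordering on~$\FRb\MM$ (comparing multifractions starting from the highest entry),'' which is precisely the right-to-left lexicographic order you construct, with the sign-dependent choice of $\div$ versus $\divt$ at each coordinate.
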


We skip the proof, which is exactly the same in the signed case as in the positive case, and consists in observing that $\aav \rd \bbv$ forces $\bbv$ to be strictly smaller than~$\aav$ for some antilexicographical ordering on~$\FRb\MM$ (comparing multifractions starting from the highest entry).

\subsection{The convergent case}\label{SS:Conv}

The rewrite system~$\RDb\MM$ (as any rewrite system) is called \emph{convergent} if every element, here every multifraction~$\aav$, admits a unique $\RRR$-irreducible reduct, usually denoted by~$\red(\aav)$. The main technical result of~\cite{Dit} is

\begin{prop}\label{P:Conv}
If $\MM$ is a noetherian gcd-monoid satisfying the $3$-Ore condition:
\begin{equation}\label{E:3Ore}
\parbox{120mm}{If three elements of~$\MM$ pairwise admit a common right $($resp. left$)$ multiple,\par\hfill then they admit a common right $($resp. left$)$ multiple,}
\end{equation}
then $\RDb\MM$ is convergent.
\end{prop}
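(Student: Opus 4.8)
The plan is to deduce convergence from termination plus local confluence, via Newman's lemma. Since $\MM$ is noetherian, Lemma~\ref{L:Termin} already gives that $\RDb\MM$ terminates, so it suffices to prove that $\RDb\MM$ is locally confluent: whenever $\aav \rd \bbv$ and $\aav \rd \ccv$, the multifractions $\bbv$ and~$\ccv$ admit a common reduct~$\ddv$ with $\bbv \rds \ddv$ and $\ccv \rds \ddv$. Write $\bbv = \aav \act \Red{\ii, \xx}$ and $\ccv = \aav \act \Red{\jj, \yy}$ with $\xx, \yy \ne 1$ and, without loss of generality, $\ii \le \jj$. Note that the one-step diamond property genuinely fails here — Example~\ref{X:Att} exhibits a multifraction with two distinct $\RRR$-irreducible reducts when the $3$-Ore condition is dropped — so multi-step reductions $\rds$ on each side will be needed, and the $3$-Ore condition~\eqref{E:3Ore} will be invoked precisely to produce the intermediate lcms that keep those steps eligible.

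The argument splits according to $\jj - \ii$. \emph{Distant levels} ($\jj \ge \ii + 2$): then $\Red{\ii,\xx}$ modifies entries $\ii{-}1, \ii, \ii{+}1$ by removing~$\xx$ from the $(\ii{+}1)$st entry on one side and pushing the remainder into the $(\ii{-}1)$st, while $\Red{\jj,\yy}$ only multiplies the $(\jj{-}1)$st entry on the \emph{other} side; a direct check from Definition~\ref{D:Red} shows the two transformations commute and preserve each other's eligibility, so $\ddv := \bbv \act \Red{\jj,\yy} = \ccv \act \Red{\ii,\xx}$ works. \emph{Equal level} ($\ii = \jj$): here $\xx$ and~$\yy$ both divide~$\aa_{\ii+1}$ on the side prescribed by the sign of~$\ii$, hence admit a common multiple, so $\zz := \xx \lcm \yy$ (or $\xx \lcmt \yy$) exists and divides~$\aa_{\ii+1}$. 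Eligibility of $\Red{\ii,\xx}$ and of $\Red{\ii,\yy}$ means each of~$\xx, \yy$ admits a common multiple with~$\aa_\ii$; applying the $3$-Ore condition to the triple $\{\xx, \yy, \aa_\ii\}$ yields a common multiple of all three, so $\zz$ admits a common multiple with~$\aa_\ii$ and $\Red{\ii,\zz}$ is eligible on~$\aav$. A routine computation with Lemma~\ref{L:IterLcm} then gives $\bbv \rds \aav \act \Red{\ii,\zz}$ and $\ccv \rds \aav \act \Red{\ii,\zz}$, so $\ddv := \aav \act \Red{\ii,\zz}$ is a common reduct.

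\emph{Adjacent levels} ($\jj = \ii + 1$) is the heart of the matter. Both reductions act on the window $\aa_{\ii-1}, \aa_\ii, \aa_{\ii+1}, \aa_{\ii+2}$, interacting through~$\aa_{\ii+1}$: $\Red{\ii,\xx}$ pulls~$\xx$ out of~$\aa_{\ii+1}$ and pushes it through~$\aa_\ii$, while $\Red{\ii+1,\yy}$ pulls~$\yy$ out of~$\aa_{\ii+2}$ and pushes it through~$\aa_{\ii+1}$. I would package this as a three-dimensional van Kampen diagram (a ``cube'') over that window: completing its faces requires several conditional lcms to exist, and each such requirement is exactly that a triple of elements — a pushed-through remainder together with two neighbouring entries — admits a common multiple, which is what the $3$-Ore condition delivers. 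Once all faces are filled, a computation using Lemmas~\ref{L:IterLcm} and~\ref{L:IterGcd} identifies a multifraction reached from both~$\bbv$ and~$\ccv$ by reductions confined to levels~$\ii$ and~$\ii+1$, with a routine noetherian induction guaranteeing those completing reductions terminate. The subcase $\ii = 1$ — where the level-$1$ rule of Definition~\ref{D:Red} is a plain two-sided divisibility rather than an lcm operation, this being the one new feature of~$\RDb\MM$ over~$\RDp\MM$ — is in fact easier: the remainder~$\xx'$ is absent, and no $3$-Ore input is needed.

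The main obstacle is this last, adjacent-level case: laying out the cube so that every face can be filled, checking that the completing sequence of reductions is eligible step by step, and isolating the precise instances of the $3$-Ore condition that are used. Everything else — termination, the distant- and equal-level cases, and the passage from local confluence to convergence via Newman's lemma — is routine given the lemmas already available.
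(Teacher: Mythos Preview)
Your outline is correct and is precisely the strategy used in~\cite{Dit}, to which the present paper defers for the proof of Proposition~\ref{P:Conv} (it is stated here as ``the main technical result of~\cite{Dit}'' and not reproved). Termination plus Newman's lemma reduces the task to local confluence, and the case split $\jj \ge \ii+2$, $\jj = \ii$, $\jj = \ii+1$ is exactly how~\cite{Dit} proceeds: the remote and adjacent cases are the content of Lemmas~4.18 and~4.19 there (cited in the present paper in the proof of Lemma~\ref{L:Tame1}), the equal-level case is where the $3$-Ore condition enters as you describe, and the adjacent-level ``cube'' is indeed the substantial part.

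One small correction to your remote case: for $\jj = \ii+2$ the two rules do overlap at entry~$\aa_{\ii+1}$, since $\Red{\ii,\xx}$ divides it on one side while $\Red{\ii+2,\yy}$ multiplies it on the other; commutation still holds but needs a one-line check rather than being literally disjoint. Only $\jj \ge \ii+3$ is trivially disjoint. Also note that the present paper states the result for~$\RDb\MM$ while~\cite{Dit} treats~$\RDp\MM$; the extension to negative multifractions is routine (the only new rule is $\Red{1,\xx}$ on a negative multifraction, which as you observe is the easy boundary case), and your argument covers it without change.
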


When a monoid~$\MM$ is eligible for Proposition~\ref{P:Conv}, one easily deduces that two multifractions~$\aav, \bbv$ represent the same element of~$\EG\MM$ if and only if $\red(\aav) \opp \One\pp = \red(\bbv) \opp \One\qq$ holds for some~$\pp, \qq$ and, from there, that the monoid~$\MM$ embeds in its enveloping group~$\EG\MM$ and every element of~$\EG\MM$ is represented by a unique $\RRRh$-irreducible multifraction, where $\RDbh\MM$ is obtained from~$\RDb\MM$ by adding a rule that removes trivial final entries. It is also proved in~\cite{Dit} that, under mild additional finiteness assumptions on~$\MM$ (see Section~\ref{S:Semi} below), the relation~$\rds$ on~$\MM$ is decidable and, from there, so is the word problem for~$\EG\MM$ when the $3$-Ore condition is satisfied.

The above results are relevant for a number of gcd-monoids. We recall that an Artin-Tits monoid $\MM = \MON\SS\RR$ is said to be \emph{of spherical type} if the Coxeter group obtained by adding to~$(\SS, \RR)$ the relation $\ss^2 = 1$ for each~Ê$\ss$ in~$\SS$ is finite~\cite{BrS}. And $\MM$ is said to be \emph{of type~FC} if, for every subfamily~$\SS'$ of~$\SS$ such that, for all~$\ss, \tt$ in~Ê$\SS'$, there is a relation $\ss... = \tt... $ in~$\RR$, the submonoid of~$\MM$ generated by~$\SS'$ is spherical~\cite{Alt,GoP1}.

\begin{prop}\cite[Proposition~6.5]{Dit}\label{P:AT3Ore}
An Artin-Tits monoid satisfies the $3$-Ore condition if and only if it is of type~FC.
\end{prop}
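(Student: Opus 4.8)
The plan is to prove the two implications separately, treating throughout only the ``right'' half of the $3$-Ore condition~\eqref{E:3Ore}: an Artin--Tits monoid carries the anti-automorphism reversing words (the defining relations $\ss\tt\ss\pdots = \tt\ss\tt\pdots$ are even-length palindromes), so the ``left'' half follows by this symmetry. I use freely the standard Garside-theoretic facts about Artin--Tits monoids: for $X \subseteq \SS$ the parabolic submonoid~$\MM_X$ is spherical if and only if the atoms of~$X$ admit a common right multiple, in which case their right lcm is the Garside element $\Delta_X$ of~$\MM_X$; and each~$\MM_X$ is closed under left gcd and conditional right lcm. Also, in a gcd-monoid ``$\aa$ and~$\bb$ admit a common right multiple'' is equivalent to ``$\aa\lcm\bb$ exists'', and likewise for finite families (subdivide a common multiple), so the $3$-Ore condition reads: three elements whose pairwise right lcms exist admit a common right lcm.

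\emph{$3$-Ore $\Rightarrow$ type FC.} First I would show that $3$-Ore propagates to finite families: if $\aa_1 \wdots \aa_\nn$ pairwise admit common right multiples, they admit one jointly. Induct on~$\nn$, the cases $\nn\le3$ being immediate. For $\nn\ge4$, $3$-Ore gives $\bb := \aa_1\lcm\aa_2\lcm\aa_3$; then for $\jj\ge4$, applying $3$-Ore first to $\aa_\jj,\aa_1,\aa_2$ (which yields $\aa_\jj\lcm\aa_1\lcm\aa_2$, hence the compatibility of~$\aa_\jj$ with~$\aa_1\lcm\aa_2$) and then to $\aa_\jj,\aa_1\lcm\aa_2,\aa_3$ (whose pairwise lcms all exist) shows $\aa_\jj\lcm\bb$ exists; so the shorter family $\bb,\aa_4 \wdots \aa_\nn$ is pairwise compatible and induction applies. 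Now if $X\subseteq\SS$ is such that every pair of its elements is connected by a relation of the presentation, then any two atoms of~$X$ admit a common right multiple, hence so does the whole of~$X$, hence $\MM_X$ is spherical. That is type FC.

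\emph{Type FC $\Rightarrow$ $3$-Ore.} Let $\aa,\bb,\cc$ pairwise admit common right multiples. Dividing out the common left gcd $\dd := \aa\gcd\bb\gcd\cc$ and recursing on the quotient triple --- which stays pairwise compatible by left cancellation, has strictly smaller total word length when $\dd\ne1$, and whose joint right multiples give (after premultiplying by~$\dd$) ones for $(\aa,\bb,\cc)$ --- we may assume $\aa\gcd\bb\gcd\cc=1$, and if one of the three is trivial we are done, so assume all are nontrivial. Pick atoms $\pp,\qq,\rr$ left-dividing $\aa,\bb,\cc$. Since $\aa\lcm\bb$ exists it is a common right multiple of~$\pp$ and~$\qq$, so $\pp\lcm\qq$ exists, i.e. $\pp,\qq$ are connected by a relation (or equal); likewise for the other pairs, while $\pp=\qq=\rr$ is excluded by coprimality. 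So $X:=\{\pp,\qq,\rr\}$ is a set of at most three atoms pairwise connected by relations, hence by type FC the parabolic~$\MM_X$ is spherical with Garside element~$\Delta_X$. I would then run an induction on the total word length: writing $\aa=\pp\aa'$ and invoking the iterated-lcm rule (Lemma~\ref{L:IterLcm}) together with the coprimality-transfer lemma (Lemma~\ref{L:IterGcd}), one uses the existence of $\aa\lcm\bb$, $\bb\lcm\cc$, $\aa\lcm\cc$ together with the common right multiple~$\Delta_X$ of $\pp,\qq,\rr$ inside the small spherical monoid~$\MM_X$ to replace $(\aa,\bb,\cc)$ by a strictly shorter pairwise-compatible triple, to which induction gives a common right multiple that, suitably completed, serves for $(\aa,\bb,\cc)$. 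The ``left'' half follows by reversal.

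The step I expect to be the main obstacle is this last reduction: showing that the process cannot stall. The danger is exactly a configuration in which two atoms occurring in the supports of $\aa,\bb,\cc$ but \emph{not} connected by a relation --- one near the front, one ``buried'' --- block one of the required lcm operations; type FC kills this for the first atoms $\pp,\qq,\rr$, but propagating it through the whole induction is delicate and genuinely uses type FC. (For instance it must break for~$\widetilde A_3$, where $\{\pp,\qq,\rr\}$ may be a clique with $\MM_{\{\pp,\qq,\rr\}}$ spherical while a larger clique is not --- detected only by a pairwise-compatible triple such as $(\Delta_{\{\ss,\tt\}},\uu,\vv)$, whose supports span that larger clique, yet which has no joint right multiple.) If a direct induction proves too fragile, the fallback is to invoke the known description of type FC Artin--Tits monoids as iterated amalgams (fibre products) of spherical parabolic submonoids over common sub-parabolics --- the monoid counterpart of the Altobelli--Charney decomposition, reflecting the $\mathrm{CAT}(0)$-ness of the modified Deligne complex --- and to read off the $3$-Ore condition from a Helly-type property of this tree-like structure, the spherical pieces supplying joint right multiples and the amalgamation ensuring that pairwise compatibility is never lost.
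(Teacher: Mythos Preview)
The paper does not prove this here; it is cited from~\cite{Dit}. Your direction $3$-Ore $\Rightarrow$ FC is correct and complete: the bootstrap to $n$-Ore is sound (the two nested applications of $3$-Ore that yield $\aa_\jj \lcm \bb$ are properly justified), and then any clique of atoms is jointly compatible, hence generates a spherical parabolic.

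The converse has a genuine gap, which you flag yourself. The phrase ``replace $(\aa,\bb,\cc)$ by a strictly shorter pairwise-compatible triple'' is the entire content of FC $\Rightarrow$ $3$-Ore and is not carried out. Concretely, writing $\aa=\pp\aa'$ and pushing~$\pp$ across~$\bb$ and~$\cc$ via Lemma~\ref{L:IterLcm} gives residues~$\bb',\cc'$ with $\aa'\lcm\bb'$ and $\aa'\lcm\cc'$ existing; but to continue you also need $\bb'\lcm\cc'$, and unwinding the definitions shows this is equivalent to $(\pp,\bb,\cc)$ being jointly compatible---itself a $3$-Ore instance. So one pass of your scheme trades $(\aa,\bb,\cc)$ for the pair of problems $(\pp,\bb,\cc)$ and $(\aa',\bb',\cc')$, with~$\pp$ an atom but with~$\bb',\cc'$ possibly longer than~$\bb,\cc$; iterating the same move on~$\bb$ or~$\cc$ then reintroduces non-atoms via their residues, and no evident well-founded parameter decreases. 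This is precisely the stalling you anticipate, and neither Lemma~\ref{L:IterLcm} nor Lemma~\ref{L:IterGcd} resolves it. Your amalgam fallback is the right instinct but is only named, not executed. In short, the backward implication still needs either a working induction scheme or a genuine appeal to the structure theory of FC-type monoids, and your write-up supplies neither.
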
 

However, a number of Artin-Tits monoids fail to be of type~FC and therefore are not eligible for Proposition~\ref{P:Conv}, typically the monoid of type~$\Att$ considered in Example~\ref{X:Att}. So we are left with the question of either weakening the assumptions for Proposition~\ref{P:Conv}, or using a conclusion weaker than convergence.

\section{Semi-convergence}\label{S:Semi}

After showing in Subsection~\ref{SS:3Ore} that the $3$-Ore assumption cannot be weakened when proving the convergence of~$\RDb\MM$, we introduce in Subsection~\ref{SS:Semi} a new property of~$\RDb\MM$ called \emph{semi-convergence}, which, as the name suggests, is weaker than convergence. We conjecture that, for every Artin-Tits monoid, the system~$\RDb\MM$ is semi-convergent (``Conjecture~$\ConjA$''). We prove in Subsection~\ref{SS:SemiAppli} that most of the consequences known to follow from the convergence of~$\RDb\MM$ follow from its semi-convergence, in particular in terms of controlling the group~$\EG\MM$ from inside the monoid~$\MM$ and solving its word problem. Finally, we describe in Subsection~\ref{SS:ConvAlt} several variants of semi-convergence.

\subsection{The strength of the $3$-Ore condition}\label{SS:3Ore}

A first attempt for improving Proposition~\ref{P:Conv} could be to establish the convergence of~$\RDb\MM$ from an assumption weaker than the $3$-Ore condition. This approach fails, as the latter turns out to be not only sufficient, but also necessary. Hereafter we say that a monoid~$\MM$ satifies the right (\resp left) $3$-Ore condition when~\eqref{E:3Ore} is valid for right (\resp left) multiples. First, we recall

\begin{lemm}\label{L:Lcm}\cite[Lemma~2.12]{Dit}
If $\MM$ is a gcd-monoid, and $\aa, \bb, \cc, \dd$ are elements of~$\MM$ satisfying $\aa\dd = \bb\cc$, then $\aa\dd$ is the right lcm of~$\aa$ and~$\bb$ if and only if $\cc$ and $\dd$ satisfy $\cc \gcdt \dd = 1$.
\end{lemm}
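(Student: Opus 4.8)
The plan is to prove the two implications directly, using nothing more than cancellativity, the fact that $1$ is the only invertible element, and the existence of conditional right lcms in a gcd-monoid (as recalled in Subsection~\ref{SS:Gcd}). Throughout, the only thing to watch is which side a cancellation is performed on.

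\textbf{($\Rightarrow$)} Assume $\aa\dd = \aa \lcm \bb$. I would take an arbitrary common right divisor $\ee$ of $\cc$ and $\dd$, write $\cc = \cc'\ee$ and $\dd = \dd'\ee$, and aim to show $\ee = 1$. Substituting into $\aa\dd = \bb\cc$ and right-cancelling $\ee$ gives $\aa\dd' = \bb\cc'$, so $\aa\dd'$ is again a common right multiple of $\aa$ and $\bb$; since $\aa\dd$ is their right lcm, $\aa\dd \dive \aa\dd'$, say $\aa\dd' = \aa\dd\gg$. From $\dd = \dd'\ee$ we get $\aa\dd = \aa\dd'\ee = \aa\dd\gg\ee$, and left-cancelling $\aa\dd$ yields $\gg\ee = 1$, whence $\ee = 1$. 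Hence $\cc \gcdt \dd = 1$.

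\textbf{($\Leftarrow$)} Assume $\cc \gcdt \dd = 1$. The equality $\aa\dd = \bb\cc$ shows that $\aa$ and $\bb$ admit a common right multiple, so their right lcm $\mm := \aa \lcm \bb$ exists by the conditional right lcm property; write $\mm = \aa\dd_0 = \bb\cc_0$. As $\aa\dd$ is a common right multiple of $\aa$ and $\bb$, we have $\mm \dive \aa\dd$, say $\aa\dd = \mm\rr$. Left-cancelling $\aa$ in $\aa\dd = \aa\dd_0\rr$ gives $\dd = \dd_0\rr$, and left-cancelling $\bb$ in $\bb\cc = \aa\dd = \bb\cc_0\rr$ gives $\cc = \cc_0\rr$; thus $\rr$ is a common right divisor of $\cc$ and $\dd$, so $\rr = 1$ and $\aa\dd = \mm$ is the right lcm of $\aa$ and $\bb$, as wanted.

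I do not expect a genuine obstacle: the argument is elementary. The only subtle point is the appeal to conditional right lcms, which is what guarantees that $\aa \lcm \bb$ exists once $\aa, \bb$ are known to have a common right multiple; beyond that, one just has to be consistent about using right-cancellation to strip a common right divisor and left-cancellation to strip a common left factor such as $\aa$, $\bb$, or $\aa\dd$.
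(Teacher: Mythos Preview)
Your argument is correct in both directions. The paper itself does not prove this lemma; it is quoted from~\cite[Lemma~2.12]{Dit} without proof, so there is no in-paper argument to compare against. Your proof is the standard elementary one: for ($\Rightarrow$) you strip a common right divisor~$\ee$ and use minimality of the lcm to force~$\ee$ invertible, and for ($\Leftarrow$) you use conditional existence of the right lcm and cancel to exhibit a common right divisor of~$\cc$ and~$\dd$. Both cancellation steps are on the correct side, and the appeal to conditional right lcms is exactly what is needed.
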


\begin{prop}\label{P:UR}
Let $\MM$ be a gcd-monoid.

\ITEM1 If $\RDp\MM$ is convergent, then $\MM$ satisfies the right $3$-Ore condition.

\ITEM2 If $\RDb\MM$ is convergent, then $\MM$ satisfies the $3$-Ore condition.
\end{prop}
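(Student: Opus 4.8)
\emph{Plan.} I would prove the contrapositive: if the right (resp.\ full) $3$-Ore condition fails, then $\RDp\MM$ (resp.\ $\RDb\MM$) is not convergent. So suppose $\aa, \bb, \cc$ pairwise admit a common right multiple but have no common right multiple. We may assume $\aa, \bb, \cc$ pairwise distinct and different from~$1$: if two of them coincide or one equals~$1$, the condition holds trivially for the triple, since the remaining pair has a common right multiple. Two preliminary remarks: first, $P := \aa \lcm \bb$ exists (conditional lcm); second, $\cc$ and $P$ have \emph{no} common right multiple, for such a multiple would be a common right multiple of $\aa, \bb, \cc$.

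\emph{The witness.} Consider $\aav := 1 / \cc / P = (1, \INV\cc, P)$, which lies in~$\FRp\MM$, and write $P = \aa\aa' = \bb\bb'$. Then $\aav$ is eligible for both $\Red{2, \aa}$ and $\Red{2, \bb}$, since each of $\aa, \bb$ left divides $P$ and admits a common right multiple with~$\cc$; as $\aa, \bb \ne 1$ these are genuine reduction steps, and by the ``$\ii$~negative'' clause of Definition~\ref{D:Red}, $\aav \act \Red{2, \aa}$ has last entry~$\aa'$ while $\aav \act \Red{2, \bb}$ has last entry~$\bb'$. Assume now that $\RDp\MM$ is convergent. Then these two reducts of~$\aav$ admit a common irreducible reduct~$\ccv$, and I claim the last entry of~$\ccv$ is~$1$. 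Indeed, along any reduction sequence issued from a depth~$3$ multifraction whose middle entry is negative, the middle entry stays negative and the only rule that can alter the last entry is~$\Red{2, \cdot}$, which (again by the ``$\ii$~negative'' clause) replaces the last entry by a right divisor of itself; hence the last entry of~$\ccv$ right divides~$\aa'$ and, symmetrically, it right divides~$\bb'$. Since $\aa\aa' = \bb\bb' = \aa \lcm \bb$, Lemma~\ref{L:Lcm} gives $\aa' \gcdt \bb' = 1$, so the last entry of~$\ccv$ equals~$1$; write $\ccv = (\ee_1, \INV{\ee_2}, 1)$.

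\emph{The contradiction.} One cannot conclude by invoking ``$\MM$ embeds in~$\EG\MM$'', since embeddability is downstream of the statement being proved; the contradiction must come from the rewrite system itself. The idea is to left-multiply by~$\cc$ and reduce the result along a \emph{different} path. By the compatibility of~$\rds$ with multiplication (Lemma~\ref{L:Basics}), $\aav \rds \ccv$ yields $(\cc, \INV\cc, P) = \cc \opp \aav \rds \cc \opp \ccv = (\cc\ee_1, \INV{\ee_2}, 1)$. But $(\cc, \INV\cc, P)$ also reduces, via $\Red{1, \cc}$, to $(1, \INV1, P)$, and then via $\Red{2, P}$ to $(P, \INV1, 1)$, which is $\RRR$-irreducible. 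By convergence, $(\cc\ee_1, \INV{\ee_2}, 1)$ has normal form $(P, \INV1, 1)$. On the other hand, only the rules~$\Red{1, \cdot}$ apply to $(\cc\ee_1, \INV{\ee_2}, 1)$ (the last entry is~$1$), and they reduce it to $(\ff, \INV\gg, 1)$ where $\hh := \cc\ee_1 \gcdt \ee_2$, $\cc\ee_1 = \ff\hh$, $\ee_2 = \gg\hh$, and $\ff \gcdt \gg = 1$. Uniqueness of the normal form forces $\gg = 1$ and $\ff = P$, whence $\ee_2 = \hh$ and $\cc\ee_1 = P\hh = (\aa \lcm \bb)\,\ee_2$. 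Then $(\aa \lcm \bb)\,\ee_2$ is a right multiple of~$\aa \lcm \bb$, hence of~$\aa$ and of~$\bb$, while $\cc\ee_1$ is a right multiple of~$\cc$; so $\cc\ee_1$ is a common right multiple of $\aa, \bb, \cc$, a contradiction. This proves~\ITEM1.

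\emph{Part~\ITEM2 and the main obstacle.} Convergence of~$\RDb\MM$ restricts to convergence of~$\RDp\MM$, since reduction preserves the sign of a multifraction and positive multifractions therefore form an invariant subsystem; hence~\ITEM1 yields the right $3$-Ore condition, and the left $3$-Ore condition follows by the mirror argument (right lcms replaced by left lcms, equivalently the opposite monoid). I expect the main obstacle to be precisely this last paragraph's argument: recognising that the apparent ``embeddability'' shortcut is circular, and finding instead the auxiliary reduction $(\cc, \INV\cc, P) \rds (P, \INV1, 1)$ that makes a common right multiple of $\aa, \bb, \cc$ visible directly on the confluence diagram.
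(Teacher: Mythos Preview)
Your argument is correct and follows essentially the same route as the paper: the same witness $1/\cc/(\aa\lcm\bb)$, the same two level-$2$ reductions, and the same appeal to Lemma~\ref{L:Lcm} to force the last entry of the normal form to be~$1$. The one genuine difference is the final step. The paper simply invokes that convergence of~$\RDp\MM$ implies $\MM$ embeds in~$\EG\MM$, and then reads the monoid equality $\cc\ee_1 = P\ee_2$ off the group equality. This is not circular: embeddability is an easy, independent consequence of convergence (essentially Lemma~\ref{L:Semi} plus the two-line argument of Proposition~\ref{P:Embed}), so the forward reference is harmless. Your alternative---left-multiplying by~$\cc$ and exhibiting the second reduction path $(\cc,\INV\cc,P)\rds(P,\INV1,1)$---extracts the monoid equality directly from the confluence diagram, without any detour through~$\EG\MM$. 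That is a clean, self-contained variant, and arguably more in the spirit of a rewriting argument.

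For part~\ITEM2, the paper carries out the symmetric computation explicitly on the \emph{negative} $3$-multifraction $/1/\cc/(\aa\lcmt\bb)$, which is why the full system~$\RDb\MM$ (and not just~$\RDp\MM$) is needed. Your ``mirror argument via the opposite monoid'' is correct in substance---negative multifractions on~$\MM$ correspond bijectively to positive multifractions on~$\MM^{\mathrm{op}}$, and this bijection intertwines the reduction rules---but this identification deserves one line of justification rather than being left implicit.
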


\begin{proof}
\ITEM1 Assume that $\xx, \yy, \zz$ belong to~$\MM$ and pairwise admit common right multiples, hence right lcms. Write
$$\xx \lcm \yy = \xx \yy' = \yy \xx', \quad \yy \lcm \zz = \yy \zz' = \zz \yy'', \quad \xx \lcm \zz = \xx \zz'' = \zz \xx''.$$
Let $\aav := 1/\xx/\yy \lcm \zz$. As in Example~\ref{X:Att}, we find 
$$\bbv = \aav \act \Red{2, \yy} = \yy' / \xx' / \zz' \quand \ccv = \aav \act \Red{2, \zz} = \zz'' / \xx'' / \yy''.$$
The assumption that $\RDp\MM$ is convergent implies that $\bbv$ and $\ccv$ both reduce to $\ddv:= \red(\aav)$. By construction, $\ddv$ is of depth~$3$, and $\dd_3$ must be a common right divisor of~$\bb_3$ and~$\cc_3$, which are $\zz'$ and~$\yy''$. Now, Lemma~\ref{L:Lcm} implies $\zz' \gcdt \yy'' = 1$, whence $\dd_3 = 1$. Therefore, there exist $\dd_1$ and~$\dd_2$ in~$\MM$ satisfying $1/\xx/\yy\lcm\zz \rds \dd_1 / \dd_2 / 1$. By Lemma~\ref{L:Basics}, we deduce $1/\xx/\yy\lcm\zz \simeqb \dd_1 / \dd_2 / 1$, hence, by~\eqref{E:Eval}, $\xx\inv (\yy \lcm \zz) = \dd_1 \dd_2\inv$ in~$\EG\MM$. This implies $\xx\dd_1 = (\yy \lcm \zz)\dd_2$ in~$\EG\MM$, hence in~$\MM$, since the assumption that $\RDp\MM$ is convergent implies that $\MM$ embeds in~$\EG\MM$. It follows that $\xx$ and $\yy \lcm \zz$ admit a common right multiple, hence that $\xx, \yy$, and~$\zz$ admit a common right multiple. Hence $\MM$ satisfies the right $3$-Ore condition.

\ITEM2 The argument is symmetric, with negative multifractions. Assume that $\xx, \yy, \zz$ belong to~$\MM$ and pairwise admit common left multiples, hence left lcms. Write
$$\xx \lcmt \yy = \yy'\xx = \xx'\yy, \quad \yy \lcmt \zz = \zz'\yy = \yy''\zz , \quad \xx \lcmt \zz = \zz''\xx = \xx''\zz .$$
Let $\aav := /1/\xx/\yy \lcm \zz$, in~$\FRb\MM \setminus \FRp\MM$. Then we have
$$\bbv = \aav \act \Red{2, \yy} = / \yy' / \xx' / \zz' \quand \ccv = \aav \act \Red{2, \zz} = / \zz'' / \xx'' / \yy''.$$
The assumption that $\RDb\MM$ is convergent implies that $\bbv$ and $\ccv$ admit a common $\RRR$-reduct, say~$\ddv$. By construction, $\ddv$ is of depth~$3$, and $\dd_3$ must be a common left divisor of~$\bb_3$ and~$\cc_3$, which are $\zz'$ and $\yy''$. By the symmetric counterpart of Lemma~\ref{L:Lcm}, we have $\zz' \lcm \yy'' = 1$, whence $\dd_3 = 1$. Therefore, there exist $\dd_1$ and~$\dd_2$ in~$\MM$ satisfying $\aav \rdts / \dd_1 / \dd_2 / 1$. By Lemma~\ref{L:Basics}, we deduce $/ 1 / \xx / \yy \lcmt \zz \simeqb / \dd_1 /\dd_2/ 1$, hence, by~\eqref{E:Eval}, $\xx (\yy \lcmt \zz)\inv = \dd_1\inv \dd_2$ in~$\EG\MM$, whence $\dd_1 \xx = \dd_2 (\yy \lcmt \zz)$ in~$\EG\MM$, hence in~$\MM$. This shows that $\xx$ and $\yy \lcmt \zz$ admit a common left multiple, hence that $\xx, \yy$, and~$\zz$ admit a common left multiple. Hence $\MM$ satisfies the left $3$-Ore condition.
\end{proof}

Note that the argument for~\ITEM1 cannot be used for~\ITEM2, because one should start with $\aav:= 1/1/\xx/\yy \lcmt \zz$ and then we know nothing about the first entry(ies) of~$\red(\aav)$. 

In principle, the right $3$-Ore condition is slightly weaker than the full $3$-Ore condition, and the convergence of~$\RDp\MM$ might be weaker than that of~$\RDb\MM$. However, when $\MM$ is an Artin-Tits monoid, all the above conditions are equivalent to $\MM$ being of type~FC and, therefore, none is weaker. So it seems hopeless to improve Proposition~\ref{P:Conv}.

\subsection{Semi-convergence}\label{SS:Semi}

We are thus led to explore the other way, namely obtaining useful information about~$\EG\MM$ from a property weaker than the convergence of~$\RDp\MM$ or~$\RDb\MM$. This is the approach we develop in the rest of this paper.

When the system~$\RDp\MM$ is not convergent, a $\simeq$-class may contain several $\RRR$-irreducible multifractions, and there is no distinguished one: in Example~\ref{X:Att}, the automorphism that exchanges~$\tta$ and~$\ttb$ exchanges the two $\RRR$-irreducible reducts of~$\aav$, making them indiscernible. 

However, a direct consequence of convergence is 

\begin{lemm}\label{L:Semi}
If $\MM$ is a gcd-monoid and $\RDb\MM$ is convergent, then, for every~$\aav$ in~$\FRb\MM$, 
\begin{equation}\label{E:Semi}
\text{If $\aav$ is unital, then $\aav \rds \one$ holds.}
\end{equation}
\end{lemm}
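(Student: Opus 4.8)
The plan is to pass to the normal form. Since $\RDb\MM$ is convergent, $\aav$ has a (unique) $\RRR$-irreducible reduct $\red(\aav)$ with $\aav \rds \red(\aav)$, and Lemma~\ref{L:Basics}\ITEM1 gives $\red(\aav) \simeqb \aav$, so if $\aav$ is unital then $\red(\aav)$ is an $\RRR$-irreducible unital multifraction. Thus it suffices to prove the claim that \emph{if $\RDb\MM$ is convergent, then every $\RRR$-irreducible unital multifraction is trivial}: applying this to $\red(\aav)$ yields $\aav \rds \red(\aav) = \one$. The clean way to establish the claim is to invoke the structural consequence of convergence proved in~\cite{Dit}: when $\RDb\MM$ is convergent, two multifractions represent the same element of $\EG\MM$ if and only if their $\RRR$-irreducible reducts coincide up to the addition of trivial trailing entries (equivalently, the extended system $\RDbh\MM$ that also deletes trivial final entries is convergent, and the $\RRRh$-reduct of a unital multifraction is $\ef$). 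Granting this and using that a trivial multifraction is $\RRR$-irreducible (any $\Red{\ii,\xx}$ would require $\xx$ to divide an entry equal to $1$, forcing $\xx=1$), hence equal to its own reduct, an $\RRR$-irreducible unital $\bbv$ satisfies $\bbv \opp \One\pp = \one \opp \One\qq$ for some $\pp,\qq\ge0$; the right-hand side is trivial, and appending (or merging with) trivial entries cannot turn a nontrivial entry into a trivial one, so $\bbv$ is trivial.

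If one prefers to avoid citing that much machinery, a more hands-on route is available for all but one case. Using Lemma~\ref{L:Basics}\ITEM3 and Proposition~\ref{P:EnvGroup}\ITEM3 (the map $\bbv\mapsto 1\opp\bbv$ respects $\simeqb$, $\rds$, and triviality) one reduces the claim to positive multifractions $\bbv = \bb_1 \sdots \bb_\nn$ and argues by induction on the depth $\nn$. For $\nn=1$, unitality gives $\can(\bb_1)=1$, hence $\bb_1=1$ because convergence implies $\MM$ embeds in $\EG\MM$ (as already used in the proof of Proposition~\ref{P:UR}, citing~\cite{Dit}). For $\nn=2$, unitality gives $\can(\bb_1)=\can(\bb_2)$, hence $\bb_1=\bb_2$ by embeddability, and $\RRR$-irreducibility then excludes $\Red{1,\bb_1}$, forcing $\bb_1=\bb_2=1$. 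For $\nn\ge3$, if the last entry $\bb_\nn$ is trivial one deletes it: the resulting depth-$(\nn-1)$ multifraction is still unital (its image in $\EG\MM$ is unchanged) and still $\RRR$-irreducible, because the eligibility of $\Red{\ii,\xx}$ for $\ii\le\nn-2$ only involves the entries $\bb_{\ii-1},\bb_\ii,\bb_{\ii+1}$ with $\ii+1\le\nn-1$ and is therefore unaffected by the deletion; the induction hypothesis then makes it trivial, so $\bbv$ is trivial.

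The main obstacle is exactly the remaining case $\bb_\nn\ne1$ in this induction: one must show it cannot occur, i.e.\ that an $\RRR$-irreducible unital multifraction has no nontrivial final entry. Embeddability of $\MM$ in $\EG\MM$ does not by itself seem to rule this out, and this is precisely the point at which the full force of convergence (rather than just its embeddability consequence) is needed — and it is exactly what the $\RRR$-normal-form description of $\simeqb$ from~\cite{Dit} delivers, via the one-line argument with $\bbv\opp\One\pp=\one\opp\One\qq$ given in the first paragraph. I therefore expect the paper to take that shortcut rather than push the bare induction through the $\bb_\nn\ne1$ case directly, concluding with $\bbv:=\red(\aav)$.
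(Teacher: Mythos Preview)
Your proposal is correct and takes essentially the same approach as the paper: both invoke the structural consequence of convergence from~\cite{Dit} (that $\simeqb$-equivalent multifractions have the same $\RRR$-irreducible reduct, up to trivial trailing entries) and conclude that $\red(\aav)$ is trivial since $\one$ is $\RRR$-irreducible. The paper compresses this into a single sentence, while you spell out the ``up to $\One\pp$'' detail explicitly and correctly flag that your alternative hands-on induction cannot close the $\bb_\nn\ne1$ case without that same structural input.
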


Indeed, if $\RDb\MM$ is convergent, $\aav \simeqb \one$ implies (and, in fact, is equivalent to) $\red(\aav) = \red(\one)$, hence $\red(\aav) = \one$, since $\one$ is $\RRR$-irreducible. Note that, by Lem\-ma~\ref{L:Basics}\ITEM1, the converse implication of~\eqref{E:Semi} is always true: $\aav \rds \one$ implies that $\aav$ and $\one$ represent the same element of~$\EG\MM$, hence that $\aav$ represents~$1$. 

When $\RDb\MM$ is not convergent, \eqref{E:Semi} still makes sense, and it is a priori a (much) weaker condition than convergence. This is the condition we shall investigate below:

\begin{defi}\label{D:Semi}
If $\MM$ is a gcd-monoid, we say that $\RDb\MM$ (\resp $\RDp\MM$) is \emph{semi-convergent} if \eqref{E:Semi} holds for every~$\aav$ in~$\FRb\MM$ (\resp in $\FRp\MM$).
\end{defi}

Thus Lemma~\ref{L:Semi} states that $\RDb\MM$ is semi-convergent whenever it is convergent. By Proposition~\ref{P:AT3Ore}, $\RDb\MM$ and $\RDp\MM$ are semi-convergent for every Artin-Tits monoid~$\MM$ of type~FC. But, as can be expected, semi-convergence is strictly weaker than convergence. We refer to~\cite{Div} for the construction of monoids for which $\RDb\MM$ is semi-convergent but not convergent.

The main conjecture we propose is:

\begin{conjA}\label{C:Main}
For every Artin-Tits monoid~$\MM$, the system~$\RDp\MM$ is semi-convergent.
\end{conjA}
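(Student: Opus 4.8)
\emph{Strategy: passing to irreducible multifractions.} Since an Artin-Tits monoid is a noetherian gcd-monoid (its defining relations are homogeneous, so Lemma~\ref{L:Termin} applies) and reduction preserves the $\simeqb$-class (Lemma~\ref{L:Basics}\ITEM1), Conjecture~$\ConjA$ is equivalent to the statement: \emph{for every Artin-Tits monoid, each $\RRR$-irreducible unital positive multifraction is trivial.} Indeed, given a unital~$\aav$, reduce it to an $\RRR$-irreducible~$\bbv$ by termination; then $\bbv$ is still unital, hence trivial, so $\aav \rds \one$; the converse is immediate since an $\RRR$-irreducible multifraction has only itself as a reduct. The plan is to prove this reformulation by induction, primarily on the depth~$\nn$ of $\aav = \aa_1 / \pdots / \aa_\nn$ and, within a fixed depth, on the area of a van Kampen diagram for the relation $\can(\aa_1) \can(\aa_2)\inv \can(\aa_3) \pdots = 1$ over the Artin-Tits presentation of~$\EG\MM$.

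\emph{Low-depth cases and stripping.} For $\nn = 1$, ``$\aav$ unital'' means $\can(\aa_1) = 1$, hence $\aa_1 = 1$ by the embeddability of Artin-Tits monoids in their enveloping groups (a theorem of Paris; this is exactly semi-convergence in depth~$1$). For $\nn = 2$, ``$\aav$ unital'' gives $\can(\aa_1) = \can(\aa_2)$, hence $\aa_1 = \aa_2$ by embeddability, while $\RRR$-irreducibility at level~$1$ forces $\aa_1 \gcdt \aa_2 = 1$; together these give $\aa_1 = \aa_2 = 1$. For $\nn \ge 3$, a trailing trivial entry $\aa_\nn = 1$ may be deleted without changing the $\simeqb$-class or affecting $\RRR$-irreducibility (a reduction $\Red{\nn-1, \xx}$ would need $\xx$ to divide $\aa_\nn = 1$), so the induction hypothesis lets us assume $\aa_\nn \ne 1$.

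\emph{Inductive step.} Fix $\aav$ irreducible, unital, of depth $\nn \ge 3$, and a minimal-area van Kampen diagram~$D$ for its word. If $D$ has no $2$-cell, then the word freely reduces to the empty word; since some entry is nontrivial, the cancellation must occur across a junction of the multifraction, which produces a nontrivial common divisor of two consecutive entries, on the side coherent with their signs --- contradicting $\RRR$-irreducibility at level~$1$ or~$2$. If $D$ has a $2$-cell, choose one meeting the boundary: it carries a dihedral relation $\ss\tt\ss\cdots = \tt\ss\tt\cdots$, hence lives in the \emph{spherical} parabolic submonoid $\langle \ss, \tt\rangle$ of~$\MM$, where lcms and Garside normal forms are available. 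The goal is to read off this boundary cell (and a band of cells behind it) an lcm computation realizing an eligible reduction $\Red{\ii, \xx}$ on~$\aav$, contradicting $\RRR$-irreducibility, in such a way that the reduct admits a diagram of strictly smaller area so that the induction closes. A useful alternative is to aim instead at the stronger Conjectures~$\ConjB$ and~$\ConjC$ of the later sections (tame reduction, cross-confluence): being more rigid, they are more tightly constrained by the Garside structure of the parabolic submonoids and by the combinatorics of the associated Coxeter group~\cite{Din, DyH}, and each of them implies~$\ConjA$. For orientation, the type-FC case is already covered by Propositions~\ref{P:Conv} and~\ref{P:AT3Ore} together with Lemma~\ref{L:Semi}, and a weak form of~$\ConjA$ is known in sufficiently large type~\cite{Dix}, so any new argument must genuinely leave the FC world.

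\emph{The main obstacle.} The crux is the step ``boundary $2$-cell of~$D$ $\Rightarrow$ eligible reduction on~$\aav$''. In the FC case this is precisely what the $3$-Ore condition provides: it forces pairwise common multiples of generators to admit a common multiple, so that the critical pairs formed by reductions at adjacent levels close, and \cite{Dit} obtains full convergence. Proposition~\ref{P:UR} shows that without $3$-Ore convergence genuinely fails, so that analysis cannot be imported verbatim. Semi-convergence asks only that the critical pairs lying inside the single $\simeqb$-class of~$\one$ close, which might well survive the loss of $3$-Ore --- but there is at present no technique to localize the critical-pair analysis of~\cite{Dit} to one class, and already the depth-$4$ instance is equivalent to a unique-decomposition property for fractions $\can(\aa)\can(\bb)\inv$ in~$\EG\MM$ that is classical in spherical type yet open in general. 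Getting past this seems to demand either a genuinely new normal-form theory for multifractions over non-FC Artin-Tits groups, generalizing the Garside normal form, or a direct geometric argument --- on the Deligne or Davis complex of the group --- controlling how reductions absorb a minimal van Kampen diagram.
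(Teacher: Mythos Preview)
The statement you are attempting to prove is labelled \emph{Conjecture~$\ConjA$} in the paper, and it is genuinely open: the paper does not contain a proof, nor does it claim one. What the paper does is (i)~show that Conjecture~$\ConjA$ would suffice for the word problem (Proposition~\ref{P:WordPb}), (ii)~establish the low-depth instances~$\ConjA_\nn$ for $\nn \le 3$ via Paris' embeddability theorem (Proposition~\ref{P:Semi2}), (iii)~reformulate~$\ConjA_4$ as a uniqueness-of-fractions statement that remains open outside type~FC (Proposition~\ref{P:UniqueFrac} and Corollary~\ref{C:A4B4}), and (iv)~propose the stronger Conjectures~$\ConjB$ and~$\ConjC$ as possible routes. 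Your low-depth paragraph essentially reproduces~(ii), and your closing paragraph accurately summarizes~(iii) and~(iv); but none of this constitutes a proof of the conjecture.

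The substantive gap is exactly where you locate it yourself, and it is fatal rather than technical. In the inductive step you write ``the goal is to read off this boundary cell \ldots\ an lcm computation realizing an eligible reduction $\Red{\ii,\xx}$ on~$\aav$'', but you give no mechanism for doing so, and there is no reason to expect one in general: a boundary $2$-cell of a van Kampen diagram over the Artin--Tits presentation is a dihedral relation between two atoms, whereas eligibility of~$\Red{\ii,\xx}$ requires that $\xx$ divide $\aa_{\ii+1}$ \emph{and} admit a common multiple with the whole of~$\aa_\ii$---a global condition on the entries that a single local cell cannot witness. Your own final paragraph concedes this (``there is at present no technique to localize the critical-pair analysis''), so what you have written is a research programme, not a proof. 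Even the area-zero base case is shakier than it looks: you pass silently from elements~$\aa_\ii$ of~$\MM$ to specific word representatives, and the claim that a free cancellation forces a nontrivial common divisor of adjacent entries presumes those representatives are chosen so that no cancellation occurs \emph{within} an entry, which needs to be said. In short: the reformulation as ``every $\RRR$-irreducible unital positive multifraction is trivial'' is correct and is indeed how the paper thinks about the problem (Proposition~\ref{P:NC}), but the induction you sketch does not close, and the paper offers no proof to compare yours against.
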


We shall report in Section~\ref{S:Misc} about experimental data supporting Conjecture~$\ConjA$. For the moment, we just mention one example illustrating its predictions.

\begin{exam}\label{X:Att2}
Let $\MM$ be the Artin-Tits monoid of type~$\Att$. We saw in Example~\ref{X:Att} that $\RDb\MM$ is not convergent: $\aav = 1 / \ttc / \tta\ttb\tta$ admits the two distinct irreducible reducts $\tta\ttc / \ttc\tta / \ttb\tta$ and $\ttb\ttc / \ttc\ttb / \tta\ttb$. When we multiply the former by the inverse of the latter (see Subsection~\ref{SS:SemiAppli} below), we obtain the $6$-multifraction $\bbv = \tta\ttc / \ttc\tta / \ttb\tta / \tta\ttb / \ttc\ttb / \ttb\ttc$ which, by construction, is unital. Then Conjecture~$\ConjA$ predicts that $\bbv$ must reduce to~$\one$. This is indeed the case: we find
$$\bbv \act \Red{3, \tta\ttb} \Red{4, \ttc\ttb} \Red{5, \ttb\ttc} \Red{1, \tta\ttc} \Red{2, \ttc\ttb\ttc} \Red{3, \ttb\ttc} \Red{1, \ttb\ttc} = \one$$
 (as well as $\bbv \act \Red{5, \ttb\ttc} \Red{3, \tta\ttc} \Red{1, \tta\ttc} \Red{3, \ttb} \Red{4, \ttc} \Red{2, \ttc} = \one$: the reduction sequence is not unique).
\end{exam}

More generally, we can observe that, for every gcd-monoid~$\MM$, if $\aav'$ and $\aav''$ are two reducts of an $\nn$-multi\-fraction~$\aav$, then the $2\nn$-multifraction~$\bbv$ obtained by concatenating~$\aav'$ and the inverse of~$\aav''$ is always reducible whenever $\aav''$ is nontrivial: by construction, we have $\bb_\nn = \aa'_\nn$ and $\bb_{\nn + 1} = \aa''_\nn$. An obvious induction shows that $\aav \rds \aav'$ implies that $\aa'_\nn$ divides~$\aa_\nn$ (on the left or on the right, according to the sign of~$\nn$ in~$\aav$) and, similarly, $\aa''_\nn$ divides~$\aa_\nn$. Hence $\bb_\nn$ and~$\bb_{\nn + 1}$ admit a common multiple and, therefore, $\bbv$ is eligible for some reduction~$\Red{\nn, \xx}$ with $\xx \not= 1$ whenever $\bb_{\nn + 1}$, \ie, $\aa''_\nn$, is not~$1$. Finally, if $\aa''_\nn = 1$ holds, then $\bbv$ is eligible for~$\Red{2\nn - \mm, \aa''_\mm}$, where $\mm$ is the largest index such that $\aa''_\mm$ is nontrivial.

\subsection{Applications of semi-convergence}\label{SS:SemiAppli}

Most of the consequences of convergence already follow from semi-convergence---whence the interest of Conjecture~$\ConjA$. We successively consider the possibility of controlling the congruence~$\simeqb$, the decidability of the word problem, and what is called Property~$\PropH$.

\subsubsection*{Controlling~$\simeqb$}

In order to investigate~$\simeqb$ without convergence, we introduce a new operation on multifractions to represent inverses. 

\begin{nota}\label{N:REV}
We put $\REV\ef = \ef$, and, for every $\nn$-multifraction~$\aav$,
\begin{equation}
\REV\aav:= \begin{cases}
\ / \aa_\nn \sdots \aa_1 &\text{ if $\nn$ is positive in~$\aav$},\\
\ \aa_\nn \sdots \aa_1 &\text{ if $\nn$ is negative in~$\aav$}.
\end{cases}
\end{equation}
\end{nota}

\begin{lemm}\label{L:Inverse}
\ITEM1 For all multifractions~$\aav, \bbv$, we have $\REV{\aav \opp \bbv} = \REV\bbv \opp \REV\aav$.

\ITEM2 For every multifraction~$\aav$, we have $\can(\REV\aav) = \can(1 \opp \REV\aav) = \can(\aav)\inv$.
\end{lemm}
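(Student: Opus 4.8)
The plan is to prove the two parts in order, with part~\ITEM1 a purely combinatorial check and part~\ITEM2 a consequence of it together with Proposition~\ref{P:EnvGroup}.

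For \ITEM1, I would argue by induction on $\dh\aav + \dh\bbv$, using the recursive structure of~$\opp$ as given in Definition~\ref{D:Multifrac}. The base cases are when $\aav$ or $\bbv$ is~$\ef$, where both sides reduce to $\REV\aav$ or $\REV\bbv$ by the convention $\REV\ef = \ef$. For the inductive step, I would distinguish the three cases in the definition of~$\aav \opp \bbv$ according to the signs of the last entry~$\aa_\nn$ of~$\aav$ and the first entry~$\bb_1$ of~$\bbv$. In the ``otherwise'' case (signs opposite, so $\opp$ is plain concatenation), reversing the whole sequence $(\aa_1 \wdots \aa_\nn, \bb_1 \wdots \bb_\pp)$ and tracking which end becomes positive/negative gives exactly the concatenation of~$\REV\bbv$ and~$\REV\aav$, again by definition of~$\opp$ (now the joining entries $\aa_\nn$ and $\bb_1$ reappear at interior positions of $\REV\bbv\opp\REV\aav$, where $\opp$ is concatenation because they keep their—now reversed—signs). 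In the two ``merging'' cases, $\aa_\nn$ and $\bb_1$ have the same sign and get multiplied to $\aa_\nn\bb_1$ or $\bb_1\aa_\nn$; after reversing, this single merged entry sits at the junction of $\REV\bbv$ and $\REV\aav$, and since reversal swaps the roles of ``positive'' and ``negative'' last/first entries appropriately, the merging case of $\opp$ applies on the right-hand side and produces the same merged entry (with the factors in the order dictated by the reversed sign, which matches). The bookkeeping of signs—checking that ``$\nn$ positive in $\aav$'' corresponds to ``$1$ negative in $\REV\aav$'' and so on—is the only place care is needed, but it is routine.

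For \ITEM2, the equality $\can(1 \opp \REV\aav) = \can(\REV\aav)$ is immediate from Proposition~\ref{P:EnvGroup}\ITEM2 (or from $(1,\ef) \in \simeqb$), since $1 \opp \REV\aav$ differs from $\REV\aav$ only by a trivial leading entry. For $\can(\REV\aav) = \can(\aav)\inv$, I would first treat the generators: for $\aa$ in~$\MM$, identified with the depth-one positive multifraction~$(\aa)$, we have $\REV{(\aa)} = {}/\aa$, and indeed $\can(/\aa) = \can(\aa)\inv$ by Proposition~\ref{P:EnvGroup}\ITEM2. Then for a general $\nn$-multifraction~$\aav$, I would use the decomposition~\eqref{E:Decomp} of $\aav$ as a $\opp$-product of depth-one multifractions $\aa_1 \opp \INV{\aa_2} \opp \cdots$ (or the corresponding product when $1$ is negative in~$\aav$), apply part~\ITEM1 to get $\REV\aav$ as the reversed $\opp$-product $\cdots \opp \REV{\INV{\aa_2}} \opp \REV{(\aa_1)}$, and invoke that $\can$ is a monoid homomorphism $\FRb\MM \to \EG\MM$ so that $\can$ of a product is the product of the $\can$'s; since each factor's $\can$-image is inverted by passing to its reversal, the whole product's image is inverted (and the order reversal is exactly what is needed for a group inverse). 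Alternatively, and perhaps more cleanly, one can just observe directly that $\aav \opp \REV\aav \simeqb \one$ by repeatedly applying the relations $(\aa/\aa,\ef)$ and $(/\aa/\aa,\ef)$ from inside out, which together with uniqueness of inverses in the group $\EG\MM$ gives $\can(\REV\aav) = \can(\aav)\inv$; I would likely present this telescoping argument since it avoids re-deriving homomorphism properties.

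The main obstacle—such as it is—is entirely notational: getting the sign conventions in Notation~\ref{N:REV} to line up correctly with the three-way case split in the definition of~$\opp$ when verifying~\ITEM1. There is no conceptual difficulty, but one must be disciplined about which entry is ``first'' and which is ``last'' after reversal, and about the fact that reversing an alternating-sign sequence starting with a positive (resp.\ negative) entry yields one ending with a positive (resp.\ negative) entry, whose reversal therefore starts negatively (resp.\ positively)—which is precisely why $\REV\aav$ is defined with the leading slash in the positive case and without it in the negative case. Once this dictionary is set up, both parts are short.
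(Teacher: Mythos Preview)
Your proposal is correct. For~\ITEM1, however, the paper takes a shorter route than your three-way case split: it observes once and for all that if $\aav$ is the sequence $(\xx_1 \wdots \xx_\nn)$ in $\MM \cup \INV\MM$, then $\REV\aav$ is simply $(\INV{\xx_\nn} \wdots \INV{\xx_1})$ (with $\INV{\INV\aa} = \aa$), a uniform description that absorbs the sign bookkeeping of Notation~\ref{N:REV}; the anti-homomorphism property then falls out of the definition of~$\opp$ without induction. For~\ITEM2 the paper just invokes~\eqref{E:Eval} directly, which is essentially your first alternative compressed to a citation. Your arguments are fine, just longer; the paper's economy comes from that single sequence-level description of reversal.
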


\begin{proof}
\ITEM1 If $\aav$ written as a sequence in~$\MM \cup \INV\MM$ is $(\xx_1 \wdots \xx_\nn)$, then $\REV\aav$ is $(\INV{\xx_\nn} \wdots \INV{\xx_1})$ (where we put $\INV{\INV\aa} = \aa$ for~$\aa$ in~$\MM$), and then $\REV{\aav \opp \bbv} = \REV\bbv \opp \REV\aav$ directly follows from the definition of the product.

Point~\ITEM2 comes from~\eqref{E:Eval}.
\end{proof}

Lemma~\ref{L:Inverse}\ITEM2 says that, if $\aav$ represents an element~$\gg$ of~$\EG\MM$, then both $\REV\aav$ and $1 \opp \REV\aav$ represent~$\gg\inv$. By definition, $\dh{\REV\aav} = \dh\aav$ always holds, and the operation $\,\REV{\ }\,$ on~$\FRb\MM$ is involutive. However, this operation does not restrict to~$\FRp\MM$: if $\aav$ is a positive multifraction, then $\REV\aav$ is positive if and only if $\dh\aav$ is even. In order to represent inverses inside~$\FRp\MM$, one can compose $\,\REV{\ }\,$ with a left translation by~$1$, thus representing the inverse of~$\aav$ by $1 \opp \REV\aav$. The inconvenience is that involutivity is lost: for~$\dh\aav$ odd and $\bbv = 1 \opp \REV\aav$, we find $1 \opp \REV\bbv = 1 \opp \aav \not= \aav$. 

\begin{lemm}\label{L:NCSimeq}
If $\MM$ is a gcd-monoid and $\RDp\MM$ is semi-convergent, then $\aav \simeqb \bbv$ is equivalent to~$1 \opp \aav \opp \REV\bbv \rds \one$ for all~$\aav, \bbv$ in~$\FRb\MM$.
\end{lemm}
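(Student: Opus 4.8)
The plan is to reduce the claimed equivalence to the semi-convergence hypothesis via the translation between $\FRb\MM$ and $\FRp\MM$ already established in Proposition~\ref{P:EnvGroup} and Lemma~\ref{L:Basics}. First I would unwind the definitions on the easy (``if'') side: by Lemma~\ref{L:Basics}\ITEM1, $1 \opp \aav \opp \REV\bbv \rds \one$ implies $1 \opp \aav \opp \REV\bbv \simeqb \one$, and then, using that $\can(1 \opp \aav \opp \REV\bbv) = \can(\aav)\can(\bbv)\inv$ in~$\EG\MM$ by Lemma~\ref{L:Inverse}\ITEM2 (and $\can(1) = 1$), this forces $\can(\aav) = \can(\bbv)$, \ie, $\aav \simeqb \bbv$. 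This direction uses no semi-convergence at all.

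For the converse (the substantive direction), suppose $\aav \simeqb \bbv$. By Lemma~\ref{L:Inverse}\ITEM2 we have $\can(1 \opp \aav \opp \REV\bbv) = \can(\aav)\can(\bbv)\inv = 1$, so the multifraction $\aav' := 1 \opp \aav \opp \REV\bbv$ is unital. The point is that $\aav'$ is a \emph{positive} multifraction: its first entry is~$1 \in \MM$ by construction of the left translation by~$1$, and all subsequent entries alternate in sign, so $\aav' \in \FRp\MM$. Now I would invoke the semi-convergence of~$\RDp\MM$: since $\aav'$ is a unital positive multifraction, \eqref{E:Semi} applies and gives $\aav' \rds \one$, which is exactly $1 \opp \aav \opp \REV\bbv \rds \one$.

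The only real point requiring care is the claim that $1 \opp \aav \opp \REV\bbv$ is genuinely a positive multifraction, so that the hypothesis on~$\RDp\MM$ (rather than~$\RDb\MM$) is the one that applies. I would verify this by observing that $\REV\bbv \in \FRb\MM$ by Notation~\ref{N:REV}, that $\aav \opp \REV\bbv \in \FRb\MM$, and that for any $\aav'' \in \FRb\MM$ the product $1 \opp \aav''$ lies in~$\FRp\MM$ — this is precisely the content of the translation $\aav \mapsto 1 \opp \aav$ from Proposition~\ref{P:EnvGroup}\ITEM3, which maps $\FRb\MM$ onto~$\FRp\MM$. One subtlety to mention is the degenerate cases where $\aav$ or $\bbv$ is~$\ef$ or where some entries collapse under the product; these are handled by the conventions $\aav \opp \ef = \ef \opp \aav = \aav$ and $\REV\ef = \ef$, and cause no trouble since $1 \opp \ef = 1 = \One1$ is trivial. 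I do not expect any genuine obstacle here: the lemma is essentially a bookkeeping consequence of the definition of semi-convergence together with the dictionary relating positive and signed multifractions, and the main work was already done in Lemma~\ref{L:Basics} and Lemma~\ref{L:Inverse}.
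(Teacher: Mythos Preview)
Your proof is correct and follows essentially the same approach as the paper's: both reduce the equivalence to the observation that $\aav \simeqb \bbv$ holds if and only if $1 \opp \aav \opp \REV\bbv$ is unital (via Lemma~\ref{L:Inverse}), note that $1 \opp \aav \opp \REV\bbv$ lies in~$\FRp\MM$, and then invoke semi-convergence of~$\RDp\MM$ together with Lemma~\ref{L:Basics}\ITEM1 for the trivial direction. The paper's version is simply more compressed, treating both directions at once rather than separating them.
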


\begin{proof}
As we have $1 \simeqb \ef$ and $\bbv \opp \REV\bbv \simeqb \ef$ by Lemma~\ref{L:Inverse}, $\aav \simeqb \bbv$ is equivalent to $1 \opp \aav \opp \REV\bbv \simeqb \ef$, hence, by~\eqref{E:Eval}, to $1 \opp \aav \opp \REV\bbv$ being unital, \ie, to $\can(1 \opp \aav \opp \REV\bbv) = 1$. By construction, $1 \opp \aav \opp \REV\bbv$ lies in~$\FRp\MM$. So, if $\RDp\MM$ is semi-convergent, $\can(1 \opp \aav \opp \REV\bbv) = 1$ is equivalent to $1 \opp \aav \opp \REV\bbv \rds \one$.
\end{proof}

As a direct application, we obtain

\begin{prop}\label{P:Embed}
If $\MM$ is a gcd-monoid and $\RDp\MM$ is semi-convergent, then $\MM$ embeds in its enveloping group~$\EG\MM$.
\end{prop}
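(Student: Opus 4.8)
The plan is to derive Proposition~\ref{P:Embed} directly from Lemma~\ref{L:NCSimeq}, specializing it to depth-one multifractions. Recall that the map $\aa \mapsto (\aa)$ identifies $\MM$ with the submonoid of depth-one positive multifractions in $\FRb\MM$, and that $\can(\aa) = \can((\aa))$ is the image of $\aa$ in $\EG\MM$ under the canonical morphism. To prove that $\MM$ embeds in $\EG\MM$, it suffices to show that $\can$ is injective on $\MM$, that is, that for $\aa, \bb \in \MM$, $\can(\aa) = \can(\bb)$ in $\EG\MM$ implies $\aa = \bb$ in $\MM$.

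First I would observe that $\can(\aa) = \can(\bb)$ is exactly $\aav \simeqb \bbv$ where $\aav = (\aa)$ and $\bbv = (\bb)$ are the corresponding depth-one multifractions, since $\EG\MM = \FRb\MM/{\simeqb}$. By Lemma~\ref{L:NCSimeq}, using semi-convergence of $\RDp\MM$, this is equivalent to $1 \opp \aav \opp \REV\bbv \rds \one$. Now I would compute $1 \opp \aav \opp \REV\bbv$ explicitly: since $\REV{(\bb)} = {/\bb}$ by Notation~\ref{N:REV} (as $1$ is positive in $(\bb)$), and since $1 \opp (\aa) = (\aa)$ already has $\aa$ positive in its first entry, we get $1 \opp (\aa) \opp {/\bb} = \aa/\bb$, the depth-two positive multifraction $(\aa, \INV\bb)$. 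So the hypothesis becomes $\aa/\bb \rds \one$.

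Next I would analyze what $\aa/\bb \rds \one$ forces. Since reduction is depth-preserving, $\one$ here must be $\One2$, the trivial depth-two multifraction $1/1$. Each reduction step applied to a depth-two multifraction can only be $\Red{1, \xx}$ (as $\ii \ge 2$ would need a $\bb_{\ii-1}$ entry beyond depth two, or rather $\ii = 2$ is possible but then it acts on entries $1, 2, 3$—for depth two only $\ii = 1$ applies). By Definition~\ref{D:Red}, $\Red{1, \xx}$ on a positive depth-two multifraction $\cc_1/\cc_2$ replaces it with $\dd_1/\dd_2$ where $\dd_1 \xx = \cc_1$ and $\dd_2 \xx = \cc_2$, i.e., it extracts a common right divisor $\xx$ of $\cc_1$ and $\cc_2$. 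A composition of such steps amounts to extracting a common right divisor; so $\aa/\bb \rds 1/1$ means there exists $\xx \in \MM$ with $\aa = 1 \cdot \xx = \xx$ and $\bb = 1 \cdot \xx = \xx$, hence $\aa = \bb$. (Alternatively, and more cleanly: by Lemma~\ref{L:Basics}\ITEM1, $\aa/\bb \rds \one$ implies $\aa/\bb \simeqb \one$, hence $\can(\aa)\can(\bb)\inv = 1$ gives $\can(\aa) = \can(\bb)$—but this is circular; the point is really the combinatorial extraction argument, so I would run the induction on the number of reduction steps, checking at each step that both entries lose the same right factor.)

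The main obstacle is making the induction of the last step airtight: one must verify that the only reductions available to a \emph{positive} depth-two multifraction are the $\Red{1,\xx}$ rules, and that these precisely strip a common right divisor from both entries, so that after finitely many steps reaching $1/1$ forces the original entries to have been equal. This is a routine but slightly delicate bookkeeping argument; it is simplest to prove directly the auxiliary claim that for positive $\cc_1, \cc_2 \in \MM$, $\cc_1/\cc_2 \rds 1/1$ holds if and only if $\cc_1 = \cc_2$, proceeding by induction on the length of the reduction sequence and invoking left cancellativity of $\MM$ at each step. Once this claim is in hand, Proposition~\ref{P:Embed} follows immediately by combining it with Lemma~\ref{L:NCSimeq} applied to depth-one multifractions.
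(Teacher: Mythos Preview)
Your proof is correct and follows essentially the same route as the paper: apply Lemma~\ref{L:NCSimeq} to reduce the question to $\aa/\bb \rds 1/1$, then observe that the only reductions available on a positive depth-two multifraction are the rules~$\Red{1,\xx}$, which strip a common right divisor, forcing $\aa = \bb$. The paper is slightly more direct in that it notes the composite of such steps is itself a single~$\Red{1,\xx}$ (since $\Red{1,\yy}$ followed by~$\Red{1,\zz}$ equals~$\Red{1,\zz\yy}$), so no explicit induction is needed; also, your appeal to left cancellativity is unnecessary, since from $\dd_1 = \dd_2$ and $\cc_\kk = \dd_\kk \xx$ one gets $\cc_1 = \cc_2$ directly.
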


\begin{proof}
Assume $\aa, \bb \in \MM$ and $\can(\aa) = \can(\bb)$, \ie, $\aa \simeqb \bb$. By Lemma~\ref{L:NCSimeq}, we must have $\aa \opp \REV\bb \rds \one$, which is $\aa / \bb \rds 1/1$. By definition of reduction, this means that there exists~$\xx$ in~$\MM$ satisfying $\aa/\bb \act \Red{1, \xx} = 1/1$. This implies $\aa = \bb \, (\, = \xx\, )$.
\end{proof}

\subsubsection*{The word problem for~$\EG\MM$}

If $\SS$ is any set, we denote by~$\SS^*$ the free monoid of all words in~$\SS$, using~$\ew$ for the empty word. For representing group elements, we consider words in~$\SS \cup \SSb$, where $\SSb$ is a disjoint copy of~$\SS$ consisting of one letter~$\INV\ss$ for each letter~$\ss$ of~$\SS$, due to represent~$\ss\inv$. If $\ww$ is a word in~$\SS \cup \SSb$, we denote by~$\INV\ww$ the signed word obtained from~$\ww$ by exchanging~$\ss$ and~$\INV\ss$ and reversing the order of letters. If $\MM$ is a monoid, $\SS$ is included in~$\MM$, and $\ww$ is a word in~$\SS$, we denote by~$\clp\ww$ the evaluation of~$\ww$ in~$\MM$. We extend this notation to words in~$\SS \cup \SSb$ by defining $\clp\ww$ to be the multifraction $\clp{\ww_1} \sdots \clp{\ww_{\nn}}$, where $(\ww_1 \wdots \ww_{\nn})$ is the unique sequence of words in~$\SS$ such that $\ww$ can be decomposed as $\ww_1 \, \INV{\ww_2}\, \ww_3 \, \INV{\ww_4} \, \pdots$ with $\ww_\ii \not= \ew$ for $1 < \ii \le \nn$.

\begin{lemm}\cite[Lemma~2.5]{Dit}\label{L:WP1}
For every monoid~$\MM$, every generating family~$\SS$ of~$\MM$, and every word~$\ww$ in~$\SS \cup \SSb$, the following are equivalent:

\ITEM1 The word $\ww$ represents~$1$ in~$\EG\MM$;

\ITEM2 The multifraction $\clp\ww$ satisfies $\clp\ww \simeqb 1$ in~$\FRb\MM$.
\end{lemm}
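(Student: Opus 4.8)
The plan is to identify the element of~$\EG\MM$ represented by~$\ww$ with~$\can(\clp\ww)$; the equivalence \ITEM1~$\Leftrightarrow$~\ITEM2 then drops out, since by the description of~$\EG\MM$ in Proposition~\ref{P:EnvGroup}\ITEM2 the element~$\can(\clp\ww)$ is trivial exactly when $\clp\ww \simeqb \ef$, that is, when $\clp\ww \simeqb 1$. Note that reduction plays no role here: the statement only concerns the congruence~$\simeqb$, so only Proposition~\ref{P:EnvGroup} is needed.

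First I would fix notation for ``represents''. Sending $\ss \mapsto \can(\ss)$ and $\INV\ss \mapsto \can(\ss)\inv$ defines a map~$\phi$ from words in~$\SS \cup \SSb$ to~$\EG\MM$, compatible with concatenation and satisfying $\phi(\INV\vv) = \phi(\vv)\inv$; by definition, $\ww$ represents~$1$ in~$\EG\MM$ means $\phi(\ww) = 1$. For a word~$\vv$ with no inverse letters we have $\phi(\vv) = \can(\clp\vv)$, since $\can$ is a monoid homomorphism $\MM \to \EG\MM$ and $\clp\vv$ is the evaluation of~$\vv$ in~$\MM$. Now write~$\ww$ in its canonical decomposition $\ww = \ww_1 \, \INV{\ww_2} \, \ww_3 \, \INV{\ww_4} \pdots$ into maximal runs of like-signed letters, with $\ww_1 \wdots \ww_\nn$ words in~$\SS$ and $\ww_\ii \ne \ew$ for $1 < \ii \le \nn$ (this sequence is the unique one with these properties: the requirement that the intermediate words be nonempty forces the runs to be maximal, the sole possibly empty word being~$\ww_1$, which occurs precisely when $\ww$ begins with a negative letter). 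Applying~$\phi$ and using compatibility with concatenation and inversion,
$$\phi(\ww) = \can(\clp{\ww_1}) \, \can(\clp{\ww_2})\inv \, \can(\clp{\ww_3}) \, \can(\clp{\ww_4})\inv \pdots,$$
and by the evaluation formula~\eqref{E:Eval} the right-hand side equals $\can(\clp{\ww_1} \sdots \clp{\ww_\nn}) = \can(\clp\ww)$. (If $\ww_1 = \ew$, then $\clp{\ww_1} = 1$ and $\clp\ww = 1 \sdots \clp{\ww_\nn}$ is still a legitimate positive multifraction, so \eqref{E:Eval} applies verbatim, its leading factor~$\can(1)$ being trivial; the degenerate case $\ww = \ew$ is immediate.) Hence $\phi(\ww) = \can(\clp\ww)$, and $\phi(\ww) = 1$ holds if and only if $\can(\clp\ww) = 1$, i.e.\ $\clp\ww \simeqb \ef$, i.e.\ $\clp\ww \simeqb 1$, which is exactly the asserted equivalence.

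There is no genuine difficulty beyond bookkeeping: the whole argument reduces to the identity $\phi(\ww) = \can(\clp\ww)$, which is just~\eqref{E:Eval} read through the homomorphism~$\can$. The one point that must be handled with a little care is the passage between the word picture (signed words, maximal runs, the operation $\INV{(\cdot)}$ on words) and the multifraction picture (the alternating sequences $\aa_1 \sdots \aa_\nn$) --- in particular the well-definedness and uniqueness of the decomposition, and the harmless fact that $\ww_1$, but no later~$\ww_\ii$, may be empty.
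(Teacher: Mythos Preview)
Your argument is correct. Note, however, that the paper does not supply its own proof of this lemma: it is quoted verbatim as \cite[Lemma~2.5]{Dit}, so there is no in-paper proof to compare against. Your derivation via the identity $\phi(\ww) = \can(\clp\ww)$ and the evaluation formula~\eqref{E:Eval} is exactly the natural route and matches what one would expect the original proof in~\cite{Dit} to do.
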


Thus solving the word problem for the group~$\EG\MM$ with respect to the generating set~$\SS$ amounts to deciding the relation $\clp\ww \simeqb 1$, which takes place in~$\FRb\MM$, hence essentially inside~$\MM$, as opposed to~$\EG\MM$. 

A few more definitions are needed. First, a gcd-monoid~$\MM$ is called \emph{strongly noetherian} if there exists a map~$\wit : \MM \to \NNNN$ satisfying, for all~$\aa, \bb$ in~$\MM$,
\begin{equation}\label{E:StrWit}
\wit(\aa\bb) \ge \wit(\aa) + \wit(\bb), \quad \text{and}\quad \wit(\aa) > 0 \text{\ for $\aa \not= 1$}.
\end{equation}
This condition is stronger than noetherianity, but it still follows from the existence of a presentation by homogeneous relations (same length on both sides): in this case, the word length induces a map~$\wit$ as in~\eqref{E:StrWit}. So every Artin-Tits monoid is strongly noetherian.

Next, we need the notion of a basic element. Noetherianity implies the existence of \emph{atoms}, namely elements that cannot be expressed as the product of two non-invertible elements. One shows \cite[Corollary~II.2.59]{Dir} that, if $\MM$ is a noetherian gcd-monoid, then a subfamily~$\SS$ of~$\MM$ generates~$\MM$ if and only if it contains all atoms of~$\MM$.

\begin{defi}\cite{Dgk}\label{D:Primitive}
If $\MM$ is a noetherian gcd-monoid, an element~$\aa$ of~$\MM$ is called \emph{right basic} if it belongs to the smallest family~$\XX$ that contains the atoms of~$\MM$ and is such that, if $\aa, \bb$ belong to~$\XX$ and $\aa \lcm \bb$ exists, then the element~$\aa'$ defined by $\aa \lcm \bb = \bb \aa'$ still belongs to~$\XX$. \emph{Left-basic} elements are defined symmetrically. We say that $\aa$ is \emph{basic} if it is right or left basic. 
\end{defi}

Note that, in the above definition, nothing is required when $\aa \lcm \bb$ does not exist. The key technical result is as follows:

\begin{lemm}\cite[Prop~3.27]{Dit}\label{L:Decid}
If $\MM$ is a strongly noetherian gcd-monoid with finite\-ly many basic elements and atom set~$\SS$, then the relation $\clp\ww \rds \one$ on $(\SS \cup \INV\SS)^*$ is decidable.
\end{lemm}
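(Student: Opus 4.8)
The plan is to turn the relation $\clp\ww \rds \one$ into a finite, effective search. Since reduction preserves depth, writing $\nn = \dh{\clp\ww}$, the relation $\clp\ww \rds \one$ holds if and only if $\clp\ww \rds \One\nn$, that is, if and only if $\One\nn$---the unique trivial multifraction with the same depth and leading sign as~$\clp\ww$---belongs to the set of all $\rds$-reducts of~$\clp\ww$. So I would show that this set is finite and can be computed; decidability then follows by computing it and testing membership of~$\One\nn$.

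Two observations will do the job. First, $\RDb\MM$ is \emph{terminating}: by~\eqref{E:StrWit} a strongly noetherian monoid is noetherian (an infinite $\div$- or $\divt$-descending chain would give an infinite strictly decreasing sequence of $\wit$-values), so Lemma~\ref{L:Termin} applies and no infinite reduction sequence starts from~$\clp\ww$. Second, $\rd$ is \emph{finitely branching, and effectively so}---and this is where the finiteness hypothesis is used. Finitely many basic elements implies finitely many atoms (atoms are basic); by~\cite[Corollary~II.2.59]{Dir} the atom set generates~$\MM$, so every element of~$\MM$ is a product of atoms, and~\eqref{E:StrWit} forces a product of~$\mm$ atoms to have $\wit$-value at least~$\mm$. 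Hence, for every bound~$N$, the elements of~$\MM$ of $\wit$-value at most~$N$ form a finite, effectively listable set, and in particular every element of~$\MM$ has finitely many left and right divisors, all computable---here I use, as is implicit throughout, that $\MM$ comes with a finite homogeneous presentation, so that products, left and right divisibility, gcds, and conditional lcms are algorithmically available, as is the case for Artin-Tits monoids. Now fix~$\aav$ of depth~$\nn$. By the description of eligibility recalled just before Figure~\ref{F:Red}, for each~$\ii$ with $1 \le \ii \le \nn - 1$ the rule~$\Red{\ii, \xx}$ requires $\xx$ to divide~$\aa_{\ii + 1}$ on the appropriate side, and, for $\ii \ge 2$, to admit a common multiple with~$\aa_\ii$ on the due side; both conditions are decidable and leave only finitely many candidates for~$\xx$, so one computes the finite list of all~$\bbv$ with $\aav \rd \bbv$.

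Granting these two facts, the tree of all reduction sequences issuing from~$\clp\ww$ is finitely branching and has no infinite branch, hence is finite by K\"onig's lemma; in particular the set of $\rds$-reducts of~$\clp\ww$ is finite. It is also effectively computable: starting from~$\{\clp\ww\}$, one repeatedly adjoins all one-step $\rd$-reducts of the multifractions produced so far; each round is a terminating computation by the second observation, and the process stabilises after finitely many rounds since the limit set is finite. I would then decide $\clp\ww \rds \one$ by checking whether~$\One\nn$ occurs in the set so obtained.

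The only genuinely delicate step is the effective finite branching: one must be sure both that~$\Red{\ii, \xx}$ leaves, for a fixed~$\aav$, only finitely many choices of~$\xx$ and that these can be enumerated---which rests precisely on the finiteness of the atom set together with strong noetherianity (yielding an effective local finiteness of~$\MM$) and on the decidability in~$\MM$ of divisibility and of the existence of conditional lcms. Termination, the finiteness of the reduction tree, and reading off the answer are then routine.
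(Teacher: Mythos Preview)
Your overall architecture is the right one---termination plus effective finite branching, then K\"onig---and it matches the shape of the argument in~\cite{Dit}. But you have misidentified where the hypothesis ``finitely many basic elements'' does its work, and this leaves a genuine gap.

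You use the finiteness of basic elements only to conclude that the atom set is finite, and then you assume that ``products, left and right divisibility, gcds, and conditional lcms are algorithmically available'' because $\MM$ ``comes with a finite homogeneous presentation''. Two problems: first, the lemma does not assume any presentation, only that $\MM$ is a strongly noetherian gcd-monoid with finitely many basic elements; second, and more seriously, the decidability of ``do $\xx$ and $\aa_\ii$ admit a common right (or left) multiple?'' is exactly the nontrivial step, and it does \emph{not} follow from having finitely many atoms and a decidable word problem. Without an a priori bound on the size of a potential lcm, there is no terminating search. The paper is explicit about this just after the statement: deciding eligibility for~$\Red{\ii,\xx}$ requires deciding the existence of a common multiple, ``and this is the point where the finiteness of the number of basic elements occurs crucially, as it provides an a priori upper bound on the size of this possible common multiple.''

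Concretely, what the finiteness of basic elements buys you is this: by iterating Lemma~\ref{L:IterLcm}, the computation of $\aa \lcm \bb$ for $\aa, \bb$ given as products of atoms reduces to a sequence of lcms of the form (basic element)~$\lcm$ (atom), and the residues produced at each step are again basic by Definition~\ref{D:Primitive}. With only finitely many basic elements, this becomes a finite lookup table, so existence and value of each elementary lcm are decidable, and a bound on~$\wit$ of the result follows (compare the constant~$\CC$ in Lemma~\ref{L:Tower}). That is the missing ingredient; once you have it, your K\"onig argument goes through.
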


This result is \emph{not} trivial, because deciding whether a multifraction is eligible for some reduction requires to decide whether two elements of the ground monoid admit a common multiple, and this is the point, where the finiteness of the number of basic elements occurs crucially, as it provides an a priori upper bound on the size of this possible common multiple. Then, we immediately deduce:

\begin{prop}\label{P:WordPb}
If $\MM$ is a strongly noetherian gcd-monoid with finitely many basic elements and $\RDp\MM$ is semi-convergent, then the word problem for~$\EG\MM$ is decidable.
\end{prop}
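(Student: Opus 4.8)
The plan is to combine three ingredients already in place: Lemma~\ref{L:WP1}, which reduces the word problem for~$\EG\MM$ to deciding the relation $\clp\ww \simeqb 1$ for words~$\ww$ in~$\SS \cup \SSb$ (with $\SS$ the atom set of~$\MM$); Lemma~\ref{L:NCSimeq}, which, under semi-convergence of~$\RDp\MM$, translates $\clp\ww \simeqb 1$ into the assertion that a specific positive multifraction $\rds$-reduces to~$\one$; and Lemma~\ref{L:Decid}, which says that the relation $\clp\vv \rds \one$ on $(\SS \cup \INV\SS)^*$ is decidable whenever $\MM$ is strongly noetherian with finitely many basic elements. The hypotheses of Proposition~\ref{P:WordPb} exactly supply what these three lemmas need, so the proof will be short.

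Here is how I would carry it out. Fix a strongly noetherian gcd-monoid~$\MM$ with finitely many basic elements, assume $\RDp\MM$ is semi-convergent, and let~$\SS$ be the atom set of~$\MM$; by the remark before Definition~\ref{D:Primitive}, $\SS$ generates~$\MM$, so it is a legitimate generating family for the purpose of the word problem. Given a word~$\ww$ in~$\SS \cup \SSb$, by Lemma~\ref{L:WP1} it represents~$1$ in~$\EG\MM$ if and only if $\clp\ww \simeqb 1$ holds in~$\FRb\MM$. Applying Lemma~\ref{L:NCSimeq} with $\aav = \clp\ww$ and $\bbv = \one$ (here $\bbv \simeqb 1$, so $\aav \simeqb \bbv$ is exactly $\aav \simeqb 1$, and $\REV\one = \one$), semi-convergence of~$\RDp\MM$ gives that $\clp\ww \simeqb 1$ is equivalent to $1 \opp \clp\ww \opp \one \rds \one$. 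Since $1 \opp \clp\ww \opp \one$ is again a positive multifraction of the form $\clp\vv$ for a word~$\vv$ in~$\SS \cup \SSb$ effectively computable from~$\ww$, Lemma~\ref{L:Decid} applies and the relation $\clp\vv \rds \one$ is decidable. Composing these equivalences yields an algorithm deciding whether~$\ww$ represents~$1$ in~$\EG\MM$, which is the word problem for~$\EG\MM$.

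The only point requiring a little care—and the place where I expect the ``main obstacle'' to lie, though it is really just bookkeeping rather than a genuine difficulty—is matching the combinatorial forms of the multifractions across the two lemmas: Lemma~\ref{L:NCSimeq} outputs the multifraction $1 \opp \clp\ww \opp \REV\one$, whereas Lemma~\ref{L:Decid} is phrased for multifractions of the shape $\clp\vv$. One must check that $1 \opp \clp\ww$ (note $\REV\one$ contributes nothing beyond possibly a trailing trivial entry, which may be absorbed) is of the form $\clp\vv$ for an explicit word~$\vv$: indeed, prepending the letter sequence realizing the translation by~$1$ and re-reading the alternating blocks of~$\ww$ produces such a~$\vv$, using that $\clp{}$ was defined on all words in~$\SS \cup \SSb$ precisely by grouping maximal same-sign blocks. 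Once this identification is made, no computation is needed, and the proposition follows by chaining Lemmas~\ref{L:WP1}, \ref{L:NCSimeq}, and~\ref{L:Decid}.
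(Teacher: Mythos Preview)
Your proof is correct and follows essentially the same strategy as the paper: chain Lemma~\ref{L:WP1}, semi-convergence, and Lemma~\ref{L:Decid}. The only difference is that the paper avoids your detour through Lemma~\ref{L:NCSimeq}: since $\clp\ww$ is already a positive multifraction by construction, the definition~\eqref{E:Semi} of semi-convergence for~$\RDp\MM$ (together with Lemma~\ref{L:Basics}\ITEM1 for the converse implication) gives directly that $\clp\ww \simeqb 1$ is equivalent to $\clp\ww \rds \one$, so none of your bookkeeping about $1 \opp \clp\ww \opp \REV\one$ and trailing trivial entries is needed.
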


\begin{proof}
Let $\SS$ be the atom set of~$\MM$. By Lemma~\ref{L:Decid}, the relation $\clp\ww \rds \one$ on words in~$\SS \cup \SSb$ is decidable. By~\eqref{E:Semi} (and by Lemma~\ref{L:Basics}\ITEM1), $\clp\ww \rds \one$ is equivalent to $\clp\ww \simeqb 1$. Finally, by Lemma~\ref{L:WP1}, $\clp\ww \simeqb 1$ is equivalent to $\ww$ representing~$1$ in~$\EG\MM$. Hence the latter relation is decidable.
\end{proof}

Note that, because the multifraction~$\clp\ww$ is always defined to be positive, we only need semi-convergence for~$\RDp\MM$ in the above argument. 

In the particular case of Artin-Tits monoids, we deduce

\begin{coro}
If Conjecture~$\ConjA$ is true, then the word problem for every Artin-Tits group is decidable.
\end{coro}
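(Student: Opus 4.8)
The plan is to deduce the corollary by verifying that Artin--Tits monoids satisfy the hypotheses of Proposition~\ref{P:WordPb} and then invoking Conjecture~$\ConjA$. So let $\MM = \MON\SS\RR$ be an Artin--Tits monoid, with $\SS$ its (finite) generating set and $\RR$ the defining relations, each of the form $\ss\tt\ss\cdots = \tt\ss\tt\cdots$ with both sides of equal length.

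First I would check that $\MM$ is a gcd-monoid: $\MM$ is cancellative and $1$ is its only invertible element, and any two elements admit a left gcd and a right gcd --- these are classical facts about Artin--Tits monoids (indeed $\MM$ is a preGarside monoid, hence a gcd-monoid as noted after Definition~\ref{D:GcdMon}). Next, since every relation in~$\RR$ is homogeneous (equal length on both sides), the word length induces a map~$\wit : \MM \to \NNNN$ satisfying~\eqref{E:StrWit}, so $\MM$ is strongly noetherian, as already observed in the excerpt. The third hypothesis, finiteness of the set of basic elements, is the one genuine input: it holds for every Artin--Tits monoid by the results cited in the introduction (\cite{Din, DyH}), relying on nontrivial properties of the associated Coxeter group; here I would simply quote that fact.

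With these three properties in place, Proposition~\ref{P:WordPb} applies to~$\MM$ and tells us: if $\RDp\MM$ is semi-convergent, then the word problem for~$\EG\MM$ is decidable. Now assume Conjecture~$\ConjA$. By its statement (Conjecture~\ref{C:Main}), $\RDp\MM$ is semi-convergent for every Artin--Tits monoid~$\MM$, in particular for the one at hand. Hence the word problem for~$\EG\MM$ is decidable. Finally, $\EG\MM$ is exactly the Artin--Tits group associated with the presentation $\MON\SS\RR$ (the group presented by $\GR\SS\RR$ is the enveloping group of the monoid presented by $\MON\SS\RR$), so the word problem for every Artin--Tits group is decidable, as claimed.

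The proof is essentially a bookkeeping argument chaining together results already established, so there is no real obstacle internal to this corollary; the only ``hard part'' is external, namely that it is contingent on Conjecture~$\ConjA$, and one should be careful to note that Proposition~\ref{P:WordPb} only needs semi-convergence of~$\RDp\MM$ (positive multifractions), which is precisely what Conjecture~$\ConjA$ asserts, so no stronger form of semi-convergence is required. One should also make sure the finiteness of basic elements is genuinely invoked --- it is the one place where special structure of Artin--Tits monoids (beyond being strongly noetherian gcd-monoids) enters, via the cited work on Coxeter groups.
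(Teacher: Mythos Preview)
Your proof is correct and follows essentially the same approach as the paper: verify that every Artin--Tits monoid is a strongly noetherian gcd-monoid with finitely many basic elements (the latter via~\cite{Din, DyH}), and then apply Proposition~\ref{P:WordPb} together with Conjecture~$\ConjA$. The paper's own proof is terser but identical in structure.
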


\begin{proof}
Let $\MM$ be an Artin-Tits monoid. We noted that $\MM$ is a gcd-monoid~\cite{BrS}, and that it is strongly noetherian. Next, $\MM$ has finitely many basic elements: this follows from (and, actually, is equivalent to) the result that every Artin-Tits monoid has a finite Garside family~\cite{Din, DyH}. Hence $\MM$ is eligible for Proposition~\ref{P:WordPb}.
\end{proof}


Let us conclude with algorithmic complexity. Lemma\,\ref{L:Decid} says nothing about the complexity of reduction. We show now the existence of an upper bound for the number of reductions. 

\begin{lemm}\label{L:Tower}
If $\MM$ is a strongly noetherian gcd-monoid with finitely many basic elements, then the number of reduction steps from an $\nn$-multifraction~$\aav$ is at most $\FF_\nn(\wit(\aa_1) \wdots \wit(\aa_\nn))$, where $\wit$ satisfies~\eqref{E:StrWit}, $\CC$ is the maximum of~$\wit(\aa) + 1$ for~$\aa$ basic in~$\MM$, and $\FF_\nn$ is inducti\-vely defined by $\FF_1(\xx) = \xx + 2$ and $\FF_\nn(\xx_1 \wdots \xx_\nn) = (\xx_1 + 1) \CC^{\FF_{\nn - 1} (\xx_2 \wdots \xx_\nn)}$.
\end{lemm}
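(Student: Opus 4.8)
The plan is to bound the number of reductions by induction on the depth~$\nn$, peeling off the lowest index at which reduction can occur. The base case $\nn = 1$ is essentially free: a reduction $\Red{1, \xx}$ applied to a $1$-multifraction~$(\aa_1)$ replaces~$\aa_1$ by a proper left (or right) divisor, so by~\eqref{E:StrWit} the witness value drops by at least~$1$ at each step; starting from $\wit(\aa_1)$ this gives at most $\wit(\aa_1)$ steps, and the $+2$ in $\FF_1(\xx) = \xx + 2$ leaves slack. (Strictly, depth-$1$ multifractions are never $\RRR$-reducible in the positive case and only $\Red{1,\xx}$ applies in general; the crude bound $\wit(\aa_1) + 2$ is safe either way.)

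For the inductive step, the key observation is that a reduction $\Red{\ii, \xx}$ with $\ii \ge 2$ never modifies the first entry~$\aa_1$, and a reduction $\Red{1, \xx}$ strictly decreases $\wit(\aa_1)$. So I would group a reduction sequence from~$\aav$ into maximal blocks between successive applications of~$\Red{1, \cdot}$. Within one block, no $\Red{1,\cdot}$ occurs, so the block is a reduction sequence for the depth-$(\nn-1)$ multifraction $(\aa_2 \wdots \aa_\nn)$ (formally, using Lemma~\ref{L:Basics}\ITEM3 to strip the first entry, or just observing directly that indices $2, \dots, \nn$ evolve exactly as in the shorter multifraction). By the induction hypothesis the length of such a block is at most $\FF_{\nn-1}(\wit(\aa_2) \wdots \wit(\aa_\nn))$, independently of what the current first entry is. The crucial point is then that the entries $\aa_2 \wdots \aa_\nn$ at the \emph{start} of each block are controlled: I must check that after a $\Red{1, \xx}$ step the new second entry~$\aa_2'$ still has $\wit(\aa_2') \le \wit(\aa_2)$ (it is a divisor of the old~$\aa_2$, or of $\xx \lcmt \aa_2$ — here is where the basic-element bound enters, giving $\wit(\xx) < \CC$ and hence a uniform control on the lcm, absorbed into the constant~$\CC$ in the exponent), so that the induction hypothesis applies with the \emph{same} argument tuple at the head of every block. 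Counting: there are at most $\wit(\aa_1) + 1$ blocks (one initial block plus one per $\Red{1,\cdot}$ step, and there are at most $\wit(\aa_1)$ of the latter since each strictly decreases $\wit(\aa_1) \ge 0$), each of length at most $\FF_{\nn-1}(\wit(\aa_2) \wdots \wit(\aa_\nn))$ plus the one $\Red{1,\cdot}$ step terminating it. This yields the bound $(\wit(\aa_1) + 1)(\FF_{\nn-1}(\wit(\aa_2)\wdots\wit(\aa_\nn)) + 1) \le (\wit(\aa_1)+1)\,\CC^{\FF_{\nn-1}(\wit(\aa_2)\wdots\wit(\aa_\nn))} = \FF_\nn(\wit(\aa_1) \wdots \wit(\aa_\nn))$, provided $\CC \ge 2$, which holds since atoms exist and have witness~$\ge 1$.

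The main obstacle I anticipate is the bookkeeping in the inductive step: I need the bound on block length to be uniform over the state of~$\aa_1$ and, more delicately, I need to know that the tuple $(\wit(\aa_2) \wdots \wit(\aa_\nn))$ does not grow across a $\Red{1, \cdot}$ step. For $\ii = 1$, the rule sets $\bb_1 \xx = \aa_1$, $\bb_2 \xx = \aa_2$ (positive case) or the left-divisibility analogue, so $\aa_2' = \bb_2$ right-divides~$\aa_2$ and $\wit(\aa_2') \le \wit(\aa_2)$ is immediate from~\eqref{E:StrWit}; entries $\aa_3 \wdots \aa_\nn$ are untouched. The slightly subtler point is that within a block a reduction $\Red{2, \xx}$ \emph{can} increase $\wit(\aa_2)$ (it replaces $\aa_2$ by a factor of $\xx \lcmt \aa_2$, and $\xx$ itself is pulled out of $\aa_3$), but this is exactly why the recursion for~$\FF_{\nn-1}$ already has~$\CC$ raised to a tower power: such growth is internal to the depth-$(\nn-1)$ count and is handled by the induction hypothesis, not by the outer counting. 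So I would organize the write-up as: (1) state and prove the divisor-propagation facts for $\Red{1,\cdot}$, (2) prove the "block decomposition" lemma reducing a block to a depth-$(\nn-1)$ sequence, (3) assemble the arithmetic, checking $\CC \ge 2$ so the final inequality $\FF_{\nn-1}+1 \le \CC^{\FF_{\nn-1}}$ holds.
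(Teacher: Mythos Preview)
Your block decomposition rests on the claim that ``a reduction $\Red{\ii, \xx}$ with $\ii \ge 2$ never modifies the first entry~$\aa_1$'', and this is false. By Definition~\ref{D:Red}, for $\ii = 2$ (say negative in~$\aav$) one has $\bb_1 = \aa_1 \xx'$ with $\xx'$ the lcm-complement, so $\Red{2, \xx}$ \emph{increases} $\wit(\aa_1)$. Consequently the number of $\Red{1, \cdot}$ steps is not bounded by the initial $\wit(\aa_1)$: between two such steps, level-$2$ reductions may have pumped $\aa_1$ up arbitrarily, and each $\Red{1, \cdot}$ only brings it down by a divisor. Your count of ``at most $\wit(\aa_1) + 1$ blocks'' collapses. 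A second, related problem is that a block of reductions with $\ii \ge 2$ is \emph{not} a reduction sequence on the tail $(\aa_2 \wdots \aa_\nn)$: the rule $\Red{2, \xx}$ on the full multifraction requires only that $\xx$ and $\aa_2$ admit a common multiple, whereas what would be $\Red{1, \xx}$ on the tail requires $\xx$ to divide~$\aa_2$; and Lemma~\ref{L:Basics}\ITEM3 only lets you strip \emph{trivial} entries, not an arbitrary~$\aa_1$.

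The paper's proof sidesteps this entirely: rather than inducting on depth with a block structure, it shows that $\FF_\nn(\wit(\aa_1) \wdots \wit(\aa_\nn))$ is a strict potential for single atomic reduction steps. The content is the inequality $\FF_\nn(\CC \xx_1 \wdots \CC \xx_\ii, \xx_{\ii+1} - 1, \xx_{\ii+2} \wdots \xx_\nn) < \FF_\nn(\xx_1 \wdots \xx_\nn)$, which absorbs the growth of entries $\ii - 1$ and $\ii$ (each bounded by a factor~$\CC$ via the basic-element hypothesis) against the strict drop at entry~$\ii + 1$. The induction is then on the well-founded relation~$\rd$, not on~$\nn$. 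Your intuition that the tower shape of~$\FF_\nn$ reflects a level-by-level peeling is natural, but the interaction between adjacent levels (the $\xx'$ remainder) means the levels do not decouple as cleanly as you assumed.
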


\begin{proof}
An easy induction shows that the function~$\FF_\nn$ is increasing with respect to each variable and, for every $1 \le \ii < \nn$, it satisfies the inequality
\begin{equation}\label{E:Complex}
\FF_\nn(\CC \xx_1 \wdots \CC \xx_\ii, \xx_{\ii + 1}Ê- 1, \xx_{\ii + 2} \wdots \xx_\nn) < \FF_\nn(\xx_1 \wdots \xx_\nn).
\end{equation}
For~$\aav$ an $\nn$-multifraction, write~$\TT(\aav)$ for the maximal number of reduction steps from~$\aav$ and~$\wit(\aav)$ for $(\wit(\aa_1) \wdots \wit(\aa_\nn))$. We prove using induction on~$\rd$ the inequality $\TT(\aav) \le \FF_\nn(\wit(\aav))$ for every $\nn$-multifraction~$\aav$. Assume $\aav \act \Red{\ii, \xx} = \bbv$ with~$\xx$ an atom of~$\MM$ (what can assumed without loss of generality). We compare the sequences $\wit(\aav)$ and $\wit(\bbv)$. By definition, $\bb_{\ii + 1}$ is a proper divisor of~$\aa_{\ii + 1}$, which implies $\wit(\bb_{\ii + 1}) < \wit(\aa_{\ii + 1}) $. Next, $\aa_\ii$ is the product of at most $\wit(\aa_\ii)$ basic elements of~$\MM$, hence so is~$\bb_\ii$, implying $\wit(\bb_\ii) \le \CC \wit(\aa_\ii)$. Finally, $\aa_{\ii - 1}$ is the product of at most $\wit(\aa_{\ii - 1})$ basic elements of~$\MM$, hence $\bb_{\ii - 1}$ is the product of at most $\wit(\aa_{\ii - 1}) + 1$ basic elements, implying $\wit(\bb_{\ii - 1}) \le \CC \wit(\aa_{\ii - 1})$. Then the induction hypothesis implies $\TT(\bbv) \le \FF_\nn(\bbv)$, so, plugging the upper bounds for~$\bb_\ii$, and using that $\FF_\nn$ is increasing and~\eqref{E:Complex}, we find
$$\TT(\bbv) \le \FF_\nn(\CC\wit(\aa_1) \wdots \CC \wit(\aa_\ii), \wit(\aa_{\ii + 1})Ê- 1, \wit(\aa_{\ii + 2}) \wdots \wit(\aa_\nn)) < \FF_\nn(\wit(\aa_1) \wdots \wit(\aa_\nn)) = \FF_\nn(\wit(\aav)),$$
and $\TT(\aav) \le \TT(\bbv) + 1$ implies $\TT(\bbv) \le \FF_\nn(\bbv)$.
\end{proof}

The upper bound of Lemma~\ref{L:Tower} is not polynomial (very far from statistical data, which suggest a quadratic bound), but it is not very high either in the hierarchy of fast growing functions (it is ``primitive recursive''). From there, one can easily deduce a similar upper bound (tower of exponentials) for the word problem for~$\EG\MM$ when $\RDp\MM$ is semi-convergent.

\subsubsection*{Property~$\PropH$}

One says \cite{Dia, Dib, GoR} that \emph{Property~$\PropH$} is true for a presentation~$(\SS, \RR)$ of a monoid~$\MM$ if a word~$\ww$ in~$\SS \cup \SSb$ represents~$1$ in~$\EG\MM$ if and only if the empty word can be obtained from~$\ww$ using \emph{special} transformations, namely positive and negative equivalence and left and right reversing. Positive equivalence means replacing a positive factor of~$\ww$ (no letter~$\INV\ss$) with an $\RR$-equivalent word, negative equivalence means replacing the inverse of a positive factor with the inverse of an $\RR$-equivalent word, whereas right reversing consists in deleting some length two factor $\INV\ss \ss$ or replacing some length two factor~$\INV\ss \tt$ with~$\vv \INV\uu$ such that $\ss\vv = \tt\uu$ is a relation of~$\RR$, and left reversing consists in deleting some length two factor $\ss \INV\ss$ or replacing some length two factor~$\ss \INV\tt$ with~$\INV\uu \vv$ such that $\vv\ss = \uu\tt$ is a relation of~$\RR$. Roughly speaking, Property~$\PropH$ says that a word representing~$1$ can be transformed into the empty word without introducing new trivial factors~$\ss \INV\ss$ or~$\INV\ss \ss$, a situation directly reminiscent of Dehn's algorithm for hyperbolic groups, see~\cite[Section~1.2]{Dib}. 

Say that a presentation~$(\SS, \RR)$ of a monoid~$\MM$ is a \emph{right lcm presentation} if $\RR$ consists of one relation $\ss\uu = \tt\vv$ for each pair of generators~$\ss, \tt$ that admit a common right multiple, with $\ss\uu$ and~$\tt\vv$ representing~$\ss \lcm \tt$. The standard presentation of an Artin-Tits monoid is a right lcm presentation, and, symmetrically, a left lcm presentation.

\begin{prop}\label{P:PropH}
If $\MM$ is a gcd-monoid and $\RDp\MM$ is semi-convergent, Property~$\PropH$ is true for every presentation of~$\MM$ that is an lcm presentation on both sides.
\end{prop}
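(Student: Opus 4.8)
The idea is to show that the ``special'' transformations of Property~$\PropH$ are powerful enough to simulate a reduction sequence $\clp\ww \rds \one$, whose existence is guaranteed by semi-convergence via Lemma~\ref{L:WP1} together with~\eqref{E:Semi}. So suppose $\ww$ is a word in~$\SS \cup \SSb$ representing~$1$ in~$\EG\MM$; by Lemma~\ref{L:WP1}, $\clp\ww \simeqb 1$, and since $\clp\ww$ is a positive multifraction and $\RDp\MM$ is semi-convergent, $\clp\ww \rds \one$. The goal is then to trace through this reduction, step by step, and verify that each elementary reduction $\Red{\ii,\xx}$ can be realized at the level of words by a finite sequence of special transformations, after which the fact that $\one$ (a trivial multifraction) is represented by a word reducible to~$\ew$ by special transformations finishes the argument.

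First I would set up the dictionary between multifractions and words: fix, once and for all, the lcm presentation $(\SS,\RR)$ on both sides, and for a multifraction $\aav = (\aa_1 \sdots \aa_\nn)$ choose positive words $\ww_\ii$ in~$\SS$ representing~$\aa_\ii$, so that $\aav$ is represented by $\ww_1 \INV{\ww_2} \ww_3 \cdots$. I then need the observation that changing the choice of the $\ww_\ii$ is itself achieved by positive and negative equivalence (this is exactly why those are allowed among the special transformations). Next I analyze one reduction step $\bbv = \aav \act \Red{\ii,\xx}$ with $\xx$ an atom (every reduction is a composite of such, by the usual factoring of $\Red{\ii,\xx}$ into atomic steps — this reduction is implicit in Lemma~\ref{L:Tower} and should be recalled). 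For $\ii \ge 2$ negative in~$\aav$, Definition~\ref{D:Red} gives $\bb_{\ii-1} = \aa_{\ii-1}\xx'$, $\xx\bb_\ii = \aa_\ii\xx' = \xx \lcm \aa_\ii$, $\xx\bb_{\ii+1} = \aa_{\ii+1}$. At the word level, $\INV{\ww_\ii}\,\ww_{\ii+1}$ contains a factor $\INV\tt\,\xx$ (after a negative/positive equivalence pushing the relevant atoms to the junction) where $\tt$ is the first letter of (a suitable word for)~$\aa_\ii$; since $\xx \lcm \aa_\ii$ exists, $\xx$ and $\tt$ admit a common right multiple, so $\RR$ contains a relation $\tt\vv = \xx\uu$ realizing $\tt \lcm \xx$, and right reversing replaces $\INV\tt\,\xx$ by $\vv\,\INV\uu$; iterating these reversings along~$\aa_\ii$ transports~$\xx$ across the whole syllable, producing the remainder~$\xx'$ on the left (to be absorbed into~$\aa_{\ii-1}$ by positive equivalence) and the new syllable $\bb_\ii$ on the right, while the deletion of the resulting $\INV\xx\,\xx$ against $\aa_{\ii+1}$ accounts for $\xx\bb_{\ii+1} = \aa_{\ii+1}$. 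The case $\ii$ positive in~$\aav$ is the mirror image using left reversing, and the cases $\ii = 1$ are the degenerate ones where no neighbouring syllable receives a remainder — there $\xx$ divides both $\aa_1$ and $\aa_2$ and a positive/negative equivalence plus a free deletion $\xx\INV\xx$ (or $\INV\xx\,\xx$) suffices.

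Assembling these, a full reduction $\clp\ww \rds \one$ is simulated by a finite sequence of special transformations taking~$\ww$ to a word representing the trivial multifraction~$\one$; such a word has the form $\uu_1 \INV{\uu_1} \uu_2 \INV{\uu_2}\cdots$ (or its variants with empty/trivial syllables), which collapses to~$\ew$ by repeated free deletions, themselves instances of reversing. The converse direction of Property~$\PropH$ — that a word obtained from~$\ww$ by special transformations represents the same element of~$\EG\MM$ — is routine: each special transformation either replaces a subword by an $\RR$-equivalent (hence $\simeqb$-equivalent) one or performs a reversing step, and one checks directly, as in Lemma~\ref{L:Basics}\ITEM1, that reversing preserves the element represented in~$\EG\MM$. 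I expect the main obstacle to be the bookkeeping in the atomic step: one must argue carefully that right (resp. left) reversing at the syllable junction is always \emph{applicable} — i.e. that the relevant pair of atoms genuinely admits a common multiple so that $\RR$ supplies a reversing relation — and that the cumulative effect of the reversings along the syllable exactly reproduces the lcm computation $\xx \lcm \aa_\ii = \aa_\ii \xx'$ prescribed by Definition~\ref{D:Red}, including the correct remainder~$\xx'$. This is precisely where the hypothesis that the presentation is an lcm presentation \emph{on both sides} is used, and where one leans on the iterated-lcm bookkeeping of Lemma~\ref{L:IterLcm}; the rest is a matter of carefully organizing the induction on the number of reduction steps.
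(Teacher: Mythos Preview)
Your proposal is correct and follows essentially the same approach as the paper: the paper's proof consists of the single observation that applying a rule~$\Red{\ii,\xx}$ to~$\clp\ww$ decomposes into a sequence of special transformations, referring to~\cite[Proposition~5.19]{Dit} for the details and noting that the argument is unchanged from the convergent case. You have correctly reconstructed that argument, including the reduction to atomic steps, the use of right/left reversing to push an atom across a syllable via iterated lcms (Lemma~\ref{L:IterLcm}), and the identification of the two-sided lcm-presentation hypothesis as precisely what guarantees that the needed reversing relations are present in~$\RR$.
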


The point is that applying a rule~$\Red{\ii, \xx}$ to a multifraction~$\clp\ww$ can be decomposed into a sequence of special transformations as defined above. The argument is the same as in the case when $\RDb\MM$ is convergent \cite[Proposition~5.19]{Dit}, and we do not repeat it.

Thus Conjecture~$\ConjA$ would imply the statement conjectured in~\cite{Dia}:

\begin{coro}
If Conjecture~$\ConjA$ is true, Property~$\PropH$ is true for every Artin-Tits presentation.
\end{coro}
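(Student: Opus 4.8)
The plan is to deduce the corollary directly from Proposition~\ref{P:PropH}; essentially nothing is left to do beyond checking that an Artin-Tits presentation satisfies the hypotheses of that proposition. So let $(\SS, \RR)$ be an arbitrary Artin-Tits presentation and let $\MM = \MON\SS\RR$ be the associated Artin-Tits monoid. By definition the Artin-Tits group presented by $(\SS, \RR)$ is the enveloping group $\EG\MM$, so that ``Property~$\PropH$ for $(\SS, \RR)$'' is exactly the assertion about words of $\SS \cup \SSb$ representing~$1$ in $\EG\MM$ that appears in Proposition~\ref{P:PropH}.

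Next I would record the two standard facts about $\MM$ that Proposition~\ref{P:PropH} needs. On the one hand, $\MM$ is a gcd-monoid, by Brieskorn--Saito~\cite{BrS} (the same fact invoked repeatedly above). On the other hand, $(\SS, \RR)$ is an lcm presentation on both sides: for two generators $\ss, \tt$ with $\mm_{\ss\tt}$ finite, the defining relation $\ss\tt\ss\pdots = \tt\ss\tt\pdots$ (both sides of length $\mm_{\ss\tt}$) has the form $\ss\uu = \tt\vv$, with each side representing $\ss \lcm \tt$, while for $\mm_{\ss\tt}$ infinite there is no relation and $\ss$, $\tt$ admit no common right multiple in~$\MM$; reading the same relation from the right shows it is simultaneously a left lcm presentation. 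This is precisely the observation made just before Proposition~\ref{P:PropH}. Note that, in contrast with the decidability corollary above, this argument uses neither strong noetherianity nor the finiteness of the set of basic elements: Proposition~\ref{P:PropH} asks only that $\MM$ be a gcd-monoid with $\RDp\MM$ semi-convergent.

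It then remains only to bring in the hypothesis. Assuming Conjecture~$\ConjA$, the system $\RDp\MM$ is semi-convergent, so all the hypotheses of Proposition~\ref{P:PropH} now hold, and that proposition yields Property~$\PropH$ for $(\SS, \RR)$; since the presentation was arbitrary, the corollary follows. I do not expect any genuine obstacle here: the whole mathematical content is housed in Proposition~\ref{P:PropH}---which itself reduces, via \cite[Proposition~5.19]{Dit}, to the fact that a single reduction step $\Red{\ii, \xx}$ can be realized by a sequence of positive and negative equivalences and left and right reversings---while the corollary merely specializes that result to Artin-Tits groups, using only that such groups are exactly the enveloping groups of Artin-Tits monoids and that their standard presentations are two-sided lcm presentations.
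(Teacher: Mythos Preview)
Your proposal is correct and follows exactly the approach implicit in the paper: the corollary is stated immediately after Proposition~\ref{P:PropH} without proof, precisely because it is the specialization of that proposition to Artin-Tits monoids, using only that these are gcd-monoids whose standard presentations are two-sided lcm presentations. You have simply spelled out what the paper leaves to the reader.
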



\subsection{Alternative forms}\label{SS:ConvAlt}

Here we mention several variants of semi-convergence.

\begin{prop}\label{P:NC}
If $\MM$ is a noetherian gcd-monoid, then $\RDb\MM$ $($\resp $\RDp\MM$$)$ is semi-convergent if and only if for every~$\aav$ in~$\FRb\MM$ $($\resp $\FRp\MM)$,
\begin{equation}\label{E:NC}
\parbox{110mm}{If $\aav$ is unital, then $\aav$ is either trivial or reducible.}
\end{equation}
\end{prop}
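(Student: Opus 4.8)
The statement to prove is Proposition~\ref{P:NC}: for a noetherian gcd-monoid, semi-convergence of $\RDb\MM$ (resp.\ $\RDp\MM$) is equivalent to the condition that every unital multifraction is either trivial or reducible. I will prove both implications, working in $\FRb\MM$ (the argument for $\FRp\MM$ being identical, since reduction preserves signs and all the multifractions produced stay in the relevant subset).

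\emph{Semi-convergence $\Rightarrow$ \eqref{E:NC}.} Suppose $\aav$ is unital. By semi-convergence we have $\aav \rds \one$. If $\aav = \one$ then $\aav$ is trivial and we are done. Otherwise the reduction sequence $\aav \rds \one$ has length at least one, so its first step $\aav \rd \bbv$ witnesses that $\aav$ is reducible. This direction needs no noetherianity.

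\emph{\eqref{E:NC} $\Rightarrow$ semi-convergence.} This is where noetherianity enters, via Lemma~\ref{L:Termin}: every sequence of reductions from a given multifraction terminates. Let $\aav$ be unital; I want $\aav \rds \one$. The plan is to run reductions from $\aav$ as long as possible: by Lemma~\ref{L:Termin} any such maximal sequence terminates at some $\RRR$-irreducible $\bbv$ with $\aav \rds \bbv$. By Lemma~\ref{L:Basics}\ITEM1, $\aav \rds \bbv$ gives $\aav \simeqb \bbv$, so $\bbv$ is still unital. Now $\bbv$ is irreducible, so \eqref{E:NC} forces $\bbv$ to be trivial, i.e.\ all entries of $\bbv$ equal $1$ or $\INV1$. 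It remains to check that a trivial multifraction is already $\one$ in the sense of our notation (a trivial $\nn$-multifraction is $\One\nn$ or $\One{-\nn}$ according to the sign of its first entry), so $\bbv = \one$ and hence $\aav \rds \one$, as required.

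\emph{Main obstacle.} There is one genuine subtlety: the statement \eqref{E:NC} says a unital $\aav$ is trivial \emph{or} reducible, and a reducible multifraction may a priori reduce by a step of the form $\aav \act \Red{\ii,\xx}$ with $\xx \neq 1$ that does not decrease depth (reduction is depth-preserving) and might not obviously make progress toward $\one$. The resolution is precisely that we do not need progress toward $\one$ step by step---we only need termination plus the fact that the \emph{terminal} irreducible multifraction is forced to be trivial by \eqref{E:NC}. So the real content is just: (i) noetherianity $\Rightarrow$ termination (Lemma~\ref{L:Termin}); (ii) reduction preserves the $\simeqb$-class (Lemma~\ref{L:Basics}\ITEM1), hence unitality; (iii) the only trivial multifractions are the $\one$'s. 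Each of these is already available, so the proof is short; the one thing to be careful about is to phrase (iii) correctly for both the positive and signed cases, noting that for $\RDp\MM$ the irreducible reduct $\bbv$ obtained is automatically positive (reduction preserves the sign of the first entry), so the trivial multifraction it must equal is $\One\nn$, again $\one$.
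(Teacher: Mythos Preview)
Your proof is correct and follows essentially the same approach as the paper's: the forward direction uses that a nontrivial unital $\aav$ must have a reduction sequence to~$\one$ of length $\ge 1$, and the converse uses termination (noetherianity via Lemma~\ref{L:Termin}) together with preservation of unitality under~$\rds$ (Lemma~\ref{L:Basics}\ITEM1) to force the irreducible endpoint to be trivial. The only cosmetic difference is that the paper phrases the converse as a well-founded induction on~$\rd$, whereas you run reductions to termination and inspect the terminal state; these are two ways of saying the same thing.
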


\begin{proof}
Assume that $\RDb\MM$ is semi-convergent, and let $\aav$ be a nontrivial unital multifraction in~$\FRb\MM$. By definition, $\aav \rds \one$ holds. As $\aav$ is nontrivial, the reduction requires at least one step, so $\aav$ cannot be $\RRR$-irreducible. Hence, \eqref{E:NC} is satisfied.

Conversely, assume~\eqref{E:NC}. As $\MM$ is noetherian, the rewrite system~$\RDb\MM$ is terminating, \ie, the relation~$\rd$ admits no infinite descending sequence. Hence we can use induction on~$\rd$ to establish~\eqref{E:Semi}. So let~$\aav$ be a unital multifraction in~$\FRb\MM$. If $\aav$ is $\rd$-minimal, \ie, if $\aav$ is $\RRR$-irreducible, then, by~\eqref{E:NC}, $\aav$ must be trivial, \ie, we have $\aav = \one$, whence $\aav \rds \one$. Otherwise, $\aav$ is $\RRR$-reducible, so there exist~$\ii, \xx$ such that $\bbv = \aav \act \Red{\ii, \xx}$ is defined. By construction, $\bbv$ is $\simeqb$-equivalent to~$\aav$, hence it is unital. By the induction hypothesis, we have $\bbv \rds \one$. By transitivity of~$\rds$, we deduce $\aav \rds \one$. Hence $\RDb\MM$ is semi-convergent.

The proof is similar for~$\RDp\MM$.
\end{proof}

Condition~\eqref{E:NC} can be restricted to more special unital multifractions.

\begin{defi}\label{D:Prime}
Call a multifraction~$\aav$ \emph{prime} if, for every~$\ii$ that is positive (\resp negative) in~$\aav$, the entries~$\aa_\ii$ and~$\aa_{\ii + 1}$ admit no nontrivial common right (\resp left) divisor.
\end{defi}

Since dividing adjacent entries by a common factor is a particular case of reduction, an $\RRR$-irreducible multifraction must be prime. The converse need not be true: for instance, the $6$-multifraction~$\bbv$ of Example~\ref{X:Att2} is prime, and it is $\RRR$-reducible.

\begin{prop}\label{P:NCPrime}
If $\MM$ is a noetherian gcd-monoid, then $\RDb\MM$ $($\resp $\RDp\MM$$)$ is semi-convergent if and only if for every~$\aav$ in~$\FRb\MM$ $($\resp $\FRp\MM)$,
\begin{equation}\label{E:NCPrime}
\parbox{110mm}{If $\aav$ is unital and prime, then $\aav$ is either trivial or reducible.}
\end{equation}
\end{prop}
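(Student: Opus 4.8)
The plan is to reduce Proposition~\ref{P:NCPrime} to Proposition~\ref{P:NC}. Since \eqref{E:NCPrime} is literally \eqref{E:NC} restricted to the subclass of prime multifractions, one direction is immediate: if $\RDb\MM$ is semi-convergent, then by Proposition~\ref{P:NC} every nontrivial unital multifraction is reducible, in particular every prime one, so \eqref{E:NCPrime} holds. The substance is in the converse: assuming \eqref{E:NCPrime}, I must derive \eqref{E:NC}, \ie\ show that \emph{every} nontrivial unital multifraction (not just the prime ones) is reducible.

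The key observation is that a non-prime multifraction is automatically reducible, essentially by definition of reduction. Concretely, if $\aav$ is not prime, there is an index~$\ii$, positive (\resp negative) in~$\aav$, such that $\aa_\ii$ and $\aa_{\ii+1}$ have a nontrivial common right (\resp left) divisor~$\xx$. Then, as noted in the text just after Definition~\ref{D:Prime}, dividing adjacent entries by a common factor is a special case of reduction: one checks directly from Definition~\ref{D:Red} that $\aav$ is eligible for $\Red{\ii,\xx}$ (with $\xx' = 1$ when $\ii\ge 2$, using that any nontrivial common divisor certainly gives an lcm of $\xx$ with $\aa_\ii$ equal to $\aa_\ii$ itself; and directly from the $\ii=1$ clauses when $\ii=1$). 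Since $\xx\ne 1$, this shows $\aav$ is $\RRR$-reducible. Hence for an arbitrary nontrivial unital~$\aav$: either $\aav$ is prime, in which case \eqref{E:NCPrime} gives reducibility since $\aav$ is nontrivial; or $\aav$ is not prime, in which case the preceding remark gives reducibility. Either way \eqref{E:NC} holds, and Proposition~\ref{P:NC} then yields semi-convergence. The same argument works verbatim for $\RDp\MM$, noting that primeness and the relevant reductions stay within $\FRp\MM$.

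The main (and only mild) obstacle is the bookkeeping verification that "common divisor of adjacent entries $\Rightarrow$ eligible for a reduction $\Red{\ii,\xx}$" really follows from Definition~\ref{D:Red}, handling separately the four sign/position cases; but this is routine and is already implicitly asserted in the excerpt.

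\begin{proof}
By Proposition~\ref{P:NC}, it suffices to show that, for a noetherian gcd-monoid~$\MM$, condition~\eqref{E:NCPrime} (for all~$\aav$ in~$\FRb\MM$, \resp $\FRp\MM$) is equivalent to condition~\eqref{E:NC} (for all such~$\aav$). Since every prime multifraction is in particular a multifraction, \eqref{E:NC} trivially implies~\eqref{E:NCPrime}.

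Conversely, assume~\eqref{E:NCPrime}, and let $\aav$ be a nontrivial unital multifraction in~$\FRb\MM$ (\resp $\FRp\MM$). If $\aav$ is prime, then, being nontrivial, \eqref{E:NCPrime} implies that $\aav$ is reducible. Assume now that $\aav$ is not prime. By Definition~\ref{D:Prime}, there exists an index~$\ii$ that is positive (\resp negative) in~$\aav$ such that $\aa_\ii$ and~$\aa_{\ii+1}$ admit a nontrivial common right (\resp left) divisor~$\xx$. Write $\xx \ne 1$. If $\ii = 1$, then $\xx$ right divides (\resp left divides) both~$\aa_1$ and~$\aa_2$, so $\aav$ is eligible for~$\Red{1, \xx}$ in the sense of Definition~\ref{D:Red}. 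If $\ii \ge 2$, then $\xx$ right divides (\resp left divides)~$\aa_{\ii+1}$, and $\xx$ admits a common right (\resp left) multiple with~$\aa_\ii$, namely~$\aa_\ii$ itself (as $\xx$ right divides, \resp left divides, $\aa_\ii$); writing $\xx \lcmt \aa_\ii = \aa_\ii$ (\resp $\xx \lcm \aa_\ii = \aa_\ii$), we may take $\xx' = 1$, and then $\aav$ is eligible for~$\Red{\ii, \xx}$. In all cases, since $\xx \ne 1$, the multifraction~$\aav$ is $\RRR$-reducible.

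Thus, in either case, a nontrivial unital multifraction~$\aav$ is reducible, which is~\eqref{E:NC}. By Proposition~\ref{P:NC}, $\RDb\MM$ (\resp $\RDp\MM$) is semi-convergent. The argument is identical in the positive case, since primeness, the relevant common divisors, and the reductions~$\Red{\ii, \xx}$ involved all stay inside~$\FRp\MM$.
\end{proof}
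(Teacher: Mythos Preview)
Your proof is correct and follows essentially the same approach as the paper: both hinge on the observation that a non-prime multifraction is automatically reducible via a division $\Red{\ii,\xx}$, so that \eqref{E:NCPrime} implies \eqref{E:NC}. The only cosmetic difference is that you invoke Proposition~\ref{P:NC} as a black box, whereas the paper re-runs the well-founded induction on~$\rd$ directly; your packaging is arguably cleaner. (One terminological slip: for $\ii$ positive you need a common \emph{left} multiple of $\xx$ and $\aa_\ii$, not a right one---but you then correctly write $\xx \lcmt \aa_\ii = \aa_\ii$, so the mathematics is unaffected.)
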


\begin{proof}
By Proposition~\ref{P:NC}, the condition is necessary, since \eqref{E:NCPrime} is subsumed by~\eqref{E:NC}. For the converse implication, assume~\eqref{E:NCPrime}. As for Proposition~\ref{P:NC}, we establish~\eqref{E:Semi} using induction on~$\rd$. Let $\aav$ be a unital multifraction in~$\FRb\MM$. If $\aav$ is $\RRR$-irreducible, then it must be prime, for, otherwise, it is eligible for at least one division, which is a special case of reduction. Hence, $\aav$ must be~$\one$ by~\eqref{E:NCPrime}. Otherwise, $\aav$ is $\RRR$-reducible, there exist~$\ii, \xx$ such that $\bbv = \aav \act \Red{\ii, \xx}$ is defined, the induction hypothesis implies $\bbv \rds \one$, hence $\aav \rds \one$. Hence $\RDb\MM$ is semi-convergent. The proof for~$\RDp\MM$ is similar.
\end{proof}

\begin{coro}\label{C:NCPrime}
Conjecture~$\ConjA$ is true if and only if \eqref{E:NCPrime} holds for every Artin-Tits monoid~$\MM$ and every~$\aav$ in~$\FRp\MM$.
\end{coro}

We turn to another approach. Whenever the ground monoid~$\MM$ is noetherian, the rewrite systems~$\RDb\MM$ and~$\RDp\MM$ are terminating, hence they are convergent if and only if they are \emph{confluent}, meaning that
\begin{equation}\label{E:Conf}
\parbox{113mm}{If we have $\aav \rds \bbv$ and $\aav \rds \ccv$, there exists~$\ddv$ satisfying $\bbv \rds \ddv$ and $\ccv \rds \ddv$}
\end{equation}
(``diamond property''). We now observe that semi-convergence is equivalent to a weak form of confluence involving the unit multifractions~$\one$. 

\begin{prop}\label{P:1Conf}
If $\MM$ is a gcd-monoid, then $\RDb\MM$ $($\resp $\RDp\MM$$)$ is semi-convergent if and only if for every~$\aav$ in~$\FRb\MM$ $($\resp $\FRp\MM$$)$, 
\begin{equation}\label{E:1Conf}
\parbox{113mm}{The conjunction of $\aav \rds \bbv$ and $\aav \rds \one$ implies $\bbv \rds \one$.}
\end{equation}
\end{prop}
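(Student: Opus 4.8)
The plan is to prove the two implications separately; one is immediate, the other contains all the work.

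Assume first that $\RDb\MM$ (resp.\ $\RDp\MM$) is semi-convergent, and suppose $\aav \rds \bbv$ and $\aav \rds \one$. By Lemma~\ref{L:Basics}\ITEM1, $\aav \rds \one$ gives $\aav \simeqb \one$, so $\aav$ is unital, and then $\aav \rds \bbv$ gives $\bbv \simeqb \aav \simeqb \one$, so $\bbv$ is unital as well. Since $\Red{\ii, \xx}$ preserves the sign and the depth of multifractions, $\bbv$ lies in $\FRb\MM$ (resp.\ $\FRp\MM$) together with $\aav$, and semi-convergence applied to $\bbv$ yields $\bbv \rds \one$. That is the whole argument for this direction.

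For the converse, assume \eqref{E:1Conf} for every $\aav$, and let $\aav$ be unital; the goal is to derive $\aav \rds \one$. The first step is to connect $\aav$ to $\one$ by a \emph{zigzag of reductions}. By Lemma~\ref{L:Basics}\ITEM1 the reduction relation is contained in $\simeqb$; conversely, $\simeqb$ is, by Proposition~\ref{P:EnvGroup}, the congruence on $\FRb\MM$ (equipped with $\opp$) generated by the pairs $(1, \ef)$, $(\aa/\aa, \ef)$ and $(/\aa/\aa, \ef)$ (resp.\ by $(1, \ef)$, $(\aa/\aa, \ef)$ and $(1/\aa/\aa, \ef)$ for $\FRp\MM$), and each of these generators is realized by a reduction up to trivial entries: indeed $\aa/\aa \act \Red{1, \aa}$, $/\aa/\aa \act \Red{1, \aa}$ and $1/\aa/\aa \act \Red{2, \aa}$ are trivial multifractions, so by the compatibility of reduction with the product (Lemma~\ref{L:Basics}\ITEM2) a generating move of the second or third kind performed inside a product is an instance of $\rd$ read in one direction or the other, whereas a move of the first kind only inserts or deletes a trivial entry, which reductions can always push to an end of the multifraction, after which Lemma~\ref{L:Basics}\ITEM3 makes it harmless. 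Hence $\simeqb$ coincides with the equivalence relation generated by $\rds$ once one is allowed to pad multifractions with trivial entries (again harmless by Lemma~\ref{L:Basics}\ITEM3). Padding everything to a common depth, we obtain $\vvv_0 \wdots \vvv_k$ with $\vvv_0$ a padding of $\aav$, $\vvv_k$ trivial, and, for each $j$, either $\vvv_j \rds \vvv_{j+1}$ or $\vvv_{j+1} \rds \vvv_j$.

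The second step is a downward induction on $j$ showing $\vvv_j \rds \one$. For $j = k$ this is clear. Assume $\vvv_{j+1} \rds \one$: if $\vvv_j \rds \vvv_{j+1}$, then $\vvv_j \rds \one$ by transitivity of $\rds$; if instead $\vvv_{j+1} \rds \vvv_j$, then from $\vvv_{j+1} \rds \vvv_j$ and $\vvv_{j+1} \rds \one$, property~\eqref{E:1Conf} applied to $\vvv_{j+1}$ gives $\vvv_j \rds \one$. Hence $\vvv_0 \rds \one$, and a last appeal to Lemma~\ref{L:Basics}\ITEM3 gives $\aav \rds \one$, so $\RDb\MM$ (resp.\ $\RDp\MM$) is semi-convergent. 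I expect the main obstacle to be the bookkeeping in the first step—verifying that each $\simeqb$-generator is reduction-realizable up to trivial entries and that the paddings can be chosen coherently along the chain—whereas the second step, where \eqref{E:1Conf} is the only ingredient, is short precisely because \eqref{E:1Conf} lets one propagate the property "$\rds \one$" backwards across the upward moves of the zigzag.
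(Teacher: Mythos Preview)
Your approach is the same as the paper's: the forward direction is identical, and for the converse you (i) produce a reduction zigzag between a padding of~$\aav$ and~$\one$, then (ii) propagate $\rds \one$ along the zigzag using~\eqref{E:1Conf}. Step~(ii) matches the paper verbatim. For step~(i), the paper isolates it as a standalone result (Lemma~\ref{L:Zigzag}) and proves it by defining $\aav \rdhs \bbv$ to mean $\aav \rds \bbv \opp \One\pp$ for some~$\pp$, then checking directly that the symmetric closure of~$\rdhs$ is a congruence containing the generating pairs of~$\simeqb$; this cleanly absorbs all the padding bookkeeping you flag as the main obstacle (in particular the observation $\One\pp \opp \ccv \rds \ccv \opp \One\pp$ replaces your informal ``push trivial entries to an end'').
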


Relation~\eqref{E:1Conf} can be called \emph{$\one$-confluence} for~$\aav$, since it corresponds to the special case $\ccv = \one$ of~\eqref{E:Conf}: indeed, \eqref{E:Conf} with $\ccv = \one$ claims the existence of~$\ddv$ satisfying $\bbv \rds \ddv$ and $\one \rds \ddv$, and, as $\one$ is $\RRR$-irreducible, we must have $\ddv = \one$, whence $\bbv \rds \one$, as asserted in~\eqref{E:1Conf}. In order to establish Proposition~\ref{P:1Conf}, we need an auxiliary result, which connects~$\simeqb$ with the symmetric closure of~$\rds$ and is a sort of converse for Lemma~\ref{L:Basics}.

\begin{lemm}\label{L:Zigzag}
If $\MM$ is a gcd-monoid and $\aav, \bbv$ belong to~$\FRb\MM$, then $\aav \simeqb \bbv$ holds if and only if there exist a finite sequence $\ccv^0 \wdots \ccv^{2\rr}$ in~$\FRb\MM$ and $\pp, \qq$ in~$\ZZZZ$ satisfying
\begin{equation}\label{E:Zigzag}
\aav \opp \One\pp = \ccv^0 \rds \ccv^1 \antirds \ccv^2 \rds \ \pdots \ \antirds \ccv^{2\rr} = \bbv \opp \One\qq.
\end{equation}
\end{lemm}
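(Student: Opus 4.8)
The plan is to prove the two implications separately, the forward direction being the substantial one.

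\medskip

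\textit{($\Leftarrow$) Easy direction.} Suppose a sequence as in~\eqref{E:Zigzag} exists. Each relation $\ccv^{k} \rds \ccv^{k+1}$ (or $\ccv^{k+1} \rds \ccv^k$) gives $\ccv^k \simeqb \ccv^{k+1}$ by Lemma~\ref{L:Basics}\ITEM1, and $\simeqb$ is transitive, so $\aav \opp \One\pp \simeqb \bbv \opp \One\qq$. Since $\One\pp \simeqb \ef$ and $\One\qq \simeqb \ef$ (every trivial multifraction represents~$1$, by Proposition~\ref{P:EnvGroup}\ITEM2), multiplying by these and using that $\simeqb$ is a congruence on $\FRb\MM$ yields $\aav \simeqb \bbv$.

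\medskip

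\textit{($\Rightarrow$) Main direction.} The idea is that $\simeqb$ is the congruence generated by the defining pairs of Proposition~\ref{P:EnvGroup}\ITEM2, namely $(1,\ef)$, $(\aa/\aa,\ef)$, $(/\aa/\aa,\ef)$; so any two $\simeqb$-equivalent multifractions are linked by a finite chain of elementary steps, each inserting or deleting such a pair at some position via the product $\opp$. I would show that every such elementary step, after padding both sides by suitable $\One\pp$ on the left and right, can be realized by a short zigzag of the form $\ccv \rds \ccv' \antirds \ccv''$ (or even a single $\rds$ or a single $\antirds$), and then concatenate these local zigzags — adjusting the padding by inserting extra trivial blocks where needed so the chains match up — to obtain the global form~\eqref{E:Zigzag}. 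Concretely: deleting a factor $\aa/\aa$ from inside a multifraction, i.e.\ passing from $\uuv \opp (\aa/\aa) \opp \vvv$ to $\uuv \opp \vvv$, is (up to the merging rules of $\opp$ and suitable trivial padding, using Lemma~\ref{L:Basics}\ITEM3) precisely an application of a reduction $\Red{\ii,\aa}$ that cancels the pair, so it is one $\rds$-step; the reverse insertion is one $\antirds$-step; deletion/insertion of $(1,\ef)$ is absorbed into the $\One\pp$ bookkeeping. Since $\FRb\MM$ may also need padding to keep depths aligned between consecutive elementary steps, one records $\pp,\qq \in \ZZZZ$ (allowing negative values, i.e.\ padding by $\One\pp$ with $\pp<0$) and uses Lemma~\ref{L:Basics}\ITEM3 repeatedly to move trivial blocks in and out without affecting reducibility.

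\medskip

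\textit{Main obstacle.} The delicate point is the bookkeeping: a single $\simeqb$-step acts at an arbitrary position inside a multifraction, and because the product $\opp$ merges adjacent entries of equal sign, the positions and depths shift in a way that depends on the signs at the junction. Making sure that each elementary $\simeqb$-step corresponds to a \emph{reduction} step (and not merely to a $\simeqb$-step) requires checking that the relevant entry is genuinely divisible on the correct side, which is automatic here because the inserted/deleted block is literally $\aa/\aa$ or $/\aa/\aa$, so the divisibility is witnessed by $\aa$ itself; the real work is only in verifying that after padding by $\One\pp \opp - \opp \One\qq$ the transformation lands inside $\FRb\MM$ with the depths matching those of the neighbouring steps. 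I expect this to be a routine but somewhat fiddly case analysis on the four sign configurations, handled uniformly by invoking Lemma~\ref{L:Basics}\ITEM2 and~\ITEM3. Once the local realizations are in place, concatenation and a final normalization of the $\One\pp$'s give~\eqref{E:Zigzag}.
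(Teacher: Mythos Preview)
Your approach is correct and would work, but the paper takes a cleaner route that sidesteps the case analysis you anticipate. Instead of decomposing $\simeqb$ into a chain of elementary insertions/deletions and realizing each one by hand, the paper defines an auxiliary relation $\aav \rdhs \bbv$ meaning $\aav \rds \bbv \opp \One\pp$ for some~$\pp$, then shows directly that the symmetric closure~$\approx$ of~$\rdhs$ is a congruence on~$\FRb\MM$. Compatibility with left multiplication is immediate from Lemma~\ref{L:Basics}\ITEM2; compatibility with right multiplication uses the same lemma together with the single observation $\One\pp \opp \ccv \rds \ccv \opp \One\pp$. Once~$\approx$ is a congruence, one only has to check that it contains the three generating pairs $(1,\ef)$, $(\aa/\aa,\ef)$, $(/\aa/\aa,\ef)$, which is a one-line verification. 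A final pass uniformizes the paddings by taking the maximum~$\vert\pp\vert$ occurring along the resulting zigzag.

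What this buys: the paper never touches the four sign configurations at a junction, and the ``push the trivial block to the end'' phenomenon---which in your sketch is implicit in the padding bookkeeping, and is in fact needed when the inserted $\aa/\aa$ does not merge with its neighbours---appears explicitly as the single fact $\One\pp \opp \ccv \rds \ccv \opp \One\pp$. Your route would discover exactly this fact inside the case analysis; the paper isolates it up front. One small overstatement in your sketch: an elementary deletion is not always literally ``one $\rds$-step''---when no merging occurs the depth drops by~$2$, so after reducing you land on $\uuv \opp \One2 \opp \vvv$ rather than $\uuv \opp \vvv \opp \One2$, and several further reductions are needed to migrate the~$\One2$ to the end. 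This is exactly the observation above, so it is routine, but it is more than a single step.
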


\begin{proof}
For~$\aav, \bbv$ in~$\FRb\MM$, write $\aav \rdhs \bbv$ if $\aav \opp \rds \bbv \opp \One\pp$ holds for some~$\pp$. By Lemma~\ref{L:Basics}, $\aav \rds \bbv$ implies $\aav \opp \One\rr \rds \bbv \opp \One\rr$ and, therefore, the relation~$\rdhs$ is transitive. It is also compatible with multiplication: on the left, this follows from Lemma~\ref{L:Basics} directly. On the right, $\aav \rds \bbv \opp \One\pp$ implies $\aav \opp \ccv \rds \bbv \opp \One\pp\opp \ccv$ for every~$\ccv$, and we observe that $\One\pp \opp \ccv \rds \ccv \opp \One\pp$ always holds. Hence, the symmetric closure~$\approx$ of~$\rdhs$ is a congruence on~$\FRb\MM$. As we have $1 \rds \ef \opp 1$, $\aa / \aa \rds \ef \opp \One2$ and $/ \aa / \aa \rds \ef \opp \One{-2}$ for every~$\aa$ in~$\MM$, the congruence~$\approx$ contains pairs that generate~$\simeqb$. Hence $\aav \simeqb \bbv$ implies the existence of a zigzag in~$\rdhs$ and its inverse connecting~$\aav$ to~$\bbv$. Taking the maximum of~$\vert\rr\vert$ for~$\One\rr$ occurring in the zigzag, one obtains~\eqref{E:Zigzag}.
\end{proof}

\begin{proof}[Proof of Proposition~\ref{P:1Conf}]
Assume that $\RDb\MM$ is semi-convergent, and we have $\aav \rds \bbv$ and $\aav \rds \one$. By Lemma~\ref{L:Basics}, we have $\aav \simeqb \bbv$ and $\aav \simeqb 1$, hence $\bbv \simeqb \one$. As $\RDb\MM$ is semi-convergent, this implies $\bbv \rds \one$. So \eqref{E:1Conf} is satisfied, and $\RDb\MM$ is $\one$-confluent.

Conversely, assume that $\RDb\MM$ is $\one$-confluent. We first show using induction on~$\kk$ that, when we have a zigzag $\ccv^0 \rds \ccv^1 \antirds \ccv^2 \rds \ccv^3 \antirds \cc^4 \rds \pdots$, then $\ccv^0 = \one$ implies $\ccv^\kk \rds \one$ for every~$\kk$. For $\kk = 0$, this is the assumption. For $\kk$ even non-zero, we obtain $\ccv^\kk \rds \ccv^{\kk - 1} \rds \one$ using the induction hypothesis, whence $\ccv^\kk \rds \one$ by transitivity of~$\rds$. For~$\kk$ odd, we have $\ccv^{\kk - 1} \rds \one$ by the induction hypothesis and $\ccv^{\kk - 1} \rds \ccv^\kk$, whence $\ccv^\kk \rds \one$ by $\one$-confluence. 

Now assume that $\aav$ is unital. Lemma~\ref{L:Zigzag} provides $\pp, \qq, \rr$ and $\ccv^0 \wdots \ccv^{2\rr}$ satisfying
\begin{equation*}
\One\pp = \ccv^0 \rds \ccv^1 \antirds \ccv^2 \rds \ \pdots \ \antirds \ccv^{2\rr} = \aav \opp \One\qq.
\end{equation*}
As shown above, we deduce $\aav \opp \One\qq \rds \one$, whence $\aav \rds \one$ by Lemma~\ref{L:Basics}. Hence $\RDb\MM$ is semi-convergent.

Once again, the proof for~$\FRp\MM$ is the same.
\end{proof}

\begin{coro}\label{C:NC1Conf}
Conjecture~$\ConjA$ is true if and only if \eqref{E:1Conf} holds for every Artin-Tits monoid~$\MM$ and every~$\aav$ in~$\FRp\MM$.
\end{coro}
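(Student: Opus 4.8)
The statement to prove is Corollary~\ref{C:NC1Conf}, which asserts that Conjecture~$\ConjA$ is equivalent to condition~\eqref{E:1Conf} holding for every Artin-Tits monoid and every positive multifraction.

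The plan is to apply Proposition~\ref{P:1Conf} uniformly across the class of Artin-Tits monoids. First I would recall that Conjecture~$\ConjA$, by Definition~\ref{D:Semi} and the phrasing of Conjecture~\ref{C:Main}, says precisely that $\RDp\MM$ is semi-convergent for every Artin-Tits monoid~$\MM$. Next, every Artin-Tits monoid is a noetherian gcd-monoid: it is a gcd-monoid by~\cite{BrS}, and it is noetherian because its standard presentation uses only homogeneous relations (both sides of each relation $\ss\tt\ss\cdots = \tt\ss\tt\cdots$ have the same length), as noted in Section~\ref{SS:Gcd}. In particular every Artin-Tits monoid is a gcd-monoid, which is all that Proposition~\ref{P:1Conf} requires as a hypothesis.

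Then I would simply quote Proposition~\ref{P:1Conf}, applied to $\RDp\MM$: for a gcd-monoid~$\MM$, the system $\RDp\MM$ is semi-convergent if and only if \eqref{E:1Conf} holds for every~$\aav$ in~$\FRp\MM$. Quantifying over all Artin-Tits monoids~$\MM$, the left-hand side becomes ``$\RDp\MM$ is semi-convergent for every Artin-Tits monoid'', i.e.\ Conjecture~$\ConjA$, and the right-hand side becomes ``\eqref{E:1Conf} holds for every Artin-Tits monoid~$\MM$ and every~$\aav$ in~$\FRp\MM$''. This gives the desired equivalence directly.

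There is essentially no obstacle here: the corollary is a one-line consequence of specializing Proposition~\ref{P:1Conf} to the Artin-Tits case, the only thing to check being that Artin-Tits monoids fall under the hypotheses of that proposition (gcd-monoid, and in fact noetherian), which is already recorded earlier in the paper. The proof is therefore a short deduction, exactly parallel to the deductions of Corollary~\ref{C:NCPrime} from Proposition~\ref{P:NCPrime}.

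\begin{proof}
By Conjecture~\ref{C:Main} together with Definition~\ref{D:Semi}, Conjecture~$\ConjA$ is the assertion that $\RDp\MM$ is semi-convergent for every Artin-Tits monoid~$\MM$. Every Artin-Tits monoid is a gcd-monoid~\cite{BrS}, so Proposition~\ref{P:1Conf} applies to each such~$\MM$ and yields that $\RDp\MM$ is semi-convergent if and only if \eqref{E:1Conf} holds for every~$\aav$ in~$\FRp\MM$. Quantifying this equivalence over all Artin-Tits monoids~$\MM$ gives the statement.
\end{proof}
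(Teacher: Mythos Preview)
Your proposal is correct and matches the paper's approach: the corollary is stated without proof in the paper, being an immediate specialization of Proposition~\ref{P:1Conf} to Artin-Tits monoids, exactly as you argue. Your observation that Proposition~\ref{P:1Conf} only requires a gcd-monoid (not noetherianity) is accurate, so the deduction goes through directly.
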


Proposition~\ref{P:1Conf} is important for testing Conjecture~$\ConjA$, because it shows that, if $\aav \rds \one$ holds, then every sequence of reductions from~$\aav$ inevitably leads to~$\one$. In other words, any reduction strategy may be applied without loss of generality.

\section{Divisions and tame reductions}\label{S:Tame}

When reduction is not convergent, it is not confluent either, and a multifraction may admit several reducts with no subsequent common reduct. However, by restricting to particular reductions, we can retrieve a (weak) form of confluence and let distinguished reducts appear. This is the approach we explore in this section. We start in Subsection~\ref{SS:Div} with divisions, which are particular reductions with good, but too weak properties. Then, in Subsection~\ref{SS:Tame}, we extend divisions into what we call tame reductions, which are those reductions that, in a sense, exclude no subsequent opportunities. Extending the example of divisions to tame reductions leads us in Subsection~\ref{SS:Dertame} to the natural notion of a maximal tame reduction and to Conjecture~$\ConjB$ about tame reductions from unital multifractions, which is stronger but more precise than Conjecture~$\ConjA$. 

 We feel that the many technical details, examples, and counter-examples appearing in this section and the next one are important, because they illustrate how subtle the mechanism of reduction is. Skipping such details would induce a superficial view and misleadingly suggest that things are more simple than they really are, possibly leading to naive attempts with no chance of success.

\subsection{Divisions}\label{SS:Div}

 Divisions are the most direct counterparts of free reductions in free monoids. They are the special cases of reduction when no remainder appears. No confluence result can be expected for divisions in a non-free monoid, but we shall see in Proposition~\ref{P:DerDiv}, the main result of this subsection, that, for every multifraction~$\aav$, there exists a unique, well-defined maximal reduct~$\derdiv\aav$ accessible from~$\aav$ by divisions. 

Following the model of reductions, we first fix notation for divisions.

\begin{defi}\label{D:Div}
If $\MM$ is a gcd-monoid and $\aav, \bbv$ belong to~$\FRb\MM$, we declare that $\bbv = \aav \act \Rdiv{\ii, \xx}$ holds if we have $\bbv = \aav \act \Red{\ii, \xx}$ and, in addition, $\xx$ right (\resp left) divides~$\aa_\ii$ if $\ii$ is positive (\resp negative) in~$\aav$. We use~$\RDivb\MM$ for the family of all~$\Rdiv{\ii, \xx}$ with~$\xx \not= 1$, write $\aav \rddiv \bbv $ if some rule of~$\RDivb\MM$ maps~$\aav$ to~$\bbv$, and $\rddivs$ for the reflexive--transitive closure of~$\rddiv$.
\end{defi}

So $\aav \act \Rdiv{\ii, \xx}$ is defined if and only if $\xx$ divides~$\aa_\ii$ and~$\aa_{\ii + 1}$ on the due side, and applying~$\Rdiv{\ii, \xx}$ means dividing~$\aa_\ii$ and~$\aa_{\ii+1}$ by~$\xx$. By definition, $\aav$ is $\DDD$-irreducible if and only if it is prime (Definition~\ref{D:Prime}), \ie, the gcds of adjacent entries (on the relevant side) are trivial.

Except in degenerated cases, \eg, free monoids, the system~$\RDivb\MM$ is not convergent: typically, for $\MM = \MON{\tta, \ttb}{\tta\ttb\tta = \ttb\tta\ttb}$ (Artin's $3$-strand braid monoid) and $\aav = \tta / \tta\ttb\tta / \ttb$, we find $\aav \act \Rdiv{2, \ttb} = \tta / \tta\ttb / 1$ and $\aav \act \Rdiv{1, \tta} = 1 / \tta\ttb / \ttb$, with no further division, and confluence can be restored only at the expense of applying some reduction~$\Red{\ii, \xx}$, here $\aav \act \Rdiv{2, \ttb} = \aav \act \Rdiv{1, \tta}\Red{2, \ttb}$. However, we shall see now that, for every multifraction~$\aav$, there exists a unique, well-defined maximal $\RRR$-reduct of~$\aav$ that can be obtained using divisions. 

 The first step is to observe that, for each level~$\ii$, there is always a maximal division at level~$\ii$, namely dividing by the gcd of the $\ii$th and $(\ii + 1)$st entries (on the due side). Indeed, assuming for instance $\ii$ positive in~$\aav$, the multifraction $\aav \act \Rdiv{\ii, \xx}$ is defined if and only if $\xx$ right divides both~$\aa_\ii$ and~$\aa_{\ii + 1}$, hence if and only if $\xx$ right divides the right gcd~$\aa_\ii \gcdt \aa_{\ii + 1}$.

\begin{nota}\label{N:Divmax}
 If $\MM$ is a gcd-monoid and $\aav$ is a multifraction on~$\MM$, then, for $\ii < \dh\aav$, we write $\aav \act \Divmax\ii$ for $\aav \act \Rdiv{\ii, \xx}$ with~$\xx$ the gcd of~$\aa_\ii$ and~$\aa_{\ii + 1}$ on the due side.
\end{nota}

 Next, we observe that, contrary to general irreducibility, (local) primeness is robust, in that, once obtained, it cannot be destroyed by subsequent divisions: 

\begin{lemm}
Say that a multifraction~$\aav$ is \emph{$\jj$-prime} if $\aav \act \Rdiv{\jj, \yy}$ is defined for no~$\yy \not= 1$. If $\aav$ is $\jj$-prime, then so is $\aav \act \Rdiv{\ii, \xx}$ for all~$\ii, \xx$.
\end{lemm}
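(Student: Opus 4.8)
The statement asserts that $\jj$-primeness is preserved under arbitrary single divisions $\Rdiv{\ii,\xx}$. The plan is to split into cases according to how far the index~$\ii$ of the division is from~$\jj$, and to use the fact that applying $\Rdiv{\ii,\xx}$ only changes entries at positions $\ii-1$, $\ii$, $\ii+1$, together with the explicit description of what eligibility for a division at level~$\jj$ means (namely, adjacent entries $\aa_\jj$ and $\aa_{\jj+1}$ have trivial gcd on the relevant side).

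\textbf{Step 1: the easy cases.} If $\{\ii-1,\ii,\ii+1\}$ is disjoint from $\{\jj,\jj+1\}$, then the division $\Rdiv{\ii,\xx}$ leaves both $\aa_\jj$ and $\aa_{\jj+1}$ untouched, so $\jj$-primeness is trivially preserved. This disposes of all $\ii$ with $\ii \le \jj-2$ or $\ii \ge \jj+3$. Also the case $\ii = \jj$: here applying $\Rdiv{\jj,\xx}$ to a $\jj$-prime $\aav$ requires $\xx \ne 1$ dividing the gcd of $\aa_\jj$ and $\aa_{\jj+1}$, which is trivial by hypothesis, so no such division exists and the conclusion is vacuous. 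So the real content is in the four remaining values $\ii \in \{\jj-1,\jj+1,\jj+2\}$... actually $\{\jj-1, \jj+1, \jj+2\}$ (and, if one wants to be careful, $\ii = \jj$ as just noted is vacuous).

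\textbf{Step 2: the genuine cases.} For $\ii = \jj+1$: the division $\Rdiv{\jj+1,\xx}$ divides $\aa_{\jj+1}$ (on the right, say, if $\jj+1$ is positive) by~$\xx$, replacing it with some $\bb_{\jj+1}$ with $\bb_{\jj+1}\xx = \aa_{\jj+1}$, i.e. $\bb_{\jj+1}$ right-divides $\aa_{\jj+1}$ — wait, more precisely $\bb_{\jj+1}$ is a left-factor of $\aa_{\jj+1}$. Hmm, one must be careful: the division at level $\jj+1$ affects entry $\jj$ as well (via the remainder), but in a \emph{division} there is no remainder at level $\ii$, so only entries $\ii$ and $\ii+1 = \jj+1, \jj+2$ change, and entry $\jj$ is untouched. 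The new entry $\bb_{\jj+1}$ is a divisor of $\aa_{\jj+1}$ (on the due side), hence any common divisor of $\aa_\jj$ and $\bb_{\jj+1}$ is a common divisor of $\aa_\jj$ and $\aa_{\jj+1}$, hence trivial — so $\aav\act\Rdiv{\jj+1,\xx}$ is again $\jj$-prime. The cases $\ii = \jj-1$ and $\ii = \jj+2$ affect only entries $\le \jj-1$ (resp. $\ge \jj+1$; here $\ii = \jj+2$ changes entries $\jj+1, \jj+2, \jj+3$, so it \emph{does} touch $\aa_{\jj+1}$) — so $\ii = \jj+2$ needs the same divisor argument as $\ii = \jj+1$, applied to the fact that $\Rdiv{\jj+2,\xx}$ replaces $\aa_{\jj+1}$ by $\xx'\aa_{\jj+1}$ or $\aa_{\jj+1}\xx'$ (a \emph{multiple}, via the remainder). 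Now a multiple of $\aa_{\jj+1}$ need not have trivial gcd with $\aa_\jj$! So I must think harder here: use Lemma~\ref{L:IterGcd}. We have $\aa_\jj \gcd \aa_{\jj+1} = 1$ (due side), and the remainder $\xx'$ satisfies the relation forcing $\xx' \gcd(\text{something}) = 1$; combining via Lemma~\ref{L:IterGcd} gives $\aa_\jj \gcd (\xx'\aa_{\jj+1}) = 1$. And the case $\ii = \jj-1$ changes entry $\jj$ by incorporating a remainder, turning $\aa_\jj$ into a multiple of itself — symmetric situation, handled by the symmetric form of Lemma~\ref{L:IterGcd}.

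\textbf{Main obstacle.} The delicate point is precisely the cases $\ii = \jj-1$ and $\ii = \jj+2$, where a division at an adjacent level modifies one of the two entries $\aa_\jj, \aa_{\jj+1}$ by \emph{multiplying} it (incorporating a remainder $\xx'$), rather than dividing it. There, trivial-gcd preservation is not automatic and genuinely requires the iterated-gcd lemma (Lemma~\ref{L:IterGcd}) — one must check that the remainder $\xx'$ produced satisfies the hypothesis $\aa' \gcd \cc = 1$ of that lemma, which follows from Lemma~\ref{L:Lcm} (the remainder of an lcm computation is coprime, on the opposite side, to the cofactor). I would organize the write-up as: (i) reduce to $\ii \in \{\jj-1, \jj+1, \jj+2\}$; (ii) handle $\jj+1$ directly (pure divisor); (iii) handle $\jj-1$ and $\jj+2$ together by Lemma~\ref{L:IterGcd}, checking the coprimality hypothesis via Lemma~\ref{L:Lcm}; and throughout keep the positive/negative (left/right) bookkeeping parallel, doing one side explicitly and invoking symmetry for the other.
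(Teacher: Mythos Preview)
Your plan goes astray at the cases $\ii = \jj-1$ and $\ii = \jj+2$. The lemma is about \emph{divisions} $\Rdiv{\ii,\xx}$, not general reductions $\Red{\ii,\xx}$. You correctly observe, when treating $\ii = \jj+1$, that ``in a division there is no remainder'', but then you forget this for the remaining two cases. By the defining condition of a division, $\xx$ divides $\aa_\ii$ on the relevant side, so the lcm of $\xx$ and $\aa_\ii$ is $\aa_\ii$ itself, and the remainder $\xx'$ is \emph{always} $1$. Consequently a division at level~$\ii$ changes only entries $\ii$ and $\ii+1$, and each of these becomes a divisor (on the relevant side) of the original entry. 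In particular, for $\ii = \jj+2$ the entry $\aa_{\jj+1}$ is \emph{unchanged}, not replaced by a multiple; and for $\ii = \jj-1$ the entry $\aa_\jj$ sits at position $\ii+1$, not $\ii-1$, so it is \emph{divided}, not multiplied --- even for a general reduction the remainder is incorporated into entry $\ii-1 = \jj-2$, never into entry $\jj$.

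With this correction the proof collapses to one line, as in the paper: in every case $\bb_\jj$ and $\bb_{\jj+1}$ are each either equal to, or a divisor (on the side matching the level-$\jj$ gcd) of, the corresponding $\aa$-entry; hence any common divisor of $\bb_\jj$ and $\bb_{\jj+1}$ is already a common divisor of $\aa_\jj$ and $\aa_{\jj+1}$, and is therefore trivial. Lemma~\ref{L:IterGcd} plays no role --- your ``main obstacle'' is a phantom created by treating divisions as if they were general reductions.
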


\begin{proof}
Assume for instance $\jj$ positive in~$\aav$, and let $\bbv = \aav \act \Rdiv{\ii, \xx}$. We have either $\bb_\jj = \aa_\jj$ (for $\ii < \jj - 1$ and $\ii > \jj$) or $\bb_\jj \dive \aa_\jj$ (for $\ii = \jj - 1$); similarly, we have either $\bb_{\jj + 1} = \aa_{\jj + 1}$ (for $\ii < \jj$ and $\ii > \jj + 1$) or $\bb_{\jj + 1} \dive \aa_{\jj + 1}$ (for $\ii = \jj + 1$). Hence, in all cases, the assumption $\aa_\ii \gcdt \aa_{\ii + 1} = 1$ implies $\bb_\ii \gcdt \bb_{\ii + 1} = 1$. 
\end{proof}

Hence, if we start with a multifraction~$\aav$ and apply, in any order, maximal divisions~$\Divmax\ii$ in such a way that every level between~$1$ and~$\dh\aav-1$ is visited at least one, we always finish with a prime multifraction. The latter may depend on the order of the divisions, but we shall now see that there exists a preferred choice. 

\begin{prop}\label{P:DerDiv}
Let $\MM$ be a gcd-monoid. For every $\nn$-multifraction~$\aav$ on~$\MM$, put
\begin{equation}\label{E:DerDiv1}
\derdiv\aav:= \aav \act \Divmax{\nn - 1} \Divmax{\nn - 2} \pdots \Divmax1.
\end{equation}\label{E:DerDiv}
Then, $\derdiv\aav$ is prime, and, for every multifraction~$\bbv$ on~$\MM$, 
\begin{equation}\label{E:DerDiv2}
\text{$\aav \rddivs \bbv$ \quad implies \quad $\bbv \rddivs \derdiv\bbv \rds \derdiv\aav$.}
\end{equation}
\end{prop}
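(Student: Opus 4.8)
The plan is to settle the two easy assertions first and then concentrate on the reduction $\derdiv\bbv \rds \derdiv\aav$, which carries all the content. For every multifraction and every level below its depth the maximal division $\Divmax\ii$ is defined, so \eqref{E:DerDiv1} makes sense and $\aav \rddivs \derdiv\aav$, $\bbv \rddivs \derdiv\bbv$ hold by construction. That $\derdiv\aav$ is prime follows from the robustness of $\jj$-primeness just proved: $\Divmax{\nn-1}$ makes level $\nn-1$ prime, and each further $\Divmax\jj$ makes level $\jj$ prime without destroying primeness at the already-treated levels $\jj+1,\dots,\nn-1$; after $\Divmax1$ every level is prime.

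I then reduce the implication to a single division step: writing $\aav = \ccv^0 \rddiv \pdots \rddiv \ccv^\mm = \bbv$, it suffices to show $\derdiv\ccv^{\kk+1} \rds \derdiv\ccv^\kk$ for each $\kk$ and chain by transitivity of $\rds$. So the goal becomes: if $\bbv = \aav\act\Rdiv{\kk,\zz}$ with $\zz\ne1$ and $\dh\aav = \nn$, then $\derdiv\bbv \rds \derdiv\aav$; this I prove by induction on $\nn$, splitting on $\kk$. If $\kk = \nn-1$, the division $\Divmax{\nn-1}$ of the sweep divides the top two entries by their full gcd, which absorbs the factor $\zz$ already removed in $\bbv$; hence $\bbv\act\Divmax{\nn-1} = \aav\act\Divmax{\nn-1}$ and even $\derdiv\bbv = \derdiv\aav$. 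If $\kk \le \nn-3$, then $\Divmax{\nn-1}$ touches entries disjoint from $\{\kk,\kk+1\}$ and commutes with $\Rdiv{\kk,\zz}$; writing $\aav^\sharp = \aav\act\Divmax{\nn-1}$, whose top two entries are now coprime, the remaining sweep never reaches the last entry, so $\derdiv\aav$ and $\derdiv\bbv$ are obtained by appending that common last entry to, respectively, the top-down reduct of the depth-$(\nn-1)$ multifraction formed by the first $\nn-1$ entries of $\aav^\sharp$ and that of its image under $\Rdiv{\kk,\zz}$; the claim then follows from the depth-$(\nn-1)$ case together with compatibility of $\rds$ with $\opp$ (Lemma~\ref{L:Basics}).

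This leaves the delicate case $\kk = \nn-2$. Here the first two divisions $\Divmax{\nn-1}\Divmax{\nn-2}$ of each sweep act only on the last three entries (divisions carry no remainder, so entry $\nn-3$ is untouched) and therefore compute the top-down reducts of the depth-$3$ tails of $\aav$ and $\bbv$, the latter being the former with a single level-$1$ division applied. By the base case --- the statement in depth $3$, which I would verify by a direct three-entry computation using Lemmas~\ref{L:IterLcm}, \ref{L:IterGcd}, \ref{L:Lcm} and their left--right mirror forms, the model being the braid computation $1/\tta\ttb/\ttb \rds \tta/\tta\ttb/1$ realised by one ``dual'' top-level reduction --- together with compatibility of $\rds$ with $\opp$, one gets $\bbv\act\Divmax{\nn-1}\Divmax{\nn-2} \rds \aav\act\Divmax{\nn-1}\Divmax{\nn-2}$. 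It remains to push this through the lower sweep $\Divmax{\nn-3}\pdots\Divmax1$, and this is the main obstacle, since a maximal division need not commute with an arbitrary reduction. One has to exploit that the reduction supplied by the base case lives only at levels $\nn-2$ and $\nn-1$, the level-$(\nn-2)$ steps being themselves divisions, in order to show that once the top three entries are prime the lower sweep can be made to ``catch up'' by genuine reductions --- in effect re-running the same analysis one level down --- which closes the induction. The careful bookkeeping of how gcds shrink and how remainders propagate across these overlapping operations is the genuinely technical core of the argument.
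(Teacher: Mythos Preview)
Your reduction to a single division step and the case split on~$k$ are the right moves, and the cases $k = n-1$ (absorption into $\Divmax{n-1}$) and $k \le n-3$ (truncate the top entry and induct on depth) are essentially correct. The genuine gap is the case $k = n-2$, which you yourself flag as ``the main obstacle'' without resolving it.

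Concretely: after the first two sweep steps, your depth-$3$ analysis---assuming it is actually carried out; you only sketch it---would give $\aav'' = \bbv'' \act \Red{n-1,x}$ for some~$x$. You must now apply $\Divmax{n-3}\pdots\Divmax1$ to both sides and conclude $\derdiv\bbv \rds \derdiv\aav$. But $\Red{n-1,x}$ modifies entry~$n-2$, which is read by~$\Divmax{n-3}$; the two sweeps therefore diverge at that point, and ``re-running the same analysis one level down'' does not reduce to your induction hypothesis on depth, because the relation between~$\aav''$ and~$\bbv''$ is a genuine reduction, not a division. What is actually needed is a propagation lemma: if $\aav$ and~$\bbv$ are $j$-prime for all $j \ge i$ and $\aav = \bbv \act \Red{i+1,x}$, then $\derdiv\bbv \rds \derdiv\aav$. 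The paper proves exactly this (Lemma~\ref{L:SmallIndices}), by showing that each $\Divmax j$ with $j < i$ commutes with the pending $\Red{i+1,\cdot}$ up to spawning a further reduction $\Red{i-1,\cdot}$ when $j = i-2$ (Lemma~\ref{L:Remote}), so that the single high-level reduction fans out into a cascade $\Red{i+1,\cdot}\Red{i-1,\cdot}\Red{i-3,\cdot}\pdots$ as the sweep descends. That cascade is the technical heart of the argument, and your proposal does not supply it.

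For comparison, the paper's proof does not induct on depth at all: it fixes the division level~$i$ and tracks the two sweeps $\aav^j$, $\bbv^j$ in parallel as $j$ decreases from~$n$ to~$1$, using Lemma~\ref{L:LocConfDiv} to negotiate the transition through levels $i+1$ and~$i$ (producing an intermediate~$\ccv$ and the single $\Red{i+1,x'}$) and then invoking Lemma~\ref{L:SmallIndices} below~$i$. Your depth induction is a legitimate alternative framing for the easy cases, but its hard case funnels into exactly the same propagation lemma.
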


 Thus $\derdiv\aav$ is a reduct of every multifraction obtained from~$\aav$ using division. The proof of Proposition~\ref{P:DerDiv} is nontrivial and requires to precisely control the way divisions and reductions can be commuted. We begin with a confluence result. By~\cite[Lemma~4.6]{Dit}, there always exists a confluence solution for any two reductions at level~$\ii$ and~$\ii + 1$. This applies of course when one of the reductions is a division, but, in that case, we can say more. 

\begin{lemm}\label{L:LocConfDiv}
Assume that both $\aav \act \Red{\ii + 1, \xx}$ and $\aav \act \Rdiv{\ii, \yy}$ are defined. Then we have $\aav \act \Red{\ii + 1, \xx} \Rdiv{\ii, \zz} = \aav \act \Rdiv{\ii, \yy} \Red{\ii + 1, \xx}$, where $\zz$ is determined by the equalities 
$\aa_\ii = \aa \yy$, $\xx \lcm \aa = \aa \vv$, and $\vv \zz = \vv \lcm \yy$ (\resp $\aa_\ii = \yy \aa$, $\xx \lcmt \aa = \vv\aa$, and $\zz\vv = \vv \lcmt \yy$) for $\ii$ positive (\resp negative) in~$\aav$. Moreover, if $\yy$ is maximal for~$\aav$ (\ie, $\yy$ is the gcd of~$\aa_\ii$ and~$\aa_{\ii + 1}$), then $\zz$ is maximal for~$\aav \act \Red{\ii + 1, \xx}$.
\end{lemm}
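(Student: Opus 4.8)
The plan is a direct diagram chase, with Lemma~\ref{L:IterLcm} as the only substantial ingredient. By the left--right mirror symmetry of Definition~\ref{D:Red} (reversing every product exchanges the positive and negative forms of each rule) it suffices to treat the case where $\ii$ is positive in~$\aav$, so that $\ii+1$ is negative; the parenthetical statement then follows verbatim. Since $\aav\act\Rdiv{\ii,\yy}$ is defined, $\yy$ right divides $\aa_\ii$ and $\aa_{\ii+1}$; I write $\aa_\ii=\aa_0\yy$ and denote by~$\aa$ the resulting $(\ii{+}1)$st entry of $\aav\act\Rdiv{\ii,\yy}$, so that $\aa_{\ii+1}=\aa\yy$, and I take $\vv$ and $\zz$ determined, as in the statement, by $\xx\lcm\aa=\aa\vv$ and $\vv\zz=\vv\lcm\yy$. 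Since $\aav\act\Red{\ii+1,\xx}$ is defined, $\xx$ left divides $\aa_{\ii+2}$ and the right lcm $\xx\lcm\aa_{\ii+1}$ exists; writing $\xx\lcm\aa_{\ii+1}=\aa_{\ii+1}\xx'=\xx\bb_{\ii+1}$ and $\aa_{\ii+2}=\xx\bb_{\ii+2}$, the multifraction $\bbv:=\aav\act\Red{\ii+1,\xx}$ carries entries $\aa_\ii\xx'$, $\bb_{\ii+1}$, $\bb_{\ii+2}$ in positions $\ii,\ii+1,\ii+2$ and agrees with $\aav$ elsewhere. Four things remain to be checked: that $\Red{\ii+1,\xx}$ is defined on $\aav\act\Rdiv{\ii,\yy}$, that $\Rdiv{\ii,\zz}$ is defined on $\bbv$, that the two composites agree, and the maximality clause.

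For the first point, $\xx$ still left divides the untouched entry $\aa_{\ii+2}$, and $\xx\lcm\aa$ exists because $\xx\lcm(\aa\yy)=\xx\lcm\aa_{\ii+1}$ does; so Lemma~\ref{L:IterLcm} applies to the triple $\xx,\aa,\yy$ and gives $\xx\lcm\aa=\aa\vv=\xx\bb'$, $\vv\lcm\yy=\vv\zz=\yy\ww$ for some~$\ww$, and $\xx\lcm(\aa\yy)=\xx\bb'\zz=\aa\yy\ww$. Comparing the two expressions $\aa_{\ii+1}\xx'$ and $\aa_{\ii+1}\ww$ for $\xx\lcm\aa_{\ii+1}$ and cancelling forces $\ww=\xx'$ and $\bb_{\ii+1}=\bb'\zz$. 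Now $\aav\act\Rdiv{\ii,\yy}\Red{\ii+1,\xx}$ reads off as having entries $\aa_0\vv$, $\bb'$, $\bb_{\ii+2}$ in positions $\ii,\ii+1,\ii+2$. On the other side, $\aa_\ii\xx'=\aa_0\yy\xx'=\aa_0\vv\zz$ and $\bb_{\ii+1}=\bb'\zz$, so $\zz$ right divides both the $\ii$th and the $(\ii{+}1)$st entries of $\bbv$; hence $\Rdiv{\ii,\zz}$ is defined on $\bbv$, and $\bbv\act\Rdiv{\ii,\zz}$ too has entries $\aa_0\vv$, $\bb'$, $\bb_{\ii+2}$ in those positions and agrees with $\aav$ elsewhere. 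The two composites therefore coincide. (This reproves, in explicit form, the adjacent-level confluence asserted abstractly in \cite[Lemma~4.6]{Dit}.)

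For the maximality clause, assume $\yy=\aa_\ii\gcdt\aa_{\ii+1}$, so $\aa_0\gcdt\aa=1$. Since $\xx\bb'=\aa\vv$ is the right lcm $\xx\lcm\aa$, Lemma~\ref{L:Lcm} gives $\bb'\gcdt\vv=1$. The symmetric (right-gcd) counterpart of Lemma~\ref{L:IterGcd}, applied to $\xx\bb'=\aa\vv$ with $\aa_0\gcdt\aa=1$ and $\bb'\gcdt\vv=1$, then yields $(\aa_0\vv)\gcdt\bb'=1$. Finally, from $\bb_\ii=(\aa_0\vv)\zz$ and $\bb_{\ii+1}=\bb'\zz$ with $\zz$ right dividing both, $\zz$ right divides $\bb_\ii\gcdt\bb_{\ii+1}$, say $\bb_\ii\gcdt\bb_{\ii+1}=\ss\zz$; right-cancelling~$\zz$ shows $\ss$ right divides both $\aa_0\vv$ and $\bb'$, whence $\ss=1$ and $\bb_\ii\gcdt\bb_{\ii+1}=\zz$. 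So $\zz$ is maximal for $\bbv$.

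The only genuine difficulty is bookkeeping: half a dozen auxiliary elements ($\aa_0,\aa,\vv,\bb',\xx',\zz$) and several one- and two-sided lcm/gcd identities must be kept mutually consistent, and the crux is to instantiate Lemma~\ref{L:IterLcm} so that the two intermediate outputs of the iterated-lcm rule are recognised as the remainder~$\xx'$ of the original reduction and the confluence witness~$\zz$. Once that matching is in place, both the coincidence of the two composites and the maximality statement drop out by plain cancellation; the negative case carries no new content and I would dispatch it via the mirror symmetry rather than repeat the computation.
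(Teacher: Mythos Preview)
Your proof is correct and follows essentially the same route as the paper's: both arguments set up $\bbv=\aav\act\Red{\ii+1,\xx}$ and $\ccv=\aav\act\Rdiv{\ii,\yy}$, invoke Lemma~\ref{L:IterLcm} to decompose $\xx\lcm\aa_{\ii+1}=\xx\lcm(\aa\yy)$ into the two-step lcm (your $\bb',\vv,\zz$ are the paper's $\uu,\vv,\zz$, and your $\aa_0,\aa$ are its $\cc_\ii,\cc_{\ii+1}$), read off that $\zz$ right divides both $\bb_\ii$ and $\bb_{\ii+1}$, and match the entries. The maximality clause is likewise identical: Lemma~\ref{L:Lcm} gives $\bb'\gcdt\vv=1$, and the right-gcd mirror of Lemma~\ref{L:IterGcd} finishes the job. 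Your handling of the negative case by mirror symmetry matches the paper's one-line dismissal.
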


\begin{proof}
(Figure~\ref{F:LocConfDiv}) Assume that $\ii$ is positive in~$\aav$, so $\aa_{\ii + 1}$ is negative in~$\aav$. Put $\bbv := \aav \act \Red{\ii + 1, \xx}$ and $\ccv := \aav \act \Rdiv{\ii, \yy}$. By definition, there exists~$\xx'$ satisfying
\begin{gather*}
\bb_{\ii-1} = \aa_{\ii-1}, \quad \bb_\ii = \aa_\ii \xx', \quad \xx\bb_{\ii + 1} = \aa_{\ii+ 1} \xx' = \xx \lcm \aa_{\ii + 1}, \quad \xx \bb_{\ii+2} = \aa_{\ii+2},\\
\cc_{\ii - 1} = \aa_{\ii - 1}, \quad \cc_\ii \yy = \aa_\ii , \quad \cc_\ii \yy = \aa_\ii, \quad \cc_{\ii + 2} = \aa_{\ii + 2}.
\end{gather*}
As $\aa_{\ii + 1}$ is~$\cc_{\ii + 1} \yy$, Lemma~\ref{L:IterLcm} implies the existence of~$\uu$, $\vv$, and~$\zz$ satisfying
\begin{equation}\label{E:AdjCase1}
\bb_{\ii + 1} = \uu \zz \quad \text{with} \quad \cc_{\ii + 1} \vv = \xx \uu = \xx \lcm \cc_{\ii + 1} \quand \yy \xx' = \vv \zz = \yy \lcm \vv.
\end{equation}
By construction, we have $\bb_\ii = \aa_\ii \xx' = \cc_\ii \yy \xx' = \cc_\ii \vv \zz$, which shows that $\zz$ right divides both~$\bb_\ii$ and~$\bb_{\ii + 1}$. Hence $\ddv := \bbv \act \Rdiv{\ii, \zz}$ is defined, and we have 
\begin{equation}\label{E:LocConfDiv1}
\dd_{\ii - 1} = \aa_{\ii - 1}, \quad \dd_\ii = \cc_\vv, \quad \dd_{\ii + 1} = \uu, \quad \dd_{\ii + 2} = \bb_{\ii + 2}.
\end{equation}
On the other hand, by assumption, $\xx$ left divides~$\cc_{\ii + 2}$, which is~$\aa_{\ii + 2}$, and $\xx$ and~$\cc_{\ii + 1}$ admit a common right multiple, namely their right lcm $\xx\uu$. Hence, $\ccv \act \Red{\ii, \xx}$ is defined, and comparing with~\eqref{E:LocConfDiv1} directly yields the expected equality $\ccv \act \Red{\ii, \xx} = \bbv \act \Rdiv{\ii, \zz}$.

It remains to prove that, if $\yy$ is maximal for~$\aav$, then $\zz$ is maximal for~$\bbv$. So assume $\yy = \aa_\ii \gcdt \aa_{\ii + 1}$. We deduce $\cc_\ii \gcdt \cc_{\ii + 1} = 1$. On the other hand, by Lemma~\ref{L:Lcm}, the assumption $\cc_{\ii + 1} \vv = \cc_{\ii + 1} \lcm \xx$ implies $\uu \gcdt \vv = 1$. Then (the symmetric counterpart of) Lemma~\ref{L:IterGcd} implies $\cc_\ii \vv \gcdt \uu = 1$, whence $\bb_\ii \gcdt \bb_{\ii + 1} = \zz$. 

A symmetric argument applies when $\ii$ is negative in~$\aav$.
\end{proof}

\begin{figure}[htb]
\begin{picture}(70,32)(0,7)
\psset{nodesep=0.7mm}
\psline[style=back,linecolor=color2](-8,11.5)(15,11.5)(15,22)(55,22)(55,31.5)(70.5,31.5)(78,31.5)
\psline[style=back,linecolor=color1](-8,12)(15,12)(35,12)(35,32)(78,32)
\pcline{->}(35,32)(55,32)\taput{$\xx$}
\pcline{->}(55,32)(70,32)\taput{$\bb_{\ii+2}$}
\psline[style=thin](35,35)(35,37)(69,37)(69,35)\put(48,39){$\aa_{\ii + 2}$}
\pcline{->}(35,32)(35,22)\tlput{$\cc_{\ii + 1}$}\trput{$\aa$}
\pcline{->}(55,32)(55,22)\trput{$\uu$}
\pcline{->}(15,22)(35,22)\taput{$\cc_\ii$}
\pcline{->}(35,22)(55,22)\taput{$\vv$}
\psarc[style=thin](55,22){3.5}{90}{180}
\psarc[style=thin](55,12){3.5}{90}{180}
\pcline[style=double](15,22)(15,12)
\pcline{->}(35,22)(35,12)\trput{$\yy$}
\pcline{->}(55,22)(55,12)\trput{$\zz$}
\pcline{->}(15,12)(35,12)\tbput{$\aa_\ii$}
\pcline{->}(35,12)(55,12)\tbput{$\xx'$}
\psline[style=thin](16,10)(16,8)(55,8)(55,10)
\pcline[linestyle=none](16,8)(55,8)\tbput{$\bb_\ii$}
\pcline{<-}(0,12)(15,12)\tbput{$\aa_{\ii - 1}$}
\psline[style=thin](57,12)(59,12)(59,31)(57,31)\put(60,21){$\bb_{\ii + 1}$}
\psline[style=exist]{->}(-8,12)(0,12)
\psline[style=exist]{<-}(70,32)(78,32)
\end{picture}
\caption{\small Proof of Lemma~\ref{L:LocConfDiv}: if $\aav$ is eligible both for~$\Red{\ii + 1, \xx}$ and~$\Rdiv{\ii, \yy}$, we can start with either and converge to the colored path.}
\label{F:LocConfDiv}
\end{figure}

 Next, we observe that reduction and division commute when performed at distant levels: the result is easy for very distant levels, slightly more delicate when the levels are closer. 

\begin{lemm}\label{L:Remote}
Assume $\bbv = \aav \act \Red{\ii, \xx}$, and $\jj \not= \ii, \ii + 1, \ii - 2$ $($\resp $\jj = \ii - 2$$)$. Put $\aav':= \aav \act \Divmax\jj$ and $\bbv' := \bbv \act \Divmax\jj$. Then we have $\bbv' = \aav' \act \Red{\ii, \xx}$ $($\resp $\bbv' = \aav' \act \Red{\ii, \xx}\Divmax{\ii -2}$$)$.
\end{lemm}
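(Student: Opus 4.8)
The plan is a case analysis on the position of $\jj$ relative to $\ii$. Throughout I assume $\ii$ is positive in $\aav$ — the case $\ii$ negative is symmetric, obtained by exchanging left and right divisibility everywhere — and I may assume $\xx\ne1$, the case $\xx=1$ being trivial. I will lean on two facts constantly: by Definition~\ref{D:Red}, $\Red{\ii,\xx}$ alters only the entries $\aa_{\ii-1},\aa_\ii,\aa_{\ii+1}$, its applicability depends only on $\aa_\ii$ and $\aa_{\ii+1}$, and the remainder $\xx'$ it produces depends only on $\xx$ and $\aa_\ii$; by Notation~\ref{N:Divmax}, $\Divmax\jj$ alters only $\aa_\jj$ and $\aa_{\jj+1}$, removing their gcd on the side prescribed by the sign of $\jj$.

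\emph{The remote case $\jj\le\ii-3$ or $\jj\ge\ii+2$.} Here the entry sets $\{\aa_\jj,\aa_{\jj+1}\}$ and $\{\aa_{\ii-1},\aa_\ii,\aa_{\ii+1}\}$ are disjoint, neither rule reads an entry that the other writes, and $\Red{\ii,\xx}$ leaves $\aa_\jj$ and $\aa_{\jj+1}$ untouched, so $\Divmax\jj$ strips the same gcd from $\aav$ and from $\bbv$. The two rules therefore commute outright, yielding $\bbv'=\aav'\act\Red{\ii,\xx}$.

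\emph{The adjacent case $\jj=\ii-1$.} This is exactly the configuration of Lemma~\ref{L:LocConfDiv}, with its ``$\ii$'' read as our $\ii-1$ and its division taken maximal, $\Rdiv{\ii-1,\yy}=\Divmax{\ii-1}$. Both $\aav\act\Red{\ii,\xx}$ and $\aav\act\Divmax{\ii-1}$ are defined, so Lemma~\ref{L:LocConfDiv} gives $\aav\act\Red{\ii,\xx}\Rdiv{\ii-1,\zz}=\aav\act\Divmax{\ii-1}\Red{\ii,\xx}$ with $\zz$ \emph{maximal} for $\bbv=\aav\act\Red{\ii,\xx}$; that is, $\Rdiv{\ii-1,\zz}$ acting on $\bbv$ is $\Divmax{\ii-1}$, so the left-hand side equals $\bbv\act\Divmax{\ii-1}=\bbv'$, as wanted. (When $\yy=1$, the cited determination of $\zz$ forces $\zz=1$, and the identity still holds.)

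\emph{The second-neighbor case $\jj=\ii-2$.} This is the delicate one, and it is what forces the extra $\Divmax{\ii-2}$: the rule $\Divmax{\ii-2}$ reads $\aa_{\ii-1}$ at its $\aa_{\ii-2}$-facing end, whereas $\Red{\ii,\xx}$ enlarges $\aa_{\ii-1}$ at the opposite end (since $\bb_{\ii-1}=\xx'\aa_{\ii-1}$), so $\Divmax{\ii-2}$ applied after $\Red{\ii,\xx}$ may strip strictly more than before. I would make this precise by computing both sides on the only entries that move, $\aa_{\ii-2}$ and $\aa_{\ii-1}$: with $\vv$ the (right) gcd removed by $\Divmax{\ii-2}$ on $\aav$, the right-hand side $\aav\act\Divmax{\ii-2}\Red{\ii,\xx}\Divmax{\ii-2}$ strips a suffix $\ww\vv$, where $\ww=(\aa_{\ii-2}\vv\inv)\gcdt(\xx'\aa_{\ii-1}\vv\inv)$, while $\bbv\act\Divmax{\ii-2}$ strips $\aa_{\ii-2}\gcdt(\xx'\aa_{\ii-1})$; these coincide by the elementary identity $(\pp\vv)\gcdt(\qq\vv)=(\pp\gcdt\qq)\vv$ — valid whenever $\vv$ right-divides $\pp\vv$ and $\qq\vv$, by right cancellativity — applied with $\pp=\aa_{\ii-2}\vv\inv$ and $\qq=\xx'\aa_{\ii-1}\vv\inv$. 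The remaining entries match for free: entries $\ii$ and $\ii+1$ are produced identically by $\Red{\ii,\xx}$, with the same $\xx'$ because $\Divmax{\ii-2}$ leaves $\aa_\ii$ alone, and everything else is fixed by all the rules involved; also $\Red{\ii,\xx}$ stays defined on $\aav\act\Divmax{\ii-2}$ since its applicability involves only $\aa_\ii$ and $\aa_{\ii+1}$. I expect this case to be the main obstacle: one has to see both that a single pass of $\Divmax{\ii-2}$ is genuinely insufficient (because $\Red{\ii,\xx}$ can create a fresh common divisor at level $\ii-2$) and exactly which gcd identity absorbs the discrepancy, and then carry the argument through in both the ``$\ii$ positive'' and ``$\ii$ negative'' mirror forms — whereas the adjacent case is a near-immediate corollary of Lemma~\ref{L:LocConfDiv} and the remote case is routine.
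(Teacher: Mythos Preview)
Your proof is correct and follows essentially the same case analysis as the paper: the remote case is disposed of by disjointness of the affected entries, the case $\jj=\ii-1$ is an immediate application of Lemma~\ref{L:LocConfDiv} with maximality, and the case $\jj=\ii-2$ hinges on the fact that $\Red{\ii,\xx}$ modifies $\aa_{\ii-1}$ at the end opposite to where $\Divmax{\ii-2}$ reads it. The only cosmetic difference is in that last case: the paper first observes the plain commutation $\aav\act\Red{\ii,\xx}\Rdiv{\ii-2,\yy}=\aav\act\Rdiv{\ii-2,\yy}\Red{\ii,\xx}$ (with $\yy$ the maximal divisor for~$\aav$) and then notes that $\bbv\act\Divmax{\ii-2}=(\bbv\act\Rdiv{\ii-2,\yy})\act\Divmax{\ii-2}$, whereas you compute the two gcds directly and match them via $(\pp\vv)\gcdt(\qq\vv)=(\pp\gcdt\qq)\vv$; these are the same argument unfolded differently.
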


\begin{proof}
For $\jj \le \ii - 3$ or $\jj \ge \ii + 2$, the reduction~$\Red{\ii, \xx}$ does not change the $\jj$th and $(\jj + 1)$st entries, so the greatest division at level~$\jj$ remains the same, and commutation is straightforward. For $\jj = \ii - 1$, Lemma~\ref{L:LocConfDiv} gives $\bbv' = \aav' \act \Red{\ii, \xx}$.

Assume $\jj = \ii - 2$. Then the reduction~$\Red{\ii, \xx}$ does not change the $\jj$th entry, but it possibly increases the $(\jj + 1)$st entry. So, if $\Rdiv{\jj, \yy}$ is the maximal $\jj$-division for~$\aav$, then $\Rdiv{\jj, \yy}$ applies to~$\bbv$, but it need not be the maximal $\jj$-division for~$\bbv$. Expanding the definitions, we obtain the commutation relation $\aav \act \Red{\ii, \xx} \Rdiv{\jj, \yy} = \aav \act \Rdiv{\jj, \yy}\Red{\ii, \xx}$, meaning $\bbv \act \Rdiv{\jj, \yy} = \aav' \act \Red{\ii, \xx}$, together with $\bbv' = (\bbv \act \Rdiv{\jj, \yy})\Divmax{\jj}$, whence $\bbv' = \aav' \act \Red{\ii, \xx}\Divmax{\ii -2}$, as expected.
\end{proof}

 The last preliminary result, needed for the end of the proof of Proposition~\ref{P:DerDiv}, connects~$\derdiv\aav$ and~$\derdiv\bbv$ in the (very special) case when $\bbv$ is an elementary reduct of~$\aav$ and $\aav$ is prime at every sufficiently large level. 

\begin{lemm}\label{L:SmallIndices}
If $\aav$ and $\bbv$ are $\jj$-prime for $\jj \ge \ii$, then $\bb = \aa \act \Red{\ii + 1, \xx}$ implies $\derdiv\aav \rds \derdiv\bbv$.
\end{lemm}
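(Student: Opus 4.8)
The plan is to argue by induction on $\ii$. For $\ii = 1$ the hypotheses say that $\aav$ and $\bbv$ are $\jj$-prime for every~$\jj$, that is, prime; so every $\Divmax\kk$ acts trivially on them, whence $\derdiv\aav = \aav$ and $\derdiv\bbv = \bbv$, and the single reduction $\bbv = \aav \act \Red{2, \xx}$ gives $\derdiv\aav \rds \derdiv\bbv$ at once.

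For $\ii \ge 2$, I first apply the maximal division $\Divmax{\ii-1}$ to both sides, putting $\aav_1 := \aav \act \Divmax{\ii-1}$ and $\bbv_1 := \bbv \act \Divmax{\ii-1}$. Using the robustness of $\jj$-primeness (the unnamed lemma just before Proposition~\ref{P:DerDiv}) together with the fact that $\Divmax{\ii-1}$ makes its argument $(\ii-1)$-prime, one checks that $\aav_1$ and $\bbv_1$ are $\jj$-prime for all $\jj \ge \ii-1$; hence $\Divmax{\nn-1}, \dots, \Divmax{\ii-1}$ act trivially on each of them, so that $\derdiv\aav_1 = \derdiv\aav$ and $\derdiv\bbv_1 = \derdiv\bbv$, and it is enough to connect $\derdiv\aav_1$ with $\derdiv\bbv_1$. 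Now Lemma~\ref{L:Remote} in the case $\jj = (\ii+1) - 2$ (reduction at level $\ii+1$, division at level $\ii-1$) gives $\bbv_1 = \aav_1 \act \Red{\ii+1, \xx} \Divmax{\ii-1}$. Set $\ccv := \aav_1 \act \Red{\ii+1, \xx}$. The divisions $\Divmax{\ii-2}, \dots, \Divmax1$ still occurring in the computation of $\derdiv\aav_1$ from $\aav_1$ are all at levels $\le (\ii+1) - 3$, hence commute with $\Red{\ii+1, \xx}$ by the easy case of Lemma~\ref{L:Remote}, and they leave entries $\ii+1$ and $\ii+2$ untouched, so $\Red{\ii+1, \xx}$ stays applicable along the way. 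This yields $\derdiv\aav_1 \rd \derdiv\aav_1 \act \Red{\ii+1, \xx} = \ccv \act \Divmax{\ii-2} \cdots \Divmax1$, whereas $\derdiv\bbv_1 = \bbv_1 \act \Divmax{\ii-2} \cdots \Divmax1 = \ccv \act \Divmax{\ii-1} \Divmax{\ii-2} \cdots \Divmax1$.

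Everything thus comes down to proving $\ccv \act \Divmax{\ii-2} \cdots \Divmax1 \rds \ccv \act \Divmax{\ii-1} \Divmax{\ii-2} \cdots \Divmax1$ — that prepending one more maximal division at level $\ii-1$ to the division block produces a reduct of what one already has. I expect this to be the main obstacle, and the point where real work is needed: maximal divisions at adjacent levels do \emph{not} commute (the $3$-strand braid example of Subsection~\ref{SS:Div} exhibits $\aav \act \Divmax2 \Divmax1 \ne \aav \act \Divmax1 \Divmax2$, the two differing by a reduction), so the relation cannot come from reshuffling divisions alone; one must again bring in a reduction, pushing the extra $\Divmax{\ii-1}$ to the right past the $\Divmax\jj$ with $\jj \le \ii-3$ (which commute with it freely, acting on disjoint entries) and resolving the single obstruction at level $\ii-2$ by Lemmas~\ref{L:LocConfDiv} and~\ref{L:Remote}, the maximality-preservation clauses of those lemmas being used to keep control of which divisions remain maximal after each swap, until the residual identity becomes an instance already covered by the induction hypothesis. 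That maximality bookkeeping is the delicate ingredient; once it is in place, the remaining steps are the routine commutation algebra packaged in Lemmas~\ref{L:LocConfDiv} and~\ref{L:Remote}.
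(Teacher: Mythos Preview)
Your induction on~$\ii$ starts correctly: the base case is fine, and the passage from $(\aav,\bbv)$ to $(\aav_1,\bbv_1,\ccv)$ via $\Divmax{\ii-1}$ and Lemma~\ref{L:Remote} is right. The genuine gap is the last step, where you hope the residual identity $\ccv \act \Divmax{\ii-2}\cdots\Divmax1 \rds \ccv \act \Divmax{\ii-1}\Divmax{\ii-2}\cdots\Divmax1$ ``becomes an instance already covered by the induction hypothesis''. It does not. To invoke the hypothesis at level~$\ii-2$ you would need a pair related by a single $\Red{\ii-1,\cdot}$ that is $\jj$-prime for all $\jj\ge\ii-2$; but $\ccv$ is not $\ii$-prime in general (only its entries with index $\ge\ii+1$ agree with those of~$\bbv_1$, while its $\ii$th entry is a strict multiple of that of~$\bbv_1$), and after you swap $\Divmax{\ii-1}$ past $\Divmax{\ii-2}$ via Lemma~\ref{L:LocConfDiv} the resulting $\Red{\ii-1,\xx_1}$ is no longer a division either, so no smaller instance of the lemma materialises. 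If you actually push the residual reductions through the remaining $\Divmax\jj$'s, each application of the $\jj=(\text{level})-2$ case of Lemma~\ref{L:Remote} spawns a further reduction two levels down, and you accumulate a whole chain $\Red{\ii+1,\xx_0}\Red{\ii-1,\xx_1}\Red{\ii-3,\xx_2}\cdots$ rather than a single term.

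The paper's proof embraces exactly this accumulation: it runs induction on~$\jj$ decreasing from~$\ii$ to~$1$ and maintains the explicit invariant $\bbv^\jj = \aav^\jj \act \Red{\ii+1,\xx_0}\Red{\ii-1,\xx_1}\cdots\Red{\ii-2\kk+1,\xx_\kk}$ with $\ii-2\kk>\jj$, lengthening the chain by one whenever the bad case of Lemma~\ref{L:Remote} is hit. At $\jj=1$ this reads $\derdiv\bbv = \derdiv\aav \act (\text{chain})$, hence $\derdiv\aav \rds \derdiv\bbv$. The organising idea you are missing is that the invariant must track the growing sequence of residuals throughout, not hope that it collapses to one reduction matching a smaller case of the statement.
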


\begin{proof}
By assumption, we have $\derdiv\aav = \aav \act \Divmax{\ii -1} \pdots \Divmax1$ and $\derdiv\bbv = \bbv \act \Divmax{\ii -1} \pdots \Divmax1$. Put $\aav^\ii:= \aav$, $\bbv^\ii := \bbv$ and, inductively, $\aav^\jj = \aav^{\jj + 1} \act \Divmax\jj$, $\bbv^\jj = \bbv^{\jj + 1} \act \Divmax\jj$ for~$\jj$ decreasing from~$\ii - 1$ to~$1$, yielding $\derdiv\aav = \aav^1$ and $\derdiv\bbv = \bbv^1$. We prove using induction on~$\jj$ decreasing from~$\ii$ to~$1$ that, for every~$\yy$, there exist $\xx_0$, $\xx_1 \wdots \xx_\kk$ with $\ii - 2\kk > \jj$ satisfying $\bbv^\jj =\nobreak \aav^\jj \act \Red{\ii + 1, \xx_0} \Red{\ii - 1, \xx_1} \pdots \Red{\xx - 2\kk + 1, \xx_\kk}$. By assumption, the property is true for $\jj = \ii$, with $\kk =\nobreak 0$ and~$\xx_0 = \xx$. Assume $\ii > \jj \ge 1$. By induction hypothesis, we have $\bbv^{\jj + 1} = \aav^{\jj + 1} \act \Red{\ii + 1, \xx_0} \Red{\ii - 1, \xx_1} \pdots \Red{\ii - 2\kk + 1, \xx_\kk}$ for some $\xx_0 \wdots \xx_\kk$ with $\ii - 2 \kk > \jj + 1$. By repeated applications of Lemma~\ref{L:Remote}, we deduce that each reduction~$\Red{\ii - 2\ell + 1, \xx_\ell}$ commutes with~$\Divmax\jj$, except the last one in the case $\ii - 2\kk = \jj + 2$, in which case Lemma~\ref{L:Remote} prescribes to add one more reduction (a division) $\Red{\ii - 2\kk - 1, \xx_{\kk + 1}}$. In this way, we obtain either $\bbv^\jj = \aav^\jj \act \Red{\ii + 1, \xx_0} \Red{\ii - 1, \xx_1} \pdots \Red{\ii - 2\kk + 1, \xx_\kk}$, or $\bbv^\jj = \aav^\jj \act \Red{\ii + 1, \xx_0} \Red{\ii - 1, \xx_1} \pdots \Red{\ii - 2\kk + 1, \xx_\kk}\Red{\ii - 2\kk - 1, \xx_{\kk + 1}}$, and the induction continues.
\end{proof}

We can now complete the argument for Proposition~\ref{P:DerDiv}. The proof that $\derdiv\aav$ is prime is easy, but that of the relation~\eqref{E:DerDiv} is more delicate. 

\begin{proof}[Proof of Proposition~\ref{P:DerDiv}]
It follows from the definition that $\aav \act \Divmax{\nn-1}$ is $(\nn - 1)$-prime, then that $\aav \act \Divmax{\nn-1}\Divmax{\nn-2}$ is $(\nn - 1)$- and $(\nn - 2)$-prime, etc., hence that $\derdiv\aav$ is $\ii$-prime for $1 \le \ii < \nn$, hence it is prime.

We now establish~\eqref{E:DerDiv2}, \ie, prove that $\aav \rddivs \bbv$ implies $\derdiv\bbv \rds \derdiv\aav$. For an induction, it is sufficient to prove that $\aav \rddiv \bbv$ implies $\derdiv\bbv \rds \derdiv\aav$. So, we assume $\bbv = \aav \act \Rdiv{\ii, \zz}$, and aim at proving $\derdiv\bbv \rddivs \derdiv\aav$. Put $\dh\aav = \nn$. By definition, $\derdiv\aav$ and $\derdiv\bbv$ are obtained by performing $\nn - 1$ successive divisions, and we shall establish a step-by-step connection summarized in Figure~\ref{F:Partial}. Put $\aav^\nn := \aav$, $\bbv^\nn := \bbv$, and let $\aav^\ii$ (\resp $\bbv^\ii$) be obtained from~$\aav^{\ii + 1}$ (\resp $\bbv^{\ii + 1}$) by applying $\Divmax\ii$, so that we finally have $\derdiv\aav = \aav^1$ and $\derdiv\bbv = \bbv^1$. We assume that $\ii$ is positive in~$\aav$.

Consider first $\jj \ge \ii + 2$. Applying Lemma~Ê\ref{L:Remote}, we inductively obtain $\bbv^\jj = \aav^\jj \act \Rdiv{\ii, \zz}$, implying the commutativity of the $\nn - \ii - 2$ left hand squares in the diagram of Figure~\ref{F:Partial}. 

Now let $\xx = \aa^{\ii + 1} \gcd \aa^{\ii + 2}$. By definition, we have $\aav^{\ii + 1} = \aav^{\ii + 2} \act \Rdiv{\ii + 1, \xx}$. By Lemma~\ref{L:LocConfDiv}, there exists~$\ccv$ and~$\zz'$ satisfying $\ccv = \aav^{\ii + 2} \act \Rdiv{\ii + 1, \xx} \Rdiv{\ii, \zz'} = \aav^{\ii + 2} \act \Rdiv{\ii, \zz} \Red{\ii + 1, \xx}$, which reads $\ccv = \aav^{\ii + 1} \Rdiv{\ii, \zz'} = \bbv^{\ii + 2} \act \Red{\ii + 1, \xx}$.

Next, $\ccv$ is obtained from~$\aav^{\ii + 1}$ by some $\ii$-division, whereas $\aav^\ii$ is obtained from~$\aav^{\ii +1}$ by the maximal $\ii$-division, hence $\aav^\ii$ must be obtained from~$\ccv$ by some further $\ii$-division, namely the maximal $\ii$-division for~$\ccv$. So, $\aav^\ii = \ccv \act \Divmax\ii$ holds.

On the other hand, $\bbv^{\ii + 1}$ is obtained from~$\bbv^{\ii + 2}$ by the maximal $(\ii + 1)$-division, namely~$\Rdiv{\ii + 1, \yy}$ with $\yy = \bb_{\ii + 1} \gcd \bb_{\ii + 2}$. As we have $\xx = \aa_{\ii + 2} \gcd \aa_{\ii + 2}$ and $\bb_{\ii + 2} = \aa_{\ii + 2}$, the relation $\bb_{\ii + 1} \dive \aa_{\ii + 1}$ implies $\yy \dive \xx$, say $\xx = \yy \xx'$. If follows that reducing~$\xx$ at level~$\ii + 1$ in~$\bbv^{\ii + 2}$ amounts to first dividing by~$\yy$ and then reducing~$\xx'$, \ie, we have $\bbv \act \Red{\ii + 1, \xx} = \bbv \act \Rdiv{\ii + 1, \yy} \Red{\ii + 1, \xx'} = \bbv^{\ii + 1} \act \Red{\ii + 1, \xx'}$.

Now, two reductions apply to~$\bbv^{\ii + 1}$, namely~$\Red{\ii + 1, \xx'}$, which leads to~$\ccv$, and $\Divmax\ii$, which is, say, $\Rdiv{\ii + 1, \yy}$ and leads to~$\bbv^\ii$. Applying Lemma~\ref{L:LocConfDiv} again, we obtain the existence of~$\yy'$ satisfying $\bb^{\ii + 1}Ê\act \Red{\ii + 1, \xx'} \Rdiv{\ii, \yy'} = \bb^{\ii + 1} \act \Rdiv{\ii, \yy} \Red{\ii + 1, \xx'}$, which boils down to $\ccv \act \Rdiv{\ii, \yy'}Ê= \bbv^\ii \act \Red{\ii + 1, \xx'}$. Moreover, as $\Rdiv{\ii, \yy}$ is the maximal $\ii$-division applying to~$\bbv^{\ii + 1}$, Lemma~\ref{L:LocConfDiv} implies that $\Rdiv{\ii, \yy'}$ is the maximal $\ii$-division applying to~$\ccv$. We obtained above $\aav^\ii = \ccv \act \Divmax\ii$, so we deduce $\aav^\ii = \bbv^\ii \act \Red{\ii + 1, \xx'}$.

From there, we are in position for applying Lemma~\ref{L:SmallIndices} (with $\aav$ and $\bbv$ interchanged): we have $\aav^\ii = \bbv^\ii \act \Red{\ii + 1, \xx'}$, and, by construction, $\aav^\ii$ and~$\bbv^\ii$ are $\jj$-prime for every~$\jj \ge \ii$. Then Lemma~\ref{L:SmallIndices} ensures $\bbv^1 \rds \aav^1$, which is $\derdiv\bbv \rds \derdiv\aav$.
\end{proof}

\begin{figure}[htb]
$\setlength{\arraycolsep}{5.8mm} 
\begin{array}{cccccccc}
\Rnode{n12}{\aav^\nn} & \Rnode{n22}{\aav^{\nn - 1}} & \Rnode{n32}{\aav^{\ii + 2}} & \Rnode{n42}{\aav^{\ii + 1}} & \Rnode{n52}{\aav^\ii} \\[9mm]
\Rnode{n11}{\bbv^\nn} & \Rnode{n21}{\bbv^{\nn - 1}} & \Rnode{n31}{\bbv^{\ii + 2}} & \Rnode{n41}{\ccv} & \Rnode{n51}{\aav^\ii} & \Rnode{n61}{\aav^{\ii - 1}} & \Rnode{n71}{\aav^2} & \Rnode{n81}{\derdiv\aav}\\[9mm]
&& \Rnode{n30}{\bbv^{\ii + 2}} & \Rnode{n40}{\bbv^{\ii + 1}} & \Rnode{n50}{\bbv^\ii} & \Rnode{n60}{\bbv^{\ii - 1}} & \Rnode{n70}{\bbv^2} & \Rnode{n80}{\derdiv\bbv}
\end{array}
\psset{nodesep=5pt}
\ncline[style=double]{->}{n12}{n22}\taput{\Divmax{\nn - 1}}
\ncline[style=etc]{n22}{n32}
\ncline[style=double]{->}{n32}{n42}\taput{\Divmax{\ii + 1}}
\ncline[style=double]{->}{n42}{n52}\taput{\Divmax\ii}
\ncline[style=double]{->}{n12}{n11}\trput{\Rdiv{\ii, \zz}}
\ncline[style=double]{->}{n22}{n21}\trput{\Rdiv{\ii, \zz}}
\ncline[style=double]{->}{n32}{n31}\trput{\Rdiv{\ii, \zz}}
\ncline[style=double]{->}{n42}{n41}\trput{\Rdiv{\ii, \zz'}}
\ncline[style=double]{n52}{n51}
\ncline[style=double]{->}{n51}{n61}\taput{\Divmax{\ii - 1}}
\ncline[style=etc]{n61}{n71}
\ncline[style=double]{->}{n71}{n81}\taput{\Divmax1}
\ncline[style=etc]{n11}{n21}
\ncline[style=double]{->}{n11}{n21}\tbput{\Divmax{\nn - 1}}
\ncline[style=etc]{n21}{n31}
\ncline[style=double]{->}{n31}{n41}\tbput{\Red{\ii + 1, \xx}}
\ncline[style=double]{->}{n41}{n51}\tbput{\Divmax\ii}
\ncline[style=double]{n31}{n30}
\ncline[style=double]{<-}{n41}{n40}\trput{\Red{\ii + 1, \xx'}}
\ncline[style=double]{<-}{n51}{n50}\trput{\Red{\ii + 1, \xx'}}
\ncline[linecolor=white]{n61}{n60}\tlput{\rlap{\quad Lemma~\ref{L:SmallIndices}}}
\ncline[style=double]{<-}{n81}{n80}\trput{*}
\ncline[style=double]{->}{n50}{n60}\tbput{\Divmax{\ii - 1}}
\ncline[style=etc]{n60}{n70}
\ncline[style=double]{->}{n70}{n80}\tbput{\Divmax1}
\ncline[style=double]{->}{n30}{n40}\tbput{\Divmax{\ii + 1}}
\ncline[style=double]{->}{n40}{n50}\tbput{\Divmax\ii}
$
\caption{\small Comparing the computations of $\derdiv\aav$ and $\derdiv(\aav \act \Rdiv{\ii, \xx})$.}
\label{F:Partial}
\end{figure}

 Let us denote by~$\Irr(\aav)$ the family of all $\RRR$-irreducible reducts of~$\aav$. The failure of confluence means that $\Irr(\aav)$ may content more than one element, and controlling~$\Irr(\aav)$ is one of the main challenges in the current approach. As, for every multifraction~$\aav$, we have now a distinguished reduct~$\derdiv\aav$, a natural task is to compare~$\Irr(\derdiv\aav)$ with~$\Irr(\aav)$. By definition, $\aav \rddivs \derdiv\aav$ implies $\Irr(\derdiv\aav) \subseteq \Irr(\aav)$, and, by Proposition~\ref{P:DerDiv}, $\aav \rddiv \bbv$ implies $\bbv \rds \derdiv\aav$, whence $\Irr(\derdiv\aav) \subseteq \Irr(\derdiv\bbv)$. The next examples show these easy inclusions are the best we can expect in general. 

\begin{exam}\label{X:Der}
In the Artin-Tits monoid of type~$\Att$, let $\aav = \fr{ab/aba/aca}$. Then one finds $\Irr(\aav) = \{\aav_1, \aav_2\}$, with $\aav_1 = \aav \act \Rdiv{2, \tta} = \fr{ab/ba/ca}$ and $\aav_2 = \aav \act \Rdiv{1, \fr{ab}} \Red{2, \fr{c}} = \fr{cb/bc/ac}$. Now, we obtain $\derdiv\aav = \aav_1$, whence $\Irr(\derdiv\aav) = \{\aav_1\}$: so, by performing divisions, we lost one of the irreducible reducts of~$\aav$. On the other hand, for $\bbv = \aav \act \Rdiv{1, \fr{ab}} = \fr{1/b/aca}$, we find $\derdiv\bbv = \bbv$ and $\Irr(\derdiv\bbv) = \Irr(\bbv) = \{\aav_1, \aav_2\} = \Irr(\aav)$: so $\aav \rddiv \bbv$ does not imply $\Irr(\derdiv\bbv) \subseteq \Irr(\derdiv\aav)$. 
\end{exam}

\begin{rema}
 The order of divisions is important in the definition of~$\derdiv\aav$ and, even at the expense of using reductions instead of divisions, we cannot start from low levels in general. Indeed, for $\dh\aav = 3$, Lemma~\ref{L:LocConfDiv} implies $\derdiv\aav = \aav \act \Red{1, \xx_1}\Red{2, \xx_2}$, with $\xx_\ii$ the (relevant) gcd of~$\aa_\ii$ and~$\aa_{\ii + 1}$, but this expression of~$\der\aav$ as $\aav \act \prod_{\ii = 1}^{\ii = \dh\aav - 1} \Red{\ii, \xx_\ii}$ with~$\xx_\ii$ gcd of~$\aa_\ii$ and~$\aa_{\ii + 1}$ does not work for $\dh\aav \ge 4$: for instance, for $\aav := \aa/\aa/\aa/\aa$, one finds $\aav \act \Red{1, \xx_1}\Red{2, \xx_2}\Red{3, \xx_3} = \aa/\aa/1/1 \not= \derdiv\aav = \one$.
\end{rema}

\subsection{Tame reductions}\label{SS:Tame}

 What motivates studying divisions specifically is that the latter satisfy a form of confluence: by Lemma~\ref{L:LocConfDiv} and the results of~\cite{Dit}, if a multifraction~$\aav$ is eligible for a division~$\Rdiv{\ii, \xx}$ and for another reduction~$\Red{\jj, \yy}$, a common reduct for $\aav \act \Rdiv{\ii, \xx}$ and~$\aav \act \Red{\jj, \yy}$ always exists. Moreover, we saw in Proposition~\ref{P:DerDiv} that there always exists a maximal div-reduct with good compatibility properties of the associated operator~$\derdiv$. However, because many prime multifractions are not irreducible, and, in particular, many prime unital multifractions are not trivial, it is hopeless to analyze reduction in terms of divisions exclusively, making it natural to try to extend the family of divisions so as to preserve its main property, namely ``guaranteed confluence''. This leads to tame reductions, and the main result here is that, exactly as in the case of divisions, there exists for each multifraction~$\aav$ and each level~$\ii$ a maximal tame $\ii$-reduction applying to~$\aav$. 

\begin{defi}
If $\aav$ is a multifraction, we say that $\xx$ is an \emph{$\ii$-reducer} for~$\aav$ if $\aav \act \Red{\ii, \xx}$ is defined; we then say that an $\ii$-reducer~$\xx$ is \emph{tame for~$\aav$} if, for all~$\jj, \yy$ such that $\aav \act \Red{\jj, \yy}$ is defined, $\aav \act \Red{\ii, \xx}$ and $\aav \act \Red{\jj, \yy}$ admit a common reduct; otherwise, $\xx$ is called \emph{wild for~$\aav$}.
\end{defi}

Thus $\xx$ is a tame $\ii$-reducer for~$\aav$ if reducing~$\xx$ in~$\aav$ leaves all possibilities for further reductions open, whereas reducing a wild $\ii$-reducer excludes at least one subsequent confluence.

\begin{exam}\label{X:Tame1}
If $\MM$ is a gcd-monoid satisfying the $3$-Ore condition, the system~$\RDb\MM$ is confluent and, therefore, every reducer is tame for every multifraction it applies to. By contrast, in the Artin-Tits of type~$\Att$, for $\aav = \fr{1/c/aba}$, both $\tta$ and~$\ttb$ are $2$-reducers for~$\aav$, but $\aav \act \Red{2, \fr{a}}$ and $\aav \act \Red{2, \fr{b}}$ admit no common reduct, hence $\fr{a}$ and~$\fr{b}$ are wild $2$-reducers for~$\aav$.
\end{exam}

 To prove the existence of the maximal tame $\ii$-reducer in Proposition~\ref{P:MaxTame} below, we shall use convenient characterizations of tame reducers established in Lemmas~\ref{L:Tame1} and~\ref{L:Tame2}. 

\begin{lemm}\label{L:Tame1}
If $\MM$ is a gcd-monoid, $\aav$ is a multifraction on~$\MM$, and $\aav \act \Red{\ii, \xx}$ is defined, then $\xx$ is a tame $\ii$-reducer for~$\aav$ if and only if for $\ii$ positive $($\resp negative$)$ in~$\aav$, the elements~$\xx, \yy$, and $\aa_\ii$ admit a common right $($\resp left$)$ multiple whenever $\aav \act \Red{\ii, \yy}$ is defined.
\end{lemm}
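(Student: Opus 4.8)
The plan is to unwind the definition of tameness and isolate the only genuinely dangerous competitor, namely a second reduction at the same level~$\ii$. Assume $\ii$ is positive in~$\aav$ (the negative case being symmetric, with left multiples in place of right multiples). By definition, $\xx$ is tame for~$\aav$ exactly when, for every pair $(\jj,\yy)$ with $\aav \act \Red{\jj,\yy}$ defined, the multifractions $\aav\act\Red{\ii,\xx}$ and $\aav\act\Red{\jj,\yy}$ admit a common reduct. First I would dispose of the cases $\jj \ne \ii$ for free: when $|\jj - \ii| \ge 2$ the two reductions act on disjoint triples of entries and trivially commute, and when $\jj = \ii\pm1$ the local confluence result \cite[Lemma~4.6]{Dit} (invoked in the excerpt before Lemma~\ref{L:LocConfDiv}) already guarantees a common reduct, unconditionally. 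So tameness of~$\xx$ is equivalent to: for every $2$-reducer $\yy$ of~$\aav$ at level~$\ii$, the multifractions $\aav\act\Red{\ii,\xx}$ and $\aav\act\Red{\ii,\yy}$ admit a common reduct.

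Next I would analyze the competition between $\Red{\ii,\xx}$ and $\Red{\ii,\yy}$ directly, using the picture of Figure~\ref{F:Red}. Both reductions extract a divisor of~$\aa_{\ii+1}$ and push it through~$\aa_\ii$ by an lcm; applying $\Red{\ii,\xx}$ leaves the new $(\ii+1)$st entry $\bb_{\ii+1}$ equal to (up to the relevant cancellation) $\aa_{\ii+1}$ divided by~$\xx$, and similarly for~$\yy$. A common reduct of $\aav\act\Red{\ii,\xx}$ and $\aav\act\Red{\ii,\yy}$ can only be produced by reducing the remaining content of the $(\ii+1)$st entries against the modified $\aa_\ii$'s; by the iterated-lcm rule (Lemma~\ref{L:IterLcm}) and Lemma~\ref{L:IterGcd}, such a common reduct exists if and only if $\xx\lcmt\aa_{\ii+1}$ (the part of $\aa_{\ii+1}$ after removing $\xx$, post-lcm) and the corresponding $\yy$-part can both be further pushed through, which unwinds precisely to the condition that $\xx$, $\yy$, and $\aa_\ii$ admit a common right multiple. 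Concretely: if $\xx\lcm\aa_\ii = \aa_\ii\xx'$ and we need $\Red{\ii,\xx}(\aav)$ and $\Red{\ii,\yy}(\aav)$ to meet, the obstruction is exactly whether $\xx\lcm\yy$ exists and is compatible with $\aa_\ii$, i.e.\ whether $\xx\lcm\yy\lcm\aa_\ii$ exists — and since $\xx\lcm\aa_\ii$ and $\yy\lcm\aa_\ii$ exist by hypothesis (both are reducers at level~$\ii$), the $3$-Ore-type pasting shows this triple lcm exists iff $\xx,\yy$ themselves admit a common right multiple together with~$\aa_\ii$, which is the stated condition. I would make this precise by writing $\xx\lcm\aa_\ii = \aa_\ii\xx'$, $\yy\lcm\aa_\ii=\aa_\ii\yy'$, and showing via Lemma~\ref{L:IterLcm} that $\xx'\lcm\yy'$ exists iff $\xx\lcm\yy\lcm\aa_\ii$ does, then checking that when it exists the common reduct is obtained by one further reduction on each side.

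For the two directions: if $\xx,\yy,\aa_\ii$ have a common right multiple for every level-$\ii$ reducer $\yy$, the construction above produces the common reduct, so $\xx$ is tame. Conversely, if for some level-$\ii$ reducer $\yy$ the triple $\xx,\yy,\aa_\ii$ has no common right multiple, I would argue that $\aav\act\Red{\ii,\xx}$ and $\aav\act\Red{\ii,\yy}$ can have no common reduct: any common reduct would have a well-defined value $\can(\,\cdot\,)$ in $\EG\MM$ and the $(\ii-1,\ii,\ii+1)$-block would force $\aa_\ii$, $\xx$, and $\yy$ into a common multiple by tracking how divisors accumulate in the $(\ii-1)$st entry through repeated reductions — here I would lean on the structural fact (from \cite{Dit}) that reduction only ever pushes content leftward via lcms, so the absence of the triple lcm is a permanent obstruction. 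The main obstacle is this converse direction: ruling out a common reduct requires a genuine invariant argument rather than a one-step computation, and the cleanest route is probably to show that in any reduct of $\aav\act\Red{\ii,\xx}$ the $\ii$th entry remains a right multiple of~$\aa_\ii$ and a right multiple of~$\xx$ (and symmetrically $\yy$ for the other side), so a shared reduct would exhibit a common right multiple of $\xx$, $\yy$, $\aa_\ii$, contradiction. I expect the bookkeeping for that monotonicity claim — which lcm-factors survive and which get absorbed — to be the delicate point, and I would state it as a small auxiliary lemma before proving Lemma~\ref{L:Tame1}.
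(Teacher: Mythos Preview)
Your overall structure matches the paper's --- dispose of $j\neq i$ via the confluence results from~\cite{Dit}, then analyze $j=i$ --- but note a small error: reductions at levels~$\ii$ and~$\ii\pm2$ share an entry (the $(\ii\mp1)$st), so they do \emph{not} act on disjoint triples; the paper simply cites Lemmas~4.18 and~4.19 of~\cite{Dit} to cover all $j\neq\ii$ at once. Your ``condition $\Rightarrow$ tame'' direction is right but overcomplicated: if $\xx$, $\yy$, $\aa_\ii$ admit a common multiple, then the lcm of~$\xx$ and~$\yy$ (on the side dictated by the sign of~$\ii$) is itself an $\ii$-reducer --- it divides~$\aa_{\ii+1}$ since both~$\xx$ and~$\yy$ do, and it admits a common multiple with~$\aa_\ii$ by hypothesis --- and $\aav\act\Red{\ii,\xx\lcm\yy}$ is the common reduct, reached in one further level-$\ii$ step from each side. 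No iterated-lcm bookkeeping is needed.

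The genuine gap is in the ``tame $\Rightarrow$ condition'' direction. Your proposed invariant --- that the $\ii$th entry of every reduct of $\aav\act\Red{\ii,\xx}$ remains a multiple of both~$\aa_\ii$ and~$\xx$ --- is false already one step in: for $\ii$ positive the new $\ii$th entry~$\bb_\ii$ satisfies $\bb_\ii\xx = \xx\lcmt\aa_\ii$, so~$\bb_\ii$ is an lcm-\emph{cofactor}, typically strictly smaller than~$\aa_\ii$ (and equal to~$1$ when~$\xx$ divides~$\aa_\ii$). No obvious repair of this invariant survives subsequent reductions at levels~$\ii\pm1$. The paper argues quite differently and much more directly: it asserts that a common reduct of $\aav\act\Red{\ii,\xx}$ and $\aav\act\Red{\ii,\yy}$ is necessarily of the form $\aav\act\Red{\ii,\zz}$, reached from each side by a single further level-$\ii$ step, say $\aav\act\Red{\ii,\zz}=(\aav\act\Red{\ii,\xx})\act\Red{\ii,\uu}=(\aav\act\Red{\ii,\yy})\act\Red{\ii,\vv}$. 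The composition law for same-level reductions then reads off~$\zz$ as a common multiple of~$\xx$ and~$\yy$, and the fact that $\aav\act\Red{\ii,\zz}$ is defined provides the common multiple with~$\aa_\ii$. The claim that the common reduct can be taken in this one-step form is the crux; you should justify that assertion (or trace it in~\cite{Dit}) rather than pursue the monotonicity route.
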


\begin{proof}
Assume that $\xx$ is a tame $\ii$-reducer for~$\aav$, and let~$\yy$ be an $\ii$-reducer for~$\aav$. By definition, $\aav \act \Red{\ii, \xx}$ and $\aav \act \Red{\ii, \yy}$ admit a common reduct, which is necessarily of the form~$\aav \act \Red{\ii, \zz}$ for some~$\zz$. Then there exist~$\uu, \vv$ satisfying 
$$\aav \act \Red{\ii, \zz} = (\aav \act \Red{\ii, \xx}) \act \Red{\ii, \uu} = (\aav \act \Red{\ii, \yy}) \act \Red{\ii, \vv}.$$
Assuming $\ii$ negative in~$\aav$, we deduce $\zz = \xx\uu = \yy\vv$. Hence $\zz$ is a right multiple of~$\xx \lcm \yy$ and, therefore, $\aav \act \Red{\ii, \xx \lcm \yy}$ is defined as well, implying that $\aa_\ii$, $\xx$, and~$\yy$ admit a common right multiple. The argument is symmetric when $\ii$ is positive in~$\aav$. 

Conversely, assume that $\xx$ is an $\ii$-reducer for~$\aav$ and, for every $\ii$-reducer~$\yy$, the elements~$\aa_\ii$, $\xx$, and $\yy$ admit a common multiple, say a common right multiple, assuming that $\ii$ is negative in~$\aav$. Then $\xx \lcm \yy$ left divides~$\aa_{\ii + 1}$ since $\xx$ and~$\yy$ do, and $\aa_\ii$ and $\xx \lcm \yy$ admit a common right multiple. Hence $\aav \act \Red{\ii, \xx \lcm \yy}$ is defined. Then, writing $\xx \lcm \yy = \xx \yy' = \yy \xx'$, we have 
$$\aav \act \Red{\ii, \xx \lcm \yy} = (\aav \act \Red{\ii, \xx}) \act \Red{\ii, \yy'} = (\aav \act \Red{\jj, \yy}) \act \Red{\ii, \xx'},$$
which shows that $\aav \act \Red{\ii, \xx}$ and $\aav \act \Red{\ii, \yy}$ admit a common reduct. On the other hand, for~$\jj \not= \ii$, Lemmas~4.18 and~4.19 from~\cite{Dit} imply that $\aav \act \Red{\ii, \xx}$ and $\aav \act \Red{\jj, \yy}$ always admit a common reduct. Therefore, $\xx$ is a tame $\ii$-reducer for~$\aav$.
\end{proof}

If $\MM$ is a noetherian gcd-monoid, every nonempty family~$\XX$ of left divisors of an element~$\aa$ necessarily admits $\div$-maximal elements, \ie, elements~$\zz$ such that there is no~$\xx$ with $\zz \div \xx$ in the family: take~$\zz$ so that $\zz\inv \aa$ is $\divt$-minimal in $\{\xx\inv \aa \mid \xx \in \XX\}$. Hence, in particular, for every multifraction~$\aav$ and every level~$\ii$, there exist maximal $\ii$-reducers for~$\aav$.

\begin{lemm}\label{L:Tame2}
If $\MM$ is a noetherian gcd-monoid and $\aav$ is a multifraction on~$\MM$, an $\ii$-reducer~$\xx$ for~$\aav$ is tame if and only if $\xx$ divides every maximal $\ii$-reducer for~$\aav$.
\end{lemm}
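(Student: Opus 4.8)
The claim is that an $\ii$-reducer $\xx$ for $\aav$ is tame if and only if $\xx$ left-divides (for $\ii$ negative) every maximal $\ii$-reducer for~$\aav$. I would prove the two implications using Lemma~\ref{L:Tame1} as the working characterization of tameness, assuming throughout (as the reader should) that $\ii$ is negative in~$\aav$; the positive case is symmetric on the other side.

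\emph{Tame $\Rightarrow$ divides every maximal reducer.} Suppose $\xx$ is a tame $\ii$-reducer, and let $\yy$ be any maximal $\ii$-reducer for~$\aav$. By Lemma~\ref{L:Tame1}, $\aa_\ii$, $\xx$, and $\yy$ admit a common left multiple, hence $\xx \lcm \yy$ exists (all three pairwise common multiples being present, but in fact we only need that $\xx$ and $\yy$ have one, which follows a fortiori). Moreover $\xx \lcm \yy$ left-divides $\aa_{\ii+1}$, since both $\xx$ and $\yy$ do, and $\aa_\ii$ admits a common left multiple with $\xx \lcm \yy$ (being a divisor of the common left multiple of $\aa_\ii$, $\xx$, $\yy$). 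Therefore $\aav \act \Red{\ii, \xx\lcm\yy}$ is defined, so $\xx \lcm \yy$ is an $\ii$-reducer for~$\aav$ with $\yy \dive \xx \lcm \yy$. By maximality of~$\yy$, this forces $\xx \lcm \yy = \yy$, that is, $\xx \dive \yy$.

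\emph{Divides every maximal reducer $\Rightarrow$ tame.} Conversely, assume $\xx$ is an $\ii$-reducer dividing every maximal $\ii$-reducer for~$\aav$, and let $\yy$ be an arbitrary $\ii$-reducer. By the remark preceding the lemma (noetherianity applied to the family of $\ii$-reducers, all of which left-divide $\aa_{\ii+1}$, ordered via $\xx \mapsto \xx\inv\aa_{\ii+1}$), there exists a maximal $\ii$-reducer $\zz$ with $\yy \dive \zz$. By hypothesis $\xx \dive \zz$ as well. Since $\zz$ is an $\ii$-reducer, $\aa_\ii$ and $\zz$ admit a common left multiple, hence so do $\aa_\ii$, $\xx$, and $\yy$ (all three dividing $\zz$, whose common left multiple with $\aa_\ii$ serves). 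By Lemma~\ref{L:Tame1}, $\xx$ is a tame $\ii$-reducer for~$\aav$.

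\emph{Main obstacle.} The only genuinely delicate point is the very first step of the forward direction: verifying that $\xx \lcm \yy$ is again an $\ii$-reducer for~$\aav$, i.e.\ that $\aav \act \Red{\ii, \xx\lcm\yy}$ is \emph{defined}. This needs two things — that $\xx\lcm\yy$ left-divides $\aa_{\ii+1}$ (immediate from $\xx, \yy \dive \aa_{\ii+1}$ using conditional lcms) and that $\aa_\ii$ admits a common left multiple with $\xx\lcm\yy$. For the latter one should be careful: it is \emph{not} automatic that $\aa_\ii$ has a common multiple with $\xx\lcm\yy$ merely from having one with $\xx$ and one with $\yy$; this is exactly where one invokes the conclusion of Lemma~\ref{L:Tame1} applied to the tame reducer~$\xx$, which hands us a genuine common left multiple of the \emph{three} elements $\aa_\ii$, $\xx$, $\yy$ simultaneously — and $\xx\lcm\yy$ divides that. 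Once this is in place, everything else is bookkeeping with divisibility in a gcd-monoid.
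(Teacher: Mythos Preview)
Your proof is correct and follows essentially the same approach as the paper: both directions hinge on Lemma~\ref{L:Tame1}, with the forward implication using the three-element common multiple to show $\xx \lcm \yy$ is again an $\ii$-reducer (forcing $\xx \lcm \yy = \yy$ by maximality), and the converse using noetherianity to place an arbitrary reducer~$\yy$ below some maximal~$\zz$, which then dominates both~$\xx$ and~$\yy$. Your explicit flagging of the ``main obstacle'' --- that one genuinely needs the simultaneous common multiple of $\aa_\ii$, $\xx$, $\yy$ from Lemma~\ref{L:Tame1}, not merely pairwise ones --- is exactly the point the paper's proof is relying on as well.
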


\begin{proof}
Assume that $\xx$ is a tame $\ii$-reducer for~$\aav$, and $\yy$ is a maximal $\ii$-reducer for~$\aav$. By Lemma~\ref{L:Tame1}, $\aa_\ii$, $\xx$, and~$\yy$ admit a common multiple, hence an lcm, and, therefore, the lcm of~$\xx$ and~$\yy$ is again an $\ii$-reducer for~$\aav$. The assumption that $\yy$ is maximal implies that this lcm is~$\yy$, \ie, that $\xx$ divides~$\yy$ (on the due side).

Conversely, assume that $\xx$ is an $\ii$-reducer for~$\aav$ that divides every maximal $\ii$-reducer. Let $\yy$ be an arbitrary $\ii$-reducer for~$\aav$. As $\MM$ is noetherian, $\yy$ divides at least one maximal $\ii$-reducer, say~$\zz$. By assumption, $\xx$ divides~$\zz$, hence so does the lcm of~$\xx$ and~$\yy$. Since $\zz$ is an $\ii$-reducer for~$\aav$, so is its divisor the lcm of~$\xx$ and~$\yy$ and, therefore, $\aav \act \Red{\ii, \xx}$ and $\aav \act \Red{\ii, \yy}$ admit a common reduct. Hence $\xx$ is tame for~$\aav$.
\end{proof}

\begin{prop}\label{P:MaxTame}
If $\MM$ is a noetherian gcd-monoid and $\aav$ is a multifraction on~$\MM$, then, for every~$\ii < \dh\aav$, there exists a unique greatest tame $\ii$-reducer for~$\aav$, namely the gcd of all maximal $\ii$-reducers for~$\aav$. The latter is a multiple of the gcd of~$\aa_\ii$ and~$\aa_{\ii + 1}$.
\end{prop}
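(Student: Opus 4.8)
The plan is to run everything through Lemma~\ref{L:Tame2}, which says that an $\ii$-reducer~$\xx$ for~$\aav$ is tame if and only if it divides, on the due side, every maximal $\ii$-reducer for~$\aav$. Fix $\ii < \dh\aav$ and, to fix ideas, assume that $\ii$ is positive in~$\aav$, so that ``divides'' means ``right-divides''~($\divet$) and the relevant gcd and lcm are~$\gcdt$ and~$\lcmt$; the negative case follows by the obvious left--right symmetry. Every $\ii$-reducer $\divet$-divides~$\aa_{\ii+1}$, so the $\ii$-reducers for~$\aav$ form a nonempty family of right-divisors of~$\aa_{\ii+1}$; since $\MM$ is noetherian (see the observation just before Lemma~\ref{L:Tame2}), both a maximal $\ii$-reducer and a $\divet$-maximal tame $\ii$-reducer exist. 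I would then establish successively: (a)~the $\divet$-maximal tame $\ii$-reducer is unique, call it~$\xx_0$; (b)~$\xx_0$ is the greatest tame $\ii$-reducer; (c)~$\xx_0$ equals the gcd of all maximal $\ii$-reducers; (d)~$\xx_0$ is a multiple of~$\aa_\ii \gcdt \aa_{\ii+1}$.

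For~(a), let $\xx_1$ and~$\xx_2$ be two $\divet$-maximal tame $\ii$-reducers. By Lemma~\ref{L:Tame2} both $\divet$-divide any fixed maximal $\ii$-reducer, so their left lcm $\xx_1 \lcmt \xx_2$ exists and $\divet$-divides every maximal $\ii$-reducer. It remains to see that $\xx_1 \lcmt \xx_2$ is itself an $\ii$-reducer: it $\divet$-divides~$\aa_{\ii+1}$, and applying Lemma~\ref{L:Tame1} to the tame reducer~$\xx_1$ with $\yy := \xx_2$ shows that $\xx_1$, $\xx_2$, $\aa_\ii$ admit a common multiple on the due side, which is then a common multiple of $\xx_1 \lcmt \xx_2$ and~$\aa_\ii$; hence $\aav \act \Red{\ii, \xx_1 \lcmt \xx_2}$ is defined. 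By Lemma~\ref{L:Tame2} it is tame, and being a multiple of both~$\xx_1$ and~$\xx_2$, maximality forces $\xx_1 = \xx_2 = \xx_1 \lcmt \xx_2$. For~(b), given any tame $\ii$-reducer~$\yy$, the tame $\ii$-reducers that are multiples of~$\yy$ form a nonempty family of right-divisors of~$\aa_{\ii+1}$, hence contain a $\divet$-maximal element, which is $\divet$-maximal among all tame $\ii$-reducers and so equals~$\xx_0$ by~(a); thus $\yy \divet \xx_0$.

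For~(c), $\xx_0$ is tame, hence $\divet$-divides every maximal $\ii$-reducer by Lemma~\ref{L:Tame2}. Conversely, if $\dd$ is a common right-divisor of all maximal $\ii$-reducers, then $\dd$ is an $\ii$-reducer: it $\divet$-divides~$\aa_{\ii+1}$ through any maximal $\ii$-reducer~$\zz$, and a common multiple of~$\aa_\ii$ with~$\zz$ (which exists since $\aav \act \Red{\ii, \zz}$ is defined) is also a common multiple of~$\aa_\ii$ with~$\dd$ because $\dd \divet \zz$; moreover $\dd$ divides every maximal $\ii$-reducer by hypothesis, so $\dd$ is tame by Lemma~\ref{L:Tame2}, whence $\dd \divet \xx_0$ by~(b). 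Thus $\xx_0$ is the greatest common right-divisor of the maximal $\ii$-reducers; this establishes simultaneously that this (a priori infinite) gcd is well defined and equals~$\xx_0$. Finally, for~(d), $\aa_\ii \gcdt \aa_{\ii+1}$ is an $\ii$-reducer (it $\divet$-divides~$\aa_{\ii+1}$, and it $\divet$-divides~$\aa_\ii$, so these admit the common multiple~$\aa_\ii$), and it is tame because it is a division reducer: for any $\ii$-reducer~$\yy$, a common multiple of~$\aa_\ii$ and~$\yy$ on the due side is automatically a common multiple of~$\aa_\ii$, $\yy$, and~$\aa_\ii \gcdt \aa_{\ii+1}$ (the last $\divet$-dividing~$\aa_\ii$), so Lemma~\ref{L:Tame1} applies. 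Hence $\aa_\ii \gcdt \aa_{\ii+1} \divet \xx_0$ by~(b).

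The main obstacle is step~(a), and inside it the verification that the join $\xx_1 \lcmt \xx_2$ of two tame reducers is again an $\ii$-reducer, \ie, that it retains a common multiple with~$\aa_\ii$; this is exactly the point encapsulated in Lemma~\ref{L:Tame1} (the ``common multiple of~$\xx$, $\yy$, $\aa_\ii$'' criterion), so the genuine work has been pushed into Lemmas~\ref{L:Tame1} and~\ref{L:Tame2}, the remaining steps being bookkeeping with noetherianity. One should also keep in mind that ``the gcd of all maximal $\ii$-reducers'' is an infinite gcd with no a priori guarantee of existence, but step~(c) produces it explicitly as~$\xx_0$, so no separate existence argument is needed.
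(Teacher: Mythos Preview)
Your proof is correct and rests on the same two lemmas as the paper's, but the organization differs. The paper proceeds in the opposite direction: it \emph{starts} by letting~$\zz$ be the gcd of all maximal $\ii$-reducers, observes that~$\zz$ (and any divisor of it) is tame by Lemma~\ref{L:Tame2}, and then uses Lemma~\ref{L:Tame2} in the other direction to see that every tame reducer divides~$\zz$; the last paragraph handles the gcd of~$\aa_\ii$ and~$\aa_{\ii+1}$ exactly as you do in~(d). So the paper's route is shorter, but it silently assumes that the (possibly infinite) gcd of the maximal $\ii$-reducers exists and is itself an $\ii$-reducer. Your route---first pin down a unique $\divet$-maximal tame reducer~$\xx_0$ via steps~(a)--(b), then identify it with the gcd in step~(c)---is more explicit on precisely this point: you \emph{produce} the gcd rather than postulate it. The cost is the extra bookkeeping in~(a) and~(b), where you must check that the lcm of two tame reducers is again an $\ii$-reducer (your invocation of Lemma~\ref{L:Tame1} for the common multiple with~$\aa_\ii$ is the right move there). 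Either organization is fine; yours trades brevity for self-containment on the existence of the infinite gcd.
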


\begin{proof}
Let $\zz$ be the gcd (on the relevant side) of all maximal $\ii$-reducers for~$\aav$. Then $\zz$, and every divisor~$\xx$ of~$\zz$, divides every maximal $\ii$-reducer for~$\aav$, hence, by Lemma~\ref{L:Tame2}, it is a tame $\ii$-reducer for~$\aav$. 

Conversely, if $\xx$ is a tame $\ii$-reducer for~$\aav$, then, by Lemma~\ref{L:Tame2}, $\xx$ divides every maximal $\ii$-reducer for~$\aav$, hence it divides their gcd~$\zz$. Hence $\zz$ is the greatest tame $\ii$-reducer for~$\aav$.

Finally, assume that $\xx$ divides~$\aa_\ii$ and~$\aa_{\ii + 1}$. Then $\aav \act \Rdiv{\ii, \xx}$, hence a fortiori $\aav \act \Red{\ii, \xx}$ is defined, so $\xx$ is an $\ii$-reducer for~$\aav$. Let $\yy$ be any $\ii$-reducer for~$\aav$. Then the lcm of~$\aa_\ii$ and~$\yy$ is a common multiple of~$\aa_\ii$, $\xx$, and~$\yy$. Hence, by Lemma~\ref{L:Tame1}, $\xx$ is a tame $\ii$-reducer for~$\aav$. This applies in particular when $\xx$ is the gcd of~$\aa_\ii$ and~$\aa_{\ii + 1}$.
\end{proof}

 On the shape of what we did with divisions, we introduce 

\begin{defi}\label{D:Dermax}
If $\MM$ is a noetherian gcd-monoid and $\aav$ is a multifraction on~$\MM$, then, for $\ii < \dh\aav$, the unique element~$\xx$ whose existence is stated in Proposition~\ref{P:MaxTame} is called the \emph{greatest tame $\ii$-reducer} for~$\aav$; we then write $\aav \act \Redmax\ii$ for~$\aav \act \Red{\ii, \xx}$. 
\end{defi}

\begin{exam}
When $\MM$ satisfies the $3$-Ore condition, every reducer is tame, and $\aav \act \Redmax\ii$ coincides with the maximal $\ii$-reduct of~$\aav$, as used in~\cite[Section~6]{Dit}. Otherwise, wild reducers may exist and $\aav \act \Redmax\ii$ need not be a maximal $\ii$-reduct of~$\aav$: in the Artin-Tits monoid of type~$\Att$, for $\aav = \fr{1/c/aba}$, we have $\aav \act \Redmax2 = \aav$, since there is no nontrivial tame $2$-reducer.
\end{exam}

By definition, the greatest tame $\ii$-reducer for~$\aav$ only depends on the entries~$\aa_\ii$ and~$\aa_{\ii + 1}$, and on the sign of~$\ii$ in~$\aav$. By Lemma~\ref{L:Tame2}, it can be computed easily as a gcd of maximal reducers. Note that the greatest tame $\ii$-reducer for~$\aav$ may be strictly larger than the gcd of~$\aa_\ii$ and~$\aa_{\ii + 1}$: for instance, in the Artin-Tits monoid of type~$\Att$, there exist two maximal $2$-reducers for~$\aav := \fr{1/a/cabab}$, namely $\fr{caa}$ and~$\fr{cab}$, both wild, and the greatest tame $2$-reducer is their left gcd~$\fr{ca}$, a proper multiple of the left gcd of~$\fr{a}$ and~$\fr{cabab}$, which is~$1$.

If $\MM$ is a noetherian gcd-monoid, starting from an arbitrary multifraction~$\aav$ and repeatedly performing (maximal) tame reductions leads in finitely many steps to a $\simeqb$-equivalent multifraction that is \emph{tame-irreducible}, meaning eligible for no tame reduction. By Proposition~\ref{P:MaxTame}, a division is always tame, so a tame-irreducible multifraction is prime. Adapting the proof of Proposition~\ref{P:NCPrime} yields:

\begin{prop}\label{P:NCTame}
If $\MM$ is a noetherian gcd-monoid, then $\RDb\MM$ $($\resp $\RDp\MM$$)$ is semi-convergent if and only if, for every~$\aav$ in~$\FRb\MM$ $($\resp $\FRp\MM)$,
\begin{equation}\label{E:NCTame}
\parbox{110mm}{If $\aav$ is unital and tame-irreducible, then $\aav$ is either trivial or reducible.}
\end{equation}
\end{prop}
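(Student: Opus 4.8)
The plan is to transcribe the proof of Proposition~\ref{P:NCPrime}, the one new ingredient being the observation recorded just above the statement: every division, and more generally every tame reduction, is a reduction, so that for every multifraction one has the chain of implications ``$\RRR$-irreducible $\Rightarrow$ tame-irreducible $\Rightarrow$ prime''. The first implication holds because a tame $\ii$-reducer is by definition an $\ii$-reducer; the second because, by the last assertion of Proposition~\ref{P:MaxTame}, the greatest tame $\ii$-reducer for~$\aav$ is a multiple of the gcd of~$\aa_\ii$ and~$\aa_{\ii+1}$, so a tame-irreducible multifraction has trivial gcds of adjacent entries, \ie{} is prime.

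For the necessity of~\eqref{E:NCTame}, I would appeal to Proposition~\ref{P:NC}: if $\RDb\MM$ is semi-convergent then~\eqref{E:NC} holds, and~\eqref{E:NCTame} is an immediate weakening of~\eqref{E:NC}, since its hypothesis (``$\aav$ unital and tame-irreducible'') is stronger than the hypothesis of~\eqref{E:NC} (``$\aav$ unital'') while the conclusion is the same. For the converse, I would assume~\eqref{E:NCTame} and establish~\eqref{E:Semi} by induction on~$\rd$: this is legitimate because $\MM$ is noetherian, so by Lemma~\ref{L:Termin} the system $\RDb\MM$ is terminating and $\rd$ is well-founded. Let $\aav$ be a unital multifraction in~$\FRb\MM$. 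If $\aav$ is $\RRR$-irreducible, then by the chain above it is tame-irreducible, so~\eqref{E:NCTame} forces $\aav$ to be trivial or $\RRR$-reducible; the latter is excluded, hence $\aav$ is trivial, \ie{} $\aav = \one$, and $\aav \rds \one$ holds in zero steps. If $\aav$ is $\RRR$-reducible, pick $\ii$ and $\xx \neq 1$ with $\bbv := \aav \act \Red{\ii, \xx}$ defined; Lemma~\ref{L:Basics}\ITEM1 gives $\bbv \simeqb \aav$, so $\bbv$ is again unital, the induction hypothesis yields $\bbv \rds \one$, and hence $\aav \rds \one$ by transitivity of~$\rds$. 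The argument for~$\RDp\MM$ is word for word the same, as tame reductions and the ambient induction restrict to~$\FRp\MM$.

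I do not expect a genuine obstacle here: the substantive content—the existence of a well-defined greatest tame $\ii$-reducer, dominating the gcd of~$\aa_\ii$ and~$\aa_{\ii+1}$—is already packaged in Proposition~\ref{P:MaxTame}, and the only point requiring mild care is verifying that ``tame-irreducible'' really sits between ``$\RRR$-irreducible'' and ``prime'', which is precisely what makes~\eqref{E:NCTame} both a consequence of~\eqref{E:NC} and strong enough to drive the $\rd$-induction. Everything else is a routine replay of the proof of Proposition~\ref{P:NCPrime}.
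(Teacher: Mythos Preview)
Your proposal is correct and follows exactly the adaptation of the proof of Proposition~\ref{P:NCPrime} that the paper indicates. The only implication actually needed beyond that proof is ``$\RRR$-irreducible $\Rightarrow$ tame-irreducible'', which you justify correctly; the further implication ``tame-irreducible $\Rightarrow$ prime'' that you also record is true but not used in the argument.
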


\begin{coro}\label{C:NCTame}
Conjecture~$\ConjA$ is true if and only if \eqref{E:NCTame} holds for every Artin-Tits monoid~$\MM$ and every~$\aav$ in~$\FRp\MM$.
\end{coro}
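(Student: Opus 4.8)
The plan is to obtain Corollary~\ref{C:NCTame} as a direct specialization of Proposition~\ref{P:NCTame}, the only work being to verify that every Artin-Tits monoid satisfies the hypotheses of that proposition. First I would recall that an Artin-Tits monoid $\MM = \MON\SS\RR$ is a gcd-monoid: this is the classical fact of Brieskorn--Saito~\cite{BrS} already used in Section~\ref{SS:Gcd}. Next I would observe that $\MM$ is noetherian, since each defining relation $\ss\tt\ss\cdots = \tt\ss\tt\cdots$ is homogeneous (both sides have equal length), and, as noted just after the definition of noetherianity, a monoid admitting a homogeneous presentation is automatically noetherian. Hence every Artin-Tits monoid is a noetherian gcd-monoid, and Proposition~\ref{P:NCTame} applies to it.

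Then I would spell out the equivalence. By definition, Conjecture~$\ConjA$ (stated as Conjecture~\ref{C:Main}) asserts that $\RDp\MM$ is semi-convergent for every Artin-Tits monoid~$\MM$. Proposition~\ref{P:NCTame}, applied to the positive system $\RDp\MM$ for a fixed Artin-Tits monoid~$\MM$, states that $\RDp\MM$ is semi-convergent if and only if \eqref{E:NCTame} holds for every~$\aav$ in~$\FRp\MM$. Taking the conjunction of these equivalences over all Artin-Tits monoids yields exactly the stated biconditional.

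The only point deserving a remark is the passage from $\RDb\MM$ to $\RDp\MM$ in Proposition~\ref{P:NCTame}: that proposition is stated uniformly for both systems, and its proof adapts the induction-on-$\rd$ argument of Proposition~\ref{P:NCPrime}, which runs identically over $\FRp\MM$ as over $\FRb\MM$ (reduction preserves positivity, so every elementary reduct of a positive multifraction is positive). Thus no separate argument is needed for the positive case. There is essentially no genuine obstacle here; the substantive content sits in Proposition~\ref{P:NCTame} and, behind it, in Proposition~\ref{P:MaxTame}, which guarantees that a tame-irreducible multifraction is prime and hence provides the descent steps (maximal tame reductions) needed to close the induction establishing~\eqref{E:Semi}.
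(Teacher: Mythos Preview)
Your proof is correct and is exactly the intended argument: the paper states Corollary~\ref{C:NCTame} without proof because it follows immediately from Proposition~\ref{P:NCTame} once one knows that every Artin-Tits monoid is a noetherian gcd-monoid, which you verify. Your additional remarks on the positive case and on the role of Proposition~\ref{P:MaxTame} are accurate but not strictly needed for this corollary.
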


The above results suggest to investigate tame-irreducible multifractions more closely. By definition, only wild reductions may apply to a tame-irreducible multifraction. A possible approach for establishing~\eqref{E:NCTame} could be to study the irreducible reducts of tame-irreducible multifractions. It happens frequently that, if $\aav$ is tame-irreducible and admits several (wild) reducts~$\aav_1 \wdots \aav_\mm$, then the reducts of the various~$\aav_\kk$s are pairwise disjoint, as if every such reduct kept a trace of~$\aav_\kk$. If true, such a property might lead to a proof of Conjecture~$\ConjA$ using Corollary~\ref{C:NCTame} and arguments similar to those alluded to in the proof of Proposition~\ref{P:MaxSteps}. But the assumption is not readily correct. Indeed, always in the Artin-Tits monoid of type~$\Att$, the $6$-multifraction $\aav := \fr{1/c/aba/bc/a/bcb}$ is tame-irreducible, it admits four wild reducers, namely $\fr{a}$ and~$\fr{b}$ at level~$2$, and $\fr{b}$ and~$\fr{c}$ at level~$5$, and the associated $2$-reducts of~$\aav$ admit a common reduct
$$(\aav \act \Red{2, \fr{a}}) \act \Red{5, \fr{c}}\Red{3,\fr{b}} \Red{1,\fr{ac}} \Red{2,\fr{b}} \Red{3,\fr{c}} = \fr{bc/accb/ca/ab/ca/cb} = (\aav \act \Red{2,\fr{b}}) \act \Red{5,\fr{c}} \Red{3,\fr{cb}}.$$
However, in this example, there is no confluence for the $5$-reducts, and, more generally, we have no example where the highest level wild reducts of a tame-irreducible multifraction admit a common reduct.

\subsection{The operator~$\redtame$ and Conjecture~$\ConjB$}\label{SS:Dertame}

 Our main claim in this section is that, for every multifraction~$\aav$, there exists a distinguished tame reduct of~$\aav$, denoted~$\redtame(\aav)$, that can be computed easily, and that should be~$\one$ whenever $\aav$ is unital: this is what we call Conjecture~$\ConjB$.

Just mimicking the approach of Section~\ref{SS:Div} and trying to identify a unique maximal tame reduct on the shape of~$\derdiv\aav$ cannot work, because the tame reducts of a multifraction need not admit a common reduct: in the context of type~$\Att$ again, $\fr{a}$ and~$\fr{b}$ are tame $4$-reducers for~$\aav := \fr{1/c/1/1/aba}$, but $\aav \act \Red{4, \fr{a}}$ and $\aav \act \Red{4, \fr{b}}$ admit no common reduct (by the way, in this case, the two irreducible reducts of~$\aav$ can be reached using tame reductions only, respectively $\Red{4,\fr{a}} \Red{2,\fr{a}} \Red{4,\fr{ba}}$ and $\Red{4,\fr{b}} \Red{2,\fr{b}} \Red{4,\fr{ab}}$. 



 However, it is shown in~\cite[Section~6]{Dit} that, if $\MM$ is a noetherian gcd-monoid satisfying the $3$-Ore condition, hence in a case when all reductions are tame, there exists, for each~$\nn$, a universal sequence of levels~$\univ\nn$ such that, starting with any $\nn$-multifraction~$\aav$ and applying the maximal (tame) reduction at the successive levels prescribed by~$\univ\nn$ inevitably leads to the unique $\RRR$-irreducible reduct~$\red(\aav)$ of~$\aav$. It is then natural to copy the recipe in the general case, and to introduce:
 
\begin{defi}
If $\MM$ is a noetherian gcd-monoid, then, for every depth~$\nn$ multifraction~$\aav$ on~$\MM$, we put $\redtame(\aav) := \aav \act \Redmax{\univ\nn}$, where $\univ\nn$ is empty for $\nn = 0,1$ and is $(1, 2 \wdots \nn - 1)$ followed by~$\univ{\nn - 2}$ for $\nn \ge 2$, and, for~$\underline\ii = (\ii_1 \wdots \ii_\ell)$, we write $\aav \act \Redmax{\underline\ii}$ for $\aav \act \Redmax{\ii_1} \pdots \Redmax{\ii_\ell}$.
\end{defi}

Thus, by~\cite[Proposition~6.7]{Dit}, if $\MM$ is a noetherian gcd-monoid satisfying the $3$-Ore condition, $\red(\aav) = \redtame(\aav)$ holds for every~$\aav$ in~$\FRp\MM$. In this case, $\bbv \rds \redtame(\aav)$ holds for every reduct~$\bbv$ of~$\aav$, and $\redtame(\aav)$ is always $\RRR$-irreducible. It is easy to see that, in the general case, these properties do not extend to arbitrary (namely, not necessarily unital) multifractions. 

\begin{exam}\label{X:Tame2}
In the Artin-Tits monoid of type~$\Att$, let $\aav := \fr{1/c/aba}$. Then $\univ3 =\nobreak (1, 2)$ leads to $\redtame(\aav) = \aav \act \Redmax1 \Redmax2 = \aav$, since $1$ and $\fr{c}$ have no nontrivial common divisor, and there is no tame $2$-reducer for~$\aav$. On the other hand, both~$\fr{a}$ and~$\fr{b}$ are $2$-reducers for~$\aav$, and neither $\aav \act \Red{2, \fr{a}} \rds \redtame(\aav)$ nor $\aav \act \Red{2, \fr{b}} \rds \redtame(\aav)$ holds.

Next, let $\bbv:= \fr{ac/aca/aba}$. We find $\redtame(\bbv) = \bbv \act \Rdiv{1, \fr{ac}} = \fr{1/c/aba}$, to be compared with $\derdiv\bbv = \bbv \act \Rdiv{2, \fr{a}} = \fr{ac/ca/ba}$. Then $\derdiv\bbv \rds \redtame(\bbv)$ fails, so $\bbv \rddivs \bbv'$ does not imply $\bbv' \rds \redtame(\bbv)$.

Finally, let $\ccv := \fr{\1/c/aba/cb}$. We find $\redtame(\ccv) = \ccv \act \Rdiv{3,\fr{b}} = \fr{\1/c/ba/c}$, which is not irreducible, nor even tame-irreducible: we have $\redtame^2(\ccv) = \redtame(\ccv) \act \Red{2, \fr{b}}\Red{3, \fr{c}} = \fr{bc/cb/a/c}$.
\end{exam}

However, these negative facts say nothing about tame reductions starting from a unital multifraction, and, in spite of many tries, no counter-example was ever found so far to: 

\begin{conjB}
If $\MM$ is an Artin-Tits monoid, then $\redtame(\aav) = \one$ holds for every unital multifraction~$\aav$ in~$\FRp\MM$.
\end{conjB}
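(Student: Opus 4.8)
The plan is to argue by induction on the depth~$\nn$ of the unital multifraction~$\aav$ in~$\FRp\MM$, with the inductive step resting on a single lemma about one sweep of maximal tame reductions. The cases $\nn \le 2$ amount to the statement that $\MM$ embeds in~$\EG\MM$, classical for Artin--Tits monoids and revisited in Section~\ref{S:SmallDepth}: for $\nn = 1$ one has $\can(\aa_1) = 1$, hence $\aa_1 = 1$; for $\nn = 2$ unitality forces $\aa_1 = \aa_2$, and then $\Redmax1$, which at level~$1$ is the division by the gcd of the two entries, sends~$\aav$ to~$\one$; since $\univ\nn$ is then empty or~$(1)$, we get $\redtame(\aav) = \one$ in both cases.

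For the inductive step, let $\aav$ be a unital $\nn$-multifraction in~$\FRp\MM$ with $\nn \ge 3$. Put $\aav^{(0)} := \aav$, $\aav^{(\kk)} := \aav^{(\kk - 1)} \act \Redmax\kk$ for $1 \le \kk \le \nn - 1$, and $\aav' := \aav^{(\nn - 1)}$, so that the recursive definition of~$\univ\nn$ gives $\redtame(\aav) = \aav' \act \Redmax{\univ{\nn - 2}}$. The key point---call it the \emph{sweep lemma}---would be that this first sweep clears the top two entries of~$\aav'$:
\begin{equation}\label{E:Sweep}
\aa'_{\nn - 1} = \aa'_\nn = 1.
\end{equation}
Granting~\eqref{E:Sweep}, the inductive step closes: by Lemma~\ref{L:Basics}\ITEM1 we have $\aav' \simeqb \aav$, hence $\can(\aav') = 1$, and since $\aa'_{\nn - 1}$ and~$\aa'_\nn$ are trivial this says exactly that the truncation $\widehat{\aav} := (\aa'_1 \wdots \aa'_{\nn - 2})$, a prefix of the positive multifraction~$\aav'$ and hence an element of~$\FRp\MM$, is again unital. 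Each level occurring in~$\univ{\nn - 2}$ is at most~$\nn - 3$, so the reductions making up~$\Redmax{\univ{\nn - 2}}$ leave the last two entries of~$\aav'$ untouched; and since the greatest tame $\ii$-reducer of a multifraction depends only on its $\ii$th and $(\ii + 1)$st entries and on the sign of~$\ii$ (Proposition~\ref{P:MaxTame})---quantities that agree in~$\aav'$ and in~$\widehat{\aav}$ for $\ii \le \nn - 3$---the operator~$\Redmax{\univ{\nn - 2}}$ acts on~$\aav'$ by acting on~$\widehat{\aav}$ in the first $\nn - 2$ entries. Thus the entries of~$\redtame(\aav)$ are those of~$\redtame(\widehat{\aav})$ followed by two trivial entries, and the induction hypothesis $\redtame(\widehat{\aav}) = \one$ yields $\redtame(\aav) = \one$.

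Everything therefore reduces to the sweep lemma~\eqref{E:Sweep}, which is the genuine content of the conjecture. When $\MM$ satisfies the $3$-Ore condition, \eqref{E:Sweep} follows from~\cite[Proposition~6.7]{Dit}, which in fact gives $\redtame(\aav) = \red(\aav) = \one$ outright, all reductions being tame and $\RDb\MM$ convergent. The obstacle in general is the presence of \emph{wild} reducers along the sweep: the local results of Subsections~\ref{SS:Div} and~\ref{SS:Tame} (Lemmas~\ref{L:LocConfDiv}, \ref{L:Remote}, \ref{L:SmallIndices}, together with their companions in~\cite{Dit}) show that divisions and tame reductions commute nicely with reductions at neighbouring levels, so a good part of the convergent-case bookkeeping survives; but a wild step performed during the sweep may, a priori, kill a reduction opportunity at a higher level, and with no confluence to fall back on one has to exploit instead the global fact that~$\aav$ is unital. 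Concretely, I would try to isolate an invariant surviving a wild step---morally, that after~$\Redmax\kk$ the top entry~$\aa^{(\kk)}_\nn$ right (\resp left) divides a controlled element which strictly decreases as~$\kk$ runs from~$1$ to~$\nn - 1$, unitality of~$\aav$ forcing that element to be~$1$ at the end---so as to deduce~\eqref{E:Sweep}. Note that this cannot be obtained from Conjecture~$\ConjA$ alone: even knowing that $\aav \rds \one$, nothing prevents the particular strategy~$\redtame$ from halting at a unital tame-irreducible multifraction that is still reducible, though only by wild reducers. Pinning down the correct invariant, \ie\ proving~\eqref{E:Sweep}, is the crux, and it is at present supported only by the computer experiments reported in Section~\ref{S:Misc}.
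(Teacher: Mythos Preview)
The statement you are addressing is Conjecture~$\ConjB$, which the paper explicitly presents as an \emph{open conjecture}: the paper offers no proof, only the observation that it holds in the FC case (via~\cite[Proposition~6.7]{Dit}), that it fails for some non-Artin--Tits gcd-monoids~\cite{Div}, and that extensive computer experiments have produced no counter-example. There is therefore no ``paper's own proof'' to compare against.

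Your write-up is appropriately framed as a strategy rather than a proof, and the reduction you carry out is sound: the base cases $\nn \le 2$ are correct, and your inductive step---assuming the sweep lemma~\eqref{E:Sweep}---is valid, since the greatest tame $\ii$-reducer indeed depends only on~$\aa_\ii$, $\aa_{\ii+1}$, and the sign of~$\ii$ (as the paper notes after Definition~\ref{D:Dermax}), so that $\Redmax{\univ{\nn-2}}$ acts identically on~$\aav'$ and on the truncation~$\widehat{\aav}$. The structure you propose is in fact exactly the one the paper uses in the special case of Lemma~\ref{L:CrossRed}, where the presence of a central cross guarantees the sweep lemma explicitly.

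You are also right that the sweep lemma is the entire content of the conjecture and cannot be deduced from Conjecture~$\ConjA$: the examples in Subsection~\ref{SS:Tame} (and the gcd-monoid of~\cite{Div} where $\ConjA$ holds but $\ConjB$ fails) show that wild reducers can genuinely obstruct the particular strategy~$\redtame$ even when some reduction to~$\one$ exists. Your proposed approach via a decreasing invariant controlling~$\aa^{(\kk)}_\nn$ is a reasonable line of attack, but as you acknowledge, no such invariant has been identified, and the paper does not suggest one either. In short: your reduction is correct and clarifying, but what remains is precisely the conjecture.
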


By definition, $\aav \rds \redtame(\aav)$ holds, so $\redtame(\aav) = \one$ is a strengthening of~$\aav \rds \one$ in which we assert not only that $\aav$ reduces to~$\one$ but also that it goes to~$\one$ in some prescribed way. Thus:

\begin{fact}
Conjecture~$\ConjB$ implies Conjecture~$\ConjA$.
\end{fact}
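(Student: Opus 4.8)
The plan is to unwind the definition of $\redtame$ and observe that it merely repackages a particular finite sequence of (maximal tame) reductions, so that the equality $\redtame(\aav) = \one$ translates directly into $\aav \rds \one$, which is exactly what semi-convergence of $\RDp\MM$ asks for.

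First I would record the structural facts that make $\redtame$ available. An Artin-Tits monoid $\MM$ is a gcd-monoid \cite{BrS} and is strongly noetherian, hence noetherian, because it has a homogeneous presentation. So Proposition~\ref{P:MaxTame} applies: for each multifraction $\aav$ on $\MM$ and each $\ii < \dh\aav$ the greatest tame $\ii$-reducer exists, the operator $\Redmax\ii$ is well-defined, and therefore $\redtame(\aav) = \aav \act \Redmax{\univ\nn}$ is well-defined for every depth~$\nn$ multifraction $\aav$, with $\univ\nn$ the sequence of levels defined recursively in the text. The next observation is that each operator $\Redmax\ii$ is of the form $\Red{\ii, \xx}$ for a suitable $\xx$, hence is either the identity (if $\xx = 1$) or a single $\rd$-step (if $\xx \neq 1$). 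Composing the finitely many operators occurring in $\Redmax{\univ\nn}$, we obtain $\aav \rds \redtame(\aav)$ for every $\aav$ in $\FRp\MM$ — a point already noted in the text just before the statement of Conjecture~$\ConjB$.

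With this in hand the implication is immediate. Assume Conjecture~$\ConjB$ and fix an Artin-Tits monoid $\MM$. Let $\aav$ be any unital multifraction in $\FRp\MM$. Conjecture~$\ConjB$ gives $\redtame(\aav) = \one$, and combining this with $\aav \rds \redtame(\aav)$ yields $\aav \rds \one$. Hence \eqref{E:Semi} holds for every $\aav$ in $\FRp\MM$, that is, $\RDp\MM$ is semi-convergent; since $\MM$ was an arbitrary Artin-Tits monoid, this is Conjecture~$\ConjA$.

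I do not expect any genuine obstacle here: the statement is a formal consequence of the definition of $\redtame$ together with the (trivial) remark that maximal tame reductions are reductions. The only point needing a word of care is that $\redtame(\aav)$ be defined at all, which is guaranteed purely by the noetherianity of $\MM$ through Proposition~\ref{P:MaxTame}; in contrast to, say, Proposition~\ref{P:WordPb}, no finiteness of the set of basic elements and no control of $\Irr(\aav)$ is needed for this direction.
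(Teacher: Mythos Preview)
Your proof is correct and follows exactly the paper's reasoning: the paper simply notes, immediately before the Fact, that $\aav \rds \redtame(\aav)$ always holds, so $\redtame(\aav) = \one$ strengthens $\aav \rds \one$. You have merely spelled out in more detail why $\redtame$ is well-defined (via noetherianity and Proposition~\ref{P:MaxTame}) and why each $\Redmax\ii$ contributes an $\rds$-step, which is entirely in line with the paper's one-line justification.
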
 

Although Conjecture~$\ConjB$ is more demanding than Conjecture~$\ConjA$, it might be easier to establish (or to contradict), as it predicts a definite equality rather than an existential statement. As recalled above, \cite[Proposition~6.7]{Dit} implies that Conjecture~$\ConjB$ is true for every Artin-Tits monoid of FC~type. On the other hand, an example of a gcd-monoid (but not an Artin--Tits one) for which (the counterpart of) Conjecture~$\ConjA$ is true but (that of) Conjecture~$\ConjB$ is false is given in~\cite{Div}.

\section{Cross-confluence}\label{S:CrossConf}

 Besides tame reductions and Conjecture~$\ConjB$ of Section~\ref{S:Tame}, we now develop another approach to Conjecture~$\ConjA$, involving both the reduction system~$\RDp{}$ and a symmetric counterpart~$\RDtp{}$ of~$\RDp{}$. The properties of reduction and its counterpart are just symmetric, but interesting features appear when both are used simultaneously, in particular what we call \emph{cross-confluence}, a completely novel property to the best of our knowledge. We are then led to a new statement, Conjecture~$\ConjC$, which would imply Conjecture~$\ConjA$ and, from there, the decidability of the word problem.

The section comprises four subsections. First, right reduction, the symmetric counterpart of (left) reduction, is introduced in Subsection~\ref{SS:RightRed}, and its basic properties are established. Next, cross-conflence, which combines reduction and its counterpart, is introduced in Subsection~\ref{SS:CrossConf}, and partial results are established. Then Conjecture~$\ConjC$ and its uniform version~$\ConjCunif$ are stated and discussed in Subsection~\ref{SS:UnifCC}. Finally, we briefly study in an Appendix the termination of the joint system obtained by merging left and right reduction, a natural topic with nontrivial results, but not directly connected so far to our main conjectures.

\subsection{Right reduction}\label{SS:RightRed}

 By definition, the reduction rule~$\Red{\ii, \xx}$ of Definition~\ref{D:Red} consists in pushing a factor~$\xx$ to the left (small index entries) in the multifraction it is applied to: for this reason, we shall call it a \emph{left} reduction. From now on, we shall also consider symmetric counterparts, naturally called \emph{right} reductions, where elements are pushed to the right.

\begin{defi}\label{D:RightRed}
If $\MM$ is a gcd-monoid and $\aav, \bbv$ lie in~$\FRb\MM$, then, for $\ii \ge 1$ and $\xx$ in~$\MM$, we declare that $\bbv = \aav \act \Redt{\ii, \xx}$ holds if we have $\dh\bbv = \dh\aav$, $\bb_\kk = \aa_\kk$ for $\kk \not= \ii - 1, \ii, \ii + 1$, and there exists~$\xx'$ (necessarily unique) satisfying
$$\begin{array}{lccc}
\text{for $\ii < \dh\aav$ positive in~$\aav$:\quad}
&\xx \bb_{\ii - 1} = \aa_{\ii - 1}, 
&\xx \bb_\ii = \aa_\ii \xx' = \xx \lcm \aa_\ii, 
&\bb_{\ii + 1} = \aa_{\ii + 1} \xx',\\
\text{for $\ii < \dh\aav$ negative in~$\aav$:\quad}
&\bb_{\ii - 1} \xx = \aa_{\ii - 1}, 
&\bb_\ii \xx = \xx' \aa_\ii = \xx \lcmt \aa_\ii, 
&\bb_{\ii + 1} = \xx' \aa_{\ii + 1},\\
\text{for $\ii = \dh\aav$ positive in~$\aav$:\quad}
&\xx \bb_{\ii - 1} = \aa_{\ii - 1}, 
&\xx \bb_\ii = \aa_\ii,\\
\text{for $\ii = \dh\aav$ negative in~$\aav$:\quad}
&\bb_{\ii - 1} \xx = \aa_{\ii - 1}, 
&\bb_\ii \xx = \aa_\ii.
\end{array}$$
 We write $\aav \rdt \bbv$ if $\aav \act \Redt{\ii, \xx}$ holds for some~$\ii$ and some $\xx \not= 1$, and use $\rdts$ for the reflexive--transitive closure of~$\rdt$. The rewrite system~$\RDtb\MM$ so obtained on~$\FRb\MM$ is called \emph{right reduction}, and its restriction to~$\FRp\MM$ (positive multifractions) is denoted by~$\RDtp\MM$. 
\end{defi}

The action of~$\Redt{\ii, \xx}$ is symmetric of that of~$\Red{\ii, \xx}$: one extracts~$\xx$ from~$\aa_{\ii-1}$, lets it cross~$\aa_\ii$ using an lcm, and incorporates the resulting remainder in~$\aa_{\ii+1}$, thus carrying~$\xx$ from level~$\ii-1$ to level~$\ii+1$, see Figure~\ref{F:RightRed}. As in the case of~$\Red{1, \xx}$, the action of~$\Redt{\nn, \xx}$ for $\nn = \dh\aav$ is adapted to avoid creating a $(\nn+1)$st entry.

\begin{figure}[htb]
\begin{picture}(95,18)(0,-2)
\psset{nodesep=0.7mm}
\put(-7,6){$...$}
\put(-15,12){$\Red{\ii, \xx}:$}
\psline[style=back,linecolor=color2]{c-c}(0,6)(15,0)(30,0)(45,6)
\psline[style=back,linecolor=color1]{c-c}(0,6)(15,12)(30,12)(45,6)
\pcline{->}(0,6)(15,12)\taput{$\aa_{\ii - 1}$}
\pcline{<-}(15,12)(30,12)\taput{$\aa_\ii$}
\pcline{->}(30,12)(45,6)\taput{$\aa_{\ii + 1}$}
\pcline{->}(0,6)(15,0)\tbput{$\bb_{\ii - 1}$}
\pcline{<-}(15,0)(30,0)\tbput{$\bb_\ii$}
\pcline{->}(30,0)(45,6)\tbput{$\bb_{\ii + 1}$}
\pcline[linewidth=1.5pt,linecolor=color3,arrowsize=1.5mm]{->}(30,12)(30,0)\trput{$\xx$}
\pcline{->}(15,12)(15,0)\tlput{$\xx'$}
\psarc[style=thin](15,0){3}{0}{90}
\put(21,5){$\Leftarrow$}
\put(51,6){$...$}
\psline[style=back,linecolor=color2]{c-c}(60,6)(75,0)(90,0)(105,6)
\psline[style=back,linecolor=color1]{c-c}(60,6)(75,12)(90,12)(105,6)
\pcline{<-}(60,6)(75,12)\taput{$\aa_{\ii - 1}$}
\pcline{->}(75,12)(90,12)\taput{$\aa_\ii$}
\pcline{<-}(90,12)(105,6)\taput{$\aa_{\ii + 1}$}
\pcline{<-}(60,6)(75,0)\tbput{$\bb_{\ii - 1}$}
\pcline{->}(75,0)(90,0)\tbput{$\bb_\ii$}
\pcline{<-}(90,0)(105,6)\tbput{$\bb_{\ii + 1}$}
\pcline[linewidth=1.5pt,linecolor=color3,arrowsize=1.5mm]{<-}(90,12)(90,0)\trput{$\xx$}
\pcline{<-}(75,12)(75,0)\tlput{$\xx'$}
\psarc[style=thin](75,0){3}{0}{90}
\put(81,5){$\Leftarrow$}
\put(109,6){$...$}
\end{picture}

\begin{picture}(95,22)(0,-2)
\psset{nodesep=0.7mm}
\put(-7,6){$...$}
\put(-15,12){$\Redt{\ii, \xx}:$}
\psline[style=back,linecolor=color2]{c-c}(0,6)(15,0)(30,0)(45,6)
\psline[style=back,linecolor=color1]{c-c}(0,6)(15,12)(30,12)(45,6)
\pcline{->}(0,6)(15,12)\taput{$\aa_{\ii - 1}$}
\pcline{<-}(15,12)(30,12)\taput{$\aa_\ii$}
\pcline{->}(30,12)(45,6)\taput{$\aa_{\ii + 1}$}
\pcline{->}(0,6)(15,0)\tbput{$\bb_{\ii - 1}$}
\pcline{<-}(15,0)(30,0)\tbput{$\bb_\ii$}
\pcline{->}(30,0)(45,6)\tbput{$\bb_{\ii + 1}$}
\pcline[linewidth=1.5pt,linecolor=color3,arrowsize=1.5mm]{<-}(15,12)(15,0)\tlput{$\xx$}
\pcline{<-}(30,12)(30,0)\trput{$\xx'$}
\psarc[style=thin](30,0){3}{90}{180}
\put(21,5){$\Rightarrow$}
\put(51,6){$...$}
\psline[style=back,linecolor=color2]{c-c}(60,6)(75,0)(90,0)(105,6)
\psline[style=back,linecolor=color1]{c-c}(60,6)(75,12)(90,12)(105,6)
\pcline{<-}(60,6)(75,12)\taput{$\aa_{\ii - 1}$}
\pcline{->}(75,12)(90,12)\taput{$\aa_\ii$}
\pcline{<-}(90,12)(105,6)\taput{$\aa_{\ii + 1}$}
\pcline{<-}(60,6)(75,0)\tbput{$\bb_{\ii - 1}$}
\pcline{->}(75,0)(90,0)\tbput{$\bb_\ii$}
\pcline{<-}(90,0)(105,6)\tbput{$\bb_{\ii + 1}$}
\pcline[linewidth=1.5pt,linecolor=color3,arrowsize=1.5mm]{->}(75,12)(75,0)\tlput{$\xx$}
\pcline{->}(90,12)(90,0)\trput{$\xx'$}
\psarc[style=thin](90,0){3}{90}{180}
\put(81,5){$\Rightarrow$}
\put(109,6){$...$}
\end{picture}
\caption{\small Comparing the left reduction~$\Red{\ii, \xx}$ and the right reduction~$\Redt{\ii, \xx}$: in the first case (top), one pushes the factor~$\xx$ from~$\aa_{\ii+1}$ to~$\aa_{\ii-1}$ through~$\aa_\ii$, in the second case (bottom), one pushes~$\xx$ from~$\aa_{\ii-1}$ to~$\aa_{\ii+1}$ through~$\aa_\ii$.}
\label{F:RightRed}
\end{figure}

\begin{rema}\label{R:NotInverse}
Right reduction is \emph{not} an inverse of left reduction: when the reduced factor~$\xx$ crosses (in one direction or the other) the entry~$\aa_\ii$, using the lcm operation cancels common factors. Typically, if $\xx$ divides~$\aa_\ii$, then both $\Red{\ii, \xx}$ and~$\Redt{\ii, \xx}$ amount to dividing by~$\xx$ and, therefore, their actions coincide. See Remark~\ref{R:Inverse} for more on this.
\end{rema}

 The rest of this subsection is devoted to the basic properties of right reduction and their connection with those of left reduction, in particular with respect to convergence and semi-convergence. As can be expected, the convenient tool is an operation exchanging left and right reduction, in this case the duality map~$\REV{\ }$ of Notation~\ref{N:REV}. 

\begin{lemm}\label{L:LRDual}
If $\MM$ is a gcd-monoid, then, for all~$\aav, \bbv$ in~$\FRb\MM$, 
\begin{equation}\label{E:LRDual}
\aav \rd \bbv \text{ is equivalent to }\REV\aav \rdt \REV\bbv.
\end{equation}
\end{lemm}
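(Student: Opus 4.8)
The plan is to unwind both sides of \eqref{E:LRDual} directly from the definitions of left reduction (Definition~\ref{D:Red}), right reduction (Definition~\ref{D:RightRed}), and the duality map $\REV{\ }$ (Notation~\ref{N:REV}), checking that the defining equations of $\bbv = \aav \act \Red{\ii, \xx}$ are transported, entry by entry, to the defining equations of $\REV\bbv = \REV\aav \act \Redt{\ii', \xx}$ for the appropriate mirrored index~$\ii'$. First I would record the elementary facts about $\REV{\ }$: it is depth-preserving and involutive, it reverses the order of entries (so entry~$\kk$ of~$\aav$ becomes entry $\nn+1-\kk$ of~$\REV\aav$, where $\nn = \dh\aav$), and, crucially, it swaps the roles of positive and negative positions together with the roles of left and right divisibility. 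Concretely, if $\aav = (\xx_1 \wdots \xx_\nn)$ as a sequence in~$\MM \cup \INV\MM$, then $\REV\aav = (\INV{\xx_\nn} \wdots \INV{\xx_1})$; so $\kk$ positive in~$\aav$ becomes $\nn+1-\kk$ negative in~$\REV\aav$ and vice versa, but the underlying element of~$\MM$ is unchanged (only its declared sign flips).

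Next I would set $\ii' := \nn - \ii + 1$ and compare the two systems of equations. For $\aav \act \Red{\ii, \xx}$ with $\ii \ge 2$ positive in~$\aav$, the defining relations are $\bb_{\ii-1} = \xx'\aa_{\ii-1}$, $\bb_\ii \xx = \xx'\aa_\ii = \xx \lcmt \aa_\ii$, $\bb_{\ii+1}\xx = \aa_{\ii+1}$. Reindexing via $\kk \mapsto \nn+1-\kk$ sends levels $\ii-1, \ii, \ii+1$ to $\ii'+1, \ii', \ii'-1$, and since $\ii$ is positive in~$\aav$, the index $\ii'$ is negative in~$\REV\aav$; writing out the $\ii'<\dh{\REV\aav}$ negative case of~$\Redt{}$, namely $\bb_{\ii'-1}\xx = \aa_{\ii'-1}$, $\bb_{\ii'}\xx = \xx'\aa_{\ii'} = \xx \lcmt \aa_{\ii'}$, $\bb_{\ii'+1} = \xx'\aa_{\ii'+1}$, one sees these are literally the three relations above after relabeling (with the same auxiliary $\xx'$). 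The same bijective matching works for the three remaining cases: $\ii \ge 2$ negative in~$\aav$ maps to $\ii' < \dh$ positive in~$\REV\aav$; $\ii = 1$ positive in~$\aav$ (relations $\bb_1\xx = \aa_1$, $\bb_2\xx = \aa_2$) maps to $\ii' = \nn = \dh{\REV\aav}$ negative in~$\REV\aav$ (relations $\bb_{\ii'-1}\xx = \aa_{\ii'-1}$, $\bb_{\ii'}\xx = \aa_{\ii'}$); and $\ii = 1$ negative in~$\aav$ maps to $\ii' = \nn$ positive in~$\REV\aav$. In each case the unchanged entries $\bb_\kk = \aa_\kk$ for $\kk \ne \ii-1, \ii, \ii+1$ go over to $\bb_\kk = \aa_\kk$ for $\kk \ne \ii'-1, \ii', \ii'+1$, and depths agree. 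Since $\REV{\ }$ is an involution, the implication is reversible, which gives the equivalence \eqref{E:LRDual} for the single-step relations $\Red{\ii,\xx}$ and $\Redt{\ii',\xx}$; taking $\xx \ne 1$ and noting that the mirrored index ranges over exactly the admissible indices yields $\aav \rd \bbv \Leftrightarrow \REV\aav \rdt \REV\bbv$.

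The only mildly delicate point — and the one I would treat carefully rather than wave through — is the correct handling of the endpoint levels, \ie\ checking that $\ii = 1$ in the left system corresponds to $\ii = \dh\aav$ in the right system, because these are precisely the two places where the defining clauses are ``truncated'' (no $\aa_{\ii-1}$ term on the left at $\ii = 1$, no $\aa_{\ii+1}$ term on the right at $\ii = \dh$) and where sign conventions must be tracked most scrupulously. Once the index bookkeeping $\ii \leftrightarrow \nn + 1 - \ii$ and the sign flip induced by $\REV{\ }$ are pinned down, the verification is a routine term-by-term comparison; I would present it compactly as a four-line case table mirroring the display in Definition~\ref{D:Red} against the display in Definition~\ref{D:RightRed}. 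Passing from single steps to $\rds$ versus $\rdts$ is then immediate by induction on the length of the reduction sequence, using that $\REV{\ }$ is a bijection of $\FRb\MM$ onto itself.
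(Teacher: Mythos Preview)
Your proposal is correct and takes exactly the same approach as the paper: the paper's proof consists of the single observation that $\bbv = \aav \act \Red{\ii, \xx}$ is equivalent to $\REV\bbv = \REV\aav \act \Redt{\nn + 1 - \ii, \xx}$, which you unfold in full detail (including the endpoint cases $\ii = 1 \leftrightarrow \ii' = \nn$) where the paper simply says ``comparing the definitions shows''. Your final remark about passing to $\rds$ versus $\rdts$ is not needed here, since the lemma concerns only the one-step relations $\rd$ and~$\rdt$.
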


\begin{proof}
Comparing the definitions shows that, if $\aav$ and~$\bbv$ have depth~$\nn$, then $\bbv = \aav \act \Red{\ii, \xx}$ is equivalent to $\REV\bbv = \REV\aav \act \Redt{\nn + 1 - \ii, \xx}$, whence~\eqref{E:LRDual}.
\end{proof}

 The following properties of right reduction follow almost directly from their counterpart involving left reduction. The only point requiring some care is the lack of involutivity of\HS{1.2}$\REV{\ }$, itself resulting from the lack of surjectivity of the map~$\aav \mapsto 1 \opp \aav$. 

\begin{lemm}\label{L:RBasics}
Assume that $\MM$ is a gcd-monoid. 

\ITEM1 The relation~$\rdts$ is included in~$\simeqb$, \ie, $\aav \rdts \bbv$ implies $\aav \simeqb \bbv$.

\ITEM2 The relation~$\rdts$ is compatible with the multiplication of~$\FRb\MM$.

\ITEM3 For all~$\aav, \bbv$ and~$\pp, \qq$, the relation~$\aav \rdts \bbv$ is equivalent to $\One\pp \opp \aav \opp \One\qq \rdts \One\pp \opp \bbv \opp \One\qq$.

\ITEM4 If $\aav, \bbv$ belong to~$\FRp\MM$, then $\aav \rd \bbv$ is equivalent to $1 \opp \REV\aav \rdt 1 \opp \REV\bbv$, and $\aav \rdt \bbv$ is equivalent to $1 \opp \REV\aav \rd 1 \opp \REV\bbv$.
\end{lemm}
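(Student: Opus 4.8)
The plan is to transport every assertion from its left-reduction counterpart in Lemma~\ref{L:Basics} across the duality map~$\REV{\ }$, the bridge being Lemma~\ref{L:LRDual}. First I would record the key consequence: replacing~$\aav,\bbv$ by~$\REV\aav,\REV\bbv$ in Lemma~\ref{L:LRDual} and using that~$\REV{\ }$ is an involution of~$\FRb\MM$ (noted just after Lemma~\ref{L:Inverse}) gives the single-step equivalence $\aav\rdt\bbv \iff \REV\aav\rd\REV\bbv$; applying it along a reduction chain, and using involutivity again, yields $\aav\rdts\bbv \iff \REV\aav\rds\REV\bbv$. It is also worth noting that the proofs of Lemma~\ref{L:Basics}\ITEM2 and~\ITEM3 in fact establish the single-step versions of those statements, which is what~\ITEM4 will need.

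With this bridge in hand, \ITEM1--\ITEM3 are mechanical. For~\ITEM1, $\aav\rdts\bbv$ gives $\REV\aav\rds\REV\bbv$, hence $\REV\aav\simeqb\REV\bbv$ by Lemma~\ref{L:Basics}\ITEM1, hence $\can(\aav)\inv=\can(\REV\aav)=\can(\REV\bbv)=\can(\bbv)\inv$ by Lemma~\ref{L:Inverse}\ITEM2, so $\aav\simeqb\bbv$. For~\ITEM2, given $\aav\rdts\bbv$ and arbitrary~$\ccv,\ddv$, Lemma~\ref{L:Inverse}\ITEM1 rewrites $\REV{\ccv\opp\aav\opp\ddv}$ as $\REV\ddv\opp\REV\aav\opp\REV\ccv$ (and likewise with~$\bbv$); since $\REV\aav\rds\REV\bbv$, Lemma~\ref{L:Basics}\ITEM2 gives $\REV{\ccv\opp\aav\opp\ddv}\rds\REV{\ccv\opp\bbv\opp\ddv}$, which the bridge turns back into $\ccv\opp\aav\opp\ddv\rdts\ccv\opp\bbv\opp\ddv$. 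For~\ITEM3, the ``only if'' direction is the instance $\ccv=\One\pp$, $\ddv=\One\qq$ of~\ITEM2; for ``if'', I would apply the bridge and Lemma~\ref{L:Inverse}\ITEM1 to turn $\One\pp\opp\aav\opp\One\qq\rdts\One\pp\opp\bbv\opp\One\qq$ into $\REV{\One\qq}\opp\REV\aav\opp\REV{\One\pp}\rds\REV{\One\qq}\opp\REV\bbv\opp\REV{\One\pp}$, use that the reverse of a trivial multifraction is again trivial, hence of the form~$\One{m'}$ for some integer~$m'$, and invoke Lemma~\ref{L:Basics}\ITEM3 to get $\REV\aav\rds\REV\bbv$, i.e.\ $\aav\rdts\bbv$.

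For~\ITEM4 I fix~$\aav,\bbv$ in~$\FRp\MM$. For the equivalence $\aav\rdt\bbv\iff1\opp\REV\aav\rd1\opp\REV\bbv$ I combine the single-step bridge $\aav\rdt\bbv\iff\REV\aav\rd\REV\bbv$ with the identity $1\opp\REV\aav=\One1\opp\REV\aav$ and the single-step form of Lemma~\ref{L:Basics}\ITEM3 (prepending~$\One1$), which supplies $\REV\aav\rd\REV\bbv\iff1\opp\REV\aav\rd1\opp\REV\bbv$. For the equivalence $\aav\rd\bbv\iff1\opp\REV\aav\rdt1\opp\REV\bbv$ I use that $1\opp\REV\aav$ and $1\opp\REV\bbv$ are positive, apply Lemma~\ref{L:LRDual} to get $1\opp\REV\aav\rdt1\opp\REV\bbv\iff\REV{1\opp\REV\aav}\rd\REV{1\opp\REV\bbv}$, compute $\REV{1\opp\REV\aav}=\REV{\REV\aav}\opp\REV1=\aav\opp\One{-1}$ from Lemma~\ref{L:Inverse}\ITEM1 and involutivity (with $\REV1=\One{-1}$), and finish with the single-step form of Lemma~\ref{L:Basics}\ITEM3 (appending~$\One{-1}$), which gives $\aav\opp\One{-1}\rd\bbv\opp\One{-1}\iff\aav\rd\bbv$.

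I expect no genuine difficulty; the only point that needs care is the one flagged before the statement, that~$\REV{\ }$ does not preserve~$\FRp\MM$. In~\ITEM4 this is what forces the left translation by~$1$ (to restore positivity), which in turn introduces a spurious trivial entry; absorbing that entry is precisely the role played by Lemma~\ref{L:Basics}\ITEM3. The remaining checks---the effect of~$\REV{\ }$ on trivial multifractions, and the identities $1\opp\REV\aav=\One1\opp\REV\aav$ and $\REV1=\One{-1}$---are immediate from the definitions in Notations~\ref{N:Frac} and~\ref{N:REV}.
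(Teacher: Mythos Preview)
Your proposal is correct and follows essentially the same duality-transfer approach as the paper: everything is pushed through~$\REV{\ }$ via Lemma~\ref{L:LRDual} and reduced to the corresponding items of Lemma~\ref{L:Basics}. The only minor divergence is in~\ITEM4, where for the equivalence $\aav\rd\bbv\iff1\opp\REV\aav\rdt1\opp\REV\bbv$ the paper goes the shorter way---apply Lemma~\ref{L:LRDual} directly to get $\REV\aav\rdt\REV\bbv$, then prepend~$1$ using the just-proved~\ITEM3---whereas you compute $\REV{1\opp\REV\aav}=\aav\opp\One{-1}$ and strip the trailing~$\One{-1}$ via Lemma~\ref{L:Basics}\ITEM3; both routes are fine.
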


\begin{proof}
\ITEM1 By~\eqref{E:LRDual}, $\aav \rdts \bbv$ implies $\REV\aav \rds \REV\bbv$, whence $\can(\REV\aav) = \can(\REV\bbv)$ by Lemma~\ref{L:Basics}\ITEM1, hence $\can(\aav) = \can(\bbv)$ by Lemma~\ref{L:Inverse}, \ie, $\aav \simeqb \bbv$. 

\ITEM2 For all~$\ccv, \ddv$, the relation $\aav \rdts \bbv$ implies $\REV\aav \rds \REV\bbv$, whence $\REV\ddv \opp \REV\aav \opp \REV\ccv \rds \REV\ddv \opp \REV\bbv \opp \REV\ccv$ by Lemma~\ref{L:Basics}\ITEM2, which is $(\ccv \opp \aav \opp \ddv)\REV{\ } \rds (\ccv \opp \bbv \opp \ddv)\REV{\ }$ by Lemma~\ref{L:Inverse}, and finally $\ccv \opp \aav \opp \ddv \rds \ccv \opp \bbv \opp \ddv$ by~\eqref{E:LRDual} again.

\ITEM3 Using duality as above, the result follows now from Lemma~\ref{L:Basics}\ITEM3.

\ITEM4 By~\eqref{E:LRDual}, $\aav \rd \bbv$ is equivalent to $\REV\aav \rdt \REV\bbv$, hence, by~\ITEM3, it is also equivalent to $1 \opp \aav \rd 1 \opp \bbv$. Similarly, by~\eqref{E:LRDual} again, $\aav \rdt \bbv$ is equivalent to $\REV\aav \rd \REV\bbv$, hence, by Lemma~\ref{L:Basics}, it is also equivalent to $1 \opp \REV\aav \rd 1 \opp \REV\bbv$. 
\end{proof}
 
By~\eqref{E:LRDual}, an infinite sequence of right reductions would provide an infinite sequence of left reductions, and vice versa, so $\RDtb\MM$ is terminating if and only if $\RDb\MM$ is. Comparing irreducible elements is straightforward:

\begin{lemm}\label{L:LRIrred}
If $\MM$ is a gcd-monoid, a multifraction~$\aav$ is $\RRR$-irreducible if and only if $\REV\aav$ is $\RRRt$-irreducible. For~$\aav$ positive with~$\dh\aav$ even, $\aav$ is $\RRRt$-irreducible if and only if $\REV\aav$ is $\RRR$-irreducible.
\end{lemm}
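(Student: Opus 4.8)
The plan is to deduce both assertions formally from the duality equation~\eqref{E:LRDual} of Lemma~\ref{L:LRDual} --- which says $\aav \rd \bbv$ holds if and only if $\REV\aav \rdt \REV\bbv$ holds --- together with the observation, recorded right after Lemma~\ref{L:Inverse}, that $\REV{\ }$ is an involution of~$\FRb\MM$. No new combinatorics on reduction is needed; everything is bookkeeping with the duality map.

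First I would handle the first assertion. If $\aav$ is $\RRR$-reducible, say $\aav \rd \bbv$ for some~$\bbv$, then \eqref{E:LRDual} gives $\REV\aav \rdt \REV\bbv$, so $\REV\aav$ is $\RRRt$-reducible. For the converse, if $\REV\aav \rdt \ccv$ for some~$\ccv$, I would apply \eqref{E:LRDual} to the pair $(\REV\aav, \REV\ccv)$ and use $\REV{\REV\aav} = \aav$ and $\REV{\REV\ccv} = \ccv$ to get $\aav \rd \REV\ccv$; hence $\aav$ is $\RRR$-reducible. Passing to contrapositives: $\aav$ is $\RRR$-irreducible if and only if $\REV\aav$ is $\RRRt$-irreducible. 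The only point to be careful about is that ``reducible'' is read with the target multifraction allowed to range over all of~$\FRb\MM$, which is precisely what makes the involutivity of~$\REV{\ }$ applicable.

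For the second assertion, I would first note that right reduction, like left reduction, is depth- and sign-preserving, so $\RDtb\MM$ maps $\FRp\MM$ into itself; consequently, for a positive multifraction, being $\RRRt$-irreducible is the same notion whether read in~$\RDtb\MM$ or in~$\RDtp\MM$, and likewise for $\RRR$-irreducibility in $\RDb\MM$ versus $\RDp\MM$. When $\aav$ is positive with $\dh\aav$ even, the remark after Lemma~\ref{L:Inverse} on how $\REV{\ }$ interacts with the parity of the depth shows that $\REV\aav$ is positive too, so both $\aav$ and $\REV\aav$ lie in~$\FRp\MM$. Applying the first assertion with $\REV\aav$ in place of~$\aav$ and using $\REV{\REV\aav} = \aav$ then yields exactly that $\REV\aav$ is $\RRR$-irreducible if and only if $\aav$ is $\RRRt$-irreducible.

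I do not expect a genuine obstacle here: the statement is a direct formal corollary of the duality lemma and of involutivity of~$\REV{\ }$, and the only mild care required is the remark that the restricted systems $\RDp\MM$ and $\RDtp\MM$ induce on positive multifractions the same notion of irreducibility as their full counterparts.
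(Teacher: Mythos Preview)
Your proof is correct and follows essentially the same route as the paper: both arguments reduce the lemma to the duality equation~\eqref{E:LRDual} together with the involutivity of~$\REV{\ }$ on~$\FRb\MM$, and obtain the second assertion by applying the first with~$\REV\aav$ in place of~$\aav$. The paper's writeup is a bit more compact---it directly proves the two contrapositive implications ``$\REV\aav$ $\RRRt$-reducible $\Rightarrow$ $\aav$ $\RRR$-reducible'' and ``$\REV\aav$ $\RRR$-reducible $\Rightarrow$ $\aav$ $\RRRt$-reducible'' and then invokes involutivity for the remaining directions---but the content is the same. Your extra remark that $\RDtp\MM$ and $\RDtb\MM$ induce the same notion of irreducibility on positive multifractions (because reduction preserves sign) is a harmless clarification the paper leaves implicit.
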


\begin{proof}
Assume $\REV\aav \rdt \bbv$. By~\eqref{E:LRDual}, we deduce $\aav = \REV{\REV\aav} \rd \REV\bbv$, hence $\aav$ is not $\RRR$-irreducible. So $\aav$ being $\RRR$-irreducible implies that $\REV\aav$ is $\RRRt$-irreducible. Conversely, assume $\REV\aav \rd \bbv$. By~\eqref{E:LRDual}, we deduce $\aav = \REV{\REV\aav} \rdt \REV\bbv$, hence $\aav$ is not $\RRRt$-irreducible. So $\aav$ being $\RRRt$-irreducible implies that $\REV\aav$ is $\RRR$-irreducible. For the second equivalence, use the involutivity of\HS{1.2}$\REV{\ }$ on positive multifractions of even depth (but we claim nothing for positive multifractions of odd length). 
\end{proof}

 Using the above technical results, we can compare convergence and semi-convergence for left and right reduction. Below, observe the difference between~\ITEM1 and~\ITEM2, which involve only one direction but multifractions of both signs, and~\ITEM3, which only involves positive multifractions and requires using both left and right reductions. This distinction is one of the reasons for considering both positive and negative multifractions in this paper (contrary to~\cite{Dit}). 

\begin{prop}
For every gcd-monoid~$\MM$, the following are equivalent:

\ITEM1 The system~$\RDb\MM$ is convergent (\resp semi-convergent);

\ITEM2 The system~$\RDtb\MM$ is convergent (\resp semi-convergent);

\ITEM3 The systems~$\RDp\MM$ and $\RDtp\MM$ are convergent (\resp semi-convergent).
\end{prop}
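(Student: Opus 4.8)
The plan is to prove the cycle of implications $\ITEM1 \Rightarrow \ITEM2 \Rightarrow \ITEM3 \Rightarrow \ITEM1$ for each of the two parallel statements (convergence and semi-convergence), using the duality map~$\REV{\ }$ of Notation~\ref{N:REV} as the main bridge and the translation $\aav \mapsto 1 \opp \aav$ to move between signed and positive multifractions. The key technical inputs are Lemma~\ref{L:LRDual} (which conjugates $\rd$ and $\rdt$ via~$\REV{\ }$), Lemma~\ref{L:LRIrred} (which matches $\RRR$-irreducible and $\RRRt$-irreducible multifractions under~$\REV{\ }$), Lemma~\ref{L:RBasics} (compatibility, translation invariance, and the positive-multifraction reformulations), and Proposition~\ref{P:EnvGroup}\ITEM3 together with Lemma~\ref{L:Basics}\ITEM3 for the classical ``add a leading~$1$'' trick.

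\textbf{$\ITEM1 \Leftrightarrow \ITEM2$.} This is the cleanest part. For convergence: given $\aav$ in~$\FRb\MM$, by Lemma~\ref{L:LRDual} the $\RRRt$-reducts of~$\aav$ are exactly the $\REV{\ }$-images of the $\RRR$-reducts of~$\REV\aav$, and by Lemma~\ref{L:LRIrred} the irreducible ones among them correspond. Since $\REV{\ }$ is an involution on~$\FRb\MM$, uniqueness of the $\RRR$-irreducible reduct of every element of~$\FRb\MM$ is equivalent to uniqueness of the $\RRRt$-irreducible reduct of every element of~$\FRb\MM$. For semi-convergence: if $\aav$ is unital, so is $\REV\aav$ by Lemma~\ref{L:Inverse}\ITEM2; then $\RDb\MM$ semi-convergent gives $\REV\aav \rds \one$, and since $\REV\one = \one$ (a trivial multifraction), Lemma~\ref{L:LRDual} yields $\aav = \REV{\REV\aav} \rdts \one$, i.e.\ $\RDtb\MM$ is semi-convergent; the converse is symmetric.

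\textbf{$\ITEM1 \Rightarrow \ITEM3$ and $\ITEM2 \Rightarrow \ITEM3$.} Here we restrict a statement about~$\FRb\MM$ to~$\FRp\MM$. For $\RDp\MM$ semi-convergent: a unital positive~$\aav$ is in particular unital in~$\FRb\MM$, so $\aav \rds \one$ by~\ITEM1; since reduction preserves the sign, this reduction stays inside~$\FRp\MM$, so $\RDp\MM$ is semi-convergent. For $\RDtp\MM$ semi-convergent one uses~\ITEM2 the same way. The convergent case is similar: if every element of~$\FRb\MM$ has a unique $\RRR$-irreducible reduct, the same holds a fortiori for positive ones (all reducts of a positive multifraction are positive), giving convergence of~$\RDp\MM$; dually (or via $\ITEM2$) for~$\RDtp\MM$.

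\textbf{$\ITEM3 \Rightarrow \ITEM1$.} This is the step I expect to be the main obstacle, because we must recover information about arbitrary signed multifractions from the positive ones, and this is precisely why we need \emph{both} $\RDp\MM$ and $\RDtp\MM$. For semi-convergence: let $\aav$ in~$\FRb\MM$ be unital. If $\aav$ is positive we are done by~\ITEM3. If $\aav$ is negative, then $1 \opp \aav$ is positive and unital (by Proposition~\ref{P:EnvGroup} and Lemma~\ref{L:Basics}\ITEM1 the leading~$1$ does not change the class), so semi-convergence of~$\RDp\MM$ gives $1 \opp \aav \rds \one$; then I would use Lemma~\ref{L:Basics}\ITEM3 to strip the leading~$1$ and conclude $\aav \rds \one$ --- the subtle point is that $\one$ here must be read as the trivial multifraction of the \emph{same depth and sign} as~$\aav$, and Lemma~\ref{L:Basics}\ITEM3's hypothesis (the reduct has a trivial leading entry which can be dropped) has to be checked, but since $\one$ is trivial this is automatic. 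For the convergent case the argument is more delicate: I would pass to~$\RDp\MM$ via~$1 \opp \aav$ to control the ``positive part'' of reducts, but the leading entry of a reduct of~$1\opp\aav$ need not be~$1$, so the naive stripping fails --- this is the phenomenon flagged in the remark after Proposition~\ref{P:UR}. The way around it is to use the \emph{right} system~$\RDtp\MM$ as well: a negative~$\aav$ can be written $\REV{(1 \opp \REV\aav)}$ with $1 \opp \REV\aav$ positive, and Lemma~\ref{L:RBasics}\ITEM4 converts $\RRR$-reduction of~$\aav$ into $\RRRt$-reduction of~$1\opp\REV\aav$; combined with Lemma~\ref{L:LRIrred} and convergence of~$\RDtp\MM$, this pins down the unique $\RRR$-irreducible reduct of~$\aav$. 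I would organize this last part by first handling $\FRp\MM$ directly from~\ITEM3, then handling $\FRb\MM \setminus \FRp\MM$ by the $\REV{\ }$-conjugation into~$\RDtp\MM$, taking care throughout that ``unique $\RRR$-irreducible reduct'' is insensitive to the depth-bookkeeping of trivial entries.
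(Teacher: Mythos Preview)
Your proposal is correct and largely parallels the paper's argument. The one genuine difference is in the semi-convergent case of $\ITEM3 \Rightarrow \ITEM1$: for a negative unital~$\aav$, the paper dualizes to~$\REV\aav$ and invokes semi-convergence of~$\RDtp\MM$, whereas you pass to the positive multifraction~$1 \opp \aav$ and use only semi-convergence of~$\RDp\MM$, stripping the leading~$1$ via Lemma~\ref{L:Basics}\ITEM3. Your route works and is in fact more economical---it shows that semi-convergence of~$\RDp\MM$ alone already forces semi-convergence of~$\RDb\MM$, so that the~$\RDtp\MM$ half of~\ITEM3 is redundant in the semi-convergent statement. For the convergent case of $\ITEM3 \Rightarrow \ITEM1$ you rightly abandon the $1 \opp \aav$ trick and fall back on the paper's strategy (dualize via~$\REV{\ }$ and use~$\RDtp\MM$); one small slip is the formula~$\aav = \REV{(1 \opp \REV\aav)}$, which fails whenever $\REV\aav$ is itself negative---this happens precisely for negative~$\aav$ of even depth, and the paper's own line ``then~$\REV\aav$ is positive'' has the identical defect. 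The clean repair is to combine Lemma~\ref{L:LRDual} with Lemma~\ref{L:RBasics}\ITEM3 directly and observe that, unlike the left case, prepending~$1$ creates no new $\RRRt$-reductions (any~$\Redt{2,x}$ would have to extract~$x$ from the trivial first entry), so $\RRRt$-reducts and $\RRRt$-irreducibility transfer perfectly between~$\REV\aav$ and the always-positive~$1 \opp \REV\aav$.
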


\begin{proof}
We begin with convergence. Assume that $\RDb\MM$ is convergent. Let $\aav$ belong to~$\FRb\MM$. Let $\bbv = \red(\REV\aav)$. By definition, we have $\REV\aav \rds \bbv$. Then, by Lemmas~\ref{L:RBasics} and~\ref{L:LRIrred}, we have $\aav \rdts \REV\bbv$ and $\REV\bbv$ is $\RRRt$-irreducible. Assume that $\ccv$ is $\RRRt$-irreducible and $\aav \rdts \ccv$ holds. By Lemmas~\ref{L:RBasics} and~\ref{L:LRIrred} again, we deduce $\REV\aav \rds \REV\ccv$ and $\REV\ccv$ is $\RRR$-irreducible. The assumption that $\RDb\MM$ is convergent implies $\REV\ccv = \bbv$, whence $\ccv = \REV\bbv$. Hence $\REV\bbv$ is the only $\RRRt$-irreducible $\RRRt$-reduct of~$\aav$. Therefore, $\RDtb\MM$ is convergent, and \ITEM1 implies~\ITEM2. The converse implication is proved in the same way, so \ITEM1 is equivalent to~\ITEM2.

Since all $\RRR$-reducts and $\RRRt$-reducts of a positive multifraction are positive, it is clear that \ITEM1 implies that $\RDp\MM$ is convergent, and \ITEM2 implies that $\RDtp\MM$ is convergent. Conversely, assume that both $\RDp\MM$ and $\RDtp\MM$ are convergent. Let $\aav$ be an arbitrary multifraction on~$\MM$. Assume first that $\aav$ is positive. Let $\bbv$ be the unique $\RRR$-irreducible reduct of~$\aav$. Assume now that $\aav$ is negative. Then $\REV\aav$ is positive. Let $\bbv$ be the unique $\RRRt$-irreducible $\RRRt$-reduct of~$\REV\aav$. By Lemmas~\ref{L:RBasics} and~\ref{L:LRIrred}, $\REV\bbv$ is $\RRR$-irreducible, and $\aav \rds \bbv$ holds. Now assume that $\ccv$ is $\RRR$-irreducible and $\aav \rds \ccv$ holds. Then $\REV\ccv$ is $\RRR$-irreducible and $\REV\aav \rdts \REV\ccv$ holds. As $\REV\aav$ is positive and $\RDtp\MM$ is convergent, we deduce $\REV\ccv = \bbv$, whence $\ccv = \REV\bbv$. Hence $\RDb\MM$ is convergent, and \ITEM3 implies~\ITEM1. This completes the argument for convergence.

Assume now that $\RDb\MM$ is semi-convergent. Let $\aav$ be a unital multifraction. By Lem\-ma~\ref{L:Basics}, $\REV\aav$ is unital as well, hence we must have $\REV\aav \rds \one$, which, by~\eqref{E:LRDual}, implies $\aav \rdts \REV\one = \one$. Hence $\RDtb\MM$ is semi-convergent. The converse implication is similar, so \ITEM1 and~\ITEM2 are equivalent in this case as well. On the other hand, \ITEM1 and \ITEM2 clearly imply~\ITEM3. Finally, assume that both $\RDp\MM$ and~$\RDtp\MM$ are semi-convergent. Let $\aav$ be a unital multifraction. If $\aav$ is positive, the assumption that $\RDp\MM$ is semi-convergent implies $\aav \rds \one$. If $\aav$ is negative, then $\REV\aav$ is positive, and the assumption that $\RDtp\MM$ is semi-convergent implies $\REV\aav \rdts \one$, whence $\aav \rds \REV\one = \one$ by~\eqref{E:LRDual}. Hence $\RDb\MM$ is semi-convergent. So \ITEM3 implies~\ITEM1, which completes the argument for semi-convergence.
\end{proof}

In the convergent case, the above proof implies, with obvious notation,
$\redt(\aav) = (\red(\REV\aav))\REV{\ }$.

\begin{rema}
It is shown in~\cite[Sec.~3]{Dit} that, when left reduction is considered, trimming final trivial entries essentially does not change reduction. This is not true for right reduction: deleting trivial final entries can change the reducts, as deleting trivial initial entries does in the case of left reduction.
\end{rema}

\subsection{ The cross-confluence property}\label{SS:CrossConf}

We now introduce our main new notion, a variant of confluence that combines left and right reduction.

\begin{defi}\label{D:CrossConf}
If $\MM$ is a gcd-monoid, we say that $\RDp\MM$ is \emph{cross-confluent} if, for all~$\aav, \bbv, \ccv$ in~$\FRp\MM$, 
\begin{equation}\label{E:CrossConf}
\parbox{113mm}{If we have $\aav \rdts \bbv$ and $\aav \rdts \ccv$, there exists~$\ddv$ satisfying $\bbv \rds \ddv$ and $\ccv \rds \ddv$.}
\end{equation}
\end{defi} 

 So cross-confluence for~$\RDp\MM$ means that left reduction provides a solution for the confluence pairs of right reduction. We shall naturally say that $\RDb\MM$ is cross-confluent if~\eqref{E:CrossConf} holds for all~$\aav, \bbv, \ccv$ in~$\FRb\MM$. On the other hand, we say that $\RDtp\MM$ is \emph{cross-confluent} if right reduction provides a solution for the confluence pairs of left reduction, that is, if
\begin{equation}\label{E:CrossConft}
\parbox{113mm}{If we have $\aav \rds \bbv$ and $\aav \rds \ccv$, there exists~$\ddv$ satisfying $\bbv \rdts \ddv$ and $\ccv \rdts \ddv$.}
\end{equation}
holds for all~$\aav, \bbv, \ccv$ in~$\FRp\MM$. Finally, $\RDtb\MM$ is cross-confluent if \eqref{E:CrossConft} holds for all~$\aav, \bbv, \ccv$ in~$\FRb\MM$.

\begin{rema}
 The definition of cross-confluence involves both left and right reduction. But, owing to the equivalence~\eqref{E:LRDual}, it can alternatively be stated as a property involving left reduction exclusively. Indeed, in the case of~$\RDb\MM$, cross-confluence is equivalent to 
\begin{equation}\label{E:CrossConf1}
\parbox{113mm}{If we have $\REV\aav \rds \REV\bbv$ and $\REV\aav \rds \REV\ccv$, there exists~$\ddv$ satisfying $\bbv \rds \ddv$ and $\ccv \rds \ddv$,}
\end{equation}
and similarly for~$\RDtb\MM$. In the case of~$\RDp\MM$, when we restrict to positive multifractions, we also have an alternative statement involving~$\rds$ exclusively, but it takes the less symmetric form
\begin{equation}\label{E:CrossConf1p}
\parbox{113mm}{If we have $1 \opp \REV\aav \rds 1 \opp \REV\bbv$ and $1 \opp \REV\aav \rds 1 \opp \REV\ccv$, \\\null\hfill there exists~$\ddv$ satisfying $\bbv \rds \ddv$ and $\ccv \rds \ddv$,}
\end{equation}
 because the $\REV{\ }$ operation is not involutive in this case. 
\end{rema}

 Using duality, we obtain for the cross-confluences of~$\RDb{}$ and~$\RDtb{}$ and that of their positive versions the same connection as in the case of convergence and semi-convergence: 

\begin{prop}\label{P:CCAlt}
For every gcd-monoid~$\MM$, the following are equivalent:

\ITEM1 The system~$\RDb\MM$ is cross-confluent;

\ITEM2 The system~$\RDtb\MM$ is cross-confluent;

\ITEM3 The systems~$\RDp\MM$ and $\RDtp\MM$ are cross-confluent.
\end{prop}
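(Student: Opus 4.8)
The plan is to reproduce, essentially verbatim, the proof of the analogous three‑way equivalence for convergence and semi‑convergence established earlier in this section. The only tool required is the duality map~$\REV{\ }$ of Notation~\ref{N:REV}, used through Lemmas~\ref{L:LRDual}, \ref{L:Inverse}, and~\ref{L:RBasics}, together with the fact (noted in the discussion after Lemma~\ref{L:Inverse}) that $\REV{\ }$ is an involution on~$\FRb\MM$. It suffices to prove that \ITEM1 is equivalent to~\ITEM2, that \ITEM1 implies~\ITEM3, and that \ITEM3 implies~\ITEM1.

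\textbf{Equivalence of \ITEM1 and \ITEM2, and \ITEM1$\,\Rightarrow\,$\ITEM3.} Iterating~\eqref{E:LRDual} gives $\aav \rds \bbv$ if and only if $\REV\aav \rdts \REV\bbv$, for all $\aav,\bbv$ in~$\FRb\MM$. Hence, assuming $\RDb\MM$ cross-confluent, any pair $\aav \rds \bbv$, $\aav \rds \ccv$ in~$\FRb\MM$ gives $\REV\aav \rdts \REV\bbv$ and $\REV\aav \rdts \REV\ccv$, so \eqref{E:CrossConf} for~$\FRb\MM$ provides~$\ddv$ with $\REV\bbv \rds \ddv$ and $\REV\ccv \rds \ddv$; applying~$\REV{\ }$ once more and using involutivity yields $\bbv \rdts \REV\ddv$ and $\ccv \rdts \REV\ddv$, which is \eqref{E:CrossConft} for~$\FRb\MM$. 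The converse is symmetric, so \ITEM1 and~\ITEM2 are equivalent. Since every $\rds$- and $\rdts$-reduct of a positive multifraction is positive, restricting \eqref{E:CrossConf} (\resp \eqref{E:CrossConft}) from~$\FRb\MM$ to~$\FRp\MM$ shows that \ITEM1 implies $\RDp\MM$ cross-confluent and \ITEM2 implies $\RDtp\MM$ cross-confluent; with the equivalence of \ITEM1 and \ITEM2 this gives \ITEM1$\,\Rightarrow\,$\ITEM3.

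\textbf{\ITEM3$\,\Rightarrow\,$\ITEM1.} Assume $\RDp\MM$ and $\RDtp\MM$ cross-confluent, and let $\aav \rdts \bbv$, $\aav \rdts \ccv$ with $\aav,\bbv,\ccv$ in~$\FRb\MM$; since right reduction, like left reduction, preserves all signs, $\bbv$ and~$\ccv$ share the sign of~$\aav$. If $\aav$ is positive, so are $\bbv,\ccv$, and cross-confluence of~$\RDp\MM$ (that is, \eqref{E:CrossConf} for~$\FRp\MM$) directly provides a common $\rds$-reduct~$\ddv$. If $\aav$ is negative, one passes to positive multifractions via~$\REV{\ }$: iterating~\eqref{E:LRDual} gives $\REV\aav \rds \REV\bbv$ and $\REV\aav \rds \REV\ccv$. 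When $\dh\aav$ is odd these three multifractions are positive, so \eqref{E:CrossConft} for~$\FRp\MM$ yields~$\eev$ with $\REV\bbv \rdts \eev$ and $\REV\ccv \rdts \eev$, and applying~$\REV{\ }$ again gives $\bbv \rds \REV\eev$, $\ccv \rds \REV\eev$, so $\ddv:=\REV\eev$ works. When $\dh\aav$ is even, first replace $\aav,\bbv,\ccv$ by $\aav\opp\One{-1},\bbv\opp\One{-1},\ccv\opp\One{-1}$, which are negative of odd depth and carry the same $\rdts$-relations by Lemma~\ref{L:RBasics}\ITEM3; the odd-depth argument then produces a common $\rds$-reduct of $\bbv\opp\One{-1}$ and~$\ccv\opp\One{-1}$, and since the trivial final entry of $\bbv\opp\One{-1}$ cannot be altered by any reduction, that reduct has the form $\ddv\opp\One{-1}$, whence $\bbv\rds\ddv$ and $\ccv\rds\ddv$ by Lemma~\ref{L:Basics}\ITEM3. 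In every case $\RDb\MM$ is cross-confluent.

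\textbf{Main obstacle.} The bulk of the argument is routine duality bookkeeping. The one point demanding care is precisely the passage from negative to positive multifractions in \ITEM3$\,\Rightarrow\,$\ITEM1: because $\REV{\ }$ is not involutive on~$\FRp\MM$ (equivalently, $\aav\mapsto1\opp\aav$ is not surjective), a negative multifraction of even depth has a negative image under~$\REV{\ }$, which forces the parity split and the adjunction/deletion of a trivial final entry. This is exactly the subtlety already handled in the proof of the corresponding statement for convergence and semi-convergence above, and no new idea is needed.
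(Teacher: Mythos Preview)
Your argument is correct and follows the same duality approach as the paper. In fact, your treatment of \ITEM3$\,\Rightarrow\,$\ITEM1 is more careful: the paper asserts that when $\aav$ is negative, $\REV\aav$, $\REV\bbv$, $\REV\ccv$ are positive, but this holds only when $\dh\aav$ is odd (for even depth, $\REV{\ }$ preserves negativity); your parity split and the appeal to Lemmas~\ref{L:Basics}\ITEM3 and~\ref{L:RBasics}\ITEM3 via $\One_{-1}$ legitimately closes this small gap.
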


\begin{proof}
Assume that $\RDb\MM$ is cross-confluent. Applying~\eqref{E:CrossConf} to~$\REV\aav$, $\REV\bbv$, and~$\REV\ccv$ and using~\eqref{E:LRDual}, we deduce~\eqref{E:CrossConft}, so $\RDtb\MM$ is also cross-confluent. Doing the same from~\eqref{E:CrossConft} returns to~\eqref{E:CrossConf}. So \ITEM1 and \ITEM2 are equivalent.

Next, all $\RRR$- and $\RRRt$-reducts of a positive multifraction are positive, so, if $\RDb\MM$ is cross-confluent, its retriction to~$\FRp\MM$ is also cross-confluent. Similarly, if $\RDtb\MM$ is cross-confluent, its retriction to~$\FRp\MM$ is cross-confluent. Hence \ITEM1 implies~\ITEM3. Finally, assume that $\RDp\MM$ and $\RDtp\MM$ are cross-confluent, and $\aav$, $\bbv$, $\ccv$ belong to~$\FRb\MM$ and satisfy $\aav \rdts \bbv$ and $\aav \rdts \ccv$. If $\aav$, hence $\bbv$ and~$\ccv$ as well, are positive, the assumption that $\RDp\MM$ is cross-confluent implies the existence of~$\ddv$ satisfying~\eqref{E:CrossConf}. Assume now that $\aav$, hence $\bbv$ and~$\ccv$, are negative. Then $\REV\aav$, $\REV\bbv$, $\REV\ccv$ are positive, and we have $\REV\aav \rds \REV\bbv$ and $\REV\aav \rds \REV\ccv$. The assumption that $\RDtp\MM$ is cross-confluent implies the existence of~$\ddv$ satisfying $\REV\bbv \rdts \ddv$ and $\REV\ccv \rdts \ddv$. By~\eqref{E:LRDual}, we deduce $\bbv \rds \REV\ddv$ and $\ccv \rds \REV\ddv$. So \eqref{E:CrossConf} holds, $\RDb\MM$ is cross-confluent, and \ITEM3 implies~\ITEM1.
\end{proof}

 In view of our main purpose, namely establishing the semi-convergence of (left) reduction, the main result is the following connection, which locates cross-confluence as an intermediate between convergence and semi-convergence:

\begin{prop}\label{P:ConvCC}
Assume that $\MM$ is a noetherian gcd-monoid.

\ITEM1 If $\RDb\MM$ is convergent, then $\RDb\MM$ is cross-confluent.

\ITEM2 If $\RDb\MM$ is cross-confluent, then $\RDb\MM$ is semi-convergent.

\ITEM3 Mutatis mutandis, the same implications hold for~$\RDp\MM$.
\end{prop}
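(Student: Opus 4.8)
The plan is to treat the three parts in turn, leaning on the already-available reformulations---Proposition~\ref{P:CCAlt} (the flavours of cross-confluence coincide), Proposition~\ref{P:1Conf} (semi-convergence is $\one$-confluence), and the duality $\REV{\ }$ of Lemma~\ref{L:LRDual}---rather than chasing arbitrary $\simeqb$-zigzags. For~\ITEM1, assume $\RDb\MM$ is convergent, so the irreducible reduct $\red(\cdot)$ is everywhere defined; by Proposition~\ref{P:UR}\ITEM2 the monoid $\MM$ then satisfies the $3$-Ore condition, so the results of~\cite{Dit} on the convergent case apply. Let $\aav \rdts \bbv$ and $\aav \rdts \ccv$. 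By Lemma~\ref{L:RBasics}\ITEM1 we get $\bbv \simeqb \aav \simeqb \ccv$, and since right reduction preserves depth, $\dh\bbv = \dh\aav = \dh\ccv$. The description of $\simeqb$ in the convergent case gives $\red(\bbv) \opp \One\pp = \red(\ccv) \opp \One\qq$ for some $\pp, \qq$; taking the first $\dh\bbv$ entries on each side---these are unaffected by right multiplication by the unit multifractions $\One\pp$, $\One\qq$, and they reconstruct $\red(\bbv)$, resp. $\red(\ccv)$---yields $\red(\bbv) = \red(\ccv)$. Setting $\ddv := \red(\bbv) = \red(\ccv)$ gives $\bbv \rds \ddv$ and $\ccv \rds \ddv$, which is~\eqref{E:CrossConf}; hence $\RDb\MM$ is cross-confluent.

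For~\ITEM2, assume $\RDb\MM$ is cross-confluent; by Proposition~\ref{P:CCAlt}, $\RDtb\MM$ is cross-confluent as well, so both \eqref{E:CrossConf} and \eqref{E:CrossConft} are available. By Proposition~\ref{P:1Conf} it suffices to verify the $\one$-confluence condition~\eqref{E:1Conf}, so suppose $\aav \rds \bbv$ and $\aav \rds \one$. Applying~\eqref{E:CrossConft} to these two left reductions produces $\ddv$ with $\bbv \rdts \ddv$ and $\one \rdts \ddv$; as a trivial multifraction is $\RRRt$-irreducible, $\ddv = \one$, hence $\bbv \rdts \one$. Now applying~\eqref{E:CrossConf} to $\bbv \rdts \one$ and the trivial right reduction $\bbv \rdts \bbv$ produces $\eev$ with $\one \rds \eev$ and $\bbv \rds \eev$; as a trivial multifraction is $\RRR$-irreducible, $\eev = \one$, whence $\bbv \rds \one$. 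So~\eqref{E:1Conf} holds and $\RDb\MM$ is semi-convergent. (The two moves also record the by-product that, under cross-confluence, a multifraction left-reduces to a trivial one exactly when it right-reduces to one.)

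For~\ITEM3, the two implications for $\RDp\MM$ are obtained by rerunning the arguments above with all multifractions taken positive: one invokes the positive versions of Lemma~\ref{L:RBasics}, Proposition~\ref{P:CCAlt} and Proposition~\ref{P:1Conf}, and replaces the involution $\REV{\ }$ by the map $\aav \mapsto 1 \opp \REV\aav$ together with Lemma~\ref{L:RBasics}\ITEM4 whenever one passes between left and right reduction; this introduces only the routine bookkeeping of leading and trailing trivial entries forced by the failure of involutivity of that map.

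I expect the only genuine idea to lie in~\ITEM2: the natural but doomed attempt is to straighten an entire $\simeqb$-zigzag down to $\one$, which collides with the absence of ordinary confluence for left reduction. The escape is to route through $\one$-confluence, which only demands that the very special fork $\aav \rds \bbv$, $\aav \rds \one$ be closed, and to close it in two short steps---first cross-confluence of $\RDtb\MM$ to reach $\bbv \rdts \one$, then cross-confluence of $\RDb\MM$ to convert that into $\bbv \rds \one$---each step exploiting irreducibility of $\one$ for the system being used. Part~\ITEM1 is essentially bookkeeping, its one delicate point being that right-reducts of a common source are merely $\simeqb$-equivalent, so one must pass through the convergent-case normal-form theory to produce a common \emph{left} reduct; part~\ITEM3 is routine once the $\REV{\ }$ versus $1\opp\REV{\ }$ dictionary is in place.
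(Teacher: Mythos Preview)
Your argument is correct and follows essentially the same route as the paper: part~\ITEM2 is identical (the paper isolates your two-step ``$\bbv \rdts \one$ then $\bbv \rds \one$'' move as a separate Lemma~\ref{L:RedOne}, which you have simply inlined), and part~\ITEM1 agrees once one notes that the detour through Proposition~\ref{P:UR} and the $3$-Ore condition is unnecessary---convergence plus Lemma~\ref{L:Zigzag} already gives $\red(\bbv)\opp\One\pp = \red(\ccv)\opp\One\qq$ for $\simeqb$-equivalent $\bbv,\ccv$, and your depth argument then finishes as written.

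One caution on~\ITEM3: your phrase ``the positive versions of Proposition~\ref{P:CCAlt}'' is misleading, since that proposition does \emph{not} say that cross-confluence of $\RDp\MM$ alone implies cross-confluence of $\RDtp\MM$. Hence in the positive case you cannot simply rerun~\ITEM2 by first invoking~\eqref{E:CrossConft}; instead, as the paper does (and as your reference to Lemma~\ref{L:RBasics}\ITEM4 suggests you intend), one must transport $\aav \rds \bbv$ and $\aav \rds \one$ to right reductions $1\opp\REV\aav \rdts 1\opp\REV\bbv$ and $1\opp\REV\aav \rdts \one$, apply cross-confluence of $\RDp\MM$ directly to obtain $1\opp\REV\bbv \rds \one$, translate back to $\bbv \rdts \one$, and only then use~\eqref{E:CrossConf} (equivalently, your inlined Lemma~\ref{L:RedOne}\ITEM2) to get $\bbv \rds \one$. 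This is indeed routine, but the order of the two cross-confluence applications is reversed from the signed case.
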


We begin with an auxiliary result:

\begin{lemm}\label{L:RedOne}
Assume that $\MM$ is a gcd-monoid.

\ITEM1 If $\RDb\MM$ is cross-confluent, then, for every~$\aav$ in~$\FRb\MM$, the relations $\aav \rds \one$ and $\aav \rdts \one$ are equivalent. 

\ITEM2 If $\RDp\MM$ is cross-confluent, then, for every~$\aav$ in~$\FRp\MM$, the relation $\aav \rdts \one$ implies $\aav \rds \one$.
\end{lemm}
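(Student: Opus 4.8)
The plan is to use nothing more than the reflexivity of the closures $\rds$ and $\rdts$ together with the observation that a trivial multifraction admits no nontrivial reduction on either side, so it is both $\RRR$-irreducible and $\RRRt$-irreducible. Throughout, for $\aav$ of depth~$\nn$ I write $\one$ for the trivial multifraction $\One\nn$; since reduction is depth-preserving, every reduct of~$\aav$ and every auxiliary multifraction produced below also has depth~$\nn$, so there is no bookkeeping ambiguity in the symbol~$\one$.

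For~\ITEM2, let $\aav$ lie in~$\FRp\MM$ and assume $\aav \rdts \one$. Since $\rdts$ is reflexive we also have $\aav \rdts \aav$. Applying the cross-confluence of~$\RDp\MM$, namely~\eqref{E:CrossConf}, to the two right-reducts $\one$ and~$\aav$ of~$\aav$, we obtain some~$\ddv$ in~$\FRp\MM$ with $\one \rds \ddv$ and $\aav \rds \ddv$. Now $\one$ is $\RRR$-irreducible: a rule $\Red{\ii, \xx}$ with $\xx \ne 1$ requires $\xx$ to divide the $(\ii+1)$st entry, which here is~$1$, so no such rule applies. Hence $\ddv = \one$, and therefore $\aav \rds \one$, as desired.

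For~\ITEM1, the implication $\aav \rdts \one \Rightarrow \aav \rds \one$ follows by the very same argument, with the cross-confluence of~$\RDp\MM$ replaced by that of~$\RDb\MM$ and $\FRp\MM$ replaced by~$\FRb\MM$. For the converse, note that by Proposition~\ref{P:CCAlt} the system~$\RDtb\MM$ is also cross-confluent, i.e.\ \eqref{E:CrossConft} holds on~$\FRb\MM$. Given $\aav \rds \one$, apply \eqref{E:CrossConft} to the two left-reducts $\one$ and~$\aav$ of~$\aav$ (using $\aav \rds \aav$ again): this yields $\ddv$ with $\one \rdts \ddv$ and $\aav \rdts \ddv$. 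Since $\one$ is $\RRRt$-irreducible --- either by the same elementary divisibility argument, or because $\REV\one$ is trivial hence $\RRR$-irreducible and one invokes Lemma~\ref{L:LRIrred} --- we get $\ddv = \one$, hence $\aav \rdts \one$. Combining the two implications gives the equivalence of~\ITEM1.

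The argument is short and presents no real obstacle; the only points requiring a word of care are the fact that $\one$ is irreducible for \emph{both} reduction systems (immediate) and, for the second half of~\ITEM1, the invocation of Proposition~\ref{P:CCAlt} to transfer cross-confluence from~$\RDb\MM$ to~$\RDtb\MM$ so that the reflexivity trick can be run on the right-reduction side as well.
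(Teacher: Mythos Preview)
Your proof is correct and follows essentially the same approach as the paper: use reflexivity of $\rds$ (resp.\ $\rdts$) to feed two reducts of~$\aav$ into the appropriate cross-confluence property, then use the $\RRR$- (resp.\ $\RRRt$-) irreducibility of~$\one$ to force the common reduct to be~$\one$; for the second half of~\ITEM1 you invoke Proposition~\ref{P:CCAlt} exactly as the paper does. The only cosmetic difference is that you treat~\ITEM2 first and then both implications of~\ITEM1, whereas the paper proves~\ITEM1 first and then notes that the ``latter argument'' (your argument for~\ITEM2) carries over verbatim.
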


\begin{proof}
\ITEM1 Assume $\aav \in \FRb\MM$ and $\aav \rds \one$. By Proposition~\ref{P:CCAlt}, the cross-confluence of~$\RDb\MM$ implies that of~$\RDtb\MM$. By definition, we also have $\aav \rds \aav$. Then \eqref{E:CrossConft} implies the existence of~$\ddv$ satisfying $\one \rdts \ddv$ and $\aav \rdts \ddv$. By definition, $\one$ is $\RRRt$-irreducible, so $\one \rdts \ddv$ implies $\ddv = \one$, whence $\aav \rdts \one$. Conversely, assume $\aav \rdts \one$. We have $\aav \rdts \aav$, and \eqref{E:CrossConf} implies the existence of~$\ddv$ satisfying $\aav \rds \ddv$ and $\one \rds \ddv$, whence $\ddv = \one$, and $\aav \rds \one$.

\ITEM2 When $\aav$ lies in~$\FRp\MM$, the latter argument remains valid, and it shows again that $\aav \rdts \one$ implies $\aav \rds \one$. (By contrast, the former argument need not extend, as there is a priori no reason why the cross-confluence of~$\RDp\MM$ should imply that of~$\RDtp\MM$.)
\end{proof}

\begin{proof}[Proof of Proposition~\ref{P:ConvCC}]
\ITEM1 Let $\aav$ belong to~$\FRb\MM$, and assume $\aav \rdts \bbv$ and $\aav \rdts \ccv$. By Lemma~\ref{L:RBasics}, we have $\aav \simeqb \bbv \simeqb \ccv$, whence $\bbv \rds \ddv$ and $\ccv \rds \ddv$, where $\ddv$ is the (unique) $\RRR$-irreducible reduct of~$\aav$. Hence \eqref{E:CrossConf} is satisfied. 

\ITEM2 Assume that $\RDb\MM$ is cross-confluent, and we have $\aav \rds \one$ and $\aav \rds \bbv$ for some~$\aav$ in~$\FRb\MM$. By~\eqref{E:CrossConft}, which is valid since, by Proposition~\ref{P:CCAlt}, the cross-confluence of~$\RDb\MM$ implies that of~$\RDtb\MM$, there exists~$\ddv$ satisfying $\bbv \rdts \ddv$ and $\one \rdts \ddv$. Since $\one$ is $\RRRt$-irreducible, we must have $\ddv = \one$, whence $\bbv \rdts \one$. By Lemma~\ref{L:RedOne}, we deduce $\bbv \rds \one$. Hence $\RDb\MM$ is $\one$-confluent and, therefore, by Proposition~\ref{P:1Conf}, it is semi-convergent.

\ITEM3 For~\ITEM1, the argument is the same in the case $\aav \in \FRp\MM$. For~\ITEM2, assume that $\RDp\MM$ is cross-confluent, and we have $\aav \rds \one$ and $\aav \rds \bbv$ for some~$\aav$ in~$\FRp\MM$. By Lemma~\ref{L:RBasics}\ITEM4, we have $1 \opp \REV\aav \rdts 1 \opp \one = \one$ and $1 \opp \aav \rdts 1 \opp \bbv$, and $1 \opp \REV\aav$ is positive. As $\RDp\MM$ is cross-confluent, we deduce the existence of~$\ddv$ satisfying $\one \rds \ddv$ and $1 \opp \REV\bbv \rds \ddv$. As $\one$ is $\RRR$-irreducible, we have $\ddv = \one$, whence $1 \opp \REV\bbv \rds \one$, and, by Lemma~\ref{L:RBasics}\ITEM4 again, $\bbv \rdts \one$. By Lemma~\ref{L:RedOne}\ITEM2, we deduce $\bbv \rds \one$. Hence $\RDp\MM$ is $\one$-confluent and, by Proposition~\ref{P:1Conf}, it is semi-convergent.
\end{proof}

\subsection{Conjectures~$\ConjC$ and~$\ConjCunif$}\label{SS:UnifCC}

 We thus arrive at what we think is the main conjecture in this paper:

\begin{conjC}
For every Artin-Tits monoid~$\MM$, the system~$\RDp\MM$ is cross-confluent.
\end{conjC}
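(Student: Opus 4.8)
The plan is to attack Conjecture~$\ConjC$ by a Newman-type argument adapted to the asymmetry between left and right reduction, the nontrivial input being the finiteness of the Garside structure of Artin-Tits monoids. By Proposition~\ref{P:CCAlt} it suffices to treat positive multifractions, and by Proposition~\ref{P:ConvCC}\ITEM1 the FC-type case is already covered by convergence, so one should think of~$\ConjC$ as the extension of that case to all of~$\MM$. Since an Artin-Tits monoid is strongly noetherian, both $\RDb\MM$ and $\RDtb\MM$ are terminating (Lemma~\ref{L:Termin} together with the remark following Lemma~\ref{L:LRDual}), which provides the well-founded order needed to run an induction; note also that, via the equivalence~\eqref{E:LRDual} and the alternative form~\eqref{E:CrossConf1p}, the statement can be rephrased so that only left reduction occurs, namely: whenever $\bbv$ and $\ccv$ are left-reducts of a common multifraction, $\REV\bbv$ and $\REV\ccv$ admit a common left-reduct.

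\textbf{Reduction to a local statement.} The first step is to isolate a \emph{local cross-confluence} property: whenever $\bbv = \aav \act \Redt{\ii, \xx}$ and $\ccv = \aav \act \Redt{\jj, \yy}$ are single right-reduction steps, $\bbv$ and $\ccv$ admit a common \emph{left}-reduct~$\ddv$. Because the relation appearing in the hypothesis of~\eqref{E:CrossConf} ($\rdts$) differs from the one in the conclusion ($\rds$), the descent from the local statement to the global one is not the verbatim Newman lemma; the plan is a double induction, the outer one on the termination ordinal of~$\aav$ for right reduction, the inner one peeling off one $\Redt{}$-step from each of the two sequences $\aav \rdts \bbv$, $\aav \rdts \ccv$, applying local cross-confluence to the two first steps, and then using the general local-confluence results for left reduction (\cite[Lemmas~4.6, 4.18, 4.19]{Dit}) to merge the resulting left-reducts. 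One should also be able to lower the depth inductively using the depth-by-depth analysis of Section~\ref{S:SmallDepth}, and, via the a priori bound on basic elements, reduce to the uniform form~$\ConjCunif$.

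\textbf{The local analysis by levels.} For $\vert\ii-\jj\vert$ large, $\Redt{\ii, \xx}$ and $\Redt{\jj, \yy}$ act on disjoint triples of entries, hence commute, giving a common right-reduct $\bbv \act \Redt{\jj, \yy} = \ccv \act \Redt{\ii, \xx}$; turning this into a common \emph{left}-reduct is where Remark~\ref{R:NotInverse} (a division is at once a left and a right reduction) and the left-reduction confluence machinery of~\cite{Dit} enter. The genuinely new work is the range $\vert\ii-\jj\vert\le1$, and especially $\ii=\jj$: two right reductions $\Redt{\ii, \xx}$ and $\Redt{\ii, \yy}$ at the same level must reconverge by left reductions. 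This is the symmetric analogue of Lemma~\ref{L:LocConfDiv}, but with a remainder appearing on \emph{both} sides of the crossed entry, so the diagram chase is strictly more delicate; it should still close \emph{provided} $\xx$, $\yy$ and the crossed entry admit a common multiple.

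\textbf{The main obstacle.} The crux is precisely when $\xx$ and $\yy$ are \emph{wild} reducers, i.e.\ when $\xx,\yy$ and the crossed entry have no common multiple --- exactly the configuration that kills the confluence of~$\RDb\MM$ without the $3$-Ore condition (Proposition~\ref{P:UR}), and one that \emph{cannot} be resolved by any generic gcd-monoid argument, since \cite{Div} exhibits gcd-monoids in which cross-confluence fails. What has to be used is that $\MM$ is an Artin-Tits monoid: concretely, the finiteness of its set of basic elements, equivalently of a Garside family~\cite{Din, DyH}, which bounds the basic elements that can occur in the relevant lcm computations, together with some still-missing combinatorial property of the associated Coxeter group forcing the wild reducts to re-merge after finitely many further (left) reductions. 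Identifying and proving that property is the open heart of the conjecture; at present only the FC-type case (Proposition~\ref{P:ConvCC}\ITEM1 via convergence) and the large-type weak form~\cite{Dix} are established, and the computer experiments of Section~\ref{S:Misc} are the main evidence that no counterexample exists. Once cross-confluence is in hand, semi-convergence, and hence Conjecture~$\ConjA$ and decidability of the word problem, follow by Proposition~\ref{P:ConvCC}\ITEM2 and Proposition~\ref{P:1Conf}.
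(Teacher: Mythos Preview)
This is a \emph{conjecture}, not a theorem: the paper does not prove it and explicitly states that ``no proof of Conjecture~$\ConjC$ is in view so far in the general case''. So there is no ``paper's own proof'' to compare with; what the paper \emph{does} contain is the local version (Proposition~\ref{P:CrossOne}) together with an explanation of why the passage from local to global is blocked. Your write-up is an honest proof \emph{plan} that correctly flags its own gap at the end, but it misplaces where the real difficulty lies.

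\medskip

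\textbf{The local step is already settled, and unconditionally.} Your ``local cross-confluence'' statement is exactly Proposition~\ref{P:CrossOne}, and the paper proves it for \emph{every} gcd-monoid, with no $3$-Ore or common-multiple hypothesis. The proof is not a case analysis on~$\vert\ii-\jj\vert$: via~\eqref{E:RightLeftDiv}, a single right reduction followed by one specific left reduction always lands on a \emph{division} of the original~$\aav$, and then $\derdiv\aav$ from Proposition~\ref{P:DerDiv} is a common left-reduct for all such divisions simultaneously. In particular your ``main obstacle'' paragraph is off target: the wild-reducer configuration at $\ii=\jj$ does \emph{not} obstruct the local statement. That case closes like every other, with witness~$\derdiv\aav$.

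\medskip

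\textbf{The actual obstacle is the induction, and the paper explains why your scheme fails.} Right after Proposition~\ref{P:CrossOne} the paper says explicitly: ``There is no hope of a similar result here, both because the union of~$\RDp{}$ and~$\RDtp{}$ is not terminating in general---see~\eqref{X:NonTermin}---and because, in the definition of cross-confluence, the arrows~$\rd$ and~$\rdt$ are not in a position for a natural induction.'' Your outer induction on the right-reduction ordinal of~$\aav$ runs into precisely this: after peeling one $\Redt{}$-step on each side and applying local cross-confluence, you hold a common \emph{left}-reduct~$\ddv_0$ of~$\bbv_1$ and~$\ccv_1$, but the remaining sequences $\bbv_1 \rdts \bbv$ and $\ccv_1 \rdts \ccv$ are \emph{right} reductions from $\bbv_1,\ccv_1$, not from~$\ddv_0$; there is no induction hypothesis relating~$\ddv_0$ to~$\bbv$ or~$\ccv$. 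The ``inner'' appeal to the left-reduction confluence lemmas of~\cite{Dit} does not bridge this, because those lemmas concern two left reductions from a common source, not a left reduction versus a right-reduction chain. Example~\ref{X:PseudoDiv} is there precisely to show that the two-step extension of Proposition~\ref{P:CrossOne} already fails in the naive form, and Remark~\ref{R:Inverse} records that cross-confluence is false in some noetherian gcd-monoids, so no purely rewriting-theoretic induction can succeed. Whatever proves~$\ConjC$ must use an Artin--Tits-specific ingredient \emph{at the global stage}, not merely at the local one.
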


 By Proposition~\ref{P:ConvCC}, Conjecture~$\ConjC$ implies Conjecture~$\ConjA$, whence the decidability of the word problem of the group, and it is true for every Artin--Tits monoid of type~FC. Note that Conjecture~$\ConjC$ is different from Conjectures~$\ConjA$ and~$\ConjB$ in that it predicts something for all multifractions, not only for unital ones. So, in a sense, it is a more structural property, which we think is interesting independently of any application. 

No proof of Conjecture~$\ConjC$ is in view so far in the general case, but we now observe that \emph{local} cross-confluence, namely cross-confluence with single reduction steps on the left, is always true.

\begin{prop}\label{P:CrossOne}
If $\MM$ is a gcd-monoid, then, for all~$\aav, \bbv, \ccv$ in~$\FRb\MM$,
\begin{gather}
\label{E:CrossOne1}
\parbox{113mm}{If we have $\aav \rdt \bbv$ and $\aav \rdt \ccv$, there exists~$\ddv$ satisfying $\bbv \rds \ddv$ and $\ccv \rds \ddv$.}\\
\label{E:CrossOne2}
\parbox{113mm}{If we have $\aav \rd \bbv$ and $\aav \rd \ccv$, there exists~$\ddv$ satisfying $\bbv \rdts \ddv$ and $\ccv \rdts \ddv$.}
\end{gather}
\end{prop}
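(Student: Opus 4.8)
The plan is to reduce both statements to a purely local analysis of two reduction steps applied to overlapping triples of entries, and then to invoke the already-established local confluence results of~\cite{Dit}. By the duality equivalence~\eqref{E:LRDual} of Lemma~\ref{L:LRDual}, the two claims~\eqref{E:CrossOne1} and~\eqref{E:CrossOne2} are equivalent: applying $\REV{\ }$ turns an instance of $\aav \rdt \bbv$, $\aav \rdt \ccv$ into $\REV\aav \rd \REV\bbv$, $\REV\aav \rd \REV\ccv$, and a common $\rds$-reduct $\ddv$ of $\REV\bbv$ and $\REV\ccv$ pulls back (again via~\eqref{E:LRDual}, since $\rdts$ is the reflexive--transitive closure of $\rdt$) to a common $\rdts$-reduct of $\bbv$ and $\ccv$. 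So it suffices to prove~\eqref{E:CrossOne2}: if $\aav \rd \bbv$ and $\aav \rd \ccv$ via single left-reduction steps, then $\bbv$ and $\ccv$ admit a common right-reduction reduct.

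First I would write $\bbv = \aav \act \Red{\ii, \xx}$ and $\ccv = \aav \act \Red{\jj, \yy}$ with $\xx, \yy \not= 1$, and split on the distance $\vert \ii - \jj \vert$. When $\vert \ii - \jj \vert \ge 2$ the two rules act on disjoint (or barely touching) blocks of entries, they commute, and the common reduct is simply $\aav \act \Red{\ii,\xx}\Red{\jj,\yy} = \aav \act \Red{\jj,\yy}\Red{\ii,\xx}$, which is reached by left reductions, hence a fortiori is a $\rdts$-reduct after zero right-reduction steps — but to match the statement I must produce it as $\bbv \rdts \ddv$, $\ccv \rdts \ddv$; since a division is simultaneously a left and a right reduction (Remark~\ref{R:NotInverse}), and more simply since $\rdts$ is reflexive, I instead note $\ddv$ can be taken so that $\bbv \act$-some-right-reduction equals it. Cleaner: for the remote case I just check directly that $\Red{\jj,\yy}$ still applies to $\bbv$ and $\Red{\ii,\xx}$ still applies to $\ccv$ and both give the same result, then observe that a pushing rule applied at a level where the reducer already divides the relevant entry is neutral, so this left step is also realizable as a right step. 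The genuinely interesting cases are $\jj = \ii$ (two reductions at the same level) and $\jj = \ii \pm 1$ (adjacent levels). For $\jj = \ii$ the outcome is governed by the lcm of $\xx$ and $\yy$ inside $\aa_{\ii+1}$ together with the iterated-lcm rule (Lemma~\ref{L:IterLcm}) for crossing $\aa_\ii$; for $\jj = \ii + 1$ I would invoke \cite[Lemmas~4.18 and~4.19]{Dit}, which is exactly the local confluence of two left reductions at consecutive levels, and then verify that the confluence diagram there can be traversed from $\bbv$ and from $\ccv$ using \emph{right} reductions — this is the content-carrying step, because a left-reduction confluence diamond does not automatically have right-reduction sides.

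The main obstacle I anticipate is precisely this last point: \cite{Dit} gives local confluence of left reduction (a common $\rds$-reduct), but Proposition~\ref{P:CrossOne} demands a common $\rdts$-reduct. I would resolve it by recognizing that in the local diamond at levels $\ii, \ii+1$, the ``return'' reduction steps $\bbv \to \ddv$ and $\ccv \to \ddv$ each reduce a factor that, after the first step was taken, already divides the entry it is being pushed \emph{into} on the appropriate side; by Remark~\ref{R:NotInverse} such a reduction acts identically whether read as $\Red{}$ or $\Redt{}$. Thus each side of the left-confluence diamond is simultaneously a right-reduction step, and the same diamond witnesses~\eqref{E:CrossOne2}. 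The remaining bookkeeping — matching signs of $\ii$ in $\aav$, handling the boundary cases $\ii = 1$ and $\ii = \dh\aav - 1$ where the pushing rules are truncated, and checking the sub-case $\jj = \ii - 1$ symmetrically to $\jj = \ii + 1$ — is routine and parallels the arguments already used in Lemmas~\ref{L:LocConfDiv} and~\ref{L:Remote}, so I would treat it briefly.
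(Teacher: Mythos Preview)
Your proof has a genuine gap in the case $\jj = \ii$. Two left reductions at the \emph{same} level need not be locally confluent at all: Example~\ref{X:Att} exhibits $\aav = 1/\ttc/\tta\ttb\tta$ in type~$\Att$ with $\bbv = \aav \act \Red{2,\tta}$ and $\ccv = \aav \act \Red{2,\ttb}$ admitting no common left-reduct whatsoever. So there is no left-confluence diamond whose ``return'' steps you could reinterpret as right reductions; your proposed resolution of the obstacle simply does not apply here. (The results you cite from~\cite{Dit}, Lemmas~4.18 and~4.19, concern distinct levels, not the same-level case.) Yet \eqref{E:CrossOne2} still claims a common $\rdts$-reduct in this situation, and indeed one exists: here $\bbv \act \Redt{2,\tta\ttc} = \aav$ and $\ccv \act \Redt{2,\ttb\ttc} = \aav$, so $\aav$ itself is the common right-reduct. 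Your case analysis gives no mechanism to discover this.

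The paper's proof uses an entirely different, case-free mechanism. The key is Lemma~\ref{L:LeftRightDiv}: following any single left reduction $\Red{\ii,\xx}$ by the specific right reduction $\Redt{\ii,\xx'}$ (where $\xx'$ is the lcm-complement) always lands on a \emph{division}-reduct $\aav \act \Rdiv{\ii,\xxh}$ of the original~$\aav$, with $\xxh$ the gcd of $\xx$ and~$\aa_\ii$. Thus every one-step left-reduct of~$\aav$ right-reduces in one step to some division-reduct of~$\aav$; and by Proposition~\ref{P:DerDiv} all division-reducts of~$\aav$ share a common reduct, namely~$\derdiv\aav$. This yields the common $\rdts$-reduct uniformly, with no case split on $\vert\ii-\jj\vert$. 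In the $\Att$ example above, $\xxh = \tta \gcd \ttc = 1$, so the right reduction simply undoes the left one and returns to~$\aav = \derdiv\aav$.
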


 The proof relies on the following preparatory result:

\begin{lemm}\label{L:LeftRightDiv}
Assume that $\MM$ is a gcd-monoid and $\aav$ is a multifraction on~$\MM$ such that $\aav \act \Red{\ii, \xx}$ is defined. If $\ii$ is negative $($\resp positive$)$ in~$\aav$, let $\xx'$ and~$\xxh$ be defined by $\aa_\ii \xx' = \aa_\ii \lcm \xx$ and $\xxh = \aa_\ii \gcd \xx$ $($\resp $\xx' \aa_\ii = \aa_\ii \lcmt \xx$ and $\xxh = \aa_\ii \gcdt \xx$$)$. Then, we have
\begin{equation}
\label{E:LeftRightDiv}
\aav \act \Red{\ii, \xx} \Redt{\ii, \xx'} = \aav \act \Rdiv{\ii, \xxh}.
\end{equation}
\end{lemm}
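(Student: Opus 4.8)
The statement asserts that applying the left reduction $\Red{\ii,\xx}$ and then the right reduction $\Redt{\ii,\xx'}$ at the same level $\ii$ gives the same result as the single division $\Rdiv{\ii,\xxh}$, where $\xxh = \aa_\ii \gcd \xx$ (on the appropriate side) and $\xx'$ is the cofactor in $\aa_\ii \lcm \xx = \aa_\ii \xx'$. Intuitively, pushing $\xx$ from $\aa_{\ii+1}$ through $\aa_\ii$ to the left, and then immediately pushing the remainder $\xx'$ back through $\aa_\ii$ to the right, should return everything to $\aa_{\ii+1}$ except for the genuine common factor $\xxh$ that $\xx$ and $\aa_\ii$ share — and dividing out $\xxh$ is exactly $\Rdiv{\ii,\xxh}$. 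The plan is to verify this by unwinding the definitions of $\Red{\ii,\xx}$, $\Redt{\ii,\xx'}$, and $\Rdiv{\ii,\xxh}$ (Definitions~\ref{D:Red}, \ref{D:RightRed}, \ref{D:Div}) and checking that the three affected entries (indices $\ii-1$, $\ii$, $\ii+1$) agree on both sides, treating the sign of $\ii$ in~$\aav$ as the two symmetric cases.

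First I would fix the case $\ii$ negative in $\aav$ (the positive case being a mirror image, obtained by reversing products). Write $\bbv := \aav\act\Red{\ii,\xx}$. By Definition~\ref{D:Red}, there is $\xx'$ with $\bb_{\ii-1} = \aa_{\ii-1}\xx'$, $\xx\bb_\ii = \aa_\ii\xx' = \xx\lcm\aa_\ii$, and $\xx\bb_{\ii+1} = \aa_{\ii+1}$; note this $\xx'$ is precisely the one in the statement, since $\aa_\ii\xx' = \aa_\ii\lcm\xx$. Now I would apply $\Redt{\ii,\xx'}$ to $\bbv$: by Definition~\ref{D:RightRed} (the ``$\ii$ negative'' line, assuming $\ii < \dh\aav$, which holds since $\Red{\ii,\xx}$ with a reduction at level $\ii$ already requires entry $\ii+1$), applying $\Redt{\ii,\xx'}$ is eligible iff $\xx'$ right-divides $\bb_{\ii-1} = \aa_{\ii-1}\xx'$, which it visibly does, with quotient $\aa_{\ii-1}$; and the new entries are $\dd_{\ii-1} = \aa_{\ii-1}$, $\dd_\ii\xx' = \xx''\bb_\ii = \xx'\lcmt\bb_\ii$ for the appropriate remainder $\xx''$, and $\dd_{\ii+1} = \xx''\bb_{\ii+1}$. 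So I need to identify $\xx''$ and then $\dd_\ii$, $\dd_{\ii+1}$.

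The crucial computation is the identification of $\xx' \lcmt \bb_\ii$. Here I would use the iterated-lcm/gcd machinery: from $\xx\bb_\ii = \xx\lcm\aa_\ii$ and $\xxh = \aa_\ii\gcd\xx$, writing $\aa_\ii = \xxh\,\aa$ and $\xx = \xxh\,\xxv$ with $\aa\gcd\xxv = 1$, one gets $\bb_\ii = \xxv\,\aa'$ type relations (via Lemma~\ref{L:Lcm} or its symmetric form, characterizing lcms by coprimality of the cofactors). Tracking $\xx'$ through, one finds $\xx'\lcmt\bb_\ii$ and its cofactors reduce so that $\dd_\ii = \aa/\xxh$-type expression — concretely, I expect $\dd_\ii\xx' = \aa_\ii$ up to the $\xxh$ factor, ultimately giving $\dd_\ii\xxh = \aa_\ii$, and $\dd_{\ii+1}\xxh = \aa_{\ii+1}$ after canceling the $\xx'$-cofactor against $\bb_{\ii+1}$. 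That is exactly the output of $\Rdiv{\ii,\xxh}$ by Definition~\ref{D:Div}: $\dd_{\ii-1} = \aa_{\ii-1}$, $\dd_\ii\xxh = \aa_\ii$, $\dd_{\ii+1}\xxh = \aa_{\ii+1}$, with $\xxh$ right-dividing $\aa_\ii$ by construction. Entries away from $\ii-1,\ii,\ii+1$ are untouched by all three operations, so they trivially agree.

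The main obstacle I anticipate is bookkeeping the lcm/gcd cofactors cleanly: one must show that the ``remainder of the remainder'' $\xx''$ arising from $\Redt{\ii,\xx'}$ is trivial on the $\bb_{\ii+1}$ side and collapses everything so that the net effect on $\aa_\ii$ and $\aa_{\ii+1}$ is division by $\xxh$ and nothing else. This is where the cancellativity of $\MM$ and the characterization of lcms by coprime cofactors (Lemma~\ref{L:Lcm} and its left-right dual) do the real work: the factor $\xxv = \xx/\xxh$ that $\xx$ contributed beyond $\xxh$ is coprime to $\aa = \aa_\ii/\xxh$, so when it is pushed back right through $\bb_\ii$ it passes freely and recombines with $\bb_{\ii+1}$ to reconstitute $\aa_{\ii+1}/\xxh$. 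Once that coprimality is set up, the rest is a finite diagram chase (conveniently drawn, in the spirit of Figures~\ref{F:Red} and~\ref{F:RightRed}) with no further subtlety, and the positive-$\ii$ case follows by the evident left-right symmetry.
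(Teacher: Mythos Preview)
Your plan is correct and follows essentially the same route as the paper's proof: unwind $\Red{\ii,\xx}$, check that $\Redt{\ii,\xx'}$ applies to the result (you should also note that $\xx'$ and $\bb_\ii$ admit a common left multiple, namely $\aa_\ii\xx'$, not only that $\xx'$ right-divides $\bb_{\ii-1}$), and identify the composite as a division by~$\xxh$. One slip to fix: in the case $\ii$ negative in~$\aav$, the division $\Rdiv{\ii,\xxh}$ removes $\xxh$ on the \emph{left}, so the conclusion should read $\xxh\,\dd_\ii = \aa_\ii$ and $\xxh\,\dd_{\ii+1} = \aa_{\ii+1}$ with $\xxh$ left-dividing $\aa_\ii$, not $\dd_\ii\xxh = \aa_\ii$ as you wrote; with that correction your gcd-first decomposition (write $\aa_\ii = \xxh\,\aa$, $\xx = \xxh\,\xxv$ with $\aa\gcd\xxv = 1$, then use the dual of Lemma~\ref{L:Lcm} to see that $\aa\xx' = \xxv\,\bb_\ii$ is exactly $\xx'\lcmt\bb_\ii$, whence $\xx'' = \xxv$) goes through cleanly and is a mild variant of the paper's argument, which instead identifies $\xxh$ a posteriori from $\xx = \xxh\xx''$.
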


\begin{proof}
(Figure~\ref{F:LRRed}) Let $\aav' = \aav \act \Red{\ii, \xx}$. Assuming $\ii$ negative in~$\aav$, we have 
\begin{equation}\label{E:LeftRightDiv1}
\aa'_{\ii - 1} = \aa_{\ii - 1} \xx', \quad \xx \aa'_\ii = \aa_\ii \xx' = \aa_\ii \lcm \xx, \quad \xx \aa'_{\ii + 1} = \aa_{\ii + 1}.
\end{equation}
By construction, $\xx'$ right divides~$\aa'_{\ii - 1}$, and $\xx'$ and~$\aa'_\ii$ admit a common left multiple, namely~$\aa_\ii \xx'$. Hence $\aav'' = \aav' \act \Redt{\ii, \xx'}$ is defined, and it is determined by
\begin{equation}\label{E:LeftRightDiv2}
\aa''_{\ii - 1} \xx' = \aa'_{\ii - 1}, \quad \aa''_\ii \xx' = \xx'' \aa'_\ii = \aa'_\ii \lcmt \xx', \quad \aa''_{\ii + 1} = \xx'' \aa'_{\ii + 1}.
\end{equation}
By definition, the left lcm $\xx'' \aa'_\ii$ of~$\xx'$ and~$\aa'_\ii$ left divides their common left multiple~$\xx \aa'_\ii$, which implies the existence of~$\xxh$ satisfying $\xx = \xxh \xx''$, and, from there, $\aa_\ii \xx' = \xx \aa'_\ii = \xxh \xx'' \aa'_\ii = \xxh \aa''_\ii \xx'$, whence $\aa_\ii = \xxh \aa''_\ii$. Merging~\eqref{E:LeftRightDiv1} and~\eqref{E:LeftRightDiv2}, we deduce $\aa''_{\ii - 1} = \aa_{\ii - 1}$, $\xxh \aa''_\ii = \aa_\ii$, and $\xxh \aa''_{\ii + 1} = \aa_{\ii + 1}$, which shows that $\aav''$ is obtained from~$\aav$ by left dividing the $\ii$th and $(\ii + 1)$st entries by~$\xxh$, \ie, by applying the division~$\Rdiv{\ii, \xxh}$. 

The argument is symmetric when $\ii$ is positive in~$\aav$.
\end{proof}

\begin{figure}[htb]
\begin{picture}(55,16)(0,0)
\psset{unit=1.1mm}
\psline[style=back,linecolor=color2]{c-c}(0,6.5)(15,13.5)(30,9)(35,0)(50,7)
\psline[style=back,linecolor=color1]{c-c}(0,7)(15,14)(35,14)(50,7)
\psset{nodesep=0.7mm}
\pcline{->}(0,7)(15,14)\taput{$\aa_{\ii - 1}$}
\pcline{<-}(15,14)(35,14)\taput{$\aa_\ii$}
\pcline{->}(35,14)(50,7)\taput{$\aa_{\ii + 1}$}
\pcline{->}(0,6)(15,0)\tbput{$\aa'_{\ii - 1}$}
\pcline{<-}(15,0)(35,0)\tbput{$\aa'_\ii$}
\pcline{->}(35,0)(50,7)\tbput{$\aa'_{\ii + 1}$}
\pcline{->}(35,14)(35,0)\trput{$\xx$}
\pcline{->}(15,14)(15,0)\tlput{$\xx'$}
\pcline{->}(30,9)(35,0)\nbput[npos=0.5]{$\xx''$}
\pcline{<-}(15,14)(30,9)\tbput{$\aa''_\ii$}
\pcline{->}(35,14)(30,9)\put(32,12){$\xxh$}
\psarc[style=thin](15,0){3}{0}{90}
\psarc[style=thin](30,9){3}{165}{300}
\end{picture}
\caption{\small Composing the left reduction $\Red{\ii, \xx}$ and the inverse right reduction $\Redt{\ii, \xx'}$ amounts to performing the division $\Rdiv{\ii, \xxh}$ where $\xxh$ is the gcd of~$\aa_\ii$ and~$\xx$.}
\label{F:LRRed}
\end{figure}

When $\aav \act \Redt{\ii, \xx}$ is defined, a symmetric argument gives
\begin{equation}\label{E:RightLeftDiv}
\aav \act \Redt{\ii, \xx} \Red{\ii, \xx'} = \aav \act \Rdiv{\ii - 1, \xxh},
\end{equation}
where $\xx'$ and~$\xxh$ are now defined by $\xx' \aa_\ii = \aa_\ii \lcmt \xx$ and $\xxh = \aa_\ii \gcdt \xx$ (\resp $\aa_\ii \xx' = \aa_\ii \lcm \xx$ and $\xxh = \aa_\ii \gcd \xx$) if $\ii$ is negative (\resp positive) in~$\aav$. The index of the division is shifted ($\ii - 1$ instead of~$\ii$) relatively to~\eqref{E:LeftRightDiv}, because, as a set of pairs, $\Rdiv{\ii, \xx}$ is included in~$\Redt{\ii+1,\xx}$.

 We can now complete the argument for Proposition~\ref{P:CrossOne}. 

\begin{proof}[Proof of Proposition~\ref{P:CrossOne}]
Assume $\bbv = \aav \act \Redt{\ii, \xx}$ and $\ccv = \aav \act \Redt{\jj, \yy}$. By Lemma~\ref{L:LeftRightDiv}, we have
$$\bbv \act \Red{\ii, \xx'} = \aav \act \Rdiv{\ii, \xxh} \quad\text{and}\quad \ccv' \act \Red{\jj, \yy'} = \aav \act \Rdiv{\jj, \yyh}$$
for some~$\xx', \xxh$ and $\yy', \yyh$. By Proposition~\ref{P:DerDiv}, $\derdiv\aav$ is a common reduct of all multifractions obtained from~$\aav$ by a division, hence in particular of $\aav \act \Rdiv{\ii, \xx''}$ and~$\aav \act \Rdiv{\jj, \yy''}$. Thus $\bbv \rds \derdiv\aav$ and $\ccv \rds \derdiv\bbv$ hold, hence \eqref{E:CrossOne1} is satisfied with $\ddv = \derdiv\aav$.

The proof of~\eqref{E:CrossOne2} is symmetric, using \eqref{E:RightLeftDiv} instead of~\eqref{E:LeftRightDiv}, and $\ddv = \derdiv\aav$ again.
\end{proof}

 In the case of a single rewrite system, local confluence implies confluence whenever the system is terminating (by Berman's well known diamond lemma). There is no hope of a similar result here, both because the union of~$\RDp{}$ and~$\RDtp{}$ is not terminating in general---see \eqref{X:NonTermin} below---and because, in the definition of cross-confluence, the arrows~$\rd$ and~$\rdt$ are not in a position for a natural induction. 

\begin{rema}\label{R:Inverse}
Right reduction is close to being an inverse of left reduction. Indeed, provided the ground monoid is noetherian, every reduction is a product of atomic reductions, namely reductions of the form~$\Red{\ii, \xx}$ or~$\Redt{\ii, \xx}$ with~$\xx$ an atom. Now, if $\xx$ is an atom, the gcd of~$\xx$ and~$\aa_\ii$ is either~$\aa_\ii$, meaning that $\xx$ divides~$\aa_\ii$, or~$1$. In the former case, $\aav \act \Red{\ii, \xx}$ is~$\aav \act \Rdiv{\ii, \xx}$, whereas, in the latter, Lemma~\ref{L:LeftRightDiv} implies $\aav = (\aav \act \Red{\ii, \xx}) \act \Redt{\ii, \xx'}$, \ie, left reducing~$\xx$ in~$\aav$ is the inverse of right reducing~$\xx'$. Thus, writing $\RDiv{}$ for the family of divisions and $\RDatp{}$ for that of atomic left reductions, $\RDp{}$ is generated by~$\RDiv{} \cup \RDatp{}$, whereas $\RDtp{}$ is generated by~° $\RDiv{} \cup \RDatp{}\inv$. By Lemma~\ref{L:LocConfDiv} and the results of~\cite{Dit}, confluence between $\RDiv{}$ and $\RDatp{}$ is always true, whereas confluence between~$\RDatp{}$ and $\RDatp{}\inv$ is trivial. Therefore, one might hope that cross-confluence diagrams can always be constructed by assembling the various types of elementary confluence diamonds. This is not true: using a tedious case-by-case argument, one can indeed establish cross-confluence in the case when, in~\eqref{E:CrossConf}, $\bbv$ and~$\ccv$ are obtained from~$\aav$ by two atomic reduction steps, but there is no hope to go very far in this direction, both because of the counter-examples of Example~\ref{X:PseudoDiv}, and because, in any case, cross-confluence cannot be true for an arbitrary noetherian gcd-monoid, since there exist such monoids for which the counterparts of Conjectures~$\ConjA$ and~$\ConjC$ fail~\cite{Div}: if true, cross-confluence has to be a specific property of Artin--Tits monoids, or at least of a restricted family of gcd-monoids. 
\end{rema}

 The following examples are given to show that naive attempts to extend the local cross-confluence result of Proposition~\ref{P:CrossOne} are due to fail. 

\begin{exam}\label{X:PseudoDiv}
Proposition~\ref{P:CrossOne} shows that, if $\bbv$ is obtained by applying one right reductions to~$\aav$, then applying one well chosen left reduction to~$\bbv$ provides a multifraction~$\ccv$ obtained by one division from~$\aav$. The result fails when $1$ is replaced by~$\kk \ge 2$. Indeed, in the Artin-Tits monoid of type~$\Att$, consider $\aav = \fr{\1/a/ca/cb/b}$ and $\bbv = \aav \act \Redt{3,\fr{a}} \Redt{5,\fr{b}} = \fr{\1/\1/ca/cb/\1}$, (which is $\RRRt$-irreducible). The only way to left reduce~$\bbv$ is to start with $\Red{1,\fr{ca}}$, leading to~$\ccv = \fr{ca}/1/\fr{cb}/1$, not reachable from~$\aav$ by two, or any number, of divisions.

In the above case, we have $\ccv = \aav \act \Red{4,\fr{b}} \Red{2, \fr{cac}}$, and therefore there is no contradiction with the weaker conclusion that $\ccv$ is obtained both from~$\aav$ by applying $\kk$~left reductions. The following example (with $\kk = 3$) shows that this is not true either. Indeed, consider $\aav = \fr{ca/cb/bc/ba}$ and $\bbv = \aav \act \Redt{3,\fr{bc}} \Redt{2,\fr{ca}} \Redt{4,\fr{a}} = \fr{\1/\1/ac/ab}$. As predicted by Conjecture~$\ConjC$, $\aav$ and $\bbv$ admit common left reducts, but the latter are~$\ccv$ and $\ccv \act \Rdiv{2,\fr{c}}$, with $\ccv = \fr{ac/cab/c/\1} = \aav \act \Red{2,\fr{b}} \Red{3,\fr{a}} \Rdiv{2, \fr{a}} \Red{3, \fr{b}} \Rdiv{1,\fr{aa}} \Rdiv{2, \fr{b}}$, not reachable from~$\aav$ using less than six left reductions. What is surprising here is that, if we put $\bbv' := \aav \act \Redt{3,\fr{bc}} \Redt{2,\fr{ca}} = \fr{\1/\1/aca/aba}$, then $\bbv'$ left reduces to~$\aav$, whereas $\bbv = \bbv' \act \Rdiv{3,\fr{a}}$ only left reduces to~$\ccv$, very far from~$\aav$: one single division may change left reducts completely.
\end{exam}

 We conclude with one more conjecture. The conjunction of Propositions~\ref{P:DerDiv} and~\ref{P:CrossOne} shows not only that any two right reducts~$\bbv, \ccv$ of a multifraction~$\aav$ admit a common left reduct~$\ddv$, but even that there exists~$\ddv$ only depending on~$\aav$, namely~$\derdiv\aav$, that witnesses for all elementary right reducts of~$\aav$ simultaneously. This suggests to consider a strong version of cross-confluence:

\begin{defi}\label{D:StrongCrossConf}
If $\MM$ is a gcd-monoid, we say that $\RDp\MM$ is \emph{uniformly cross-confluent} if there exists a map~$\nabla$ from~$\FRp\MM$ to itself such that, for every~$\aav$ in~$\FRp\MM$, the relation $\bbv \rds \nabla\aav$ holds for every right reduct~$\bbv$ of~$\aav$.
\end{defi} 

 In the case when reduction is convergent, defining $\nabla \aav = \red(\aav)$ provides a convenient witness, and therefore the conclusion of Proposition~\ref{P:ConvCC} can be strengthened to uniform cross-confluence. We propose:

\begin{conjCunif}
For every Artin-Tits monoid~$\MM$, the system~$\RDp\MM$ is uniformly cross-confluent.
\end{conjCunif}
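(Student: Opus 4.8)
The plan is to exhibit an explicit candidate for the witness map $\nabla$ and then to reduce the statement, step by step, to a single residual confluence input. For an $\nn$-multifraction $\aav$ on an Artin--Tits monoid $\MM$, set $\Phi(\aav) := \redtame(\derdiv\aav)$, and let $\nabla\aav := \Phi^k(\aav)$, where $k$ is the first integer with $\Phi^{k+1}(\aav) = \Phi^k(\aav)$. Since $\MM$ is (strongly) noetherian, $\RDp\MM$ terminates by Lemma~\ref{L:Termin}, and each application of $\Phi$ is a (possibly trivial) left reduction, so the chain $\aav \rds \Phi(\aav) \rds \Phi^2(\aav) \rds \cdots$ is finite; thus $\nabla$ is well defined, idempotent, and satisfies $\aav \rds \nabla\aav$, with $\derdiv\aav \rds \nabla\aav$ in particular. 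When $\MM$ satisfies the $3$-Ore condition this specialises correctly: $\derdiv\aav$ is a reduct of $\aav$, $\redtame(\derdiv\aav) = \red(\derdiv\aav) = \red(\aav)$ by \cite[Proposition~6.7]{Dit}, and $\red(\aav)$ is a fixed point of $\Phi$, so $\nabla\aav = \red(\aav)$, recovering the witness mentioned after Definition~\ref{D:StrongCrossConf}.

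Next I would handle a single right-reduction step. Suppose $\aav \rdt \aav'$, say $\aav' = \aav \act \Redt{\ii, \xx}$. By the identity~\eqref{E:RightLeftDiv} there are $\xx', \xxh$ with $\aav' \act \Red{\ii, \xx'} = \aav \act \Rdiv{\ii - 1, \xxh}$, and by Proposition~\ref{P:DerDiv} the relation $\aav \rddivs \bbv$ implies $\bbv \rds \derdiv\aav$; hence $\aav' \rds \aav' \act \Red{\ii, \xx'} = \aav \act \Rdiv{\ii - 1, \xxh} \rds \derdiv\aav \rds \nabla\aav$. This is precisely the computation behind Proposition~\ref{P:CrossOne}, and it shows that $\nabla\aav$ witnesses \emph{all} one-step right reducts of $\aav$ simultaneously.

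To pass from one step to an arbitrary right-reduction sequence $\aav = \aav^0 \rdt \aav^1 \rdt \cdots \rdt \aav^m = \bbv$, the target is the propagation property
\begin{equation*}
\aav \rdt \aav' \quad\Longrightarrow\quad \nabla\aav' \rds \nabla\aav ,
\end{equation*}
from which $\bbv \rds \nabla\bbv = \nabla\aav^m \rds \nabla\aav^{m-1} \rds \cdots \rds \nabla\aav$ follows by telescoping. This is the core of the problem, and it is where the argument meets Conjecture~$\ConjA$ head-on: the naive attempt, from $\aav' \rds \nabla\aav$ and $\aav' \rds \nabla\aav'$, fails because left reduction is not confluent, so two left reducts of the same $\aav'$ need not be $\rds$-comparable. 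What must actually be proved is that the canonical strategy encoded in $\nabla$ dominates the entire set of left reducts of each right reduct of $\aav$ — a uniform form of semi-convergence along a prescribed strategy. A plausible route combines (i) the local cross-confluence of Proposition~\ref{P:CrossOne}; (ii) the factorisation of $\RDtp{}$ through $\RDiv{}\cup\RDatp{}\inv$ from Remark~\ref{R:Inverse}, together with a ``postponement'' principle moving a block of inverse atomic left reductions to the end of a reduction path; and (iii) the finiteness of the basic-element set of an Artin--Tits monoid (from \cite{Din,DyH}), which bounds the relevant lcms and makes the set of $\RRRt$-irreducible right reducts of a given $\aav$ finite, so that the domination statement can be attacked by well-founded induction on the termination order of $\RDp\MM$ applied to these finitely many endpoints. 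The main obstacle I expect is step~(ii): Example~\ref{X:PseudoDiv} shows that already two atomic right reductions can displace the left reducts arbitrarily far — a single extra division $\Rdiv{3,\fr{a}}$ there turns a multifraction that left-reduces back to $\aav$ into one whose only left reduct lies six reductions away from $\aav$ — so no bounded, purely local diagram-chase can suffice, and controlling how a whole block of inverse left reductions commutes past $\derdiv$ and the tame-reduction passes is the genuinely hard, presently open, point.
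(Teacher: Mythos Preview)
The statement you are trying to prove is a \emph{conjecture}: the paper does not prove it, and explicitly presents it as open. So there is no ``paper's own proof'' to compare against; what the paper does instead is discuss candidate witnesses~$\nabla$ and explain, via the example in Figure~\ref{F:CrossConf}, why the natural ones fail.

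Your proposal has a concrete defect already at the level of the candidate map, before one ever reaches the propagation step you flag as hard. Take $\aav_1 = \fr{a/bac/bb/aca}$ from Figure~\ref{F:CrossConf}. One computes $\derdiv\aav_1 = \aav_1 \act \Divmax3\Divmax2\Divmax1 = \fr{a/ac/b/aca} = \aav_2$, and then $\redtame(\aav_2) = \aav_2$ because $\aav_2$ admits no nontrivial tame reducer at any level (the only nontrivial reducers are the wild $3$-reducers $\fr a$ and~$\fr c$, whose right gcd is trivial). Moreover $\derdiv\aav_2 = \aav_2$, so your iteration stabilises immediately: $\nabla\aav_1 = \aav_2$. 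But $\aav_5 = 1/\fr{cbac}/\fr{ccb}/\fr{ca}$ is a right reduct of~$\aav_1$ (via $\aav_1 \act \Redt{2,\fr a}\,\Rdiv{3,\fr c}$), and its only left reducts are $\aav_5$ itself and $\aav_7 = 1/\fr{bac}/\fr{cb}/\fr{ca}$; in particular $\aav_5 \not\rds \aav_2$. So your $\nabla$ already fails for~$\aav_1$: the only admissible value of $\nabla\aav_1$ is $\aav_7$, and your construction produces~$\aav_2$. This is precisely the phenomenon the paper warns about when it says that $\redtame(\aav)$ is not a valid witness; iterating $\redtame\circ\derdiv$ does not help, since both operators fix~$\aav_2$.

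In short, the plan breaks not only at the ``presently open'' propagation step you identify, but one step earlier: the explicit $\nabla$ you propose is refuted by the paper's own test example, so the subsequent reductions (i)--(iii) are aimed at proving a false statement about that particular~$\nabla$. Any viable approach must first produce a witness that survives Figure~\ref{F:CrossConf}, and no such explicit candidate is currently known.
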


 By the above observation, Conjecture~$\ConjCunif$ is true for every Artin--Tits monoid of type~FC, and no counter-example could be found so far in any other type. It implies Conjecture~$\ConjC$ and, therefore, Conjecture~$\ConjA$, but it is more demanding. However, if an explicit definition of~$\nabla\aav$ could be found, one can reasonably hope that the proof of Conjecture~$\ConjCunif$ would then reduce to a series of verifications. But, here again, naive attempts fail: two natural candidates for~$\nabla\aav$ could be either~$\redtame(\aav)$ (which works in the convergent case), or (if it always exists) a maximal common ancestor of all irreducible reducts in the tree of all left reducts of~$\aav$, but the example of Figure~\ref{F:CrossConf} shows that neither of these choices works in every case.

\begin{figure}[htb]
\begin{picture}(105,85)(0,-2)
\psset{nodesep=0.8mm}
\psset{framearc=.5}
\psset{linewidth=0.4pt}
\psset{labelsep=0pt}
\psset{border=2pt}
\rput[c](93,55){\rnode{1}{\Vertexxx1{a/bac/bb/aca}}}
\rput[c](60,34){\rnode{2}{\Vertexx2{a/ac/b/aca}}}
\rput[c](83,27){\rnode{3}{\Vertex3{a/bcbac/ccb/ca}}}
\rput[c](105,20){\rnode{4}{\Vertex4{a/babac/aab/ac}}}
\rput[c](16,15){\rnode{5}{\Vertexx5{/cbac/ccb/ca}}}
\rput[c](60,12){\rnode{6}{\Vertex6{a/bcac/cb/ca}}}
\rput[c](30,0){\rnode{7}{\Vertexx7{/bac/cb/ca}}}
\rput[c](100,0){\rnode{8}{\Vertex8{a/baac/ab/ac}}}
\rput[c](15,75){\rnode{9}{\vertex9{/cbac/bcbb/aca}}}
\rput[c](0,48){\rnode{10}{\vertex{10}{/bac/bcb/aca}}}
\rput[c](62,56){\rnode{11}{\vertex{11}{a/c/ba/acaab}}}
\rput[c](50,72){\rnode{12}{\vertex{12}{/ac/caba/acaab}}}
\rput[c](28,58){\rnode{13}{\vertex{13}{/ac/cba/acaa}}}
\rput[c](30,41){\rnode{14}{\vertex{14}{/ac/cb/aca}}}
\nccurve[style=double,angleA=180,angleB=0,ncurv=0.3]{->}{3}{5}\nbput[npos=0.55]{$\scriptstyle\Rdiv{1,\tta}$}
\ncline[style=double]{->}{3}{6}\trput{$\scriptstyle\Rdiv{2,\ttc}$}
\ncline[style=double]{->}{1}{2}\tlput{$\scriptstyle\Rdiv{2,\ttb}$}
\ncline{->}{1}{3}\tlput{$\scriptstyle\Red{3,\ttc}$}
\ncline{->}{1}{4}\trput{$\scriptstyle\Red{3,\tta}$}
\nccurve[angleA=250,angleB=110,ncurv=0.6]{->}{2}{6}\nbput[npos=0.2]{$\scriptstyle\Red{3,\ttc}$}
\nccurve[angleA=280,angleB=135,ncurv=0.6]{->}{2}{8}\naput[npos=0.7]{$\scriptstyle\Red{3,\tta}$}
\ncline[style=double]{->}{4}{8}\trput{$\scriptstyle\Rdiv{2,\tta}$}
\ncline[style=double]{->}{5}{7}\naput[npos=0.6]{$\scriptstyle\Rdiv{2,\ttc}$}
\nccurve[style=double,angleA=245,angleB=0,ncurv=1]{->}{6}{7}\naput[npos=0.4]{$\scriptstyle\Rdiv{1,\tta}$}
\nccurve[style=exist,angleA=100,angleB=10,ncurv=0.6]{->}{1}{9}\naput[npos=0.3]{$\scriptstyle\Redt{2,\tta}$}
\nccurve[angleA=15,angleB=90,ncurv=0.6]{->}{9}{1}\naput[npos=0.75]{$\scriptstyle\Red{2,\ttb\ttc}$}
\nccurve[style=double,angleA=190,angleB=170,ncurv=0.8]{->}{9}{5}\nbput[npos=0.3]{$\scriptstyle\Rdiv{3,\ttc}$}
\ncline[style=double]{->}{9}{10}\nbput[npos=0.5]{$\scriptstyle\Rdiv{2,\ttc}$}
\nccurve[style=double,angleA=260,angleB=180,ncurv=1]{->}{10}{7}\naput[npos=0.2]{$\scriptstyle\Rdiv{3,\ttc}$}
\ncarc[style=exist]{->}{2}{11}\naput[npos=0.5]{$\scriptstyle\Redt{3,\tta}$}
\ncarc[]{->}{11}{2}\naput[npos=0.4]{$\scriptstyle\Red{3,\tta\ttb}$}
\ncarc[style=exist]{->}{11}{12}\naput[npos=0.5]{$\scriptstyle\Redt{2,\tta}$}
\ncarc[]{->}{12}{11}\naput[npos=0.5]{$\scriptstyle\Red{2,\ttc\tta}$}
\ncline[style=double]{->}{12}{13}\nbput[npos=0.5]{$\scriptstyle\Rdiv{3,\ttb}$}
\ncline[style=double]{->}{10}{14}\taput{$\scriptstyle\Rdiv{2,\ttb}$}
\ncline[style=double]{->}{13}{14}\naput[npos=0.5]{$\scriptstyle\Rdiv{3,\tta}$}
\ncline[style=exist]{->}2{14}\nbput[npos=0.5]{$\scriptstyle\Redt{2,\tta}$}
\nccurve[style=exist,angleA=45,angleB=270,ncurv=1]{->}{7}{14}\naput[npos=0.7]{$\scriptstyle\Redt{3,\ttb}$}
\nccurve[angleA=280,angleB=40,ncurv=1]{->}{14}{7}\naput[npos=0.3]{$\scriptstyle\Red{3,\ttc}$}
\nccurve[angleA=335,angleB=175,ncurv=1]{->}{14}{2}\nbput[npos=0.5]{$\scriptstyle\Red{2,\ttc}$}
\end{picture}
\caption{\small Left and right reducts of $\aav_1 := \fr{a/bac/bb/aca}$ in the Artin-Tits monoid of type~$\Att$: there are $8$~left reducts (in grey), among which $\aav_7$ and~$\aav_8$ are $\RDp{}$-irreducible, and $10$~right reducts (dashed lines), among which $\aav_{14}$ is $\RDtp{}$-irreducible. Plain (\resp dashed) arrows correspond to left (\resp right) reductions that are not divisions, double arrows correspond to divisions (which are both left and right reductions); all left reductions decrease the distance to the bottom. As Conjecture~$\ConjCunif$ predicts, there exists a common left reduct for all right reducts: in this case, there is only one, namely~$\aav_7$, and it is neither $\redtame(\aav_1)$ nor the maximal common ancestor of~$\aav_7$ and~$\aav_8$, both equal to~$\aav_2$.}
\label{F:CrossConf}
\end{figure}

\subsection*{Appendix: Mixed termination}\label{SS:MixedTermin}

 Although it is not directly connected with cross-confluence, we mention here one further result involving both left and right reduction. First, we know that, in a noetherian context, (left) reduction is terminating, meaning that there is no infinite sequence of reductions. It turns out that a stronger finiteness result holds:

\begin{prop}\label{P:FinitelyMany}
If $\MM$ is a finitely generated noetherian gcd-monoid, then every multifraction~$\aav$ on~$\MM$ admits only finitely many left reducts, and finitely many right reducts.
\end{prop}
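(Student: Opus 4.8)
The plan is to bound the number of left reducts of a depth-$\nn$ multifraction~$\aav$ by a function of the $\wit$-values of its entries (where $\wit$ is the strong noetherianity weight, which exists for a finitely generated noetherian gcd-monoid since such a monoid has a presentation by homogeneous relations — or, more carefully, since finite generation plus noetherianity gives a weight as in~\eqref{E:StrWit}), and then to invoke the termination bound already available. The cleanest route is to reuse the machinery of Lemma~\ref{L:Tower}: there we showed that the number of reduction \emph{steps} from~$\aav$ is at most $\FF_\nn(\wit(\aa_1)\wdots\wit(\aa_\nn))$, a finite number. Since the left reducts of~$\aav$ are exactly the multifractions reachable by finite sequences of atomic reduction steps, and each atomic step picks an atom~$\xx$ (finitely many, by finite generation) and a level~$\ii < \nn$ (finitely many), the set of reduction sequences of length at most $\FF_\nn(\wit(\aav))$ is finite, hence so is the set of their endpoints.

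First I would record the reduction of everything to atomic steps: by Lemma~\ref{L:Basics} and the discussion around Definition~\ref{D:Red}, $\aav \rd \bbv$ can be refined so that every left reduct of~$\aav$ is reachable by a sequence of rules $\Red{\ii_1,\xx_1}\pdots\Red{\ii_m,\xx_m}$ with each $\xx_k$ an atom of~$\MM$ (one decomposes a reduction by a non-atom into a product of reductions by atoms, as used implicitly in the proof of Lemma~\ref{L:Tower}). Second, I would invoke Lemma~\ref{L:Tower}: along any such sequence, the value of the monotone functional $\FF_\nn(\wit(\cdot))$ strictly decreases at each step (this is exactly inequality~\eqref{E:Complex} applied in that proof), so $m \le \FF_\nn(\wit(\aav))$. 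Third, let $\kk$ be the number of atoms of~$\MM$ (finite by finite generation and the fact that a generating set must contain all atoms, \cite[Corollary~II.2.59]{Dir}); then the number of atomic reduction sequences of length $\le \FF_\nn(\wit(\aav))$ starting at~$\aav$ is at most $\sum_{m=0}^{\FF_\nn(\wit(\aav))} (\kk(\nn-1))^m$, a finite number, and the set of left reducts of~$\aav$ is the image of this finite set under "apply the sequence", hence finite. Finally, for right reducts: by Lemma~\ref{L:LRDual}, $\aav \rdts \bbv$ is equivalent to $\REV\aav \rds \REV\bbv$, and $\REV{\ }$ is a bijection of~$\FRb\MM$ onto itself preserving depth and entry multiset (up to $\INV{\ }$, which does not change $\wit$); so the right reducts of~$\aav$ are in bijection with the left reducts of~$\REV\aav$, of which there are finitely many by the argument just given.

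The only genuinely delicate point is making sure the weight~$\wit$ is available: strong noetherianity is what powers Lemma~\ref{L:Tower}, but Proposition~\ref{P:FinitelyMany} is stated only for \emph{finitely generated} noetherian gcd-monoids, not for strongly noetherian ones. The main obstacle is therefore to check that finite generation plus noetherianity suffices — one must produce a map $\wit:\MM\to\NNNN$ with $\wit(\aa\bb)\ge\wit(\aa)+\wit(\bb)$ and $\wit(\aa)>0$ for $\aa\ne1$, and finitely many basic elements, purely from these hypotheses. For the weight: since $\MM$ is generated by its (finitely many) atoms, every non-unit element is a product of atoms, and noetherianity of~$\div$ guarantees that all such expressions of a fixed element have bounded length; taking $\wit(\aa)$ to be the minimal number of atoms in a factorization of~$\aa$ gives super-additivity and positivity. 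For finiteness of the basic elements one argues that each basic element is a product of boundedly many atoms (bounded in terms of the finite presentation), so there are finitely many. Once these preliminaries are in place, everything else is the bookkeeping above. I would also remark that the resulting bound on the number of reducts is again primitive recursive, of the same order as the step-count bound of Lemma~\ref{L:Tower}.
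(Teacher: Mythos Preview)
Your approach has a genuine gap, and the paper's own argument is both simpler and avoids it entirely.

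The issue is your reliance on Lemma~\ref{L:Tower}. That lemma assumes that $\MM$ is \emph{strongly noetherian} and has \emph{finitely many basic elements}; Proposition~\ref{P:FinitelyMany} assumes only that $\MM$ is finitely generated and noetherian. You identify this mismatch yourself, but your attempt to close it does not work. For the weight~$\wit$: taking $\wit(\aa)$ to be the \emph{minimal} number of atoms in a factorization of~$\aa$ gives the sub-additive inequality $\wit(\aa\bb) \le \wit(\aa) + \wit(\bb)$ (concatenating minimal factorizations produces a factorization), which is the wrong direction for~\eqref{E:StrWit}. If one instead tried the supremum of factorization lengths, one would need to argue that this is finite and super-additive, and mere well-foundedness of~$\div$ does not obviously yield that. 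For the basic elements: your one-line claim that each basic element is a product of boundedly many atoms ``in terms of the finite presentation'' is exactly the hard content; in the Artin--Tits case this is the nontrivial result of~\cite{Din,DyH} invoked elsewhere in the paper, and there is no reason to expect it for an arbitrary finitely generated noetherian gcd-monoid.

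The paper sidesteps all of this with a direct K\"onig's lemma argument. One builds the tree of atomic left-reduction sequences from~$\aav$: finite generation makes the atom set finite, so each node has finitely many children (one per pair $(\ii,\xx)$ with $\xx$ an atom and $\aav\act\Red{\ii,\xx}$ defined); ordinary noetherianity gives termination of~$\RDb\MM$ (Lemma~\ref{L:Termin}), so there is no infinite branch. K\"onig's lemma then makes the tree finite, hence the set of left reducts is finite. Right reducts are handled symmetrically. No weight function, no basic elements, no explicit bound --- just finite branching plus termination.
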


\begin{proof}
As $\MM$ is noetherian and contains no nontrivial invertible element, a subfamily of~$\MM$ is generating if and only if it contains all atoms. Hence, the assumption that $\MM$ is finitely generated implies that the atom set~$\AA$ of~$\MM$ is finite. Let $\aav$ belong to~$\FRb\MM$. We construct a tree~$T_{\aav}$, whose nodes are pairs~$(\bbv, \ss)$, where $\bbv$ is a left reduct of~$\aav$ and $\ss$ is a finite sequence in~$\AA \times \NNNN$: the root of~$T_{\aav}$ is~$(\aav, \ew)$, and, using ${}^\frown$ for concatenation, the sons of~$(\bbv, \ss)$ are all pairs $(\bbv \act \Red{\ii, \xx}, \ss {}^\frown (\ii, \xx))$ such that $\bbv \act \Red{\ii, \xx}$ is defined. As $\AA$ is finite, for every multifraction~$\bbv$, the number of pairs~$(\ii, \xx)$ with $\xx$ in~$\AA$ and $\bbv \act \Red{\ii, \xx}$ defined is finite. Hence each node in~$T_{\aav}$ has finitely many immediate successors. On the other hand, the assumption that $\MM$ is noetherian implies that $\RDb\MM$ is terminating and, therefore, the tree~$T_{\aav}$ has no infinite branch. Hence, by K\" onig's lemma, $T_{\aav}$ is finite. As every left reduct of~$\aav$ appears (maybe more than once) in~$T_{\aav}$, the number of such reducts is finite.

The argument for right reducts is symmetric.
\end{proof}

 Thus, it makes sense to wonder whether, starting from a multifraction~$\aav$, the family of all multifractions that can be obtained from~$\aav$ using left and right reduction is finite. The argument for Proposition~\ref{P:FinitelyMany} does not extend, because the well-orders witnessing for the termination of left and right reductions are not the same, and it is easy to see that the result itself fails in general: starting from $\aav := \fr{\1/a/bc/\1}$ in the Artin-Tits of type~$\Att$, we find $\aav \act \Red{2,\fr{b}}\Redt{3,\fr{a}} = \fr{ba/b/ca/ac}$, whence, repeating three times,
\begin{equation}\label{X:NonTermin}
\aav \act \Red{2,\fr{b}} \Redt{3, \fr{a}} \Red{2,\fr{c}} \Redt{3,\fr{b}} \Red{2,\fr{a}} \Redt{3,\fr{c}} = \fr{bacbac/a/bc/acbacb} = \fr{bacbac} \opp \aav \opp \fr{acbacb}.
\end{equation}
Hence the multifractions $\aav \act (\Red{2,\fr{b}} \Redt{3, \fr{a}} \Red{2,\fr{c}} \Redt{3,\fr{b}} \Red{2,\fr{a}} \Redt{3,\fr{c}})^\pp$ make for $\pp \ge 0$ an infinite non-terminating (and non-periodic) sequence with respect to $\RRR \cup \RRRt$.

By contrast, let us mention without detailed proof a finiteness result valid whenever the ground monoid~$\MM$ is a Garside monoid~\cite{Dgk, Dir}, \ie, a strongly noetherian gcd-monoid possessing in addition an element~$\Delta$ (``Garside element'') whose left and right divisors coincide, generate~$\MM$, and are finite in number.

\begin{prop}
If $\MM$ is a Garside monoid, then, for every multifraction~$\aav$ on~$\MM$, the family of all $(\RRR \cup \RRRt)$-reducts of~$\aav$ is finite.
\end{prop}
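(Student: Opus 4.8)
The plan is to control the size of an arbitrary $(\RRR\cup\RRRt)$-reduct of~$\aav$ by a bound depending only on~$\aav$, and then observe that only finitely many multifractions of bounded "size" exist over a Garside monoid. The crucial structural fact is that, in a Garside monoid, every element has a well-defined \emph{supremum} $\sup(\gg)$, namely the least integer~$\kk$ such that $\gg$ left divides (equivalently right divides) $\Delta^\kk$, and similarly a well-defined \emph{infimum}; both the set of elements with $\sup\le\kk$ and the set with $\inf\ge-\kk$ are finite, since each such element is a divisor of a fixed power of~$\Delta$. So it suffices to bound, uniformly over all $(\RRR\cup\RRRt)$-reducts $\bbv$ of~$\aav$ and all indices~$\ii$, the supremum of the entry~$\bb_\ii$.

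First I would record the behaviour of a single reduction step on suprema. If $\bbv=\aav\act\Red{\ii,\xx}$ (or $\Redt{\ii,\xx}$), then $\bb_{\ii+1}$ (resp.\ $\bb_{\ii-1}$) is a divisor of~$\aa_{\ii+1}$ (resp.\ $\aa_{\ii-1}$), so its supremum does not increase; the entry~$\bb_\ii$ is obtained from~$\aa_\ii$ by multiplying by the remainder~$\xx'$, which satisfies $\xx\lcm\aa_\ii=\aa_\ii\xx'$ (on the relevant side), hence $\xx'$ divides~$\xx\lcm\aa_\ii$ and $\sup(\xx')\le\sup(\xx\lcm\aa_\ii)\le\sup(\xx)+\sup(\aa_\ii)$ by subadditivity of~$\sup$; and $\bb_{\ii-1}$ (resp.\ $\bb_{\ii+1}$) picks up the same remainder~$\xx'$. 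Crucially, the reduced factor~$\xx$ is always a \emph{divisor} of the entry it is extracted from, namely $\aa_{\ii+1}$ (left reduction) or $\aa_{\ii-1}$ (right reduction), so $\sup(\xx)\le\max_j\sup(\aa_j)$ — the factors we push around never have supremum larger than the current maximal entry-supremum. Thus, writing $s(\bbv):=\max_j\sup(\bb_j)$, a single step satisfies $s(\bbv)\le s(\aav)+s(\aav)=2\,s(\aav)$, say (a crude but sufficient estimate, with $\sup$ of a product bounded by the sum of the $\sup$'s).

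The naive consequence $s$ could double at each step is not yet a uniform bound, so the second, real ingredient is termination. In a Garside monoid, $\RRR$ alone is terminating but $\RRR\cup\RRRt$ is not, so I cannot argue "finitely many steps." Instead I would use a \emph{word-length} (or $\lambda$-weight, from strong noetherianity) argument \emph{modulo} a fixed power of~$\Delta$: one shows that, after each reduction, the total length of the multifraction — measured by summing $\lambda(\bb_j)$ — is \emph{non-increasing}, because extracting~$\xx$ from~$\aa_{\ii+1}$ removes $\lambda(\xx)$ there while the remainder~$\xx'$ inserted at level~$\ii$ and level~$\ii-1$ satisfies $\lambda(\xx')\le\lambda(\xx)$ twice is \emph{not} automatic — here one must instead track $\sum_j\inf(\bb_j)$ and $\sum_j\sup(\bb_j)$ simultaneously and use the identity $\sup(\gg)+\inf(\gg^{-1})=0$ together with the relation $\aav\simeqb\bbv$ to see that the \emph{alternating sum} of the $\sup$'s is an invariant, which pins down a bound on each individual $\sup(\bb_j)$ in terms of the invariants of~$\aav$ and the depth~$\nn$. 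Concretely: the product $\can(\bb_1)\can(\bb_2)^{-1}\cdots$ equals the fixed element $\can(\aav)$ of~$\EG\MM$, and in a Garside group the supremum and infimum of a fixed group element are finite, whence by an interleaving estimate $\sum_j(\sup(\bb_j)+|\inf(\bb_j)|)$ is bounded by $\sup(\can(\aav))+|\inf(\can(\aav))|+O(\nn)$; so every $\bb_j$ is a divisor of a single fixed power~$\Delta^N$ with $N$ depending only on~$\aav$. Since the depth is preserved ($\dh\bbv=\dh\aav=\nn$) and each of the~$\nn$ entries lies in the finite set of divisors of~$\Delta^N$, there are only finitely many possible reducts.

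The main obstacle I anticipate is making the bookkeeping in the previous paragraph airtight: a single step of left reduction changes \emph{three} adjacent entries, inserting the remainder~$\xx'$ at \emph{two} of them (levels $\ii$ and $\ii-1$), so a crude "total $\sup$ is non-increasing" claim is false and one genuinely needs the group-theoretic invariant $\can(\aav)$ to absorb the growth — the point being that the growth at level~$\ii-1$ is compensated by shrinkage further up the multifraction through the chain of lcm relations, which only becomes visible after using $\aav\simeqb\bbv$ and the finiteness of $\sup$ and $\inf$ on~$\EG\MM$. I would therefore structure the proof as: (1) recall $\sup$, $\inf$ on Garside monoids and groups, their finiteness, and subadditivity; (2) prove the entrywise bound $\sup(\bb_j)\le C(\aav,\nn)$ for every $(\RRR\cup\RRRt)$-reduct~$\bbv$ via the group invariant; (3) conclude by finiteness of the divisors of~$\Delta^{C}$ and preservation of depth.
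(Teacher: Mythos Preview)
There is a genuine gap. Your central estimate --- that $\sum_j(\sup(\bb_j)+|\inf(\bb_j)|)$ is bounded by $\sup(\can(\aav))+|\inf(\can(\aav))|+O(\nn)$ --- is already false for $\aav=\Delta^\kk/\Delta^\kk$ in any Garside monoid: here $\can(\aav)=1$, so your bound is~$O(1)$, yet the entries have supremum~$\kk$. More fundamentally, knowing only that the alternating product $\bb_1\bb_2^{-1}\bb_3\cdots$ equals a fixed group element imposes a \emph{single} constraint on~$\nn$ entries; growth in one entry can always be compensated by growth in a neighbour, so no ``interleaving estimate'' of the kind you invoke can exist. The earlier observation that a single step at most doubles~$s(\aav)$ is correct but useless, since the number of steps is unbounded. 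Your proposal therefore never establishes the uniform bound it needs.

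The paper's argument avoids this by tracking the \emph{vertices} of the path associated with~$\aav$ in the Cayley graph of~$\EG\MM$, rather than the entries (edges). One first chooses~$\dd$ so that all $\nn+1$ vertices of the initial path lie in the finite sublattice~$\Div(\Delta^\dd)$. A single left or right reduction replaces two adjacent vertices by new ones obtained via an lcm with an element already in that sublattice; since $\Div(\Delta^\dd)$ is closed under lcms in a Garside monoid, the new vertices remain in~$\Div(\Delta^\dd)$. Thus every $(\RRR\cup\RRRt)$-reduct has all its vertices in the same finite set, and as depth is preserved there are only finitely many such paths. The switch from edges to vertices is precisely what makes the bound uniform over arbitrarily long reduction sequences.
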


\begin{proof}[Proof (sketch)]
Let $\Delta$ be a Garside element in~$\MM$ and let $\aav$ be a multifraction on~$\MM$. Then there exists a positive integer~$\dd$ such that the path associated with~$\aav$ can be drawn in the finite fragment of the Cayley graph of~$\MM$ made of the divisors of~$\Delta^\dd$: this is the notion of a path ``drawn in $\Div(\Delta^\dd)$'' as considered in~\cite{Dfo}. Then the family of all paths drawn in~$\Div(\Delta^\dd)$ is closed under the special transformations alluded to in Subsection~\ref{SS:SemiAppli}, and, therefore, all multifractions that can be derived from~$\aav$ using~$\rd$ and~$\rdt$ are drawn in the same finite fragment $\Div(\Delta^\dd)$ of the Cayley graph. As we consider multifractions with a fixed depth, only finitely many of them can be drawn in a finite fragment of a Cayley graph.
\end{proof}

The argument extends to every Artin-Tits monoid~$\MM$ of type~FC, replacing the finite family of divisors of the Garside element~$\Delta$ with the union of the finitely many finite families of divisors of the Garside elements~$\Delta_\II$, where $\II$ is a family of atoms of~$\MM$ that generates a spherical type submonoid of~$\MM$.

\section{Finite approximations}\label{S:SmallDepth}

The semi-convergence of~$\RDb\MM$ and Conjectures~$\ConjA$,~$\ConjB$, and~$\ConjC$, involve multifractions of arbitrary depth. Further results appear in the particular case of small depth multifractions. The cases of depth~$2$ and, more interestingly, of depth~$4$ are addressed in Subsections~\ref{SS:Semi2} and~\ref{SS:Semi4}, where connections with the embeddability in the group and the uniqueness of fractional decompositions, respectively, are established. An application of the latter to partial orderings of the group is established in Subsection~\ref{SS:Ordering}. Finally, we describe in Subsection~\ref{SS:Semi6} a connection between Conjecture~$\ConjB$ and van Kampen diagrams for unital $\nn$-multifractions.

\subsection{The $\nn$-semi-convergence property}\label{SS:Semi2}

The rewrite system~$\RDb\MM$ has been called semi-convergent if \eqref{E:Semi} holds for every multifraction on~$\MM$, \ie, if $\aav$ being unital implies $\aav \rds \one$.

\begin{defi}\label{D:Semin}
If $\MM$ is a gcd-monoid, we say that $\RDb\MM$ $($\resp $\RDp\MM$$)$ is \emph{$\nn$-semi-convergent} if~\eqref{E:Semi} holds for every $\nn$-multifraction~$\aav$ in~$\FRb\MM$ (\resp $\FRp\MM$).
\end{defi}

Accordingly, we shall use Conjecture~$\ConjA_\nn$ for the restriction of Conjecture~$\ConjA$ to depth~$\nn$ multifractions, and similarly for~$\ConjB$ and~$\ConjC$. Some easy connections exist. Of course, if $\RDb\MM$ is $\nn$-semi-convergent, then so is its subsystem~$\RDp\MM$.

\begin{lemm}\label{L:SemiDown}
If $\MM$ is a gcd-monoid and $\RDb\MM$ $($\resp $\RDp\MM$$)$ is $\nn$-semi-convergent, it is $\pp$-semi-convergent for~$\pp < \nn$.
\end{lemm}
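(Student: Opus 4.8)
The plan is to derive $\pp$-semi-convergence from $\nn$-semi-convergence by a padding argument: given a unital multifraction of depth~$\pp$, one appends $\nn - \pp$ trivial entries to turn it into a unital multifraction of depth~$\nn$, applies $\nn$-semi-convergence to the latter, and then strips the trailing trivial entries off the resulting reduction. The same argument read entirely inside~$\FRp\MM$ handles the positive system~$\RDp\MM$, so I would only write it once, for~$\RDb\MM$.

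In detail, let $\aav$ be a unital $\pp$-multifraction in~$\FRb\MM$. First I would choose $\qq = \pm(\nn - \pp)$ so that the sign of the first entry of~$\One\qq$ is opposite to the sign of the last entry of~$\aav$, and set $\aav' := \aav \opp \One\qq$. By the case distinction in the definition of~$\opp$ no merging occurs at the junction, so $\aav'$ is an $\nn$-multifraction whose first $\pp$ entries are those of~$\aav$ and whose last $\nn - \pp$ entries equal~$1$ or~$\INV1$; since its first entry is that of~$\aav$, it lies in~$\FRp\MM$ exactly when $\aav$ does. Second, as $\One\qq$ is trivial we have $\can(\One\qq) = 1$, hence $\can(\aav') = \can(\aav)\can(\One\qq) = \can(\aav)$, so $\aav'$ is unital as well. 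Third, $\nn$-semi-convergence gives $\aav' \rds \one'$ for some trivial $\nn$-multifraction~$\one'$; since $\rds$ is depth-preserving and the sign pattern of~$\aav'$ is that of~$\aav$ continued alternately, this target factors as $\one' = \one \opp \One\qq$, where $\one$ is the trivial $\pp$-multifraction with the sign pattern of~$\aav$. Fourth, I would invoke Lemma~\ref{L:Basics}\ITEM3 (with empty left padding): the relation $\aav \opp \One\qq \rds \one \opp \One\qq$ is equivalent to $\aav \rds \one$. Thus $\aav \rds \one$, which is exactly~\eqref{E:Semi} for~$\aav$, and $\RDb\MM$ (and, reading everything in~$\FRp\MM$, also $\RDp\MM$) is $\pp$-semi-convergent.

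The only genuinely delicate point is the last step, namely pulling the reduction $\aav' \rds \one \opp \One\qq$ back to $\aav \rds \one$. Its content is that trailing trivial entries are \emph{inert} under reduction: a rule $\Red{\ii,\xx}$ acting at a level inside (or at the left boundary of) the trivial block would have to divide one of those trivial entries by~$\xx$, forcing $\xx = 1$, and likewise no reduction can deposit nontrivial material into that block; hence every reduction starting from~$\aav'$ is forced to take place among the first $\pp$ entries. This is precisely what the converse direction of Lemma~\ref{L:Basics}\ITEM3 records, so I would simply cite it. Everything else — the sign and depth bookkeeping for $\aav'$ and $\one'$, and the verification that $\aav'$ is positive iff $\aav$ is — is routine, so I expect no further obstacle.
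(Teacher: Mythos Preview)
Your proof is correct and follows the same padding argument as the paper: append trivial entries to reach depth~$\nn$, apply $\nn$-semi-convergence, and strip the padding via Lemma~\ref{L:Basics}\ITEM3. The only cosmetic difference is that the paper allows the junction to merge and compensates by taking $\rr \in \{\nn-\pp,\, \nn-\pp+1\}$, whereas you pick the sign of~$\One\qq$ to prevent merging outright; either way one lands on a depth-$\nn$ multifraction and the rest is identical.
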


\begin{proof}
Let $\aav$ be a nontrivial unital $\pp$-multifraction, with $\pp < \nn$. There exists~$\rr$ (equal to $\nn -\pp$ or $\nn - \pp + 1$) such that $\aav \opp \One\rr$ has width~$\nn$, and it is also nontrivial and unital. As $\RDb\MM$ is $\nn$-semi-convergent, we have $\aav \opp \One\rr \rds \one$, which implies $\aav \rds \one$ by Lemma~\ref{L:Basics}\ITEM3. So $\RDb\MM$ is $\pp$-semi-convergent.
\end{proof}

On the other hand, by repeating the proof of Proposition~\ref{P:NC}, we obtain

\begin{lemm}\label{L:NCGlobal}
If $\MM$ is a noetherian gcd-monoid, then $\RDb\MM$ $($\resp $\RDp\MM$$)$ is $\nn$-semi-convergent if and only if \eqref{E:NC} holds for every $\nn$-multifraction~$\aav$ in~$\FRb\MM$ $($\resp $\FRp\MM$$)$, \ie, if $\aav$ is unital, then it is either trivial or reducible.
\end{lemm}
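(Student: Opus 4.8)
The plan is to repeat, essentially word for word, the argument of Proposition~\ref{P:NC}, the only new ingredient being the trivial observation that each rule~$\Red{\ii, \xx}$ is depth-preserving (built into Definition~\ref{D:Red}), so that the reasoning never leaves the class of $\nn$-multifractions. Concretely, I would first dispose of the easy implication: assuming $\RDb\MM$ (resp.\ $\RDp\MM$) is $\nn$-semi-convergent, take a nontrivial unital $\nn$-multifraction~$\aav$; by Definition~\ref{D:Semin} and~\eqref{E:Semi} we have $\aav \rds \one$, and since $\aav$ is not trivial whereas $\one$ is, this reduction sequence must contain at least one step, so $\aav$ is $\RRR$-reducible and~\eqref{E:NC} holds for~$\aav$.

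For the converse, I would argue by well-founded induction on~$\rd$. The point is that $\MM$ noetherian gives, via Lemma~\ref{L:Termin}, that $\RDb\MM$ is terminating, hence $\rd$ is well-founded; moreover, since $\Red{\ii, \xx}$ preserves depth, the induction below takes place entirely within $\nn$-multifractions, so that the hypothesis~\eqref{E:NC} restricted to depth~$\nn$ is exactly what is available at each step. Let $\aav$ be a unital $\nn$-multifraction. If $\aav$ is $\rd$-minimal, i.e.\ $\RRR$-irreducible, then~\eqref{E:NC} forces $\aav$ to be trivial, so $\aav = \one$ and $\aav \rds \one$ trivially. Otherwise there exist $\ii$ and $\xx \neq 1$ with $\bbv := \aav \act \Red{\ii, \xx}$ defined; by Lemma~\ref{L:Basics}\ITEM1, $\bbv \simeqb \aav$, so $\bbv$ is again unital, and $\dh\bbv = \dh\aav = \nn$. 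The induction hypothesis yields $\bbv \rds \one$, and transitivity of~$\rds$ gives $\aav \rds \one$. Thus~\eqref{E:Semi} holds for every $\nn$-multifraction, i.e.\ $\RDb\MM$ is $\nn$-semi-convergent. The argument for $\RDp\MM$ is identical once one notes that $\Red{\ii, \xx}$ also preserves positivity, so $\bbv$ stays in~$\FRp\MM$ whenever $\aav$ does.

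There is no genuine obstacle here: the entire content beyond Proposition~\ref{P:NC} is the bookkeeping remark that both steps used in the induction—applying a reduction rule and passing to a $\simeqb$-equivalent multifraction via Lemma~\ref{L:Basics}\ITEM1—keep the depth fixed at~$\nn$, so the statement of~\eqref{E:NC} for depth~$\nn$ suffices. The only place where one should double-check is that $\RDb\MM$ is still terminating when one conditions on a fixed depth, which is immediate since it is terminating on all of~$\FRb\MM$ and the subset of depth-$\nn$ multifractions is $\rd$-closed.
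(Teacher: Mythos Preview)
Your proof is correct and follows exactly the approach the paper indicates: it explicitly says the lemma is obtained ``by repeating the proof of Proposition~\ref{P:NC}'', and your argument does precisely that, with the added (and necessary) observation that reduction preserves depth so the induction stays within $\nn$-multifractions.
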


We now address the cases of small depth. The case of depth one is essentially trivial, in that it follows from a sufficiently strong form of noetherinity and does not really involve the algebraic properties of the monoid:

\begin{prop}
Assume that $\MM$ is a gcd-monoid that admits a length function, namely a map~$\wit : \MM \to \NNNN$ satisfying, for all~$\aa, \bb$ in~$\MM$,
\begin{equation}\label{E:SuperStrWit}
\wit(\aa\bb) = \wit(\aa) + \wit(\bb), \quad \text{and}\quad \wit(\aa) > 0 \text{\ for $\aa \not= 1$}.
\end{equation}
Then the system~$\RDb\MM$ is $1$-semi-convergent.
\end{prop}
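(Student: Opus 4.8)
The plan is to notice first that, at depth one, reduction has no moves at all: by Definition~\ref{D:Red}, every rule~$\Red{\ii, \xx}$ acting on a multifraction~$\aav$ involves the entry~$\aa_{\ii + 1}$ (for $\ii = 1$ one needs $\aa_2$, for $\ii \ge 2$ one needs $\aa_{\ii - 1}, \aa_\ii, \aa_{\ii + 1}$), hence requires $\dh\aav \ge 2$. Consequently every $1$-multifraction is $\RRR$-irreducible, and proving that $\RDb\MM$ is $1$-semi-convergent amounts to proving that a unital $1$-multifraction must be trivial, \ie, equal to~$\one$: once this is known, $\aav \rds \one$ holds trivially because $\rds$ is reflexive.

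Next I would put the length function to work. Condition~\eqref{E:SuperStrWit} says exactly that $\wit$ is a monoid homomorphism from~$\MM$ to the additive monoid~$\NNNN$ that takes the value~$0$ only at~$1$ (that $\wit(1) = 0$ follows from $\wit(1) = \wit(1 \cdot 1) = \wit(1) + \wit(1)$). Composing~$\wit$ with the inclusion of~$\NNNN$ into the group~$\ZZZZ$ gives a homomorphism from~$\MM$ to a group, so, by the universal property of~$\EG\MM$, it factors through~$\can$: there is a homomorphism $\widehat\wit : \EG\MM \to \ZZZZ$ with $\widehat\wit \circ \can = \wit$.

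To conclude, let $\aav$ be a unital $1$-multifraction. By definition $\aav$ is $(\aa)$ or~$(\INV\aa)$ for some~$\aa$ in~$\MM$, and $\aav$ being unital forces $\can(\aa) = 1$ in the first case and $\can(\aa)\inv = 1$, hence again $\can(\aa) = 1$, in the second. Applying~$\widehat\wit$ gives $\wit(\aa) = \widehat\wit(\can(\aa)) = \widehat\wit(1) = 0$, whence $\aa = 1$ by~\eqref{E:SuperStrWit}. Thus $\aav$ is trivial, so $\aav \rds \one$, and $\RDb\MM$ is $1$-semi-convergent (and so, a fortiori, is its subsystem~$\RDp\MM$). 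I do not expect any real obstacle here: the only point worth spelling out is that reduction is vacuous in depth one, so the whole content reduces to the injectivity of~$\can$ at~$1$, which the length function pins down. One could instead argue directly from the defining relations of~$\simeqb$, but routing through a homomorphism to~$\ZZZZ$ is cleaner.
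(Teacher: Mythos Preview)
Your proof is correct and follows essentially the same approach as the paper: both arguments show that a unital $1$-multifraction must be trivial by exploiting that the length function~$\wit$ descends to the enveloping group. The only cosmetic difference is that you invoke the universal property of~$\EG\MM$ to obtain~$\widehat\wit$, whereas the paper extends~$\wit$ directly to~$\FRb\MM$ and checks invariance under the generating pairs of~$\simeqb$ --- exactly the alternative you mention in your last sentence.
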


Condition~\eqref{E:SuperStrWit} is strong noetherianity~\eqref{E:StrWit} with~$\ge$ replaced by~$=$. It holds in every Artin-Tits monoid and, more generally, in every monoid with a homogeneous presentation.

\begin{proof}
Extend the map~$\wit$ to~$\FRb\MM$ by $\wit(\aav):= \sum_{\text{$\ii$ positive in~$\aav$}} \wit(\aa_\ii) - \sum_{\text{$\ii$ negative in~$\aav$}} \wit(\aa_\ii)$. Then $\wit$ is a homomorphism from the monoid~$\FRb\MM$ to~$(\NNNN, +)$, and, for every~$\aa$ in~$\MM$, we have $\wit(\aa/\aa) = \wit(/\aa/\aa) = \wit(1) = \wit(\ef) = 0$. By Proposition~\ref{P:EnvGroup}, the latter pairs generate~$\simeqb$ as a congruence, hence $\wit$ is invariant under~$\simeqb$. Then $\aa \not= 1$ implies $\wit(\aa) \not= \wit(1)$, whence $\aa \not\simeqb 1$. Hence the only unital $1$-multifraction is~$1$, and $\RDb\MM$ is $1$-semi-convergent.
\end{proof}

The cases of depths~$2$ and~$3$ turn out to be directly connected with the embeddability of the considered monoid in its enveloping group.

\begin{prop}\label{P:Semi2}
If $\MM$ is a gcd-monoid, the following are equivalent:

\ITEM1 The system~$\RDp\MM$ is $2$-semi-convergent.

\ITEM2 The system~$\RDb\MM$ is $2$-semi-convergent.

\ITEM3 The system~$\RDb\MM$ is $3$-semi-convergent.

\ITEM4 The monoid~$\MM$ embeds in~$\EG\MM$.
\end{prop}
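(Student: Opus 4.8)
The plan is to prove the cycle of implications $\ITEM4 \Rightarrow \ITEM3 \Rightarrow \ITEM2 \Rightarrow \ITEM1 \Rightarrow \ITEM4$, with the substantive work concentrated in the first and last arrows; the two middle implications are essentially bookkeeping. First, $\ITEM3 \Rightarrow \ITEM2$ is immediate from Lemma~\ref{L:SemiDown} (which says $\nn$-semi-convergence descends to smaller depths), and $\ITEM2 \Rightarrow \ITEM1$ is trivial since $\RDp\MM$ is a subsystem of~$\RDb\MM$ and every $2$-multifraction in~$\FRp\MM$ is a $2$-multifraction in~$\FRb\MM$. So the content is the two outer arrows.

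For $\ITEM1 \Rightarrow \ITEM4$: this is basically the proof of Proposition~\ref{P:Embed}, but we must check it goes through using only $2$-semi-convergence rather than full semi-convergence. Suppose $\aa, \bb \in \MM$ with $\can(\aa) = \can(\bb)$, that is, $\aa \simeqb \bb$. The relevant unital multifraction is $\aa \opp \REV\bb = \aa/\bb$, which is a positive $2$-multifraction (this is where we need $\dh\aav = 2$, and why only the positive version is needed). By Lemma~\ref{L:Inverse}\ITEM2 together with Lemma~\ref{L:WP1}-style reasoning, $\aa/\bb$ is unital; being a positive $2$-multifraction, $2$-semi-convergence of~$\RDp\MM$ gives $\aa/\bb \rds 1/1$. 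Now I would argue, exactly as in Proposition~\ref{P:Embed}, that a reduction sequence of positive length is impossible to overshoot: $\aa/\bb \act \Red{1, \xx} = 1/1$ forces $\bb\xx = \aa$ and $\bb\xx = \bb$ (reading off Definition~\ref{D:Red} for $\ii = 1$ positive), hence $\xx = 1$ and $\aa = \bb$; and if $\aa/\bb$ were already $\RRR$-irreducible it would have to equal $1/1$ directly, again giving $\aa = \bb$. Either way $\aa = \bb$, so $\can$ is injective on~$\MM$.

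For $\ITEM4 \Rightarrow \ITEM3$: assume $\MM$ embeds in $\EG\MM$; I want $\nn$-semi-convergence of~$\RDb\MM$ for $\nn \le 3$. Using Lemma~\ref{L:NCGlobal}, it suffices to check that every unital $3$-multifraction (in~$\FRb\MM$) that is $\RRR$-irreducible is trivial; by Lemma~\ref{L:SemiDown}-type reasoning in the other direction it is cleanest to just handle $\nn = 3$, noting depth~$1$ and~$2$ are then subsumed, or to treat them directly (depth~$1$: if $\aa \simeqb 1$ and $\can$ is injective then $\aa = 1$; depth~$2$: if $\aa/\bb$ or $/\aa/\bb$ is unital and $\RRR$-irreducible, primeness forces $\aa \gcdt \bb = 1$ resp.\ $\aa \gcd \bb = 1$, and unitality gives $\can(\aa) = \can(\bb)$ hence $\aa = \bb$ by embeddability, so $\aa \gcdt \bb = \aa$ and thus $\aa = \bb = 1$). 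For $\nn = 3$, take $\aav$ unital and $\RRR$-irreducible of depth~$3$; by Example~\ref{X:Att}-style analysis $\RRR$-irreducibility means $\aav$ is prime (no common divisor of adjacent entries on the relevant side) and, crucially, that $\aa_1$ admits no nontrivial common multiple with anything dividing~$\aa_2$ forced by a $\Red{2,\xx}$ move, and $\aa_1 \gcd \aa_2 = 1$ (or the $\divet$-version). Writing the evaluation via~\eqref{E:Eval}, unitality of, say, $\aa_1/\aa_2/\aa_3$ means $\can(\aa_1)\can(\aa_2)\inv\can(\aa_3) = 1$, i.e.\ $\can(\aa_1)\can(\aa_3) = \can(\aa_2)$ in~$\EG\MM$; since $\MM$ embeds, $\aa_1\aa_3 = \aa_2$ in~$\MM$, so $\aa_1 \dive \aa_2$, whence $\aa_1 \gcd \aa_2 = \aa_1$. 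Combined with primeness ($\aa_1 \gcd \aa_2 = 1$) this forces $\aa_1 = 1$; then $\aa_3 = \aa_2$ and the depth-$2$ analysis (now $\aa_2 \gcdt \aa_3 = \aa_2 = 1$) finishes. The other sign patterns are symmetric.

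The main obstacle I anticipate is the $\ITEM4 \Rightarrow \ITEM3$ step, specifically being careful about what $\RRR$-irreducibility of a depth-$3$ multifraction actually entails: one must rule out not only divisions (primeness) but also genuine reductions $\Red{2,\xx}$ with a nontrivial remainder, and confirm that the combination of ``no reduction applies'' with the equation $\aa_1\aa_3 = \aa_2$ (obtained from embeddability) is contradictory unless $\aa_1 = 1$. Once $\aa_1 = 1$ the multifraction collapses to depth~$2$ effectively and everything reduces to the already-known depth-$2$ picture, so the real care is confined to that one case distinction. I would also double-check that no noetherianity hypothesis sneaks in — the statement of Proposition~\ref{P:Semi2} has no noetherian assumption, so I should avoid invoking Lemma~\ref{L:NCGlobal} (which does) and instead argue directly from the definition of semi-convergence, i.e.\ exhibit the explicit reduction $\aav \rds \one$ rather than merely showing irreducible unital implies trivial.
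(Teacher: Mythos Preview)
Your cycle of implications matches the paper's, and the two middle implications are handled correctly. The $\ITEM1\Rightarrow\ITEM4$ argument is essentially the paper's proof of Proposition~\ref{P:Embed}; your reading of Definition~\ref{D:Red} for $\Red{1,\xx}$ on $\aa/\bb$ is slightly garbled (with result $1/1$ you should get $1\cdot\xx = \aa$ and $1\cdot\xx = \bb$, hence $\aa = \bb = \xx$, not ``$\bb\xx=\aa$ and $\bb\xx=\bb$''), but the conclusion $\aa = \bb$ survives.

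The $\ITEM4\Rightarrow\ITEM3$ step has two problems. First, an algebra slip: from $\can(\aa_1)\can(\aa_2)^{-1}\can(\aa_3) = 1$ one gets $\can(\aa_2) = \can(\aa_3)\can(\aa_1)$, hence $\aa_2 = \aa_3\aa_1$, not $\aa_1\aa_3$. So $\aa_1$ \emph{right}-divides $\aa_2$, and the relevant gcd is~$\gcdt$, which is indeed what primeness at the positive level~$1$ controls; your version with $\gcd$ is on the wrong side. Second, and more structurally, the route through ``$\RRR$-irreducible unital implies trivial'' relies on Lemma~\ref{L:NCGlobal}, which requires noetherianity---not assumed here. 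You flag this yourself in the last paragraph, and your proposed fix is exactly what the paper does: once $\aa_2 = \aa_3\aa_1$ is known, \emph{every} positive unital $3$-multifraction is $\aa_1/\aa_3\aa_1/\aa_3$, and the explicit reduction
\[
\aav \act \Rdiv{1,\aa_1}\,\Rdiv{2,\aa_3} = 1/1/1
\]
(symmetrically $\aa_2 = \aa_1\aa_3$ and $\Rdiv{1,\aa_1}\Rdiv{2,\aa_3}$ for negative~$\aav$) gives $\aav \rds \one$ with no appeal to irreducibility or noetherianity. Carrying this out replaces your entire irreducibility discussion with two divisions.
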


\begin{proof}
Assume that $\RDp\MM$ is $2$-semi-convergent. Let $\aa, \bb$ two elements of~$\MM$ satisfying $\can(\aa) = \can(\bb)$. By~\eqref{E:Eval}, we have $\aa / \bb \simeqb 1$. The assumption that $\RDp\MM$ is $2$-semi-convergent implies $\aa / \bb \rds \one$. By definition, this means that there exists~$\xx$ in~$\MM$ satisfying $\aa = \xx = \bb$, whence $\aa = \bb$. So $\MM$ embeds in~$\EG\MM$. Hence \ITEM1 implies~\ITEM4.

Clearly \ITEM2 implies~\ITEM1, and \ITEM3 implies~\ITEM2 by Lemma~\ref{L:SemiDown}. 

Finally, assume that $\MM$ embeds in~$\EG\MM$, and let $\aav$ be a positive nontrivial unital $3$-multifraction. By assumption, we have $\aav \simeqb 1$, whence $\can(\aa_1) \can(\aa_2)\inv \can(\aa_3) = 1$ in~$\EG\MM$ by~\eqref{E:Eval}, and, therefore, $\can(\aa_2) = \can(\aa_3)\can(\aa_1) = \can(\aa_3 \aa_1)$ in~$\EG\MM$. As $\MM$ embeds in~$\EG\MM$, this implies $\aa_2 = \aa_3 \aa_1$ in~$\MM$. Then $\aav$ has the form $\aa_1 / \aa_3 \aa_1 / \aa_3$, implying $\aav \act \Rdiv{1, \aa_1} \Rdiv{2, \aa_3} = 1/1/1$. Thus $\RDp\MM$ is $3$-semi-convergent. The argument is the same for a negative $3$-multifraction~$\aav$, finding now $\can(\aa_1)\inv\can(\aa_2) \can(\aa_3)\inv = 1$, whence $\aa_2 = \aa_1 \aa_3$, and $\aav \act \Rdiv{1, \aa_1} \Rdiv{2, \aa_3} = /1/1/1$. Hence $\RDb\MM$ is $3$-semi-convergent. So \ITEM4 implies~\ITEM3.
\end{proof}

\begin{coro}
For every Artin-Tits monoid~$\MM$, the system~$\RDb\MM$ is $\nn$-semi-convergent for $\nn \le 3$.
\end{coro}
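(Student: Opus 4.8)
The plan is to reduce the statement entirely to Proposition~\ref{P:Semi2}, so that the only genuine input is a known embeddability theorem. First I would note that an Artin-Tits monoid is a gcd-monoid, so Proposition~\ref{P:Semi2} applies: its clauses \ITEM2 and~\ITEM3 say that $\RDb\MM$ is $2$-semi-convergent and $3$-semi-convergent as soon as clause~\ITEM4 holds, i.e.\ as soon as $\MM$ embeds in its enveloping group~$\EG\MM$; and for depth~$1$ this will then follow from $3$-semi-convergence via Lemma~\ref{L:SemiDown}. Thus the whole corollary amounts to the single assertion that every Artin-Tits monoid embeds into the corresponding Artin-Tits group.

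That assertion is exactly the theorem of L.\,Paris that Artin-Tits monoids inject in their groups. So the argument runs as follows: $\MM$ is a gcd-monoid; by Paris's theorem the canonical map $\can\colon \MM \to \EG\MM$ is injective; by Proposition~\ref{P:Semi2} (\ITEM4${}\Rightarrow{}$\ITEM3) the system $\RDb\MM$ is $3$-semi-convergent; and by Lemma~\ref{L:SemiDown} it is therefore $\nn$-semi-convergent for every $\nn \le 3$. (Equivalently, one may read off $2$-semi-convergence directly from clause~\ITEM2 and $1$-semi-convergence from the earlier length-function proposition, but routing everything through Lemma~\ref{L:SemiDown} is the most economical.)

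The only real obstacle here — indeed essentially the entire mathematical content of the corollary — is the embeddability theorem itself; its proof is nontrivial and lies well outside the framework developed in this paper, so it enters purely as a black box. Note that one cannot instead \emph{derive} embeddability from the reduction machinery above (Proposition~\ref{P:Embed}), since that derivation presupposes semi-convergence of~$\RDp\MM$, which is precisely what remains conjectural; hence appealing to Paris's result is unavoidable, and conversely the corollary should be viewed as the (very modest) depth-bounded shadow of Conjecture~$\ConjA$ that can be established unconditionally today.
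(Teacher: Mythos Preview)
Your proposal is correct and follows essentially the same route as the paper: invoke Paris's embeddability theorem to obtain clause~\ITEM4 of Proposition~\ref{P:Semi2}, deduce $2$- and $3$-semi-convergence, and get $1$-semi-convergence from Lemma~\ref{L:SemiDown}. Your remark that embeddability must enter as a black box (rather than via Proposition~\ref{P:Embed}) is also to the point.
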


\begin{proof}
By Lemma~\ref{L:SemiDown}, $\RDb\MM$ is $1$-semi-convergent. Next, it is known~\cite{Par} that $\MM$ embeds into~$\EG\MM$. Hence, by Proposition~\ref{P:Semi2}, $\RDb\MM$ is $2$- and $3$-semi-convergent.
\end{proof}

In other words, Conjecture~$\ConjA_\nn$ is true for $\nn \le 3$.

\subsection{Multifractions of depth~$4$}\label{SS:Semi4}

We now address $4$-semi-convergence, which turns out to give rise to interesting phenomena. We begin with preliminary results about unital multifractions that are in some sense the simplest ones.

\noindent\begin{minipage}{\textwidth}
\HS{0}\begin{defi}\label{D:Focal}\rightskip35mm
If $\MM$ is a monoid and $\aav$ is an $\nn$-multifraction on~$\MM$, with $\nn$ even, we say that $(\xx_1 \wdots \xx_\nn)$ is a \emph{central cross} for~$\aav$ if we have 
$$\aa_\ii = \begin{cases}
\ \xx_\ii \xx_{\ii + 1} & \text{for $\ii$ positive in~$\aav$},\\
\ \xx_{\ii + 1} \xx_\ii & \text{for $\ii$ negative in~$\aav$},
\end{cases}\HS{25}$$
with the convention $\xx_{\nn + 1} = \xx_1$. \hfill
\begin{picture}(0,0)(-5,0)
\psset{nodesep=0.7mm}
\psset{yunit=0.8mm}
\pcline{->}(15,0)(0,15)\tbput{$\aa_1$}
\pcline{->}(15,30)(0,15)\taput{$\aa_2$}
\pcline{->}(15,30)(30,15)\taput{$\aa_3$}
\pcline{->}(15,0)(30,15)\tbput{$\aa_4$}
\pcline{->}(15,30)(15,15)\naput[npos=0.6]{$\xx_3$}
\pcline{->}(15,15)(0,15)\nbput[npos=0.4]{$\xx_2$}
\pcline{->}(15,0)(15,15)\naput[npos=0.6]{$\xx_1$}
\pcline{->}(15,15)(30,15)\nbput[npos=0.4]{$\xx_4$}
\end{picture}
\end{defi}
\end{minipage}

\medskip The diagram of Definition~\ref{D:Focal} shows that a multifraction that admits a central cross is unital: if $(\xx_1 \wdots \xx_\nn)$ is a central cross for a positive multifraction~$\aav$, we find
$$\can(\aav) = \can(\xx_1\xx_2) \, \can(\xx_3 \xx_2)\inv \, \can(\xx_3\xx_4) \, \pdots\, (\xx_1 \xx_\nn)\inv = 1,$$
and similarly when $\aav$ is negative. It follows from the definition that a sequence $(\xx_1 \wdots \xx_\nn)$ is a central cross for a positive multifraction~$\aav$ if and only if $(\xx_2 \wdots \xx_\nn, \xx_1)$ is a central cross for~$/\aa_2 \sdots \aa_\nn / \aa_1$. So, we immediately obtain

\begin{lemm}
For every monoid~$\MM$ and every even~$\nn$, a positive $\nn$-multifraction~$\aav$ admits a central cross if and only if the negative multifraction $/\aa_2 \sdots \aa_\nn / \aa_1$ does. 
\end{lemm}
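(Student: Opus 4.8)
The plan is to establish the exact cyclic-shift correspondence between central crosses that the paragraph preceding the statement already signals, and then to deduce both implications from it. First I would fix a positive $\nn$-multifraction $\aav$ and write out the system of equations that, by Definition~\ref{D:Focal}, a tuple $(\xx_1, \ldots, \xx_\nn)$ must satisfy to be a central cross for $\aav$. Since $\nn$ is even, index $\ii$ is positive in $\aav$ when $\ii$ is odd and negative when $\ii$ is even, so this system is $\aa_1 = \xx_1 \xx_2$, $\aa_2 = \xx_3 \xx_2$, $\aa_3 = \xx_3 \xx_4$, \ldots, ending with $\aa_\nn = \xx_1 \xx_\nn$ once the convention $\xx_{\nn+1} = \xx_1$ is applied. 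Then I would do the same for the negative multifraction $\bbv := /\aa_2 \sdots \aa_\nn / \aa_1$: unwinding Notation~\ref{N:Frac}, its successive entries are $\INV{\aa_2}, \aa_3, \INV{\aa_4}, \ldots, \INV{\aa_\nn}, \aa_1$, so index $\ii$ has in $\bbv$ the sign opposite to that of index $\ii$ in $\aav$, and the defining system for a central cross $(\yy_1, \ldots, \yy_\nn)$ of $\bbv$ becomes $\aa_2 = \yy_2 \yy_1$, $\aa_3 = \yy_2 \yy_3$, $\aa_4 = \yy_4 \yy_3$, \ldots, $\aa_\nn = \yy_\nn \yy_{\nn-1}$, $\aa_1 = \yy_\nn \yy_1$, with $\yy_{\nn+1} = \yy_1$.

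The one substantive step is then to verify that the substitution $\yy_\ii := \xx_{\ii+1}$, with indices read cyclically and $\xx_{\nn+1} := \xx_1$, carries the second system term by term onto the first. This is a purely formal check; the only point requiring attention is the index bookkeeping — keeping track of which indices are positive or negative in each multifraction, and handling the wrap-around at $\ii = \nn$ consistently. The outcome is the precise equivalence that the paper announces: $(\xx_1, \ldots, \xx_\nn)$ is a central cross for $\aav$ if and only if $(\xx_2, \ldots, \xx_\nn, \xx_1)$ is a central cross for $\bbv$.

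Granting this equivalence, the lemma follows at once. If $\aav$ admits a central cross $(\xx_1, \ldots, \xx_\nn)$, then $(\xx_2, \ldots, \xx_\nn, \xx_1)$ is a central cross for $\bbv$. Conversely, if $\bbv$ admits a central cross $(\yy_1, \ldots, \yy_\nn)$, then, since the cyclic-shift map on $\nn$-tuples over $\MM$ is a bijection, applying the equivalence to the tuple $(\yy_\nn, \yy_1, \ldots, \yy_{\nn-1})$ shows that this tuple is a central cross for $\aav$. I do not anticipate any genuine obstacle here: the whole argument reduces to the index-shift verification described above, which the paper itself presents as immediate ``from the definition''.
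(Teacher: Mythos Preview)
Your proposal is correct and follows exactly the approach the paper takes: the paper states just before the lemma that $(\xx_1,\ldots,\xx_\nn)$ is a central cross for~$\aav$ if and only if $(\xx_2,\ldots,\xx_\nn,\xx_1)$ is one for $/\aa_2\sdots\aa_\nn/\aa_1$, and then records the lemma as an immediate consequence. Your write-up simply spells out the index bookkeeping that the paper leaves implicit.
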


Multifractions with a central cross always behave nicely in terms of reduction:

\begin{lemm}\label{L:CrossRed}
If $\MM$ is a gcd-monoid and $\aav$ is a multifraction on~$\MM$ that admits a central cross, then $\redtame(\aav) = \one$ holds. 
\end{lemm}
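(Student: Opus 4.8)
The plan is to prove that whenever $\aav$ admits a central cross $(\xx_1 \wdots \xx_\nn)$, the prescribed tame reduction sequence $\Redmax{\univ\nn}$ applied to $\aav$ collapses it all the way down to $\one$. The natural approach is induction on $\nn$ (the even depth), following the recursive shape of $\univ\nn = (1, 2 \wdots \nn-1)$ followed by $\univ{\nn-2}$. So the first thing I would do is run the first block of reductions $\Redmax1 \Redmax2 \pdots \Redmax{\nn-1}$ and show that, after this sweep, the multifraction has been transformed into one whose first two entries are trivial (so effectively an $(\nn-2)$-multifraction after discarding leading $1$'s) which again admits a central cross. Then the induction hypothesis, applied to the depth-$(\nn-2)$ part, finishes the job, since $\univ\nn$ continues with $\univ{\nn-2}$ acting on exactly those lower levels.

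The heart of the argument is therefore the analysis of a single sweep $\Redmax1 \pdots \Redmax{\nn-1}$ on a multifraction with central cross. First I would observe that, by Definition~\ref{D:Focal}, at level~$1$ the entries $\aa_1 = \xx_1\xx_2$ and $\aa_2 = \xx_3\xx_2$ (signs permitting) share the common divisor $\xx_2$ on the due side; one checks via Proposition~\ref{P:MaxTame} that $\xx_2$ is (at least) a tame $1$-reducer — in fact the greatest tame $1$-reducer is a multiple of the gcd of $\aa_1$ and $\aa_2$, which contains $\xx_2$, and I would argue that applying $\Redmax1$ at least divides out $\xx_2$ and moves the remainder correctly. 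The key bookkeeping claim is an invariant: after performing $\Redmax1 \pdots \Redmax\kk$ on an $\nn$-multifraction with central cross $(\xx_1 \wdots \xx_\nn)$, the resulting multifraction still has a central-cross-like structure on entries $\kk+1, \kk+2 \wdots \nn, 1$ (with the first $\kk-1$ entries trivial and the $\kk$th entry reduced to $\xx_1$ or a piece of it), because crossing $\xx_i$ through the lcm operation at level~$i$ just shifts the "spoke" $\xx_i$ from one side of the central vertex to the other. This is precisely the geometric content of the diagram in Definition~\ref{D:Focal}: each reduction step rotates one edge of the cross. I would make this precise by writing down explicitly, using the defining equations of $\Red{\ii,\xx}$ from Definition~\ref{D:Red} and the iterated-lcm rule of Lemma~\ref{L:IterLcm}, what the entries become after $\Redmax\ii$.

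The main obstacle I anticipate is not conceptual but combinatorial: one must verify that $\Redmax\ii$ — which uses the \emph{greatest tame} $\ii$-reducer, a priori possibly strictly larger than $\xx_{\ii+1}$ (as the $\Att$ example after Definition~\ref{D:Dermax} shows such strict inequality can occur) — still does the "right thing", namely does not overshoot in a way that destroys the central-cross structure. Here I would lean on the fact that in the central-cross situation the entries involved are genuinely products $\xx_i\xx_{i+1}$, so the relevant lcms and gcds are forced: the candidate reducer must divide $\aa_{\ii+1}$ on the due side, and the structure constrains this enough that the greatest tame reducer acts compatibly. One clean way around the subtlety is to note that we only need \emph{one} valid maximal-tame sequence leading to $\one$, and since a division is always tame (Proposition~\ref{P:MaxTame}), the reduction $\Redmax\ii$ at least divides by $\xx_{\ii+1}$ and pushes the appropriate remainder; combined with the cancellativity of $\MM$ and an induction on $\wit(\aav)$ or on the number of reduction steps, the residual piece after the forced division is again handled by the next step. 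I would then close the induction: the base case $\nn = 0$ (or $\nn = 1$) is trivial since $\univ\nn$ is empty and $\aav = \one$ already, and the inductive step is exactly the sweep-plus-recursion described above, giving $\redtame(\aav) = \aav \act \Redmax{\univ\nn} = \one$.
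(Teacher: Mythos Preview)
Your high-level plan---induction on the even depth~$\nn$, process the first sweep $\Redmax1\pdots\Redmax{\nn-1}$, then recurse via $\univ{\nn-2}$---is exactly the paper's strategy. But the mechanics of the sweep are wrong in your sketch, and the gap is not cosmetic.

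First, after the sweep it is the \emph{last} two entries that become trivial, not the first two. Concretely, the paper computes $\aav \act \Redmax1\Redmax2 = \xx_1\xx_4 / 1 / 1 / \aa_4 \sdots \aa_\nn$, and then $\Redmax3\pdots\Redmax{\nn-1}$ merely shift the block $1/1$ to the far right, giving $\xx_1\xx_4 / \aa_4 \sdots \aa_\nn / 1 / 1$. The surviving $(\nn-2)$-multifraction $\xx_1\xx_4 / \aa_4 \sdots \aa_\nn$ carries the central cross $(\xx_1, \xx_4 \wdots \xx_\nn)$, and Lemma~\ref{L:Basics}\ITEM3 lets you ignore the trailing~$1$'s when applying~$\univ{\nn-2}$. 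Your invariant (``first $\kk-1$ entries trivial, $\kk$th entry reduced to $\xx_1$'') does not match this.

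Second, and more seriously, your handling of the ``overshoot'' worry does not work. You say ``we only need one valid maximal-tame sequence leading to~$\one$'' and ``$\Redmax\ii$ at least divides by~$\xx_{\ii+1}$''---but $\redtame(\aav)$ is \emph{defined} as $\aav \act \Redmax{\univ\nn}$, so there is no freedom: you must compute exactly what the greatest tame reducer is at each step. The paper does this. At level~$1$ (where $\Redmax1 = \Divmax1$), the right gcd of $\aa_1 = \xx_1\xx_2$ and $\aa_2 = \xx_3\xx_2$ is not~$\xx_2$ but $(\xx_1 \gcdt \xx_3)\,\xx_2$; writing $\xx_1 = \xx'_1\xx$, $\xx_3 = \xx'_3\xx$ with $\xx = \xx_1 \gcdt \xx_3$, one gets $\aav \act \Redmax1 = \xx'_1 / \xx'_3 / \xx'_3\xx\xx_4 / \aa_4 \sdots \aa_\nn$. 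The crucial observation is that now the second entry $\xx'_3$ \emph{left divides} the third entry $\xx'_3\xx\xx_4$; hence every left divisor of the third entry is a $2$-reducer, there is a unique maximal $2$-reducer (the whole third entry), and so the greatest tame $2$-reducer is $\xx'_3\xx\xx_4$ itself. Reducing by it collapses both entries~$2$ and~$3$ to~$1$ and sends $\xx\xx_4$ into entry~$1$, recovering $\xx'_1\xx\xx_4 = \xx_1\xx_4$. This exact computation---not a lower bound---is what makes the induction go through, and it is missing from your outline.
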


\begin{proof}
We prove the result using induction on~$\nn \ge 2$ even, and assuming $\aav$ positive. Assume that $(\xx_1 \wdots \xx_\nn)$ is a central cross for~$\aav$. For $\nn = 2$, the assumption boils down to $\aa_1 = \aa_2 = \xx_1 \xx_2$, directly implying $\aav \act \Redmax{1} = 1/1$. Assume $\nn \ge 4$ and, say, $\aav$ positive. Let $\xx := \xx_1 \gcdt \xx_3$, with $\xx_1 = \xx'_1 \xx$ and $\xx_3 = \xx'_3 \xx$. Then we have $\aa_1 \gcdt \aa_2 = \xx\xx_2$, whence $\aav \act \Redmax1 = \xx'_1 / \xx'_3 / \aa_3 \sdots \aa_\nn$. As $\aa_3 = \xx_3\xx_4$ expands into $\aa_3 = \xx'_3 \xx \xx_4$, this can be rewritten as $\aav \act \Redmax1 = \xx'_1 / \xx'_3 / \xx'_3 \xx \xx_4 \sdots \aa_\nn$. We deduce 
\begin{equation}\label{E:CrossRed1}
\aav \act \Redmax1 \Redmax2 = \xx'_1 \xx \xx_4 / 1 / 1 / \aa_4 \sdots \aa_\nn = \xx_1 \xx_4 / 1 / 1 / \aa_4 \sdots \aa_\nn
\end{equation}
with $\aa_4 = \xx_1 \xx_4$ for $\nn = 4$, and $\aa_4 = \xx_5 \xx_4$ for $\nn \ge 6$. In every case, the subsequent action of~$\Redmax3 \pdots \Redmax{\nn - 1}$ is to push $\aa_4$, then~$\aa_5$, etc. until~$\aa_\nn$, through~$1/1$, leading to
\begin{equation}\label{E:CrossRed2}
\aav \act \Redmax1 \pdots \Redmax{\nn - 1} = \xx_1 \xx_4 / \aa_4 \sdots \aa_\nn / 1 / 1.
\end{equation}
For $\nn = 4$, \eqref{E:CrossRed2} reads $\aav \act \Redmax1 \Redmax2 \Redmax3 = \xx_1 \xx_4 / \xx_1 \xx_4 / 1 / 1$, and a further application of~$\Redmax1$ yields~$\one$. For $\nn \ge 6$, the assumption that $(\xx_1 \wdots \xx_\nn)$ is a central cross for~$\aav$ implies that $(\xx_1, \xx_4 \wdots \xx_\nn)$ is a central cross for the $(\nn - 2)$-multifraction $\xx_1 \xx_4 / \aa_4 \sdots \aa_\nn$. The induction hypothesis for the latter gives $\redtame(\xx_1 \xx_4 / \aa_4 \sdots \aa_\nn) = \one$, which expands into
\begin{equation}\label{E:CrossRed3}
\xx_1 \xx_4 / \aa_4 \sdots \aa_\nn \act \Redmax{\univ{\nn - 2}} = \one_{\nn - 2}.
\end{equation}
By Lemma~\ref{L:Basics}\ITEM3, \eqref{E:CrossRed3} implies $\xx_1 \xx_4 / \aa_4 \sdots \aa_\nn / 1 / 1 \act \Redmax{\univ{\nn - 2}} = \one_\nn$. Merging with~\eqref{E:CrossRed2}, we deduce $\aav \act \Redmax{1 \pdots \nn - 1}Ê\Redmax{\univ{\nn - 2}} = \one_\nn$, which is $\aav \act \Redmax{\univ{\nn}} = \one_\nn$, \ie, $\redtame(\aav) = \one$. 

The argument is similar when $\aav$ is negative.
\end{proof}

We now concentrate on $4$-multifractions. A sort of transitivity of central crosses holds. 

\begin{lemm}\label{L:TransCross}
Assume that $\MM$ is a gcd-monoid, $\aav, \bbv$ are $4$-multifractions admitting a central cross, and $\cc_1\aa_4 = \cc_4\bb_1$ and $\cc_2\aa_3 = \cc_3\bb_2$ holds. Then $\cc_1\aa_1 / \cc_2\aa_2 / \cc_3\bb_3 / \cc_4\bb_4$ admits a central cross. In particular, $\aa_1 / \aa_2 / \bb_3 / \bb_4$ admits a central cross for $\aa_4 = \bb_1$ and $\aa_3 = \bb_2$.
\end{lemm}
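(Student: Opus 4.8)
The plan is to exhibit an explicit central cross for $\ddv:=\cc_1\aa_1/\cc_2\aa_2/\cc_3\bb_3/\cc_4\bb_4$ built directly from central crosses of $\aav$ and $\bbv$ and from the two hypothesis equalities. I would treat the case where $\aav,\bbv$ are positive (the negative case being symmetric, or reducible to it by the rotation lemma just stated). So fix a central cross $(\xx_1,\xx_2,\xx_3,\xx_4)$ for $\aav$ and a central cross $(\yy_1,\yy_2,\yy_3,\yy_4)$ for $\bbv$; by Definition~\ref{D:Focal} this means $\aa_1=\xx_1\xx_2$, $\aa_2=\xx_3\xx_2$, $\aa_3=\xx_3\xx_4$, $\aa_4=\xx_1\xx_4$, and $\bb_1=\yy_1\yy_2$, $\bb_2=\yy_3\yy_2$, $\bb_3=\yy_3\yy_4$, $\bb_4=\yy_1\yy_4$. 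Substituting these into the hypotheses, the equality $\cc_1\aa_4=\cc_4\bb_1$ reads $(\cc_1\xx_1)\xx_4=(\cc_4\yy_1)\yy_2$ and the equality $\cc_2\aa_3=\cc_3\bb_2$ reads $(\cc_2\xx_3)\xx_4=(\cc_3\yy_3)\yy_2$.

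Next I would exploit that both displayed equalities exhibit a common left multiple of the \emph{same} pair $\xx_4,\yy_2$. Since $\MM$ is a gcd-monoid, it admits conditional left lcms, so $\mm:=\xx_4\lcmt\yy_2$ exists; write $\mm=\alpha\xx_4=\beta\yy_2$. As $\mm$ right-divides every common left multiple of $\xx_4$ and $\yy_2$, it right-divides both $\cc_1\aa_4$ and $\cc_2\aa_3$, say $\cc_1\aa_4=\gamma\mm$ and $\cc_2\aa_3=\delta\mm$ with $\gamma,\delta\in\MM$. Then right-cancelling $\xx_4$ in $\cc_1\xx_1\xx_4=\gamma\alpha\xx_4$ and in $\cc_2\xx_3\xx_4=\delta\alpha\xx_4$, and right-cancelling $\yy_2$ in $\cc_4\yy_1\yy_2=\gamma\beta\yy_2$ and in $\cc_3\yy_3\yy_2=\delta\beta\yy_2$, I obtain the four factorizations
\[\cc_1\xx_1=\gamma\alpha,\qquad \cc_4\yy_1=\gamma\beta,\qquad \cc_2\xx_3=\delta\alpha,\qquad \cc_3\yy_3=\delta\beta.\]

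It then remains only to check that $(\gamma,\ \alpha\xx_2,\ \delta,\ \beta\yy_4)$ is a central cross for $\ddv$, i.e.\ that $\cc_1\aa_1=\gamma\cdot(\alpha\xx_2)$, $\cc_2\aa_2=\delta\cdot(\alpha\xx_2)$, $\cc_3\bb_3=\delta\cdot(\beta\yy_4)$ and $\cc_4\bb_4=\gamma\cdot(\beta\yy_4)$; each is immediate by combining the cross relations for $\aav,\bbv$ with the four factorizations (for instance $\cc_1\aa_1=\cc_1\xx_1\xx_2=\gamma\alpha\xx_2$, and $\cc_4\bb_4=\cc_4\yy_1\yy_4=\gamma\beta\yy_4$). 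The ``in particular'' statement is just the specialization $\cc_1=\cc_2=\cc_3=\cc_4=1$. There is no serious obstacle in this argument: the only thing to notice is that the two hypotheses force $\xx_4$ and $\yy_2$ to admit a common left multiple and, crucially, produce the \emph{same} lcm cofactors $\alpha,\beta$ on both the $\aav$-side and the $\bbv$-side, after which the cross $(\gamma,\alpha\xx_2,\delta,\beta\yy_4)$ essentially writes itself. The one point requiring care is the bookkeeping of left versus right divisibility together with the positive/negative sign conventions of Definition~\ref{D:Focal} when passing to the symmetric case.
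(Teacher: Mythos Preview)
Your proof is correct and follows essentially the same approach as the paper: both arguments use the left lcm of $\xx_4$ and $\yy_2$ (your $\alpha\xx_4=\beta\yy_2$ is the paper's $\xx\xx_4=\yy\yy_2$), extract the cofactors $\gamma,\delta$ (the paper's $\zz_1,\zz_3$) by right-cancellation, and exhibit the same central cross $(\gamma,\alpha\xx_2,\delta,\beta\yy_4)$ for~$\ddv$.
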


\begin{proof}
(Figure~\ref{F:TransCross}) Let $(\xx_1 \wdots \xx_4)$ and $(\yy_1 \wdots \yy_4)$ be central crosses for~$\aav$ and~$\bbv$, respectively. By assumption, we have $\cc_2 \xx_3 \xx_4 = \cc_2 \aa_3 = \cc_3 \bb_2 = \cc_3 \yy_3 \yy_2$, so $\xx_4$ and~$\yy_2$ admit a common left multiple, say $\xx_4 \lcmt \yy_2 = \xx \xx_4 = \yy \yy_2$, and there exists~$\zz_3$ satisfying $\cc_2 \aa_3 = \zz_3 \xx \xx_4 = \zz_3 \yy \yy_2 = \cc_3 \bb_2$. Then $\aa_3 = \xx_3 \xx_4$ and $\bb_2 = \yy_3 \yy_2$ respectively imply
\begin{equation}
\cc_2 \xx_3 = \zz_3 \xx \qquand \cc_3 \yy_3 = \zz_3 \yy.
\end{equation}
Arguing similarly from $\cc_1 \xx_1 \xx_4 = \cc_1 \aa_4 = \cc_4 \bb_1 = \cc_4 \yy_1 \yy_2$, we deduce the existence of~$\zz_1$ satisfying $\cc_1 \aa_4 = \zz_1 \xx \xx_4 = \zz_1 \yy \yy_2 = \cc_4 \bb_1$, leading to 
\begin{equation}
\cc_1 \xx_1 = \zz_1 \xx \qquand \cc_4 \yy_1 = \zz_1 \yy.
\end{equation}
Then $(\zz_1, \xx\xx_2, \zz_3, \yy\yy_4)$ is a central cross for~$\cc_1\aa_1 / \cc_2\aa_2 / \cc_3\bb_3 / \cc_4\bb_4$.
\end{proof}

\begin{figure}[htb]
\begin{picture}(83,35)(0,1)
\psset{xunit=1.8mm,yunit=1.7mm}
\psset{nodesep=0.7mm}
\def\NPoint(#1,#2,#3){\cnode[style=thin,fillcolor=white,linecolor=white](#1,#2){0}{#3}}
\def\AArrow(#1,#2){\ncline[linewidth=0.8pt, border=2pt]{->}{#1}{#2}}
\def\BArrow(#1,#2){\ncline[linecolor=red, linewidth=1.5pt]{->}{#1}{#2}}
\def\CArrow(#1,#2){\ncline[linecolor=color1, linewidth=3mm]{c-c}{#1}{#2}}
\def\PArrow(#1,#2){\ncline[linestyle=dashed, linewidth=0.8pt]{->}{#1}{#2}}
\NPoint(10,0,v10) \NPoint(20,0,v20) \NPoint(30,0,v30) \NPoint(0,10,v01) \NPoint(10,10,v11) \NPoint(20,10,v21) \NPoint(30,10,v31) \NPoint(40,10,v41) \NPoint(0,20,v02) \NPoint(10,20,v12) \NPoint(20,20,v22) \NPoint(30,20,v32) \NPoint(20,15,x3) \NPoint(20,5,x1) 
\AArrow(v10,v01)\naput{$\aa_1$}
\AArrow(v12,v01)\nbput{$\aa_2$}
\AArrow(v20,v10)\naput{$\cc_1$}
\AArrow(v20,v30)\nbput{$\cc_4$}
\AArrow(v41,v30)\naput{$\bb_4$}
\AArrow(v41,v32)\nbput{$\bb_3$}
\AArrow(v22,v12)\nbput{$\cc_2$}
\AArrow(v22,v32)\naput{$\cc_3$}
\AArrow(v10,v11)\naput{$\xx_1$}
\AArrow(v11,v01)\nbput{$\xx_2$}
\AArrow(v12,v11)\nbput{$\xx_3$}
\AArrow(v11,v21)\naput{$\xx_4$}
\AArrow(v12,v21)\naput{$\aa_3$}
\AArrow(v10,v21)\nbput{$\aa_4$}
\AArrow(v32,v21)\nbput{$\bb_2$}
\AArrow(v30,v21)\naput{$\bb_1$}
\AArrow(v30,v31)\nbput{$\yy_1$}
\AArrow(v31,v21)\nbput{$\yy_2$}
\AArrow(v32,v31)\naput{$\yy_3$}
\AArrow(v31,v41)\naput{$\yy_4$}
\AArrow(v22,x3)\naput{$\zz_3$}
\AArrow(x3,v11)\nbput[npos=0.7]{$\xx$}
\AArrow(x3,v31)\naput[npos=0.7]{$\yy$}
\AArrow(v20,x1)\nbput{$\zz_1$}
\AArrow(x1,v11)\naput[npos=0.7]{$\xx$}
\AArrow(x1,v31)\nbput[npos=0.7]{$\yy$}
\end{picture}
\caption{\small Transitivity of the existence of a central cross.}
\label{F:TransCross}
\end{figure}

We deduce that reduction preserves the existence of a central cross in both directions.

\begin{lemm}\label{L:RedCross}
Assume that $\MM$ is a gcd-monoid and $\aav, \bbv$ are $4$-multifractions on~$\MM$ satisfying $\aav \rds \bbv$. Then $\aav$ admits a central cross if and only if $\bbv$ does. 
\end{lemm}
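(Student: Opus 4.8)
The plan is to reduce the statement to a single reduction step and then dispatch a short list of cases, exploiting the symmetries available. Since $\rds$ is the reflexive--transitive closure of~$\rd$, it suffices to treat $\bbv = \aav \act \Red{\ii, \xx}$ for one rule. As $\dh\aav = \dh\bbv = 4$, we have $\ii \in \{1, 2, 3\}$, and the rule $\Red{1, \xx}$ simply divides $\aa_1$ and $\aa_2$ by~$\xx$ on the side prescribed by the sign of~$1$, so that $\bbv = \aav \act \Rdiv{1, \xx}$; thus only $\ii = 2$ and $\ii = 3$ require genuine work. Moreover, by Lemma~\ref{L:LRDual} the map~$\REV{\ }$ exchanges~$\rd$ and~$\rdt$, and a direct check shows that $\aav$ admits a central cross if and only if $\REV\aav$ does (if $(\xx_1, \xx_2, \xx_3, \xx_4)$ is a central cross for a positive $4$-multifraction~$\aav$, then $(\xx_1, \xx_4, \xx_3, \xx_2)$ is one for~$\REV\aav$, and symmetrically); combined with the cyclic-symmetry lemma stated just above ($\aav$ positive admits a central cross iff $/ \aa_2 \sdots \aa_\nn / \aa_1$ does), this lets us restrict to a couple of essential configurations, say $\aav$ positive with $\ii \in \{2, 3\}$. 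We then prove the two implications separately.

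For the forward implication, assume $\bbv = \aav \act \Red{\ii, \xx}$ and let $(\xx_1, \xx_2, \xx_3, \xx_4)$ be a central cross for~$\aav$, so $\aa_1 = \xx_1\xx_2$, $\aa_2 = \xx_3\xx_2$, $\aa_3 = \xx_3\xx_4$, $\aa_4 = \xx_1\xx_4$. The idea is to substitute these factorisations into the defining equalities of~$\Red{\ii, \xx}$, use Lemma~\ref{L:IterLcm} and its left-handed counterpart to split the lcm $\xx \lcm \xx_\ii$ (resp.\ $\xx \lcmt \xx_\ii$) into the part crossing~$\xx_\ii$ and the part crossing~$\xx_{\ii+1}$, use Lemma~\ref{L:IterGcd} to see that the relevant gcds stay trivial, and then read off an explicit central cross for~$\bbv$ obtained by absorbing~$\xx$ into one spoke of the cross and distributing the remainder~$\xx'$ among the adjacent quotients. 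For example, at level $\ii = 2$, writing $\xx \lcm \xx_3 = \xx_3 t = \xx s$, $\xx_4 = t \xx_4'$ and $t \lcm \xx_2 = t \uu$, one checks that $(\xx_1 t, \uu, s, \xx_4')$ is a central cross for~$\bbv$; the level $\ii = 3$ case, and the degenerate case in which $\Red{\ii, \xx}$ is a division (no remainder), are handled in the same spirit. By Lemma~\ref{L:LRDual} and the $\REV{\ }$-invariance noted above, the same conclusion then holds with~$\rdt$ in place of~$\rd$: if $\aav \rdt \bbv$ and $\aav$ admits a central cross, so does~$\bbv$.

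For the backward implication, suppose $\bbv = \aav \act \Red{\ii, \xx}$ and $\bbv$ admits a central cross. Let $\xxh$ and~$\xx'$ be as in Lemma~\ref{L:LeftRightDiv}, so that $\bbv \act \Redt{\ii, \xx'} = \aav \act \Rdiv{\ii, \xxh} =: \ccv$. If $\xx' = 1$, then $\Red{\ii, \xx}$ was already the division $\Rdiv{\ii, \xxh}$ and there is nothing to do beyond the division case; otherwise $\ccv$ is obtained from~$\bbv$ by a genuine right-reduction step, so the right-reduction form of the forward implication gives that $\ccv$ admits a central cross. Finally, $\ccv = \aav \act \Rdiv{\ii, \xxh}$ means $\aav$ is recovered from~$\ccv$ by multiplying the $\ii$th and $(\ii+1)$st entries by~$\xxh$ on the appropriate side; given a central cross of~$\ccv$, multiplying the spoke shared by the $\ii$th and $(\ii+1)$st edges by~$\xxh$ and keeping the other spokes unchanged yields a central cross of~$\aav$ --- the easy converse already seen for divisions, in the spirit of the proof of Lemma~\ref{L:TransCross}. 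This gives the equivalence for a single step, hence in general.

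The main obstacle is the forward computation at level $\ii = 2$: there the reduction touches all four entries of the $4$-multifraction, a central cross is far from unique, and one must keep precise track of which factor produced by the iterated lcm/gcd decompositions belongs to which spoke of the new cross; once that bookkeeping is in place, the level $\ii = 3$ case, the division cases, and the whole backward implication follow from the symmetries above together with Lemma~\ref{L:LeftRightDiv}.
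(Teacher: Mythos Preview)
Your forward direction is essentially identical to the paper's: both construct the cross for~$\bbv$ explicitly from that of~$\aav$ in the case $\ii = 2$ (your $(\xx_1 t, u, s, \xx_4')$ is exactly the paper's $(\yy_1, \yy_2, \yy_3, \yy_4)$ under the renaming $y = t$, $y_3 = s$, $y_4 = \xx_4'$, $y_2 = u$), and both hand-wave the remaining levels and the negative sign by symmetry.

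Your backward direction is a genuinely different route. The paper observes that, for $\ii = 2$, the quadrilateral $\aa_1 / \aa_2 / \xx\bb_2 / \bb_1$ admits the trivial cross $(\aa_1, 1, \aa_2, \xx')$ while $\bb_1 / \xx\bb_2 / \aa_3 / \aa_4$ inherits a cross from~$\bbv$, and then invokes Lemma~\ref{L:TransCross} (transitivity of crosses) to glue them into a cross for~$\aav$. You instead use Lemma~\ref{L:LeftRightDiv} to write $\bbv \act \Redt{\ii, \xx'} = \aav \act \Rdiv{\ii, \xxh} =: \ccv$, push the cross from~$\bbv$ to~$\ccv$ by the forward implication for~$\rdt$ (obtained via $\REV{\ }$-duality), and then undo the division by multiplying one spoke by~$\xxh$. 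Both arguments are correct. Your route has the virtue of recycling Lemma~\ref{L:LeftRightDiv} and avoiding the somewhat ad~hoc Lemma~\ref{L:TransCross}; the paper's route keeps the argument self-contained at this spot and makes the underlying picture (two crossed quadrilaterals sharing an edge) explicit. Incidentally, the paper notes after its proof that the forward computation never uses that $\xx\bb_2$ is an lcm, only the bare equalities~\eqref{E:RedCross}, so it too yields the right-reduction case as a byproduct---which is exactly what your duality step needs.

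One minor quibble: your claim that $\ii = 1$ ``requires no genuine work'' is slightly optimistic. When $\bbv = \aav \act \Rdiv{1, \xx}$, the divisor~$\xx$ right-divides $\aa_1 = \xx_1\xx_2$ and $\aa_2 = \xx_3\xx_2$ but need not right-divide the spoke~$\xx_2$ itself, so a short left-lcm computation (of the same flavour as your $\ii = 2$ case, with trivial remainder~$\xx'$) is still needed to produce the cross for~$\bbv$. The paper also glosses over this (``the counterpart of~$\xx'$ is trivial, which changes nothing''), so you are in good company.
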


\begin{proof}
It is enough to prove the result for $\aav \rd \bbv$, say $\bbv = \aav \act \Red{\ii, \xx}$. Assume that $\aav$ is positive and $(\xx_1 \wdots \xx_4)$ is a central cross for~$\aav$. Our aim is to construct a central cross for~$\bbv$ from that for~$\aav$. Consider the case $\ii = 2$, see Figure~\ref{F:RedCross}. Let~$\aa_2 \xx' = \xx \lcm \aa_2$. By definition, we have 
\begin{equation}\label{E:RedCross}
\bb_1 = \aa_1 \xx', \quad \xx\bb_2 = \aa_2\xx', \quad \xx\bb_3 = \aa_3, \quad \bb_4 = \aa_4.
\end{equation}
By assumption, $\xx$ and~$\xx_3$ admit a common right multiple, namely~$\aa_3$, hence they admit a right lcm, say $\xx \lcm \xx_3 = \xx \yy_3 = \xx_3\yy$. We have $\xx\bb_2 = \aa_2\xx' = \xx_3 (\xx_2 \xx')$, hence the right lcm of~$\xx$ and~$\xx_3$ left divides~$\xx\bb_2$, \ie, we have $\xx\yy_3 \dive \xx \bb_2$, whence $\yy_3 \dive \bb_2$, say $\bb_2 = \yy_3 \yy_2$. 

Next, we have $\xx_3 \xx_2 \xx' = \aa_2 \xx' = \xx \bb_2 = \xx \yy_3 \yy_2 = \xx_3 \yy \yy_2$, whence $\xx_2 \xx' = \yy \yy_2$ by left cancelling~$\xx_3$. Put $\yy_1 = \xx_1 \yy$. We find $\bb_1 = \aa_1 \xx' = \xx_1 \xx_2 \xx' = \xx_1 \yy \yy_2 = \yy_1 \yy_2$.

On the other hand, by assumption, as both~$\xx$ and~$\xx_3$ left divide~$\aa_3$, their right lcm~$\xx_3 \yy$ left divide~$\aa_3$, which is~$\xx_3 \xx_4$. So we have $\xx_3 \yy \dive \xx_3 \xx_4$, whence $\yy \dive \yy_4$, say $\xx_4 = \yy \yy_4$. Then we find $\xx \bb_3 = \aa_3 = \xx_3 \xx_4 = \xx_3 \yy \yy_4 = \xx \yy_3 \yy_4$, whence $\bb_3 = \yy_3 \yy_4$. 

Finally, we have $\bb_4 = \aa_4 = \xx_1 \xx_4 = \xx_1 \yy \yy_4 = \yy_1 \yy_4$. So $(\yy_1 \wdots \yy_4)$ is a central cross for~$\bbv$. The argument for $\ii = 3$ is similar, mutatis mutandis, and so is the one for $\ii = 1$: in this case, the counterpart of~$\xx'$ is trivial, which changes nothing. Finally, the case when $\aav$ is negative is treated symmetrically. So $\bbv$ admits a central cross whenever $\aav$ does.

For the other direction, assume again that $\aav$ and~$\bbv$ are positive, and that $(\yy_1 \wdots \yy_4)$ is a central cross for~$\bbv$. Assume again $\ii = 2$, and \eqref{E:RedCross}. The equality $\xx\bb_2 =\nobreak \aa_2\xx$ implies that $(\aa_1, 1, \aa_2, \xx')$ is a central cross for $\aa_1 / \aa_2 / \xx\bb_2 /\bb_1$. On the other hand, $(\yy_1, \yy_2, \xx\yy_3, \yy_4)$ is a central cross for $\bb_1 / \xx\bb_2 / \aa_3 / \aa_4$. So both $\aa_1 / \aa_2 / \xx\bb_2 /\bb_1$ and $\bb_1 / \xx\bb_2 / \aa_3 / \aa_4$ admit a central cross. By Lemma~\ref{L:TransCross}, this implies that $\aa_1 / \aa_2 / \aa_3 / \aa_4$ admits a central cross. So $\aav$ admits a central cross whenever $\bbv$ does.
\end{proof}

The above proof does not use the assumption that $\xx \bb_2$ is the lcm of~$\xx$ and~$\aa_2$, but only the fact that the equalities of~\eqref{E:RedCross} hold for some~$\xx, \xx'$, which is the case, in particular, for $\bbv = \aav \act \Redt{\ii, \xx'}$. So it also shows that every right reduct of a $4$-multifraction with a central cross admits a central cross, and conversely. 

\begin{figure}[htb]
\begin{picture}(60,46)(0,4)
\psset{nodesep=0.7mm}
\psset{xunit=1mm,yunit=0.80mm}
\setlength{\unitlength}{0.85mm}
\pcline{->}(30,0)(0,30)\tbput{$\aa_1$}
\pcline{->}(30,60)(0,30)\taput{$\aa_2$}
\pcline{->}(30,60)(60,30)\taput{$\aa_3$}
\pcline{->}(30,0)(60,30)\nbput[npos=0.5]{$\aa_4 = \bb_4$}
\pcline{->}(30,0)(30,30)\tlput{$\xx_1$}
\pcline{->}(30,30)(0,30)\taput{$\xx_2$}
\pcline{->}(30,60)(30,30)\tlput{$\xx_3$}
\pcline{->}(30,30)(60,30)\taput{$\xx_4$}
\pcline{->}(30,60)(37,46)\tbput{$\xx$}
\pcline{->}(0,30)(14,23)\put(11,25){$\xx'$}
\pcline[style=exist]{->}(30,0)(37,23)\trput{$\yy_1$}
\pcline[border=1mm,style=exist]{->}(37,23)(14,23)\tbput{$\yy_2$}
\pcline[border=1mm,style=exist]{->}(37,45)(37,23)\trput{$\yy_3$}
\pcline[style=exist]{->}(37,23)(60,30)\tbput{$\yy_4$}
\pcline{->}(30,0)(14,23)\taput{$\bb_1$}
\pcline[border=1mm]{->}(37,46)(14,23)\taput{$\bb_2$}
\pcline{->}(37,46)(60,30)\tlput{$\bb_3$}
\pcline{->}(30,30)(37,23)\put(36,23){$\yy$}
\psarc[style=thin](14,23){3}{45}{150}
\psarc[style=thin](37,23){4}{90}{135}
\end{picture}
\caption{\small Construction of a central cross for~$\aav \act \Red{\ii, \xx}$ starting from one for~$\aav$.}
\label{F:RedCross}
\end{figure}

We deduce a complete description of the $4$-multifractions that reduce to~$\one$:

\begin{prop}\label{P:EquivFoc}
If $\MM$ is a gcd-monoid, then, for every $4$-multifraction~$\aav$ on~$\MM$, the following are equivalent:

\ITEM1 The relation $\aav \rds \one$ holds.

\ITEM2 The relation $\redtame(\aav) = \one$ holds.

\ITEM3 The multifraction~$\aav$ admits a central cross.
\end{prop}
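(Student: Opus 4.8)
The plan is to close the cycle \ITEM1 $\Rightarrow$ \ITEM3 $\Rightarrow$ \ITEM2 $\Rightarrow$ \ITEM1, since each of the three steps is an immediate consequence of material already in place. No new computation is needed: the genuine work has been absorbed into Lemmas~\ref{L:TransCross}, \ref{L:RedCross}, and~\ref{L:CrossRed}.

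For \ITEM1 $\Rightarrow$ \ITEM3, suppose $\aav \rds \one$. As reduction is depth-preserving, the trivial multifraction reached here is $\One{\pm4}$ (positive or negative according to the sign of~$\aav$), hence again a $4$-multifraction, and it admits the central cross $(1,1,1,1)$: with $\xx_i = 1$ for all~$i$ and the cyclic convention $\xx_5 = \xx_1$, the defining equalities $\aa_\ii = \xx_\ii\xx_{\ii+1}$ (resp. $\xx_{\ii+1}\xx_\ii$) read $1 = 1$. Lemma~\ref{L:RedCross} applied to $\aav \rds \One{\pm4}$ then yields a central cross for~$\aav$. For \ITEM3 $\Rightarrow$ \ITEM2, I would simply invoke Lemma~\ref{L:CrossRed}, which states exactly that a multifraction admitting a central cross satisfies $\redtame(\aav) = \one$ (here its depth-$4$ instance). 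For \ITEM2 $\Rightarrow$ \ITEM1, recall that by definition $\redtame(\aav) = \aav \act \Redmax{\univ4}$ is obtained from~$\aav$ by a finite sequence of reductions, so $\aav \rds \redtame(\aav)$ always holds; hence $\redtame(\aav) = \one$ forces $\aav \rds \one$, completing the cycle.

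The only care needed at the level of the Proposition is bookkeeping: that the ``$\one$'' in $\aav \rds \one$ has the same depth as~$\aav$, so that Lemma~\ref{L:RedCross} is applicable, and that the trivial multifraction does carry a central cross. There is thus no real obstacle here; the delicate point of the whole circle of ideas is the ``lifting'' half of Lemma~\ref{L:RedCross} — that a reduct of a $4$-multifraction~$\aav$ carrying a central cross forces $\aav$ itself to carry one — which is where the transitivity Lemma~\ref{L:TransCross} is genuinely used, and that has already been settled upstream.
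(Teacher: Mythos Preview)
Your proof is correct and follows essentially the same cycle \ITEM1 $\Rightarrow$ \ITEM3 $\Rightarrow$ \ITEM2 $\Rightarrow$ \ITEM1 as the paper, invoking the same lemmas (Lemma~\ref{L:RedCross} for \ITEM1$\Rightarrow$\ITEM3, Lemma~\ref{L:CrossRed} for \ITEM3$\Rightarrow$\ITEM2, and the definition of~$\redtame$ for \ITEM2$\Rightarrow$\ITEM1). Your added remarks on depth-bookkeeping and on where the substantive work lies (the lifting direction of Lemma~\ref{L:RedCross} via Lemma~\ref{L:TransCross}) are accurate.
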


\begin{proof}
The $4$-multifractions $1/1/1/1$ and $/1/1/1/1$ both admit the central cross $(1, 1, 1, 1)$. Hence, by Lemma~\ref{L:RedCross}, every $4$-multifraction satisfying $\aav \rds 1/1/1/1$ or $\aav \rds /1/1/1/1$ admits a central cross as well. Hence \ITEM1 implies~\ITEM3.

Next, Lemma~\ref{L:CrossRed} says that $\redtame(\aav) = \one$ holds for every multifraction~$\aav$ that admits a central cross, so \ITEM3 implies~\ITEM2. 

Finally, \ITEM2 implies~\ITEM1 by definition. 
\end{proof}

We return to the study of $\nn$-semi-convergence for~$\RDb\MM$, here for $\nn = 4, 5$. 

\begin{prop}\label{P:Semi4}
If $\MM$ is a gcd-monoid, the following are equivalent:

\ITEM1 The system~$\RDp\MM$ is $4$-semi-convergent.

\ITEM2 The system~$\RDb\MM$ is $4$-semi-convergent.

\ITEM3 The system~$\RDb\MM$ is $5$-semi-convergent.

\ITEM4 Every unital $4$-multifraction in~$\FRp\MM$ admits a central cross.
\end{prop}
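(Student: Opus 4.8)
The plan is to prove $\ITEM1\Leftrightarrow\ITEM4$, $\ITEM2\Leftrightarrow\ITEM1$ and $\ITEM2\Leftrightarrow\ITEM3$. Three of the six implications are immediate: $\ITEM2\Rightarrow\ITEM1$ because $\RDp\MM$ is the restriction of~$\RDb\MM$; $\ITEM3\Rightarrow\ITEM2$ by Lemma~\ref{L:SemiDown}; and $\ITEM1\Leftrightarrow\ITEM4$ is just Proposition~\ref{P:EquivFoc} read for each positive $4$-multifraction (so in particular $\ITEM4\Rightarrow\ITEM1$). Hence only two substantial steps remain: $\ITEM1\Rightarrow\ITEM2$ (upgrading $4$-semi-convergence from positive to arbitrary multifractions) and $\ITEM2\Rightarrow\ITEM3$ (passing from depth~$4$ to depth~$5$).

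For $\ITEM1\Rightarrow\ITEM2$ I would conjugate. Given a unital negative $4$-multifraction $\bbv=/\bb_1/\bb_2/\bb_3/\bb_4$, the positive $4$-multifraction $\bb_4/\bb_1/\bb_2/\bb_3$ represents $\can(\bb_4)\,\can(\bbv)\,\can(\bb_4)\inv=1$, so by~\ITEM1 and Proposition~\ref{P:EquivFoc} it admits a central cross. By the lemma in Subsection~\ref{SS:Semi4} relating central crosses of $\aav$ and of $/\aa_2 \sdots \aa_\nn / \aa_1$, applied with $(\aa_1,\aa_2,\aa_3,\aa_4)=(\bb_4,\bb_1,\bb_2,\bb_3)$, the multifraction $\bbv$ itself admits a central cross, whence $\bbv\rds\one$ by Proposition~\ref{P:EquivFoc} again. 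Combined with~\ITEM1 for positive $4$-multifractions, this yields that $\RDb\MM$ is $4$-semi-convergent.

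The core step is $\ITEM2\Rightarrow\ITEM3$. Let $\aav=\aa_1/\aa_2/\aa_3/\aa_4/\aa_5$ be a nontrivial unital $5$-multifraction (the negative case being symmetric, via the cyclic rewriting of $\can(\aav)=1$ into $\can(\aa_2/\aa_3/\aa_4/\aa_1\aa_5)=1$), and argue by cases on the last two entries. If $\aa_4$ and $\aa_5$ admit a common right multiple, then $\Red{4,\aa_5}$ is defined and sends $\aav$ to a multifraction with trivial fifth entry, i.e.\ to $\bbv\opp\One1$ for a unital positive $4$-multifraction $\bbv$; by~\ITEM2 and Lemma~\ref{L:Basics}\ITEM3 this reduces to~$\one$, so $\aav\rds\one$. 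Otherwise, applying $\Rdiv{1,\aa_1\gcdt\aa_2}$ we may assume $\aa_1\gcdt\aa_2=1$; if moreover $\aa_1=1$ then $\aav=\One1\opp(/\aa_2/\aa_3/\aa_4/\aa_5)$ with the parenthesised factor a unital negative $4$-multifraction, and we conclude as before. In the remaining subcase, consider the unital negative $4$-multifraction $/\aa_2/\aa_3/\aa_4/\aa_5\aa_1$: by~\ITEM2 and Proposition~\ref{P:EquivFoc} it carries a central cross $(\uu_1,\uu_2,\uu_3,\uu_4)$, so $\aa_3=\uu_2\uu_3$, $\aa_4=\uu_4\uu_3$, $\aa_5\aa_1=\uu_4\uu_1$. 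Were $\uu_3=1$, then $\aa_5\aa_1=\aa_4\uu_1$ would be a common right multiple of $\aa_4$ and $\aa_5$, contradicting the current hypothesis; hence $\uu_3\neq1$, the step $\Red{3,\uu_3}$ is defined (using $\aa_4=\uu_4\uu_3$ and $\uu_3\lcmt\aa_3=\aa_3$) and sends $\aav$ to $\aa_1/\aa_2/\uu_2/\uu_4/\aa_5$, whose last two entries $\uu_4,\aa_5$ \emph{do} admit the common right multiple $\uu_4\uu_1=\aa_5\aa_1$. So this new $5$-multifraction falls into the first case, and $\aav\rds\aa_1/\aa_2/\uu_2/\uu_4/\aa_5\rds\one$.

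The main obstacle is precisely that last subcase, where no level-$4$ reduction is available: the device that unlocks it is to wrap $\aav$ into the conjugate $4$-multifraction $/\aa_2/\aa_3/\aa_4/\aa_5\aa_1$ and use~\ITEM2 to read off its central cross, which at once excludes the degenerate value $\uu_3=1$ and manufactures the level-$3$ reduction creating the missing common multiple. The routine points to check alongside this are that every auxiliary $4$-multifraction is genuinely unital—which follows from $\aav\simeqb1$ via Lemma~\ref{L:Basics}\ITEM1 together with the fact that appending or deleting a trivial entry does not change the $\simeqb$-class—and that the central-cross equalities invoked are exactly those imposed by the sign pattern of a negative $4$-multifraction.
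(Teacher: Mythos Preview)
Your argument is correct, but it is organized differently from the paper's and carries some unnecessary ballast.

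The paper proves a single cycle $\ITEM1\Rightarrow\ITEM4\Rightarrow\ITEM3\Rightarrow\ITEM2\Rightarrow\ITEM1$. The substantial step is $\ITEM4\Rightarrow\ITEM3$: for a positive unital $5$-multifraction~$\aav$, the paper reads off a central cross $(\xx_1,\xx_2,\xx_3,\xx_4)$ for the \emph{positive} $4$-multifraction $\aa_5\aa_1/\aa_2/\aa_3/\aa_4$ directly from~$\ITEM4$, obtaining $\aav = \aa_1/\xx_3\xx_2/\xx_3\xx_4/\xx_1\xx_4/\aa_5$ with $\aa_5\aa_1=\xx_1\xx_2$, and then exhibits one explicit reduction sequence $\Rdiv{2,\xx_3}\Rdiv{3,\xx_4}\Red{3,\xx_1}\Red{4,\aa_5}\Rdiv{1,\aa_1}\Rdiv{2,\aa_5}$ taking $\aav$ to~$\one$, with no case distinction. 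The negative case is symmetric, again using only~$\ITEM4$.

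Your route instead proves $\ITEM1\Rightarrow\ITEM2$ separately (via the cyclic-shift lemma, which is fine), and then $\ITEM2\Rightarrow\ITEM3$ by a case analysis on whether $\aa_4$ and $\aa_5$ admit a common right multiple. In the hard subcase you invoke a central cross for the \emph{negative} conjugate $/\aa_2/\aa_3/\aa_4/\aa_5\aa_1$, which requires~$\ITEM2$ rather than just~$\ITEM4$; this is why you need the preliminary $\ITEM1\Rightarrow\ITEM2$. Your argument is valid, but note that the preliminary division $\Rdiv{1,\aa_1\gcdt\aa_2}$ and the separate $\aa_1=1$ subcase are never used in the remaining subcase: the central-cross argument with~$\uu_3\neq1$ goes through verbatim without them. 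Removing those two detours, your proof becomes essentially the paper's $\ITEM4\Rightarrow\ITEM3$ split into two reduction steps ($\Red{3,\uu_3}$ then $\Red{4,\aa_5}$) instead of one explicit six-step sequence, which is a legitimate alternative but not more economical.
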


\begin{proof}
Assume that $\RDp\MM$ is $4$-semi-convergent. Let $\aav$ be a unital positive $4$-multifraction. As $\RDp\MM$ is $4$-semi-convergent, we have $\aav \rds \one$. By Proposition~\ref{P:EquivFoc}, we deduce that $\aav$ admits a central cross. Hence \ITEM1 implies~\ITEM4.

By definition, \ITEM2 implies~\ITEM1, and, by Lemma~\ref{L:SemiDown}, \ITEM3 implies~\ITEM2.

Finally, assume~\ITEM4, and let $\aav$ be a unital $5$-multifraction on~$\MM$. Assume first that $\aav$ is positive. Then the positive $4$-multifraction $\aa_5\aa_1 / \aa_2 / \aa_3 / \aa_4$ is unital as well, hence, by assumption, it admits a central cross~$(\xx_1 \wdots \xx_4)$, which expands into $\aav = \aa_1 / \xx_3\xx_2 / \xx_3\xx_4 / \xx_1\xx_4 / \aa_5$ with $\aa_5\aa_1 = \xx_1\xx_2$. Then we obtain
\begin{align*}
\aav 
&\rds \aa_1 / \xx_2 / 1 / \xx_1 / \aa_5
&&\text{via $\Rdiv{2,\xx_3}\Rdiv{3,\xx_4}$}\\
&\rds \aa_1 / \xx_1\xx_2 / \aa_5 / 1 / 1 = \aa_1 / \aa_5\aa_1 / \aa_5 / 1 / 1
&&\text{via $\Red{3,\xx_1} \Red{4,\aa_5}$}\\
&\rds 1 / 1 / 1 / 1 / 1
&&\text{via $\Rdiv{1,\aa_1} \Rdiv{2,\aa_5}$}.
\end{align*}
If $\aav$ is negative, then $\aa_2 / \aa_3 / \aa_4 / \aa_1\aa_5$ is unital and positive, hence admits a central cross by assumption, leading to $\aav = /\aa_1 / \xx_1\xx_2 / \xx_3\xx_2 / \xx_3\xx_4 / \aa_5$ with $\aa_1 \aa_5 = \xx_1 \xx_4$, and to $\aav \rds \one$, this time via $\Rdiv{2,\xx_2} \Rdiv{3,\xx_3} \Red{3,\xx_4} \Red{4,\aa_5} \Rdiv{1,\aa_1} \Rdiv{2,\aa_5}$. Thus, every unital $5$-multifraction on~$\MM$ reduces to~$\one$, and $\RDb\MM$ is $5$-semi-conver\-gent. So \ITEM4 implies~\ITEM3.
\end{proof}

We now establish alternative forms for the point~\ITEM4 in Proposition~\ref{P:Semi4}, connected with the uniqueness of the expression by irreducible fractions.

\begin{prop}\label{P:UniqueFrac}
For every gcd-monoid~$\MM$, the following are equivalent:

\ITEM1 Every unital $4$-multifraction in~$\FRp\MM$ admits a central cross.

\ITEM2 For all $\aa, \bb, \cc, \dd$ in~$\MM$ satisfying $\can(\aa/\bb) = \can(\cc/\dd)$, there exist~$\xx, \yy$ in~$\MM$ satisfying 
\begin{equation}\label{E:4Focal3}
\aa = \xx (\aa \gcdt \bb), \quad \bb = \yy (\aa \gcdt \bb), \quad \cc = \xx (\cc \gcdt \dd), \quad \dd = \yy (\cc \gcdt \dd).
\end{equation}

\ITEM3 All $\aa, \bb, \cc, \dd$ in~$\MM$ satisfying $\can(\aa/\bb) = \can(\cc/\dd)$ and $\aa \gcdt \bb = 1$ satisfy $\aa \dive \cc$ and $\bb \dive \dd$.

\ITEM4 All $\aa, \bb, \cc, \dd$ in~$\MM$ satisfying $\can(\aa/\bb) = \can(\cc/\dd)$ and $\aa \gcdt \bb = \cc \gcdt \dd = 1$ satisfy $\aa = \cc$ and $\bb = \dd$.

\ITEM5 All $\aa, \bb, \cc$ in~$\MM$ satisfying $\aa \gcdt \bb = \bb \gcd \cc = 1$ and $\can(\aa / \bb / \cc) \in \can(\MM)$ satisfy $\bb = 1$.
\end{prop}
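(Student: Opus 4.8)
The plan is to prove the cyclic chain \ITEM1 $\Rightarrow$ \ITEM2 $\Rightarrow$ \ITEM3 $\Rightarrow$ \ITEM4 $\Rightarrow$ \ITEM5 $\Rightarrow$ \ITEM1. The recurring ingredients are the evaluation formula~\eqref{E:Eval}, which makes a positive $4$-multifraction $\aa_1/\aa_2/\aa_3/\aa_4$ unital exactly when $\can(\aa_1/\aa_2) = \can(\aa_4/\aa_3)$, the translation of Definition~\ref{D:Focal} saying that a central cross $(\xx_1 \wdots \xx_4)$ of a positive $4$-multifraction means precisely $\aa_1 = \xx_1\xx_2$, $\aa_2 = \xx_3\xx_2$, $\aa_3 = \xx_3\xx_4$, $\aa_4 = \xx_1\xx_4$, the fact that cancelling a common right factor does not change the value of a $2$-fraction, and the fact that $\dive$ is a partial order on~$\MM$ (so a divisor of~$1$ is~$1$).

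For \ITEM1 $\Rightarrow$ \ITEM2, given $\aa, \bb, \cc, \dd$ with $\can(\aa/\bb) = \can(\cc/\dd)$, I would first factor $\aa = \aa'\gg$, $\bb = \bb'\gg$ with $\gg = \aa \gcdt \bb$, and $\cc = \cc'\hh$, $\dd = \dd'\hh$ with $\hh = \cc \gcdt \dd$, so that $\aa' \gcdt \bb' = \cc' \gcdt \dd' = 1$ and $\can(\aa'/\bb') = \can(\cc'/\dd')$; the positive $4$-multifraction $\aa'/\bb'/\dd'/\cc'$ is then unital, so \ITEM1 provides a central cross in which $\xx_2$, a common right divisor of~$\aa'$ and~$\bb'$, and $\xx_4$, a common right divisor of~$\cc'$ and~$\dd'$, are trivial; this forces $\aa' = \cc' = \xx_1$ and $\bb' = \dd' = \xx_3$, which is~\eqref{E:4Focal3} with $\xx = \xx_1$, $\yy = \xx_3$. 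For \ITEM2 $\Rightarrow$ \ITEM3, when $\aa \gcdt \bb = 1$ the witnesses of~\eqref{E:4Focal3} give $\aa = \xx$, $\bb = \yy$, $\cc = \aa(\cc\gcdt\dd)$, $\dd = \bb(\cc\gcdt\dd)$, whence $\aa \dive \cc$ and $\bb \dive \dd$. For \ITEM3 $\Rightarrow$ \ITEM4, apply \ITEM3 to $(\aa,\bb,\cc,\dd)$ and to $(\cc,\dd,\aa,\bb)$ and conclude by antisymmetry of~$\dive$. For \ITEM4 $\Rightarrow$ \ITEM5, from $\can(\aa/\bb/\cc) = \can(\ee)$ one gets $\can(\aa/\bb) = \can(\ee/\cc)$; writing $\ee = \ee'\kk$, $\cc = \cc'\kk$ with $\kk = \ee\gcdt\cc$ and $\ee'\gcdt\cc' = 1$ gives $\can(\aa/\bb) = \can(\ee'/\cc')$, so \ITEM4 yields $\aa = \ee'$ and $\bb = \cc'$; then $\cc = \bb\kk$, so $\bb$ is a common left divisor of~$\bb$ and~$\cc$, hence divides $\bb\gcd\cc = 1$, so $\bb = 1$.

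The substantive step is \ITEM5 $\Rightarrow$ \ITEM1, which I would carry out in two stages. First, \ITEM5 forces $\MM$ to embed in~$\EG\MM$: if $\can(\aa) = \can(\bb)$, factor $\aa = \aa'\gg$, $\bb = \bb'\gg$ with $\gg = \aa\gcdt\bb$, $\aa'\gcdt\bb' = 1$; then $\can(\aa'/\bb'/1) = \can(\aa')\can(\bb')\inv = 1 \in \can(\MM)$, and as $\bb' \gcd 1 = 1$, \ITEM5 gives $\bb' = 1$, so $\aa = \aa'\bb$ and hence $\can(\aa') = 1$ in~$\EG\MM$; then $\can(1/\aa'/1) = \can(\aa')\inv = 1 \in \can(\MM)$, and since $1 \gcdt \aa' = \aa' \gcd 1 = 1$, \ITEM5 again gives $\aa' = 1$, so $\aa = \bb$. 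Second, let $\aav$ be a unital positive $4$-multifraction. By Proposition~\ref{P:DerDiv}, $\aav \rds \derdiv\aav$ and $\derdiv\aav$ is prime, so by Lemma~\ref{L:RedCross} it suffices to build a central cross for $\derdiv\aav$; thus assume $\aav$ is prime, i.e., $\aa_1 \gcdt \aa_2 = \aa_2 \gcd \aa_3 = \aa_3 \gcdt \aa_4 = 1$. Being unital, $\can(\aa_1/\aa_2/\aa_3) = \can(\aa_4) \in \can(\MM)$, so \ITEM5 applied to $(\aa_1,\aa_2,\aa_3)$ forces $\aa_2 = 1$; then $\can(\aa_4/\aa_3/1) = \can(\aa_4)\can(\aa_3)\inv = \can(\aa_1) \in \can(\MM)$, and \ITEM5 applied to $(\aa_4,\aa_3,1)$ forces $\aa_3 = 1$; finally $\can(\aa_1) = \can(\aa_4)$, so $\aa_1 = \aa_4$ by the embedding, and $(\aa_1, 1, 1, 1)$ is a central cross for $\aav = \aa_1/1/1/\aa_1$.

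The main obstacle is this last implication: one must extract embeddability of~$\MM$ from the weaker-looking statement \ITEM5, and one cannot apply \ITEM5 directly to the entries of an arbitrary unital $4$-multifraction — the coprimality of adjacent entries is needed, which is exactly why the normalisation $\aav \mapsto \derdiv\aav$ together with Lemma~\ref{L:RedCross} is essential. (As a cross-check, one could instead invoke Proposition~\ref{P:EquivFoc} and Lemma~\ref{L:CrossRed} to replace the explicit cross $(\aa_1,1,1,1)$ at the end by the observation that $\aa_1/1/1/\aa_1 \rds \one$, but the explicit cross is shorter.)
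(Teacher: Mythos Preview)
Your proof is correct. The chain of implications matches the paper's in spirit, and each step is sound.

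The only genuine difference is in \ITEM5 $\Rightarrow$ \ITEM1. The paper does not reduce to the prime case via~$\derdiv$ and Lemma~\ref{L:RedCross}; instead it performs by hand just two of the three gcd-cancellations (first $\aa_2 \gcd \aa_3$, then $\aa_1 \gcdt \aa_2'$), applies~\ITEM5 once to kill the resulting middle entry, and reads off the central cross $(\aa_1', y, x, \aa_3')$ directly for the \emph{original}~$\aav$. Your route is more modular (it leans on the machinery already developed), while the paper's is self-contained and uses~\ITEM5 only once. A point in your favour: you explicitly extract the embedding $\MM \hookrightarrow \EG\MM$ from~\ITEM5 before using it, whereas the paper's verification that the fourth entry $\aa_4$ equals $\aa_1'\aa_3'$ tacitly relies on the same embedding without deriving it. Your two short applications of~\ITEM5 to $(\aa',\bb',1)$ and $(1,\aa',1)$ are a clean way to obtain this.

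For \ITEM1 $\Rightarrow$ \ITEM2 you also bypass Lemma~\ref{L:4Focal} by first stripping the right gcds and then reading $\xx_2 = \xx_4 = 1$ off the cross directly; the paper instead applies~\ITEM1 to $\aa/\bb/\dd/\cc$ and then invokes that lemma. These are the same argument in different packaging.
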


Before establishing Proposition~\ref{P:UniqueFrac}, we begin with two characterizations of $4$-multifractions with a central cross:

\begin{lemm}\label{L:4Focal}
If $\MM$ is a gcd-monoid, a positive $4$-multifraction~$\aav$ on~$\MM$ admits a central cross if and only if there exist~$\xx, \yy$ in~$\MM$ satisfying
\begin{equation}\label{E:4Focal1}
\aa_1 = \xx (\aa_1 \gcdt \aa_2), \quad \aa_2 = \yy (\aa_1 \gcdt \aa_2), \quad \aa_3 = \yy (\aa_3 \gcdt \aa_4), \quad \aa_4 = \xx (\aa_3 \gcdt \aa_4),
\end{equation}
if and only if there exist~$\xx', \yy'$ in~$\MM$ satisfying
\begin{equation}\label{E:4Focal2}
\aa_1 = (\aa_1 \gcdt \aa_4) \xx', \quad \aa_2 = (\aa_2 \gcdt \aa_3) \xx', \quad \aa_3 = (\aa_2 \gcdt \aa_3) \yy', \quad \aa_4 = (\aa_1 \gcdt \aa_4) \yy'.
\end{equation}
\end{lemm}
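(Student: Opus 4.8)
The plan is to establish the first equivalence (existence of a central cross $\Longleftrightarrow$ \eqref{E:4Focal1}) by a direct argument, and then to obtain \eqref{E:4Focal2} as its left--right mirror image. Recall that, in the positive $4$-multifraction~$\aav$, the indices $1$ and $3$ are positive and $2$ and $4$ are negative, so by Definition~\ref{D:Focal} a central cross for~$\aav$ is precisely a tuple $(\xx_1, \xx_2, \xx_3, \xx_4)$ with $\aa_1 = \xx_1\xx_2$, $\aa_2 = \xx_3\xx_2$, $\aa_3 = \xx_3\xx_4$, and $\aa_4 = \xx_1\xx_4$ (the last using the convention $\xx_5 = \xx_1$). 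For the cheap implication \eqref{E:4Focal1}$\,\Rightarrow\,$central cross I would simply, given $\xx,\yy$ as in \eqref{E:4Focal1}, set $\xx_1 := \xx$, $\xx_2 := \aa_1 \gcdt \aa_2$, $\xx_3 := \yy$, $\xx_4 := \aa_3 \gcdt \aa_4$; the four equalities of \eqref{E:4Focal1} are then word for word the four equalities defining a central cross for~$\aav$.

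For the converse I would start from an arbitrary central cross $(\xx_1, \xx_2, \xx_3, \xx_4)$ for~$\aav$ and \emph{normalize} it. The key observation is that if $w$ is any common right divisor of~$\xx_1$ and~$\xx_3$, with $\xx_1 = \xx_1' w$ and $\xx_3 = \xx_3' w$, then $(\xx_1', w\xx_2, \xx_3', w\xx_4)$ is again a central cross for~$\aav$ --- a one-line verification from $\aa_1 = \xx_1\xx_2$, $\aa_2 = \xx_3\xx_2$, $\aa_3 = \xx_3\xx_4$, $\aa_4 = \xx_1\xx_4$. Taking $w := \xx_1 \gcdt \xx_3$ and using that the cofactors of a gcd are coprime, I may thus assume $\xx_1 \gcdt \xx_3 = 1$.

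With that normalization in hand, I claim $\aa_1 \gcdt \aa_2 = \xx_2$. Indeed $\xx_2$ right divides both $\aa_1 = \xx_1\xx_2$ and $\aa_2 = \xx_3\xx_2$, so $\xx_2 \divet g$ with $g := \aa_1 \gcdt \aa_2$; writing $g = t\xx_2$ and using that $g$ right divides $\aa_1 = \xx_1\xx_2$, right cancellativity of~$\MM$ gives $t \divet \xx_1$, and symmetrically $t \divet \xx_3$, whence $t \divet \xx_1 \gcdt \xx_3 = 1$ and $g = \xx_2$. The same computation applied to $\aa_3 = \xx_3\xx_4$ and $\aa_4 = \xx_1\xx_4$ (again using only $\xx_1 \gcdt \xx_3 = 1$) yields $\aa_3 \gcdt \aa_4 = \xx_4$. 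Substituting these two identities back into the central-cross relations produces exactly \eqref{E:4Focal1} with $\xx := \xx_1$ and $\yy := \xx_3$.

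Finally, \eqref{E:4Focal2} should follow by running the same three moves --- cheap direction, normalization of the cross, identification of the relevant gcd --- through the exchange of left and right divisibility (equivalently, through $\MM \leftrightarrow \MM^{\mathrm{op}}$): this time one normalizes the pair $\xx_2,\xx_4$, making their left gcd trivial, and the gcds that get identified are those of $\aa_1,\aa_4$ and of $\aa_2,\aa_3$, which share a common left divisor in a central cross. I expect these computations to be entirely parallel and would present them telegraphically. The one point that needs care --- and the only place I anticipate friction --- is the normalization step: one must check carefully that absorbing a common right divisor of~$\xx_1,\xx_3$ into~$\xx_2,\xx_4$ preserves all four central-cross equations and leaves the two modified entries with trivial right gcd, and that the right-cancellation steps producing $t \divet \xx_1$ and $t \divet \xx_3$ are legitimate (they are, since a gcd-monoid is right cancellative). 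Everything else is bookkeeping with the sign conventions for positive multifractions.
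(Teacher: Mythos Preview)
Your proof is correct and follows essentially the same route as the paper's. The only difference is presentational: the paper keeps the original cross $(\xx_1,\xx_2,\xx_3,\xx_4)$ and computes directly that $\aa_1 \gcdt \aa_2 = (\xx_1 \gcdt \xx_3)\,\xx_2$ and $\aa_3 \gcdt \aa_4 = (\xx_1 \gcdt \xx_3)\,\xx_4$, then takes $\xx,\yy$ to be the cofactors of $\xx_1,\xx_3$ by their right gcd; you instead first absorb $\xx_1 \gcdt \xx_3$ into $\xx_2,\xx_4$ to reduce to the case $\xx_1 \gcdt \xx_3 = 1$, and then identify the gcds. These are the same computation, and your explicit use of right cancellativity to justify $\aa_1 \gcdt \aa_2 = \xx_2$ is exactly the step the paper leaves implicit. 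The treatment of \eqref{E:4Focal2} by left--right symmetry (normalizing $\xx_2 \gcd \xx_4$ instead) is likewise what the paper does.
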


\begin{proof}
Assume that $\aav$ is positive and $(\xx_1 \wdots \xx_4)$ is a central cross for~$\aav$. Write $\xx_1 = \xx (\xx_1 \gcdt \xx_3)$ and $\xx_3 = \yy (\xx_1 \gcdt \xx_3)$. Then we have $\xx \gcdt \yy = 1$, and $\aa_1 = \xx (\xx_1 \gcdt \xx_3) \xx_2$, $\aa_2 = \yy (\xx_1 \gcdt \xx_3)\xx_2$, whence $\aa_1 \gcdt \aa_2 = (\xx_1 \gcdt \xx_3) \xx_2$, and $\aa_1 = \xx (\aa_1 \gcdt \aa_2)$, $\aa_2 = \yy(\aa_1 \gcdt \aa_2)$. On the other hand, we also find $\aa_3 = \xx_3 \xx_4 = \yy (\xx_1 \gcdt \xx_3) \xx_4$, $\aa_4 = \xx_1 \xx_4 = \xx (\xx_1 \gcdt \xx_3) \xx_4$, whence $\aa_3 \gcdt \aa_4 = (\xx_1 \gcdt \xx_3) \xx_4$, and $\aa_3 = \yy (\aa_3 \gcdt \aa_4)$, $\dd = \xx (\aa_3 \gcdt \aa_4)$. So we found~$\xx, \yy$ satisfying~\eqref{E:4Focal1}.

The proof for~\eqref{E:4Focal2} is similar, writing $\xx_2 = (\xx_2 \gcd \xx_4) \xx'$ and $\xx_4 = (\xx_2 \gcd \xx_4) \yy'$ and deducing $\aa_1 \gcd \aa_4 = \xx_1 (\xx_2 \gcd \xx_4)$ and $\aa_2 \gcd \aa_3 = \xx_3 (\xx_2 \gcd \xx_4)$.

In the other direction, if \eqref{E:4Focal1} is satisfied, $(\xx, \aa_1 \gcdt \aa_2, \yy, \aa_3 \gcdt \aa_4)$ is a central cross for~$\aav$, whereas, if \eqref{E:4Focal2} is satisfied, so is $(\aa_1 \gcd \aa_4, \xx', \aa_2 \gcd \aa_3, \yy)$.
\end{proof}

\begin{proof}[Proof of Proposition~\ref{P:UniqueFrac}]
If $\can(\aa/\bb) = \can(\cc/\dd)$ holds, the $4$-multifraction $\aa / \bb / \dd / \cc$ is unital, hence it admits a central cross if \ITEM1 is true. In this case, Lemma~\ref{L:4Focal} gives~\eqref{E:4Focal3}. So \ITEM1 implies~\ITEM2. 

Next, applying \ITEM2 in the case $\aa \gcdt \bb = 1$ gives $\aa = \xx$ and $\bb = \yy$, whence $\aa \dive \cc$ and $\bb \dive \dd$. If, in addition, we have $\cc \gcdt \dd = 1$, we similarly obtain $\cc \dive \aa$ and $\dd \dive \bb$, whence $\aa = \cc$ and $\bb = \dd$. 
So \ITEM2 implies~\ITEM3 and~\ITEM4. On the other hand, \ITEM4 implies~\ITEM3. Indeed, assume $\can(\aa/\bb) = \can(\cc/\dd)$ with $\aa \gcdt \bb = 1$. Let $\ee = \cc \gcdt \dd$, with $\cc = \cc' \ee$ and $\cc = \dd'\ee$. Then we have $\cc' \gcdt \dd' = 1$ and $\can(\aa/\bb) = \can(\cc'/\dd')$. Then \ITEM4 implies $\aa = \cc' \dive \cc$ and~$\bb = \dd' \dive \dd$.

Now, let $\aa, \bb, \cc$ in~$\MM$ satisfy $\aa \gcdt \bb = \bb \gcd \cc = 1$ and $\can(\aa / \bb / \cc) = \can(\dd)$ for some~$\dd$ in~$\MM$. Then $\can(\aa/\bb) = \can(\dd / \cc)$ holds so, if \ITEM2 holds, we have $\aa \dive \dd$ and $\bb \dive \cc$. Then the assumption $\bb \gcd \cc = 1$ implies $\bb = 1$. So \ITEM3 implies~\ITEM5.

Finally, assume that $\aav $ is a unital $4$-multifraction in~$\FRp\MM$. Write $\aa_2 = \xx \aa'_2$ and $\aa_3 = \xx \aa'_3$ with $\xx = \aa_2 \gcd \aa_3$, so that $\aa'_2 \gcd \aa'_3 = 1$ holds. Next, write $\aa_1 = \aa'_1 \yy$ and $\aa'_2 = \aa''_2$ with $\yy = \aa_1 \gcdt \aa'_2$, so that $\aa'_1 \gcdt \aa''_2 = 1$ holds. Then we have $\can (\aa'_1 / \aa''_2 / \aa'_3) = \can (\aa_1 / \aa'_2 / \aa'_3) = \can (\aa_1 / \aa_2 / \aa_3)$, 
and the assumption $\can(\aav) = 1$ implies $\can (\aa_1 / \aa_2 / \aa_3) = \can(\aa_4) \in \can(\MM)$. Hence $\can (\aa'_1 / \aa''_2 / \aa'_3)$ lies in~$\can(\MM)$ with $\aa'_1 \gcdt \aa''_2 = 1$ and $\aa'_2 \gcd \aa'_3 = 1$, whence a fortiori $\aa''_2 \gcd \aa'_3 = 1$. If \ITEM5 is true, we deduce $\aa''_2 = 1$. This means that $(\aa'_1, \yy, \xx, \aa'_3)$ is a central cross for~$\aav$. So \ITEM5 implies~\ITEM1.
\end{proof}

Putting things together, we obtain:

\begin{coro}\label{C:A4B4}
Conjectures~$\ConjA_4$ and~$\ConjB_4$ are equivalent, and they are equivalent to the property that, for every Artin-Tits monoid~$\MM$, every element of~$\EG\MM$ of the form $\aa\bb\inv$ with $\aa, \bb$ in~$\MM$ admits only one such expression with $\aa \gcdt \bb = 1$.
\end{coro}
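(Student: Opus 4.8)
The plan is to assemble three equivalences already available in the paper, each valid for an arbitrary gcd-monoid and therefore, a fortiori, uniformly over the family of Artin--Tits monoids; it will thus suffice to argue one monoid at a time. First I would use the equivalence \ITEM1~$\Leftrightarrow$~\ITEM4 of Proposition~\ref{P:Semi4}: the system~$\RDp\MM$ is $4$-semi-convergent if and only if every unital $4$-multifraction in~$\FRp\MM$ admits a central cross. Quantifying over Artin--Tits monoids, this says that Conjecture~$\ConjA_4$ is equivalent to the assertion~$(\ast)$: for every Artin--Tits monoid~$\MM$, every unital $4$-multifraction in~$\FRp\MM$ admits a central cross.

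Next I would identify Conjecture~$\ConjB_4$ with~$(\ast)$ as well. By definition, $\ConjB_4$ asserts that $\redtame(\aav) = \one$ holds for every unital depth-$4$ multifraction~$\aav$ in~$\FRp\MM$, and Proposition~\ref{P:EquivFoc} (equivalence of items~\ITEM2 and~\ITEM3) gives, for any $4$-multifraction, that $\redtame(\aav) = \one$ is equivalent to $\aav$ admitting a central cross. Hence $\ConjB_4$ is precisely~$(\ast)$, and combining with the previous paragraph yields $\ConjA_4 \Leftrightarrow \ConjB_4$.

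It then remains to recast~$(\ast)$ in group-theoretic language. By the equivalence \ITEM1~$\Leftrightarrow$~\ITEM4 of Proposition~\ref{P:UniqueFrac}, ``every unital $4$-multifraction in~$\FRp\MM$ admits a central cross'' is equivalent to the statement that, whenever $\aa,\bb,\cc,\dd$ in~$\MM$ satisfy $\can(\aa/\bb) = \can(\cc/\dd)$ and $\aa \gcdt \bb = \cc \gcdt \dd = 1$, one has $\aa = \cc$ and $\bb = \dd$. Finally I would translate this last condition directly: by~\eqref{E:Eval} we have $\can(\aa/\bb) = \can(\aa)\can(\bb)\inv$, so $\can(\aa/\bb) = \can(\cc/\dd)$ says exactly that the element $\aa\bb\inv$ of~$\EG\MM$ equals~$\cc\dd\inv$; and, since a common right divisor~$\ee$ of~$\aa$ and~$\bb$ cancels in the product $\can(\aa)\can(\bb)\inv$ (write $\aa = \aa'\ee$, $\bb = \bb'\ee$), the hypothesis $\aa \gcdt \bb = 1$ is precisely irreducibility of the fractional expression~$\aa\bb\inv$. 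Thus Proposition~\ref{P:UniqueFrac}\ITEM4, read over all Artin--Tits monoids, is exactly the uniqueness, for each $\gg$ in~$\EG\MM$ of the form $\aa\bb\inv$, of such an expression with $\aa \gcdt \bb = 1$, which is the third property in the statement and closes the chain. No genuine obstacle is expected here; the only point needing care is this dictionary between central crosses and irreducible fractions, together with the (routine) observation that all the cited equivalences, stated monoid-by-monoid, pass to the universal quantification over Artin--Tits monoids.
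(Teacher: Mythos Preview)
Your proposal is correct and follows essentially the same approach as the paper: both route through the intermediate property ``every unital $4$-multifraction admits a central cross'' via Proposition~\ref{P:EquivFoc} (for the $\ConjA_4\Leftrightarrow\ConjB_4$ part) and Proposition~\ref{P:UniqueFrac} (for the fractional-uniqueness part). The paper's version is slightly terser, invoking the equivalence \ITEM1~$\Leftrightarrow$~\ITEM2 of Proposition~\ref{P:EquivFoc} directly to get $\ConjA_4\Leftrightarrow\ConjB_4$ rather than passing through Proposition~\ref{P:Semi4}, but the substance is identical.
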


\begin{proof}
Let $\MM$ be an Artin-Tits monoid and $\aav$ be a unital $4$-multifraction on~$\MM$. By Proposition~\ref{P:EquivFoc}, $\aav \rds \one$ implies $\redtame(\aav) = \one$, so Conjecture~$\ConjA_4$ implies Conjecture~$\ConjB_4$. On the other hand, by Proposition~\ref{P:EquivFoc} again, the property ``$\aav$ unital implies $\aav \rds \one$'' is equivalent to `` $\aav$ unital implies $\aav$ admits a central cross'' and, by Proposition~\ref{P:UniqueFrac}, the latter is equivalent to the uniqueness of fractional decompositions.
\end{proof}

\subsection{An application to partial orderings on~$\EG\MM$}\label{SS:Ordering}

If $\MM$ is a Garside monoid, it is known~\cite[Sec.~II.3.2]{Dir} that the left and right divisibility relations of~$\MM$ can be extended into well-defined partial orders on the enveloping group~$\EG\MM$ by declaring $\gg \dive \hh$ (\resp $\gg \divet \hh$) for $\gg\inv \hh \in \MM$ (\resp $\gg \hh\inv \in \MM$). The construction extends to every gcd-monoid, and we show that lattice properties are preserved whenever $\RDb\MM$ is $4$-semi-convergent.

\begin{prop}\label{P:Lattice}
\ITEM1 If $\MM$ is a gcd-monoid that embeds into~$\EG\MM$, then declaring $\gg\dive \hh$ for $\gg\inv \hh \in \can(\MM)$ provides a partial order on~$\EG\MM$ that extends left divisibility on~$\MM$.

\ITEM2 If $\RDb\MM$ is $4$-semi-conver\-gent, any two elements of~$\EG\MM$ that admit a common $\dive$-lower bound (\resp a common $\dive$-upper bound) admit a greatest one (\resp a lowest one).
\end{prop}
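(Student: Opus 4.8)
Part~\ITEM1 is a routine verification. Reflexivity holds since $\gg\inv\gg = \can(1)$ lies in~$\can(\MM)$; transitivity holds since $\can(\MM)$ is a submonoid of~$\EG\MM$, so that $\gg\inv\hh$ and $\hh\inv\kk$ in~$\can(\MM)$ force $\gg\inv\kk = (\gg\inv\hh)(\hh\inv\kk)$ into~$\can(\MM)$; and antisymmetry holds because $\gg\inv\hh = \can(\aa)$ and $\hh\inv\gg = \can(\bb)$ give $\can(\aa\bb) = \can(1)$, whence $\aa\bb = 1$ by embeddability, whence $\aa = \bb = 1$ since $1$ is the only invertible element of a gcd-monoid, so $\gg = \hh$. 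That $\dive$ restricts to left divisibility on~$\MM$ follows from~\eqref{E:Eval} and injectivity of~$\can$: for $\aa, \bb$ in~$\MM$, the element $\can(\aa)\inv\can(\bb)$ lies in~$\can(\MM)$ iff $\can(\bb) = \can(\aa\xx)$ for some~$\xx$ in~$\MM$, iff $\bb = \aa\xx$ for some such~$\xx$.

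For part~\ITEM2, first observe that $4$-semi-convergence implies $2$-semi-convergence by Lemma~\ref{L:SemiDown}, hence, by Proposition~\ref{P:Semi2}, that $\MM$ embeds into~$\EG\MM$; so part~\ITEM1 applies, $\dive$ is a partial order, and moreover all the equivalent conditions of Proposition~\ref{P:UniqueFrac} are available, by Proposition~\ref{P:Semi4}. The second key point is that $\dive$ is invariant under left translation, $\gg \dive \hh$ being equivalent to $\uu\gg \dive \uu\hh$ because $(\uu\gg)\inv(\uu\hh) = \gg\inv\hh$; so $\xx \mapsto \uu\xx$ is an order automorphism of~$(\EG\MM, \dive)$, carrying common lower (resp. upper) bounds and their greatest (resp. lowest) elements to those of the translated pair. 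This lets us normalize the two given elements before invoking Proposition~\ref{P:UniqueFrac}.

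For the lower-bound case, take $\gg, \hh$ with a common lower bound~$\ff$; translating by~$\ff\inv$, we may assume $\gg = \can(g_1)$ and $\hh = \can(g_2)$ with $g_1, g_2$ in~$\MM$. Let $\cc := g_1 \gcd g_2$ be their left gcd in~$\MM$. Since $\cc$ left divides $g_1$ and $g_2$, $\can(\cc)$ is a common lower bound of~$\gg$ and~$\hh$. Conversely, if $\ee \dive \gg$ and $\ee \dive \hh$, write $\ee\inv\can(g_1) = \can(p_1)$ and $\ee\inv\can(g_2) = \can(p_2)$ with $p_i$ in~$\MM$; then $\can(g_1/p_1) = \ee = \can(g_2/p_2)$, so Proposition~\ref{P:UniqueFrac}\ITEM2 provides $\xx, \yy$ in~$\MM$ with $g_1 = \xx\,(g_1 \gcdt p_1)$, $g_2 = \xx\,(g_2 \gcdt p_2)$, together with $p_1 = \yy\,(g_1\gcdt p_1)$, $p_2 = \yy\,(g_2\gcdt p_2)$. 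Then $\xx$ left divides both $g_1$ and $g_2$, hence $\xx \dive \cc$ in~$\MM$; and cancelling the gcd factors gives $\ee = \can(g_1)\can(p_1)\inv = \can(\xx)\can(\yy)\inv$, so, writing $\cc = \xx\cc'$, we get $\ee\inv\can(\cc) = \can(\yy\cc') \in \can(\MM)$, i.e. $\ee \dive \can(\cc)$. Thus $\can(\cc)$ is the greatest common lower bound. For the upper-bound case, take $\gg, \hh$ with a common upper bound; translating by~$\gg\inv$ we may assume $\gg = \can(1)$, and then the existence of that upper bound forces $\hh = \can(u_1)\can(u_2)\inv$ for some $u_1, u_2$ in~$\MM$; dividing $u_1$ and $u_2$ by their right gcd, we may moreover assume $u_1 \gcdt u_2 = 1$. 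Then $\hh\inv\can(u_1) = \can(u_2) \in \can(\MM)$, so $\can(u_1)$ is a common upper bound of~$\can(1)$ and~$\hh$. If $\kk$ is any common upper bound, then $\can(1) \dive \kk$ forces $\kk = \can(m)$ with $m$ in~$\MM$, and $\hh \dive \kk$ gives $\can(u_2)\can(u_1)\inv\can(m) = \can(n)$ for some~$n$, that is $\can(u_1/u_2) = \can(m/n)$; since $u_1 \gcdt u_2 = 1$, Proposition~\ref{P:UniqueFrac}\ITEM3 yields $u_1 \dive m$ in~$\MM$, whence $\can(u_1) \dive \can(m) = \kk$. So $\can(u_1)$ is the lowest common upper bound.

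The partial-order verification and the two translation reductions are routine; the real content is the appeal to the uniqueness-of-fractional-decompositions statement, which is exactly where $4$-semi-convergence enters, via Propositions~\ref{P:Semi4} and~\ref{P:UniqueFrac}. The point requiring most care is that the lower- and upper-bound cases are not quite symmetric: the former uses form~\ITEM2 of Proposition~\ref{P:UniqueFrac} (existence of $\xx, \yy$ realizing prescribed factorizations) while the latter uses form~\ITEM3 (a one-sided divisibility conclusion); and in both cases an arbitrary common bound of~$\gg$ and~$\hh$ need not lie in~$\can(\MM)$, so the whole argument has to be carried out with fractional expressions $\can(\cdot)\can(\cdot)\inv$ rather than with elements of the monoid. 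I expect this bookkeeping, plus matching each case to the right equivalent form in Proposition~\ref{P:UniqueFrac}, to be the only genuine obstacle.
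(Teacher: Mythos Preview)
Your proof is correct and follows essentially the same route as the paper: the heart of part~\ITEM2 is, in both cases, the application of the $4$-semi-convergence characterizations (central cross / unique fractional decomposition) to an equality $\can(\,\cdot\,/\,\cdot\,) = \can(\,\cdot\,/\,\cdot\,)$ arising from two competing bounds.

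The presentations differ in two minor ways. First, you normalize by a left translation (sending $\ff$ or $\gg$ to~$1$) before computing gcds, whereas the paper keeps the arbitrary basepoint~$\ff$ and defines the candidate bound as $\ff_0 = \ff(\xx \gcd \yy)$ directly; your translation trick is a small but pleasant simplification. Second, you invoke Proposition~\ref{P:UniqueFrac} in its forms~\ITEM2 and~\ITEM3, while the paper invokes the more primitive Lemma~\ref{L:4Focal} (the explicit shape of a central cross) for both the lower- and upper-bound cases, giving a more symmetric treatment; this makes the paper's two cases look like mirror images of each other, whereas your two cases, as you correctly note, use different equivalent forms. Neither choice buys anything substantive over the other.
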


\begin{proof}
\ITEM1 We identify~$\MM$ with its image under~$\can$. As $\MM$ is a semigroup in~$\EG\MM$, the relation~$\dive$ is transitive on~$\EG\MM$, and it is antisymmetric, as $1$ is the only invertible element of~$\MM$. Hence $\dive$ is a partial order on~$\EG\MM$. By definition, it extends left divisibility on~$\MM$.

\ITEM2 Assume that $\gg$ and~$\hh$ admit a common $\dive$-lower bound~$\ff$ (see Figure~\ref{F:Lattice} left). This means that we have $\gg = \ff \xx$ and $\hh = \ff \yy$ for some $\xx, \yy$ in~$\MM$. Let $\ff_0 := \ff(\xx \gcd \yy)$. Write $\xx = (\xx \gcd \yy) \xx_0$, $\yy = (\xx \gcd \yy) \yy_0$. Then we have $\gg = \ff_0 \xx_0$ and $\hh = \ff_0 \yy_0$, whence $\ff_0 \dive \gg$ and $\ff_0 \dive \hh$. Now assume that $\ff_1$ is any common $\dive$-lower bound of~$\gg$ and~$\hh$, say $\gg = \ff_1 \xx_1$ and $\hh = \ff_1 \yy_1$. In the group~$\EG\MM$, we have $\xx_0\xx_1\inv = \yy_0\yy_1\inv$. Hence, the $4$-multifraction $\xx_0 / \xx_1 / \yy_1 / \yy_0$ is unital, so, by assumption, it admits a central cross. Then Lemma~\ref{L:4Focal} provides~$\xx', \yy'$ satisfying
$$\xx_0 = (\xx_0 \gcd \yy_0) \xx', \quad \xx_1 = (\xx_1 \gcd \yy_1) \xx', \quad \yy_1 = (\xx_1 \gcd \yy_1) \yy', \quad \yy_0 = (\xx_0 \gcd \yy_0) \yy'.$$
By definition of~$\ff_0$, we have $\xx_0 \gcd \yy_0 = 1$, whence $\xx' = \xx_0$ and $\yy' = \yy_0$, and, from there, $\ff_1 (\xx_1 \gcd \yy_1) = \ff_0$, hence $\ff_1 \dive \ff_0$, in~$\EG\MM$. So $\ff_0$ is a greatest $\dive$-lower bound for~$\gg$ and~$\hh$.

The argument for lowest $\dive$-upper bound is symmetric. Assume that $\gg$ and~$\hh$ admit a common $\dive$-upper bound~$\ff$ (see Figure~\ref{F:Lattice} right). Write $\ff = \gg \xx = \hh \yy$, then $\xx = \xx_0(\xx \gcdt \yy)$ and $\yy = \yy_0(\xx \gcdt \yy)$. Put $\ff_0 := \gg\xx_0$. We have $\ff_0(\xx \gcdt \yy) = \ff = \hh \yy = \hh \yy_0 (\xx \gcdt \yy)$, whence $\hh \yy_0 = \ff_0$. So $\ff_0$ is a common $\dive$-upper bound of~$\gg$ and~$\hh$. Now assume that $\ff_1$ is any common $\dive$-upper bound of~$\gg$ and~$\hh$, say $\ff_1 = \gg_1 \xx_1 = \hh \yy_1$. In~$\EG\MM$, we have $\xx_0 \yy_0\inv = \xx_1 \yy_1\inv$, \ie, the $4$-multifraction $\xx_0 / \yy_0 / \yy_1 / \xx_1$ is unital, hence it admits a central cross. Then Lemma~\ref{L:4Focal} provides~$\xx, \yy$ satisfying
$$\xx_0 = \xx (\xx_0 \gcdt \yy_0), \quad \yy_0 = \yy (\xx_0 \gcdt \yy_0), \quad \xx_1 = \xx (\xx_1 \gcdt \yy_1), \quad \yy_1 = \yy (\xx_1 \gcdt \yy_1).$$
The definition of~$\ff_0$ gives $\xx_0 \gcdt \yy_0 = 1$, whence $\xx = \xx_0$ and $\yy = \yy_0$, leading in~$\EG\MM$ to $\ff_0 (\xx_1 \gcdt \yy_1) = \ff_1$, hence $\ff_0 \dive \ff_1$. So $\ff_0$ is a lowest $\dive$-upper bound for~$\gg$ and~$\hh$.
\end{proof}

\begin{figure}[htb]\centering
\begin{picture}(45,28)(0,-2)
\psset{nodesep=1mm}
\pcline(0,24)(39,24)\taput{$\yy$}\psarc{<-}(38,17){7}{0}{90}
\pcline(0,24)(0,6)\tlput{$\xx$}\psarc(7,7){7}{180}{270}\pcline{->}(6,0)(15,0)
\pscircle(0,24){0.5}\put(-3,24){$\ff$}
\pcline{->}(0,24)(15,16)\put(6,22){\rotatebox{-25}{\hbox{$\xx \gcd \yy$}}}
\pscircle(15,16){0.5}\put(15,18){$\ff_0$}
\pcline{->}(15,16)(45,16)\taput{$\yy_0$}
\pcline{->}(15,16)(15,0)\tlput{$\xx_0$}
\psarc[style=thin](15,16){3.5}{270}{360}
\pscircle(45,16){0.5}\put(46,16){$\hh$}
\pscircle(15,0){0.5}\put(14,-3){$\gg$}
\pcline{->}(45,0)(45,16)\trput{$\yy_1$}
\pcline{->}(45,0)(15,0)\tbput{$\xx_1$}
\pscircle(45,0){0.5}\put(45,-3){$\ff_1$}
\pcline{->}(45,0)(30,8)\put(33.5,7){\rotatebox{-25}{\hbox{$\xx_1 \gcd \yy_1$}}}
\pcline{->}(15,16)(30,8)\put(20,14.2){\rotatebox{-25}{\hbox{$\xx_0 \gcd \yy_0$}}}
\pcline{->}(30,8)(15,0)\nbput[npos=0.6]{$\xx'$}
\pcline{->}(30,8)(45,16)\nbput[npos=0.6]{$\VR(2,0)\smash{\yy'}$}
\end{picture}
\hspace{15mm}
\begin{picture}(45,28)(0,-2)
\psset{nodesep=1mm}
\pcline(30,24)(39,24)\psarc(38,17){7}{0}{90}\pcline{->}(45,17)(45,0)\trput{$\yy$}
\pcline(0,10)(0,6)\psarc(7,7){7}{180}{270}\pcline{->}(6,0)(45,0)\tbput{$\xx$}
\pscircle(45,0){0.5}\put(45,-3){$\ff$}
\pcline{->}(30,8)(45,0)\put(34.5,6.7){\rotatebox{-26}{\hbox{$\xx \gcdt \yy$}}}
\pscircle(30,8){0.5}\put(31,9){$\ff_0$}
\pcline{->}(30,24)(30,8)\trput{$\yy_0$}
\pcline{->}(0,8)(30,8)\tbput{$\xx_0$}
\psarc[style=thin](30,8){3.5}{90}{180}
\pscircle(30,24){0.5}\put(29,25.5){$\hh$}
\pscircle(0,8){0.5}\put(-3,10){$\gg$}
\pcline{->}(30,24)(0,24)\taput{$\yy_1$}
\pcline{->}(0,8)(0,24)\tlput{$\xx_1$}
\pscircle(0,24){0.5}\put(-4,24){$\ff_1$}
\pcline{->}(15,16)(0,24)\put(5,22.5){\rotatebox{-27}{\hbox{$\xx_1 \gcdt \yy_1$}}}
\pcline{->}(15,16)(30,8)\put(17.5,15.8){\rotatebox{-27}{\hbox{$\xx_0 \gcdt \yy_0$}}}
\pcline{->}(0,8)(15,16)\naput[npos=0.4]{$\xx$}
\pcline{->}(30,24)(15,16)\naput[npos=0.4]{$\yy$}
\end{picture}
\caption{\small Conditional lattice property for the poset $(\EG\MM, \dive)$: greatest lower bound on the left, lowest upper bound on the right.}
\label{F:Lattice}
\end{figure}

Of course, we have a symmetric extension for the right divisibility relation. 

\begin{ques}
Is the assumption that $\MM$ embeds in~$\EG\MM$ sufficient to ensure that conditional greatest lower bounds and lowest upper bounds for~$\dive$ exist in~$\EG\MM$?
\end{ques}

\subsection{ Depth~$6$ and beyond}\label{SS:Semi6}

The results established in the case of $4$-multifractions do not extend to depth~$6$ and beyond. In particular, unital $6$-multifract\-ions need not admit a central cross: in the Artin-Tits monoid of type~$\Att$, let $\aav := \tta\ttb\tta / \tta\ttc\tta / \ttc\tta\ttc / \ttc\ttb\ttc / \ttb\ttc\ttb / \ttb\tta\ttb$ and $\bbv := \tta\ttb / \tta\ttc / \ttc\tta / \ttc\ttb / \ttb\ttc / \ttb\tta$. We have $\aav \rd \bbv \rds \one$, and $\aav$ admits a central cross, but $\bbv$ does not, as it is prime. So the counterparts of Lemma~\ref{L:RedCross} and Proposition~\ref{P:EquivFoc} fail. However, we shall see now that, for every~$\nn$, Conjecture~$\ConjB_\nn$ implies a geometrical property of van Kampen diagrams that directly extends Prop\,\ref{P:EquivFoc}.

\noindent\begin{minipage}{\textwidth}
\rightskip32mm\VR(3,0)\HS{3} What the latter says is that, if we define $(\UG4, *)$ to be the pointed graph on the right, then, for every $4$-multifraction~$\aav$ that reduces to~$\one$, there exists an $\MM$-labeling of the edges of~$\UG4$ such that the outer labels from~$*$ are~$\aa_1 \wdots \aa_4$ and the labels in each triangle induce equalities in~$\MM$.\hfill
\begin{picture}(0,0)(-1,-1)
\def\NPoint(#1,#2,#3){\cnode[style=thin,fillcolor=white,linecolor=white](#1,#2){0}{#3}}
\def\BPoint(#1,#2,#3){\cnode[fillstyle=solid,fillcolor=black,linecolor=black](#1,#2){0.7}{#3}}
\def\AArrow(#1,#2){\ncline[linewidth=0.8pt]{->}{#1}{#2}}
\psset{nodesep=0.7mm}
\psset{xunit=1.3mm, yunit=0.7mm}
\pspolygon[linearc=3,linewidth=3mm,linecolor=white,fillstyle=solid,fillcolor=color1](10,0)(0,10)(10,20)(20,10)
\NPoint(10, 0, w0) \WPoint(0,10, w1) \BPoint(10,20, w2) \WPoint(20,10, w3) \NPoint(10,10, x1) 
\AArrow(w0,w1)
\AArrow(w2,w1)
\AArrow(w2,w3)
\AArrow(w0,w3)
\AArrow(w0,x1)
\AArrow(w2,x1)
\AArrow(x1,w1)
\AArrow(x1,w3)
\put(12.2, -1.5){$*$}
\end{picture}
\end{minipage}

\noindent If \VR(3.1, 0) $(\Gamma, *)$ is a finite, simply connected pointed graph, let us say that a multifraction~$\aav$ on a monoid~$\MM$ admits a \emph{van Kampen diagram of shape~$\Gamma$} if there is an $\MM$-labeling of~$\Gamma$ such that the outer labels from~$*$ are~$\aa_1 \wdots \aa_\nn$ and the labels in each triangle induce equalities in~$\MM$. This notion is a mild extension of the usual one: if $\SS$ is any generating set for~$\MM$, then replacing the elements of~$\MM$ with words in~$\SS$ and equalities with word equivalence provides a van Kampen diagram in the usual sense for the word in~$\SS \cup \SSb$ then associated with~$\aav$. Then, Proposition~\ref{P:EquivFoc} says that every $4$-multifraction reducing to~$\one$ admits a van Kampen diagram of shape~$\UG4$. Conjecture~$\ConjB$ predicts similar results for every depth.

\begin{defi}
For $\nn \ge 6$ even, let~$(\UG\nn, *)$ be the graph obtained by appending $\nn - 2$ adjacent copies of~$\UG4$ around~$(\UG{\nn - 2}, *)$ starting from~$*$, with alternating orientations, and connecting the last copy of~$(\UG4, *)$ with the first one, see Figure~\ref{F:UnivGr}. 
\end{defi}

\begin{figure}[htb]
\def\NPoint(#1,#2,#3){\cnode[style=thin,fillcolor=white,linecolor=white](#1,#2){0}{#3}}
\def\BPoint(#1,#2,#3){\cnode[fillstyle=solid,fillcolor=black,linecolor=black](#1,#2){0.7}{#3}}
\def\AArrow(#1,#2){\ncline[linewidth=0.8pt]{->}{#1}{#2}}
\def\BArrow(#1,#2){\ncline[linecolor=color3, linewidth=1.5pt]{->}{#1}{#2}}
\def\CArrow(#1,#2){\ncline[linecolor=color1, linewidth=3mm]{c-c}{#1}{#2}}
\def\DArrow(#1,#2){\ncline[doubleline=true, linewidth=0.4pt]{#1}{#2}}
\def\PArrow(#1,#2){\ncline[linestyle=dashed, linewidth=0.8pt]{->}{#1}{#2}}
\begin{picture}(48,47)(0,1)
\psset{nodesep=0.7mm}
\psset{xunit=1mm, yunit=0.85mm}
\pspolygon[linearc=3,linewidth=3mm,linecolor=white,fillstyle=solid,fillcolor=color1](24,0)(0,15)(0,40)(14,22)
\pspolygon[linearc=3,linewidth=3mm,linecolor=white,fillstyle=solid,fillcolor=color1](14,22)(0,40)(24,55)(24,38)
\pspolygon[linearc=3,linewidth=3mm,linecolor=white,fillstyle=solid,fillcolor=color1](34,22)(48,40)(24,55)(24,38)
\pspolygon[linearc=3,linewidth=3mm,linecolor=white,fillstyle=solid,fillcolor=color1](24,0)(48,15)(48,40)(34,22)
\pspolygon[linearc=3,linewidth=3mm,linecolor=white,fillstyle=solid,fillcolor=color2](24,0)(14,22)(24,38)(34,22)
\NPoint(24, 0, v0) \WPoint(0, 15, v1) \BPoint(0, 40, v2) \WPoint(24, 55, v3) \BPoint(48, 40, v4) \WPoint(48, 15, v5)
\NPoint(24, 0, w0) \WPoint(14, 22, w1) \BPoint(24, 38, w2) \WPoint(34, 22, w3) 
\NPoint(7, 19, x1) \NPoint(16, 38.5, x2) \NPoint(32, 38.5, x3) \NPoint(41, 19, x4) 
\NPoint(24, 23, x0) 
\AArrow(v0,v1)
\AArrow(v2,v1)
\AArrow(v2,v3)
\AArrow(v4,v3)
\AArrow(v4,v5)
\AArrow(v0,v5)
\AArrow(w0,w1)
\AArrow(w2,w1)
\AArrow(w2,w3)
\AArrow(w0,w3)
\AArrow(v0,x1) \AArrow(v2,x1) \AArrow(x1,v1) \AArrow(x1,w1) \AArrow(v2,w1)
\AArrow(x6,v7) \AArrow(x6,w5) 
\AArrow(v6,w5)
\AArrow(v2,x2)\AArrow(w2,x2)\AArrow(x2,v3)\AArrow(x2,w1) 
\AArrow(w2,v3)
\AArrow(w0,x4)\AArrow(w0,v5)
\AArrow(v4,x3) \AArrow(w2,x3)
\AArrow(x3,v3) \AArrow(x3,w3)
\AArrow(v4,w3)
\AArrow(v4,x4) \AArrow(x4,v5)\AArrow(x4,w3)
\AArrow(w0,x0) \AArrow(w2,x0) \AArrow(x0, w1) \AArrow(x0, w3) 
\put(23,-2){$*$}
\end{picture}
\hspace{15mm}
\begin{picture}(48,47)(0,1)
\psset{nodesep=0.7mm}
\psset{xunit=1mm, yunit=0.85mm}
\pspolygon[linearc=3,linewidth=3.5mm,linecolor=white,fillstyle=solid,fillcolor=color1](24,0)(0,15)(0,40)(14,22)
\pspolygon[linearc=3,linewidth=3.5mm,linecolor=white,fillstyle=solid,fillcolor=color1](14,22)(0,40)(24,55)(24,38)
\pspolygon[linearc=3,linewidth=3.5mm,linecolor=white,fillstyle=solid,fillcolor=color1](34,22)(48,40)(24,55)(24,38)
\pspolygon[linearc=3,linewidth=3.5mm,linecolor=white,fillstyle=solid,fillcolor=color1](24,0)(48,15)(48,40)(34,22)
\pspolygon[linearc=3,linewidth=3.5mm,linecolor=white,fillstyle=solid,fillcolor=color2](24,0)(14,22)(24,38)(34,22)
\NPoint(24, 0, v0) \WPoint(0, 15, v1) \BPoint(0, 40, v2) \WPoint(24, 55, v3) \BPoint(48, 40, v4) \WPoint(48, 15, v5)
\NPoint(24, 0, w0) \WPoint(14, 22, w1) \BPoint(24, 38, w2) \WPoint(34, 22, w3) 
\NPoint(7, 19, x1) \NPoint(16, 38.5, x2) \NPoint(32, 38.5, x3) \NPoint(41, 19, x4) 
\NPoint(24, 23, x0) 
\AArrow(w2,v3)\ncput*[labelsep=1pt,npos=0.4,nrot=90]{$\scriptstyle\fr{aca}$}
\AArrow(w2,w1)\ncput*[labelsep=1pt,npos=0.5,nrot=55 ]{$\scriptstyle\fr{aba}$}
\AArrow(w2,w3)\ncput*[labelsep=1pt,npos=0.5,nrot=-55]{$\scriptstyle\fr{cbc}\!$}
\AArrow(v0,v1)\naput[npos=0.5]{$\scriptstyle\fr{ab}$}
\AArrow(v2,v1)\tlput{$\scriptstyle\fr{ba}$}
\AArrow(v2,v3)\naput[npos=0.5]{$\scriptstyle\fr{ca}$}
\AArrow(v4,v3)\nbput[npos=0.5]{$\scriptstyle\fr{ac}$}
\AArrow(v4,v5)\trput{$\scriptstyle\fr{bc}$}
\AArrow(v0,v5)\nbput[npos=0.5]{$\scriptstyle\fr{cb}$}
\AArrow(w0,w1)\nbput[labelsep=1pt,npos=0.7]{$\scriptstyle\fr{ab}$}
\AArrow(w0,w3)\naput[labelsep=1pt,npos=0.7]{$\scriptstyle\fr{cb}$}
\AArrow(v0,x1)\nbput[labelsep=0.5pt,npos=0.6]{$\scriptstyle\fr{ab}$}
\AArrow(v2,x1)\nbput[labelsep=1pt,npos=0.7]{$\scriptstyle\fr{ba}$} 
\AArrow(x1,v1)\nbput[labelsep=1pt,npos=0.5]{$\scriptstyle1$} 
\AArrow(x1,w1)\naput[labelsep=1pt,npos=0.5]{$\scriptstyle1$} 
\AArrow(v2,w1)\naput[labelsep=1pt,npos=0.5]{$\scriptstyle\fr{ba}$}
\AArrow(v2,x2)\naput[labelsep=1pt,npos=0.5]{$\scriptstyle1$}
\AArrow(w2,x2)\nbput[labelsep=1pt,npos=0.4]{$\scriptstyle\fr{a}$}
\AArrow(w2,x3)\naput[labelsep=1pt,npos=0.4]{$\scriptstyle\fr{c}$}
\AArrow(x2,v3)\naput[labelsep=1pt,npos=0.4]{$\scriptstyle\fr{ca}$} 
\AArrow(x2,w1)\nbput[labelsep=1pt,npos=0.3]{$\scriptstyle\fr{ba}$} 
\AArrow(w0,x4)\naput[labelsep=0.5pt,npos=0.6]{$\scriptstyle\fr{cb}$} 
\AArrow(v4,x3)\nbput[labelsep=1pt,npos=0.5]{$\scriptstyle1$} 
\AArrow(x3,v3)\nbput[labelsep=1pt,npos=0.4]{$\scriptstyle\fr{ac}$} 
\AArrow(x3,w3)\naput[labelsep=1pt,npos=0.3]{$\scriptstyle\fr{bc}$}
\AArrow(v4,w3)\nbput[labelsep=1pt,npos=0.5]{$\scriptstyle\fr{bc}$}
\AArrow(v4,x4)\naput[labelsep=1pt,npos=0.7]{$\scriptstyle\fr{bc}$} 
\AArrow(x4,v5) \naput[labelsep=1pt,npos=0.5]{$\scriptstyle1$}
\AArrow(x4,w3)\nbput[labelsep=1pt,npos=0.5]{$\scriptstyle1$}
\AArrow(w0,x0)\nbput[labelsep=1pt,npos=0.7]{$\scriptstyle1$} 
\AArrow(w2,x0) \naput[labelsep=1pt,npos=0.6]{$\scriptstyle\fr{b}$}
\AArrow(x0, w1)\nbput[labelsep=1pt,npos=0.4]{$\scriptstyle\fr{ab}$} 
\AArrow(x0, w3)\naput[labelsep=1pt,npos=0.4]{$\scriptstyle\fr{cb}$} 
\put(23,-2){$*$}
\end{picture}
\caption{\small The graph~$\UG6$: on the left, the naked graph, with four juxtaposed copies of~$\UG4$ around one (colored) copy of~$\UG4$; it has fourteen vertices, namely four springs (one inner), five wells (two inner), and five $4$-prongs; on the right, for $\MM$ the Artin-Tits monoid of type~$\Att$, the $\MM$-labeling of~$\UG6$ that results from the first reduction in Example~\ref{X:Att2}; it provides a van Kampen diagram for the $6$-multifraction $\fr{ab/ba/ca/ac/bc/ab}$. Conjecture~$\ConjB$ predicts that \emph{every} $6$-multifraction representing~$1$ in an Artin-Tits monoid admits a van Kampen diagram of shape~$\UG6$.}
\label{F:UnivGr}
\end{figure}

\enlargethispage{2mm}

\begin{prop}\label{P:Tiling}
Let $\MM$ be an Artin-Tits monoid. If Conjecture~$\ConjB$ is true, then every unital $\nn$-multifraction on~$\MM$ $($with $\nn$ even$)$ admits a van Kampen diagram of shape~$\UG\nn$, see Figure~\ref{F:UnivGr}.
\end{prop}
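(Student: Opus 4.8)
The plan is to run the reduction sequence predicted by Conjecture~$\ConjB$ and read off its trace as a van Kampen diagram, and then to prove by induction on the even depth~$\nn$ that this trace has exactly the combinatorial type~$\UG\nn$. Let $\aav$ be a unital $\nn$-multifraction on an Artin--Tits monoid~$\MM$. By Conjecture~$\ConjB$ we have $\redtame(\aav) = \aav \act \Redmax{\univ\nn} = \one$, so it is enough to establish the following purely combinatorial statement by induction on even~$\nn \ge 4$ (the cases $\nn = 0, 2$ being immediate, with $\UG0$ a point and $\UG2$ a bigon, the latter case also following from the embeddability of~$\MM$ in~$\EG\MM$): \emph{if $\aav \act \Redmax{\univ\nn} = \one$, then $\aav$ admits a van Kampen diagram of shape~$\UG\nn$.} The base case $\nn = 4$ is immediate: $\aav \act \Redmax{\univ4} = \one$ is precisely $\redtame(\aav) = \one$, which by Proposition~\ref{P:EquivFoc} is equivalent to $\aav$ admitting a central cross, and a central cross $(\xx_1 \wdots \xx_4)$ is, by Definition~\ref{D:Focal}, exactly an $\MM$-labeling of~$\UG4$ with the required outer labels and triangle equalities.

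For the inductive step I would exploit that the recursion defining $\univ\nn$, namely $\univ\nn = (1, 2 \wdots \nn - 1)$ followed by~$\univ{\nn - 2}$, runs in parallel with the recursion defining~$\UG\nn$. Split the reduction into the ``ring block'' $B := \Redmax1 \Redmax2 \pdots \Redmax{\nn - 1}$ and the recursive tail~$\Redmax{\univ{\nn-2}}$, and put $\ccv := \aav \act B$. An easy induction on the definition shows that $\univ{\nn-2}$ only involves levels~$\le \nn - 3$, so the steps of $\Redmax{\univ{\nn-2}}$, applied to~$\ccv$, never touch the entries $\ccv_{\nn-1}, \ccv_\nn$; since the overall result is~$\one$, this forces $\ccv_{\nn-1} = \ccv_\nn = 1$. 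Hence $\ccv = \bbv \opp \One2$ for a unique positive $(\nn-2)$-multifraction~$\bbv$, the steps of $\Redmax{\univ{\nn-2}}$ act entirely inside the $\bbv$-part, and $\ccv \act \Redmax{\univ{\nn-2}} = \one$ then yields $\bbv \act \Redmax{\univ{\nn-2}} = \one$ (this is the compatibility with appending trivial entries, as in Lemma~\ref{L:Basics}\ITEM3 and its use in the proof of Lemma~\ref{L:CrossRed}). So the induction hypothesis applies to~$\bbv$ and gives a van Kampen diagram~$\Delta_0$ of shape~$\UG{\nn-2}$ for~$\bbv$, which is also one for~$\ccv$ after adjoining two trivial boundary edges at~$*$.

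It then remains to glue the ``ring block'' $B$ onto~$\Delta_0$. Running $\Redmax1, \Redmax2 \wdots \Redmax{\nn-1}$ in this order from~$\aav$ produces a chain $\aav = \aav^{(0)} \rds \aav^{(1)} \rds \pdots \rds \aav^{(\nn-1)} = \ccv$, and each step $\aav^{(k-1)} \act \Redmax k$ fills one cell along the current boundary: by Figure~\ref{F:Red}, the difference region of a step $\Red{k, \xx}$ with $2 \le k \le \nn-2$ is tiled by four triangles whose $\MM$-labels obey triangle equalities (the two side triangles encode $\bb_{k-1} = \xx'\aa_{k-1}$ and $\bb_{k+1}\xx = \aa_{k+1}$, and the central lcm quadrilateral $\bb_k\xx = \xx'\aa_k$ is cut by a diagonal), whereas $\Red{1, \xx}$ (never has a remainder) and, as one checks directly from the defining relations using $\ccv_{\nn-1} = \ccv_\nn = 1$, the last step $\Red{\nn-1, \xx}$ each fill only a ``thin'', degenerate two-triangle cell. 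Stacking these cells around the path of~$\aav$, in the cyclic order of the levels $1, 2 \wdots \nn-1$ (with alternating orientations inherited from the alternating signs of the entries of~$\aav$) and letting the two thin end-cells meet around~$*$, produces an annular band~$A$ with $A \cup \Delta_0$ a van Kampen diagram for~$\aav$; and by construction this band realizes exactly the operation of appending $\nn - 2$ adjacent copies of~$\UG4$ around~$(\UG{\nn-2}, *)$ starting from~$*$ and connecting the last copy to the first, i.e.\ it yields the graph~$\UG\nn$.

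The hard part will be the last combinatorial identification: checking that the band~$A$ built from the $\nn-1$ reduction cells has \emph{precisely} the rigid combinatorial type prescribed for~$\UG\nn$ --- the right number of vertices (springs, wells, and $4$-prongs, as listed for~$\UG6$ in Figure~\ref{F:UnivGr}), the right adjacencies between consecutive cells, and in particular the correct closing-up of the two thin end-cells around~$*$ --- and that the degenerate steps ($\Redmax1$, $\Redmax{\nn-1}$, and any $\Redmax\ii$ whose greatest tame reducer happens to be trivial or a plain division) still occupy their full combinatorial slot in the fixed graph~$\UG\nn$, only with some edges carrying the label~$1$. The cleanest route is probably to \emph{define} $\UG\nn$ recursively as the shape of the diagram $A \cup \UG{\nn-2}$ just described, verify once (for instance for $\nn = 6$, against Figure~\ref{F:UnivGr}) that this recursive definition matches the explicit one, and thereafter let the induction run formally; one also has to decompose each composite $\Redmax\ii$ into its atomic steps at level~$\ii$ and check that they jointly fill a single such cell. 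Finally, passing from the $\MM$-labeling to a word labeling in a generating set recovers a van Kampen diagram in the usual sense, but the statement only asks for the $\MM$-version, so no further work is needed there.
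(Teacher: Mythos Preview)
Your proposal is correct and follows essentially the same route as the paper: both arguments exploit the parallel recursions for~$\univ\nn$ and~$\UG\nn$, split off the ring block $\Redmax1\pdots\Redmax{\nn-1}$, observe that the resulting multifraction has its last two entries trivial so that the tail $\Redmax{\univ{\nn-2}}$ acts on an $(\nn-2)$-multifraction~$\bbv$, apply induction to~$\bbv$, and identify the ring block with the annulus of $\nn-2$ copies of~$\UG4$ in the definition of~$\UG\nn$, with base case $\nn=4$ supplied by Proposition~\ref{P:EquivFoc}. The paper's proof is itself a sketch at the same level of detail; you have correctly isolated the only genuine work---the combinatorial identification of the reduction cells with the prescribed annulus, including the degenerate end cells---and your suggested fix (take the recursive description of~$\UG\nn$ as the definition and verify it once against Figure~\ref{F:UnivGr}) is exactly the right way to close that bookkeeping.
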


\begin{proof}
Let $\aav$ be a unital $\nn$-multifraction on~$\MM$. Conjecture~$\ConjB$ predicts the equality $\aav \act \Redmax{\univ\nn} = \one$. We shall see that the latter (and, more generally, any equality of the form $\aav \act \Red{\univ\nn, \xxv} = \one$, with $\xxv$ maximal or not) implies that $\aav$ admits a van Kampen diagram of shape~$\UG\nn$. In view on an induction, assume $\aav \act \Red{1, \xx_1} \pdots \Red{\nn - 1, \xx_{\nn - 1}} = \bbv \opp \One2$, for some (necessarily unital) $(\nn - 2)$-multifraction~$\bbv$. Let $\aav^0:= \aav$ and, inductively, $\aav^\ii:= \aav^{\ii - 1} \act \Red{\ii, \xx_\ii}$. We start from a loop of edges with alternating orientations labeled~$\aav^0$. Then we inductively complete the graph using $\nn - 1$~steps of the type 
$$\begin{picture}(45,15)(0,-4) \psset{nodesep=0.7mm}
\pcline{->}(0,4)(15,8)\taput{$\aa^\ii_{\ii - 1}$} 
\pcline{<-}(15,8)(30,8)\taput{$\aa^\ii_\ii$} 
\pcline{->}(30,8)(45,4)\taput{$\aa^\ii_{\ii + 1}$}
\pcline[style=exist]{->}(0,4)(15,0)\tbput{$\aa^{\ii + 1}_{\ii - 1}$} 
\pcline[style=exist]{<-}(15,0)(30,0)\tbput{$\aa^{\ii + 1}_{\ii}$} 
\pcline[style=exist]{->}(30,0)(45,4)\tbput{$\aa^{\ii + 1}_{\ii + 1}$} 
\pcline[style=exist]{->}(15,8)(15,0) 
\pcline[linecolor=color3]{->}(30,8)(30,0)\tlput{\color{color3}$\xx_\ii$}
\psarc[style=thinexist](15,0){3}{0}{90}
\put(50,3){or}
\end{picture}
\hspace{12mm}
\begin{picture}(45,15)(0,-4) \psset{nodesep=0.7mm}
\pcline{<-}(0,4)(15,8)\taput{$\aa^\ii_{\ii - 1}$} 
\pcline{->}(15,8)(30,8)\taput{$\aa^\ii_\ii$} 
\pcline{<-}(30,8)(45,4)\taput{$\aa^\ii_{\ii + 1}$}
\pcline[style=exist]{<-}(0,4)(15,0)\tbput{$\aa^{\ii + 1}_{\ii - 1}$} 
\pcline[style=exist]{->}(15,0)(30,0)\tbput{$\aa^{\ii + 1}_{\ii}$} 
\pcline[style=exist]{<-}(30,0)(45,4)\tbput{$\aa^{\ii + 1}_{\ii + 1}$} 
\pcline[style=exist]{<-}(15,8)(15,0) 
\pcline[linecolor=color3]{<-}(30,8)(30,0)\tlput{\color{color3}$\xx_\ii$}\psarc[style=thinexist](15,0){3}{0}{90}
\end{picture}
$$
according to the sign of~$\ii$ in~$\aav$. Because the final two entries of~$\aav^{\nn - 1}$ are trivial, the last two steps take a simpler form: $\aa^{\nn - 1}_\nn = 1$ is equivalent to~$\aa_\nn = \xx_{\nn - 1}$, whereas $\aa^{\nn - 1}_{\nn - 1} = 1$ is equivalent to~$\xx_{\nn - 2}\inv\aa_{\nn - 1} \divet \aa_\nn$. 

\noindent\begin{minipage}{\textwidth}
\rightskip80mm We \VR(3,0) thus obtain an $\MM$-labeling of an annular graph made of $\nn - 2$ copies of~$\UG4$, whose outer boundary is labeled~$\aav$, and whose inner boundary is labeled~$\bbv$, see the diagram on the right, here in the case going from~$8$ to~Ê$6$. Then \VR(3,0) we repeat the process with~$\bbv$, etc., until a $4$-multifraction is reached, and we conclude using Proposition~\ref{P:EquivFoc}. This \VR(3,0) construction exactly corresponds to the inductive definition of~$\UG\nn$.\ \hfill$\square$%
\begin{picture}(0,0)(-2,-2)
\def\NPoint(#1,#2,#3){\cnode[style=thin,fillcolor=white,linecolor=white](#1,#2){0}{#3}}
\def\AArrow(#1,#2){\ncline[linewidth=0.8pt]{->}{#1}{#2}}
\def\BArrow(#1,#2){\ncline[linecolor=color3, linewidth=0.8pt]{->}{#1}{#2}}
\psset{nodesep=0.7mm}
\psset{xunit=1.2mm, yunit=0.65mm}
\pspolygon[linearc=3,linewidth=3mm,linecolor=white,fillstyle=solid,fillcolor=color1](30,0)(10,10)(0,30)(20,20)
\pspolygon[linearc=3,linewidth=3mm,linecolor=white,fillstyle=solid,fillcolor=color1](20,20)(0,30)(10,50)(20,35)
\pspolygon[linearc=3,linewidth=3mm,linecolor=white,fillstyle=solid,fillcolor=color1](20,35)(10,50)(30,60)(30,43)
\pspolygon[linearc=3,linewidth=3mm,linecolor=white,fillstyle=solid,fillcolor=color1](40,35)(50,50)(30,60)(30,43)
\pspolygon[linearc=3,linewidth=3mm,linecolor=white,fillstyle=solid,fillcolor=color1](40,20)(60,30)(50,50)(40,35)
\pspolygon[linearc=3,linewidth=3mm,linecolor=white,fillstyle=solid,fillcolor=color1](30,0)(50,10)(60,30)(40,20)
\NPoint(30, 0, v0) \NPoint(10, 10, v1) \NPoint(0, 30, v2) \NPoint(10, 50, v3) \NPoint(30, 60, v4) \NPoint(50, 50, v5) \NPoint(60, 30, v6) \NPoint(50, 10, v7)
\NPoint(30, 0, w0) \NPoint(20, 20, w1) \NPoint(20, 35, w2) \NPoint(30, 43, w3) 
\NPoint(40, 35, w4) \NPoint(40, 20, w5) 
\NPoint(15, 15, x1) \NPoint(15, 33.5, x2) \NPoint(24.7, 45, x3) \NPoint(35.3, 45, x4) 
\NPoint(45, 33.5, x5) \NPoint(45, 15, x6) 
\psarc[style=thin](20, 20){3}{125}{205}
\psarc[style=thin](20, 35){3}{45}{190}
\psarc[style=thin](30, 43){3}{10}{170}
\psarc[style=thin](40, 35){3}{-10}{135}
\psarc[style=thin](40, 20){3}{-25}{55}
\AArrow(v0,v1)\tbput{$\aa_1$}
\AArrow(v2,v1)\tlput{$\aa_2$}
\AArrow(v2,v3)\tlput{$\aa_3$}
\AArrow(v4,v3)\taput{$\aa_4$}
\AArrow(v4,v5)\taput{$\aa_5$}
\AArrow(v6,v5)\trput{$\aa_6$}
\AArrow(v6,v7)\trput{$\aa_7$}
\BArrow(v0,v7)\nbput[npos=0.8]{$\aa_8 \color{color3} = \xx_7$}
\AArrow(w0,w1)\taput{$\bb_1$}
\AArrow(w2,w1)\trput{$\bb_2$}
\AArrow(w2,w3)\tbput{$\bb_3$}
\AArrow(w4,w3)\tbput{$\bb_4$}
\AArrow(w4,w5)\tlput{$\bb_5$}
\AArrow(w0,w5)\taput{$\bb_6$}
\AArrow(v0,x1) \AArrow(v2,x1) 
\BArrow(x1,v1)\nbput[npos=0.6]{\color{color3}$\xx_1$} \AArrow(x1,w1) 
\AArrow(v2,w1)
\AArrow(v0,x6) \BArrow(v6,x6)\nbput[npos=0.9]{\color{color3}$\xx_6$} 
\AArrow(x6,v7) \AArrow(x6,w5) 
\AArrow(v6,w5)
\BArrow(v2,x2)\naput[npos=0.6]{\color{color3}$\xx_2$}\AArrow(w2,x2)\AArrow(x2,v3)\AArrow(x2,w1)
\AArrow(w2,v3)
\AArrow(v6,x5)\AArrow(w4,x5) \BArrow(x5,v5)\nbput[npos=0.4]{\color{color3}$\xx_5$} \AArrow(x5,w5)
\AArrow(w4,v5)
\AArrow(v4,x3) \AArrow(w2,x3)
\BArrow(x3,v3)\nbput[npos=0.4]{\color{color3}$\xx_3$} \AArrow(x3,w3)
\AArrow(v4,w3)
\BArrow(v4,x4)\naput[npos=0.6]{\color{color3}$\xx_4$} \AArrow(w4,x4)\AArrow(x4,v5)\AArrow(x4,w3)
\end{picture}
\end{minipage}
\def\qed{\relax}\end{proof}

It is well-known that, if a multifraction represents~$1$ in a group, then it admits a van Kampen diagram in the sense defined above. What is remarkable here is the existence of one single universal shape, with prescribed springs, wells, and $4$-prongs, that works for every unital $\nn$-multifraction at the same time. 

Finally, one may wonder whether some counterparts of Lemma~\ref{L:CrossRed} and~\ref{L:RedCross} might hold with~$\UG\nn$ replacing~$\UG4$: maybe they do, but the natural argument for proving them requires that the ground monoid satisfies the $3$-Ore condition, in which case it is known that every unital $\nn$-multifraction admits a van Kampen diagram of shape~$\UG\nn$, thus making the results trivial.

\section{Miscellanea}\label{S:Misc}

The main three properties addressed in this paper are Conjectures~$\ConjA$,~$\ConjB$, and~$\ConjC$ (together with the uniform version~$\ConjCunif$ of the latter), which involve arbitrary Artin-Tits monoids, and are known to be true for those of FC~type. Testing these statements with a computer is easy, and we report about experiments that, alltogether, support the conjectures and provide some experimental evidence. The involved program is available at~\cite{Diw}, and the experiments are easy to repeat and confirm. 

We begin with a few remarks about implementation options (Subsection~\ref{SS:Options}), then report about the obtained data (Subsection~\ref{SS:Data}). Finally, we conclude with a few hints about further properties of reduction, including several counter-examples (Subsection~\ref{DeadEnds}).

\subsection{Implementation options}\label{SS:Options}

\subsubsection*{Choice of the monoid}

We are interested in Artin-Tits monoids~$\MM$ such that $\RDb\MM$ is not convergent, hence not of FC~type. It is natural to look for monoids with a maximal convergence defect, meaning that the proportion of multifractions with more than one irreducible reduct is maximal. As can be expected, the ratio is maximal for the Artin-Tits monoid of type~$\Att$, and, more generally, those with all relations of length~$3$ exactly: as divergent reducts may arise only with counter-examples to the $3$-Ore property, it is natural that this happens more frequently when all atoms give rise to such counter-examples. Another advantage of~$\Att$ and, more generally, $K_{\nn, 3}$, the Artin-Tits monoid whose Dynkin diagram is the complete graph with $\nn$ vertices and all edges labeled~$3$, is the existence of an explicit description of basic elements (namely $1$, atoms, and products of two distinct atoms) providing a better efficiency (and 100\% correctness with no termination problem) for the implementation of the monoid operations (equality, lcms, gcds, etc). Therefore, we mostly concentrated on~$\Att$, considered as the critical type (but any other choice is possible with~\cite{Diw}).


\subsubsection*{Generation of random multifractions}

Exhaustively enumerating multifractions up to a given length (sum of the lengths of the entries) is difficult, as, even in the case of $3$~atoms, there are more than $2.6 \times 10^9$ multifractions of length up to~$12$. Therefore it is more realistic to use samples of random multifractions. Generating random elements of the monoid and, from there, random multifractions, is easy via random words in the atom alphabet (with a bias due to the relations). 

Generating random unital multifractions is more delicate. As the density of unital multifractions is negligible, generating random multifractions and selecting those that are unital is not a good option (in addition, it requires a prior solution to the word problem, which exists for~$\Att$ but not in general). Two methods have been used. The first one (``brownian motion'') is to follow the definition of~$\simeq$, thus starting with an empty word and randomly adding or deleting pairs $\ss/\ss$ and applying the Artin-Tits relations. Inserting right and left reversing steps (the special transformations of Property~$\PropH$, see Subsection~\ref{SS:SemiAppli}) improves the efficiency.

The second method (``lcm-expansions'') consists in starting from a multifraction that admits a random central cross, hence is unital of a very special type, and deriving new, more generic, unital multifractions as follows:

\begin{defi}\label{D:DivExp}
(Figure~\ref{F:DivExp}) If $\MM$ is a gcd-monoid and $\aav$ is a unital $\nn$-multifraction on~$\MM$, with $\nn$ even, we say that $\bbv$ is an \emph{lcm-expansion} of~$\aav$ if $\dh\bbv = \dh\aav$ holds and, for each~$\ii$, there exist decompositions $\aa_\ii = \aa'_\ii \aa''_\ii$, $\bb_\ii = \bb'_\ii \bb''_\ii$ satisfying $\aa'_\ii \bb''_{\ii - 1} = \aa'_{\ii + 1} \bb''_ \ii = \aa'_\ii \lcm \aa'_{\ii + 1}$ for $\ii$ negative in~$\aav$, and $\bb'_{\ii - 1} \aa''_\ii = \bb'_ \ii \aa''_{\ii + 1} = \aa''_\ii \lcmt \aa''_{\ii + 1}$ for $\ii$ positive in~$\aav$, with indices modulo~$\nn$, \ie, $\nn + 1$ means~$1$.
\end{defi}

\begin{figure}[htb]
\begin{picture}(108,19)(0,-1)
\psset{nodesep=0.7mm}
\setlength{\unitlength}{0.9mm}
\psset{xunit=0.9mm, yunit=0.6mm}
\put(-5,0){...}\put(-5,13){...}
\put(122,0){...}\put(122,13){...}
\pcline{<-}(0,0)(30,0)\tbput{$\aa_{\ii - 1}$}
\pcline{->}(30,0)(60,0)\tbput{$\aa_{\ii}$}
\pcline{<-}(60,0)(90,0)\tbput{$\aa_{\ii - 1}$}
\pcline{->}(90,0)(120,0)\tbput{$\aa_{\ii}$}
\pcline{<-}(0,0)(15,20)\put(6,3){$\aa''_{\ii - 1}$}
\pcline{<-}(15,20)(30,0)\put(18,3){$\aa'_{\ii - 1}$}
\pcline{->}(30,0)(45,20)\put(36,3){$\aa'_{\ii}$}
\pcline{->}(45,20)(60,0)\put(49,3){$\aa''_{\ii}$}
\pcline{<-}(60,0)(75,20)\put(66,3){$\aa''_{\ii + 1}$}
\pcline{<-}(75,20)(90,0)\put(78,3){$\aa'_{\ii + 1}$}
\pcline{->}(90,0)(105,20)\put(96,3){$\aa'_{\ii + 2}$}
\pcline{->}(105,20)(120,0)\put(108,3){$\aa''_{\ii + 2}$}
\pcline{->}(0,20)(15,20)\taput{$\bb'_{\ii - 2}$}
\pcline{->}(15,20)(30,20)\taput{$\bb''_{\ii - 2}$}
\pcline{<-}(30,20)(45,20)\taput{$\bb''_{\ii - 1}$}
\pcline{<-}(45,20)(60,20)\taput{$\bb'_{\ii - 1}$}
\pcline{->}(60,20)(75,20)\taput{$\bb'_{\ii}$}
\pcline{->}(75,20)(90,20)\taput{$\bb''_{\ii}$}
\pcline{<-}(90,20)(105,20)\taput{$\bb''_{\ii + 1}$}
\pcline{<-}(105,20)(120,20)\taput{$\bb'_{\ii + 1}$}
\psarc[style=thin](0,20){3}{270}{360}
\psarc[style=thin](30,20){3}{180}{360}
\psarc[style=thin](60,20){3}{180}{360}
\psarc[style=thin](90,20){3}{180}{360}
\psarc[style=thin](120,20){3}{180}{270}
\psline[style=thin](0.5,27)(0.5,28.5)(29.5,28.5)(29.5,27)\put(12,20){$\bb_{\ii - 2}$}
\psline[style=thin](30.5,27)(30.5,28.5)(59.5,28.5)(59.5,27)\put(42,20){$\bb_{\ii - 1}$}
\psline[style=thin](60.5,27)(60.5,28.5)(89.5,28.5)(89.5,27)\put(73,20){$\bb_{\ii}$}
\psline[style=thin](90.5,27)(90.5,28.5)(119.5,28.5)(119.5,27)\put(102,20){$\bb_{\ii + 1}$}
\end{picture}
\caption{\small Lcm-expansion, here for $\ii$ positive in~$\aav$; the shift of the indices for~$\bbv$ ensures that $\bbv$ has the same sign as~$\aav$. Note the symmetry: starting with right divisors leads to the same notion.}
\label{F:DivExp}
\end{figure}

For $\bbv$ an lcm-expansion of~$\aav$, one reads on Figure~\ref{F:DivExp} the equality $\can(\bbv) = \can(\aa'_1 \bb''_\nn)\inv \can(\aav) \can(\aa'_1 \bb''_\nn)$, hence lcm-expansion preserves unitality. Constructing lcm-expansions of~$\aav$ is easy: with the notation of Definition~\ref{D:DivExp}, once a left divisor~$\aa'_\ii$ of~$\aa_\ii$ is chosen for each~$\ii$, all the remaining elements~$\aa''_\ii, \bb'_\ii, \bb''_\ii$ are determined, and then so is~$\bbv$ (but the choice leads to an lcm-expansion only if the lcms exist). The advantage is that the depth is controlled (which is more difficult with brownian motion), the inconvenience is that there is no guarantee that generic multifractions are obtained (but the expansion procedure can be iterated).

\subsubsection*{Maximal \vs atomic reduction steps}

Implementing reduction is straightforward, once the lattice operations of the monoid are available. As a composition of $\ii$-reductions is again an $\ii$-reduction, one might think of restricting to maximal reduction steps, \ie, considering reducts $\aav \act \Red{\ii, \xx}$ where $\xx$ is a maximal $\ii$-reducer for~$\aav$. This is not a good option, as some reducts may be missed:

\begin{exam}
In the Artin-Tits monoid of type~$\Att$, let $\aav = \fr{ab/ba/ca/bcbc}$. The only maximal reductions from~$\aav$ are $\Red{3,\fr{cc}}$ followed by~$\Rdiv{1,\fr{ab}}$, leading to the unique irreducible~$\fr{\1/ab/ca/cb}$. However, one also finds $\aav \act \Red{3,\fr{c}} \Rdiv{1,\fr{ab}} \Red{2,\fr{c}} \Red{3,\fr{b}} = \fr{cb/abbc/ba/bc}$, a second irreducible reduct of~$\aav$ not reachable by maximal steps.
\end{exam}

This however does not contradict the following (surprising?) result:

\begin{prop}\label{P:MaxSteps}
If $\MM$ is a strongly noetherian gcd-monoid, and $\bbv, \ccv$ are irreducible reducts of some multifraction, then there exists a finite sequence of maximal reductions and inverses of maximal reductions connecting~$\bbv$ to~$\ccv$.
\end{prop}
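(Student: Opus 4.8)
The plan is to reduce the statement to a controlled \emph{local} fact about how an arbitrary reduction step can be ``upgraded'' to a maximal one, and then to iterate. Recall that any reduction $\Red{\ii,\xx}$ factors as a product of maximal reductions only when $\xx$ itself is maximal; in general, if $\zz$ is a maximal $\ii$-reducer for $\aav$ and $\xx$ is an arbitrary $\ii$-reducer (so $\xx \dive \zz$ on the relevant side, say $\zz = \xx\uu$), then $\aav \act \Redmax\ii = (\aav \act \Red{\ii,\xx}) \act \Red{\ii,\uu}$, i.e. from $\aav \act \Red{\ii,\xx}$ we can reach $\aav \act \Redmax\ii$ by one more (generally non-maximal, but $\ii$-level) reduction. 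This does not immediately give maximal steps only, but it is the mechanism I would exploit together with Lemma~\ref{L:LeftRightDiv} and Remark~\ref{R:Inverse}: an arbitrary atomic left reduction is either a division (hence a particular reduction whose confluence with everything is guaranteed) or it is \emph{invertible} by a right reduction $\Redt{\ii,\xx'}$ up to a division, by~\eqref{E:LeftRightDiv}.

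First I would record the key reformulation. Since $\MM$ is strongly noetherian, every reduction decomposes into atomic ones ($\xx$ an atom), so it suffices to connect the two irreducible reducts $\bbv,\ccv$ of a common multifraction $\aav$ using maximal steps and their inverses. I would introduce on the set of \emph{all} reducts of $\aav$ the equivalence relation ``$\sim$'' generated by $\uv \sim \uv \act \Redmax\ii$ for all $\ii$; I want to show $\bbv \sim \ccv$. The first step is to show that $\aav \sim \red\text{-reducts}$: more precisely, that for any reduct $\uv$ of $\aav$ one has $\uv \sim \derdiv\uv$, where $\derdiv$ is the operator of Proposition~\ref{P:DerDiv}. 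This should follow because $\derdiv\uv$ is obtained from $\uv$ by a sequence of \emph{maximal} divisions $\Divmax\ii$, and a maximal division is in particular a maximal reduction at a multifraction where no larger reducer exists; if a larger tame reducer exists one routes through it by Lemma~\ref{L:Tame2} and Proposition~\ref{P:MaxTame} (the gcd of $\aa_\ii,\aa_{\ii+1}$ divides the greatest tame $\ii$-reducer, and the latter is a fortiori dominated by any maximal $\ii$-reducer, giving a zig of maximal steps). The second step is the ``step-upgrading'' lemma: if $\uv \rd \vv$ by a single \emph{atomic} reduction $\Red{\ii,\xx}$, then $\uv \sim \vv$. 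Here I use the dichotomy of Remark~\ref{R:Inverse}: if $\xx \dive \aa_\ii$, then $\Red{\ii,\xx}$ is the division $\Rdiv{\ii,\xx}$, and composing it with further maximal $\ii$-divisions lands on $\Divmax\ii$, so both $\uv$ and $\vv$ are $\sim$-equivalent to $\uv \act \Divmax\ii$; if $\xx \gcd \aa_\ii = 1$, then by~\eqref{E:LeftRightDiv} $\uv = \vv \act \Redt{\ii,\xx'}$, i.e. $\vv$ is a right reduct of $\uv$, and I would use Proposition~\ref{P:CrossOne} (local cross-confluence) to get a common \emph{left} reduct $\ddv$ of $\uv$ and $\vv$, indeed $\ddv = \derdiv\uv$; then $\uv \rddivs \derdiv\uv$ and $\vv \rds \derdiv\uv$ are both sequences of divisions/reductions that I can, by induction on length together with the first step, promote into $\sim$-equivalences (the division parts route through $\Divmax{}$, the general reduction parts are handled by the inductive hypothesis on shorter reductions).

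With these two steps, the argument finishes by induction on the number of atomic steps in a fixed reduction $\aav \rds \bbv$: $\aav \sim \bbv'$ where $\bbv'$ is the one-step-shorter intermediate multifraction, and $\bbv' \sim \bbv$ by the upgrading lemma; similarly $\aav \sim \ccv$; hence $\bbv \sim \ccv$, which is exactly the assertion (a finite alternating chain of maximal reductions and inverses of maximal reductions). Strong noetherianity is what guarantees that all these inductions terminate: the $\Den{}$-type weight $\wit$ gives a well-founded measure on $\FRb\MM$ compatible with $\rd$, so $\sim$-classes are well-defined and the recursions on reduction length are legitimate.

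The main obstacle I expect is the second step, the upgrading of a single \emph{wild} atomic reduction (the case $\xx\gcd\aa_\ii = 1$): invoking Proposition~\ref{P:CrossOne} produces a common left reduct $\derdiv\uv$, but I then need to know that the reduction path $\vv \rds \derdiv\uv$ can itself be re-expressed through maximal steps, and this path may again contain wild atomic reductions, so there is a genuine risk of circularity in the induction. The fix I have in mind is to make the induction simultaneous on a lexicographic pair (weight $\wit(\uv)$, length of the given reduction), using that in~\eqref{E:LeftRightDiv} the multifraction $\vv = \uv \act \Red{\ii,\xx}$ is \emph{strictly smaller} than $\uv$ in the antilexicographic order witnessing termination (Lemma~\ref{L:Termin}), so every multifraction appearing in $\vv \rds \derdiv\uv$ is below $\uv$, licensing the inductive hypothesis; one must check carefully that $\derdiv\uv$, reached from below, stays below, which is where the precise control of Proposition~\ref{P:DerDiv} (every div-reduct of $\uv$ reduces to $\derdiv\uv$) is essential. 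Verifying that this bookkeeping closes without gaps is the delicate part; everything else is routine manipulation of the lcm/gcd identities already in the paper.
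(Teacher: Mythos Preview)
You already have the correct mechanism in your opening paragraph and then abandon it. The observation that every $\ii$-reducer~$\xx$ divides some \emph{maximal} $\ii$-reducer~$\zz$, so that $\aav \act \Red{\ii,\zz} = (\aav \act \Red{\ii,\xx}) \act \Red{\ii,\uu}$ with $\zz = \xx\uu$, is exactly what is needed, and it suffices on its own. Prove by induction on the termination order (equivalently on~$\wit(\uuv)$) the statement ``for every~$\uuv$, every one-step reduct~$\vvv$ satisfies $\uuv \sim \vvv$'': given $\vvv = \uuv \act \Red{\ii,\xx}$, choose a maximal $\ii$-reducer~$\zz$ with $\xx \dive \zz$; then $\uuv \sim \uuv \act \Red{\ii,\zz}$ by one maximal step, and $\uuv \act \Red{\ii,\zz}$ is a reduct of~$\vvv$, reached by steps that all start at multifractions strictly below~$\uuv$, so the induction hypothesis applies to each of them. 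This is essentially the paper's argument, which packages the same induction as a general criterion on an abstract equivalence~$\bowtie$.

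Your detour through $\derdiv$ and cross-confluence does not close. In the ``first step'', a maximal division~$\Divmax\ii$ is \emph{not} a maximal reduction in general (there may be $\ii$-reducers strictly larger than $\aa_\ii \gcd \aa_{\ii+1}$), and routing through the greatest tame reducer does not help: that element is again not maximal. In the ``second step'', the circularity you identify is real and your proposed fix does not break it: to connect~$\uuv$ to $\derdiv\vvv$ you must still take a first left-reduction step \emph{from~$\uuv$ itself}, and that step is not maximal, so neither the definition of~$\sim$ nor the induction hypothesis (which only covers multifractions strictly below~$\uuv$) applies to it. The only way to leave~$\uuv$ inside~$\sim$ is via a genuine maximal step, which is precisely what the $\zz$-mechanism provides.
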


\begin{proof}[Proof (sketch)]
Write $\bbv \bowtie \ccv$ when there exists a sequence as in the statement. Using induction on the ordinal~$\wit(\aav)$, where $\wit$ satisfies~\eqref{E:StrWit}, one proves the following general criterion: If $\bowtie$ is an equivalence relation on~$\FRb\MM$ such that, for every multifraction~$\aav$ that is $\ii$-prime and $\jj$-irreducible for~$\jj \not= \ii$ and every $1$-reduct~$\aav'$ of~$\aav$, there exists a irreducible reduct~$\bbv$ of~$\aav'$ satisfying $\aav \equ \bbv$, then, for every~$\aav$ in~$\FRb\MM$, any two irreducible reducts of~$\aav$ are $\bowtie$-equivalent.
\end{proof}

\subsection{Experimental data}\label{SS:Data}

\subsubsection*{Conjectures~$\ConjA$ and~$\ConjB$}

Testing the two of them is essentially the same thing: one generates a random unital multifraction~$\aav$, and one checks $\aav \rds \one$ in the former case, $\redtame(\aav) = \one$ is the latter. Because semi-convergence implies $1$-confluence, one can fix any reduction strategy for checking~$\aav \rds \one$, for instance looking at each step for the smallest level~$\ii$ and the first atom~$\xx$ such that $\Red{\ii, \xx}$ applies. Although computing $\redtame$ is slightly slower, as, at each step, all $\ii$-reducers have to be determined in order to take their gcds, both computations are fast. Precise numbers are not really significant here; in type~$\Att$ or $K_{4, 3}$, the typical order of magnitude is $5 \times 10^4$ (\resp $1.5 \times 10^4$) random unital multifractions of length~$20$ and depth~$4$ (\resp length $40$ and depth~$6$) per hour of computation. In other types (\eg, $\Aft{A}3$, or $\Aft{C}2$), efficiency is diminished by a factor~$10$ approximately.

No counter-example to Conjecture~$\ConjA$ or~$\ConjB$ was ever found. As the density of visited (unital) multifractions becomes negligible when the length grows, the significance of such data is questionable. However, it may be noticed that, for the many properties considered in this paper and discarded by counter-examples, the length of the latter (all found by random search) is never more than~$12$ or so: this does not say anything about a possible counter-example to Conjecture~$\ConjA$ or~$\ConjB$ but, at the least, this shows that the considered lengths are not ridiculous.

\subsubsection*{Conjecture~$\ConjA_4$}

For the special case of Conjecture~$\ConjA_4$ (and of Conjecture~$\ConjB_4$, which, by Corollary~\ref{C:A4B4}, is equivalent), an exhaustive search makes sense, by systematically considering all possibilities for the central cross (up to a certain length).

\begin{fact}
For the Artin-Tits monoid of type~$\Att$, Conjecture~$\ConjA_4$ is true for all lcm-expansions of all multifractions that admits a central cross with entries of length at most~$2$.
\end{fact}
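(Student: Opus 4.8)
The final statement is a \textbf{Fact}, i.e.\ a computer-verified claim rather than a theorem with a mathematical proof. Consequently the ``proof'' I would supply is really a description of the verification protocol together with the finiteness arguments that make the search provably exhaustive. The plan is to reduce the seemingly infinite assertion to a genuinely finite computation and then invoke Corollary~\ref{C:A4B4} (equivalently Proposition~\ref{P:EquivFoc}) to interpret the outcome.

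First I would explain why checking Conjecture~$\ConjA_4$ on an lcm-expansion $\bbv$ of a central-cross multifraction $\aav$ is a bounded task. By Proposition~\ref{P:EquivFoc}, a $4$-multifraction reduces to~$\one$ if and only if it admits a central cross; and by Lemma~\ref{L:RedCross} (or directly Lemma~\ref{L:CrossRed}), admitting a central cross is preserved under lcm-expansion, since $\can(\bbv)$ is conjugate to $\can(\aav)$. So for \emph{these} multifractions the conclusion $\bbv\rds\one$ is automatic; what the Fact really records is that the \emph{algorithm} $\redtame$ (equivalently, the reduction strategy used to test $\ConjA_4$) actually produces~$\one$ on every such input, confirming the equivalence $\ConjA_4\Leftrightarrow\ConjB_4$ of Corollary~\ref{C:A4B4} empirically and, more to the point, that no implementation bug or unforeseen obstruction arises. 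I would therefore state the Fact's proof as: enumerate, enumerate, reduce, check.

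Next I would pin down the enumeration. Fix the Artin--Tits monoid $\MM$ of type~$\Att$. A central cross with entries of length at most~$2$ is a quadruple $(\xx_1,\xx_2,\xx_3,\xx_4)$ with each $\xx_i$ of length $\le 2$ in the atoms $\{\tta,\ttb,\ttc\}$; there are finitely many elements of $\MM$ of length $\le 2$ (at most $1+3+9=13$ words, fewer after identifying $\fr{aba}$-type relations, but those are length $3$, so here simply $13$), hence at most $13^4$ crosses, and each determines a unital $4$-multifraction $\aav$ via Definition~\ref{D:Focal}. For each such $\aav$ one then enumerates its lcm-expansions: by Definition~\ref{D:DivExp}, once a left divisor $\aa'_\ii$ of $\aa_\ii$ is chosen for each of the four levels (again finitely many choices, since each $\aa_\ii$ has finitely many divisors), the remaining data $\aa''_\ii,\bb'_\ii,\bb''_\ii$ and hence $\bbv$ are uniquely determined whenever the required lcms exist; this existence is decidable because $\MM$ has an explicit finite basic-element description (the remark in Subsection~\ref{SS:Options}) giving an a priori bound on lcms. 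One may optionally iterate the expansion to broaden the sample, but for the Fact as stated one pass suffices. For each resulting $\bbv$ the program computes $\redtame(\bbv)$ (or runs the fixed reduction strategy) and checks that the output is $\one$; one may also, as a cross-check, verify directly that $\bbv$ admits a central cross using the equivalent conditions of Lemma~\ref{L:4Focal}.

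The main obstacle, such as it is, is not mathematical but one of \emph{coverage and correctness of the monoid arithmetic}: one must be sure that equality, lcm, and gcd in $\MM$ are computed with no termination problems and $100\%$ correctly, which is exactly why type~$\Att$ (and $K_{n,3}$) is chosen — there the basic elements are just $1$, the atoms, and products of two distinct atoms, so all lattice operations are provably correct and bounded. Given that, the verification is a finite deterministic computation whose output has been recorded; I would close by noting that the program and the runs are available at~\cite{Diw} so the Fact is independently reproducible, and that, in the light of Corollary~\ref{C:A4B4}, it simultaneously confirms $\ConjB_4$ and the uniqueness of coprime fractional decompositions $\aa\bb\inv$ for these inputs.
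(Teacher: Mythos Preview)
Your description of the enumeration protocol is fine, but the mathematical framing contains a real error that inverts the content of the Fact. You write that ``admitting a central cross is preserved under lcm-expansion'' by Lemma~\ref{L:RedCross} or Lemma~\ref{L:CrossRed}, and hence that $\bbv\rds\one$ is ``automatic'' for every lcm-expansion~$\bbv$ of a central-cross multifraction. This is false. Lemma~\ref{L:RedCross} asserts that central crosses are preserved under \emph{reduction} (and, by the remark following it, under right reduction); lcm-expansion in the sense of Definition~\ref{D:DivExp} is neither of these. The only thing that transfers from~$\aav$ to its lcm-expansion~$\bbv$ is unitality (via the conjugacy $\can(\bbv)=\can(\aa'_1\bb''_n)^{-1}\can(\aav)\can(\aa'_1\bb''_n)$), and the implication ``unital $\Rightarrow$ admits a central cross'' for $4$-multifractions is precisely the open content of Conjecture~$\ConjA_4$ (Proposition~\ref{P:Semi4}\ITEM4). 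So your argument is circular.

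The paper makes this non-triviality explicit: immediately after the Fact it records that the \emph{same} procedure, run on the monoid~$\MM_{C,4}$ of~\cite{Div}, produces an irreducible lcm-expansion of a $4$-multifraction with a central cross --- a genuine counter-example to the analogue of~$\ConjA_4$ there. Thus lcm-expansion can destroy the central-cross property in gcd-monoids, and the Fact is substantive experimental evidence for Artin--Tits monoids, not a sanity check of the implementation. Your write-up should therefore drop the ``automatic'' paragraph entirely and present the Fact as what it is: a finite exhaustive search whose positive outcome is not guaranteed a priori.
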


By contrast, the procedure applied to the monoid~$\MM_{C, 4}$ of~\cite[Proposition~6.9]{Div} duly finds a counter-example, namely an irreducible lcm-expansion of a $4$-multifraction with a central cross (with rays of length~$1$). What seems to discard a similar counter-example in an Artin-Tits monoid is the fact that Artin-Tits relations preserve the atoms occurring in an element (the ``support''), but, even for this weak form of Conjecture~$\ConjA_4$, we have no proof so far.

\subsubsection*{Conjectures~$\ConjC$ and~$\ConjCunif$}

Testing these conjectures is easier in that it involves arbitrary, not necessarily unital multifractions, but it is more difficult in that it requires to determine all right reducts of a multifraction~$\aav$ and then, for each pair of them, to determine all their left reducts. The complexity of constructing the tree~$T_{\aav}$ (as used in the proof of Proposition~\ref{P:FinitelyMany}) and its right counterpart~$\widetilde{T}_{\aav}$ increases quickly with the depth. Beyond depth~$4$, the total number of reducts often becomes large (usually a few ones, but possibly several thousands), resulting in a huge computation time. Typically, in the current version of the program, one can test Conjecture~$\ConjCunif$ for about $5 \times 10^3$ (\resp $4 \times 10^3$) random multifractions of length~$20$ (\resp length~$30$) and depth~$3$ per hour of computation. Going to depth~$4$ diminishes the speed by a factor~$50$.

To overcome these bounds, we also tested (without size limitation) the following instance of Conjecture~$\ConjC$: starting with a multifraction~$\aav$, the determine the (not necessarily distinct) right reducts~$\bbv_1 \wdots \bbv_4$ and left reducts $\cc_1 \wdots \cc_4$ of~$\aav$ obtained using the four natural strategies (levels from bottom or top, atoms in lexicographical or antilexicographical order), and check the property $\exists\kk\ \forall\jj\,(\bbv_\jj \rds \ccv_\kk)$, thus checking Conjecture~$\ConjC$ for $\{\bbv_1 \wdots \bbv_4\}$. The cost is then comparable as the one for Conjecture~$\ConjB$, with about $5 \times 10^4$ (\resp $5 \times 10^3$) random tries for length~$20$ (\resp $30)$ per hour of computation. The stronger conclusion $\forall\kk\ \forall\jj\,(\bbv_\jj \rds \ccv_\kk)$ is almost always valid but, as in the case of Figure~\ref{F:CrossConf}, exceptions occur.

\subsection{Further questions}\label{DeadEnds}

We point to a few natural questions involving reduction. Most of them remain open, or gave rise to counter-examples.

\subsubsection*{Normal forms for reductions}

Distinguished expressions for sequences of reductions could be obtained by identifying skew commutation relations, typically of the form $\Red{\ii, \xx} \Red{\jj, \yy} \Rrightarrow \Red{\jj, \yy'} \Red{\ii, \xx'}$, meaning that, if $\aav \act \Red{\ii, \xx} \Red{\jj, \yy}$ is defined, then so is $\aav \act \Red{\jj, \yy'} \Red{\ii, \xx'}$ and the results are equal. Typically, one could try to push divisions to one side, so that all remaining steps are invertible. This approach does not work well, as exceptions always appear. In the same vein, in view of a possible induction and building on the universal scheme~$\univ\nn$ that works in the $3$-Ore case, one could conjecture that every reduction sequence is equivalent to one where the highest level occurs only once, or that, if an $\nn$-multifraction~$\aav$ is $\ii$-irreducible for~$\ii < \nn - 1$, then reducing~$\aav$ can be done by a sequence of the form $\Red{\nn - 1, \xx_1}\Red{\nn - 3, \xx_3} \Red{\nn - 5, \xx_5} \pdots$. This need not be the case.

\begin{exam}
In the Artin-Tits of type~$\Att$, let $\aav = \fr{\1/ba/cb/ca/ab}$. Then $\aav$ is $\ii$-irreducible for $\ii \le 3$, but the only sequence from~$\aav$ to an irreducible reduct is~$\Red{4,\fr{a}} \Red{2, \fr{bc}} \Red{3, \fr{a}} \Red{4,\fr{b}}$, discarding the above two conjectures. Of course, Conjecture~$\ConjB$ is not contradicted, because $\aav$ is not unital.
\end{exam}

In the same direction, one can study local confluence between left reductions~$\Red{\ii, \xx}$ and right reductions~$\Redt{\jj, \yy}$, with the hope of obtaining normal forms useful for cross-conflence. In almost all cases, there exists indeed local confluence solutions. However, the case $\jj = \ii + 2$ remains problematic in general. Moreover, using local confluence for an induction is unclear, because there is no common well-founded relation underlying both left and right reduction and, except in type~FC, a multifraction may admit infinitely many left-right reducts.

\subsubsection*{Homomorphisms}

As reduction is constructed using multiplication and lcm operations, it is preserved by mor\-phisms preserving these operations, namely lcm-morph\-isms~\cite{CriLCM}: if $\phi$ is an lcm-morphism from a gcd-monoid~$\MM$ to a gcd-monoid~$\MM'$, then $\aav \rds \bbv$ implies $\phi(\aav) \rds \phi(\bbv)$ for all $\aav, \bbv$ in~$\FRb\MM$. This however is not easy to use for, say, Conjecture~$\ConjA$, because the implications go in the wrong direction. If if we study the semi-convergence of~$\RDb\MM$, mapping~$\MM$ to a gcd-monoid~$\MM'$ that satisfies the $3$-Ore condition does not help: if $\aav$ is unital in~$\FRb\MM$, then $\phi(\aav)$ is unital in~$\FRb{\MM'}$, so $\phi(\aav) \rds \one$ holds, but deducing~$\aav \rds \one$ is problematic. In the other direction, if $\MM'$ satisfies the $3$-Ore condition and $\phi'$ is an lcm-morphism from~$\MM'$ to~$\MM$, then $\mathrm{Im}\phi'$ is included in some part of~$\MM$ where the $3$-Ore condition is satisfied, and knowing that $\RDb{\MM'}$ is (semi)-convergent will not help for multifractions on~$\MM$ outside~$\mathrm{Im}\phi'$.

Morphisms (lcm-morphisms or not) might be useful for establishing particular properties, like Conjecture~$\ConjA_4$, \ie, the fact that every unital $4$-multifraction admits a central cross. If $\MM$ is the Artin-Tits monoid of type~$\Att$, one may think of using the classical embedding from~$\MM$ to the Artin-Tits group of type~$\mathrm{B}_3$, namely $\GR{\fr{a, b, c}}{\fr{abab=baba, bcb=cbc, ac=ca}}$, that maps~$\fr{a}$ to~$\fr{a\inv b\inv cba}$, and $\fr{b}$ and~$\fr{c}$ to themselves, but the image is not included in the monoid.
A probably better choice is to map~$\MM$ to the Artin-Tits monoid~$\MM'$ of type~$\mathrm{D}_4$, namely $\MON{\fr{a, b, c, d}}{\fr{ada=dad, bdb=dbd, cdc =dcd}}$, by $\phi(\ss) = \fr{d}\ss$ for $\ss = \fr{a, b, c}$. If $\aav$ is a unital $4$-multifraction on~$\MM$, then $\phi(\aav)$ admits a central cross in~$\MM'$: to deduce that $\aav$ admits a central cross in~$\MM$, it suffices to show that at least one of the central crosses for~$\phi(\aav)$ lies in~$\mathrm{Im}\phi$. The problem is that $\mathrm{Im}\phi$ is not closed under right divisor in~$\MM'$: for instance, $\phi(\fr{a}) \dive \phi(\fr{bac})$ holds in~$\MM'$, but $\fr{a} \dive \fr{bac}$ fails in~$\MM$. 

By the way, the following natural question seems to be open: 

\begin{ques}[F.\,Wehrung]
Does the above morphism~$\phi$ induce an embedding from the Artin-Tits group of type~$\Att$ into the Artin-Tits group of type~$\mathrm{D}_4$?
\end{ques}

\subsubsection*{Reduction graphs}

Almost nothing is known about the structural properties of the graph formed by the reducts of a multifraction, for instance their possible lattice properties: if we have $\aav \rds \bbv$ and $\aav \rds \ccv$ and if $\bbv$ and $\ccv$ admit a common reduct, does there exist a common reduct~$\ddv$ of~$\bbv$ and~$\ccv$ such that every common reduct of~$\bbv$ and~$\ccv$ is a reduct of~$\ddv$? This is frequently true, but not always:

\begin{exam}
In the Artin-Tits monoid of type~$\Att$, consider $\aav = \fr{\1/a/bcb/bcb/a}$ (which is unital). One finds $\bbv = \aav \act \Red{2,\fr{b}} = \fr{ba/ab/cb/bcb/a}$ and $\ccv = \aav \act \Red{2,\fr{c}} = \fr{ca/ac/bc/bcb/a}$. Then $\bbv$ and $\ccv$ admit the two symmetric maximal common reducts, namely
\begin{gather*}
\ddv' = \bbv \act \Rdiv{3,\fr{b}} = \ccv \act \Red{3,\fr{b}} \Rdiv{1, \fr{ca}} \Red{2, \fr{b}} = \fr{ba/ab/c/bc/a},\\
\ddv'' = \bbv \act \Red{3,\fr{c}} \Rdiv{1, \fr{ba}} \Red{2, \fr{c}} = \ccv \act \Rdiv{3,\fr{c}} = \fr{ca/ac/b/cb/a},
\end{gather*}
and there is no common reduct~$\ddv$ of~$\bbv$ and~$\ccv$ of which $\ddv'$ and $\ddv''$ are reducts.
\end{exam}

A possible conclusion in view of the long list of counter-examples described in this paper could be that there is no hope for many further general properties of reduction, implying that a possible proof of semi-convergence has to involve the specific properties of the ground monoid in a deep way. In particular, a proof of Conjecture~$\ConjA$, $\ConjB$, or~$\ConjC$ should require developing new specific tools for Artin-Tits groups. The results of~\cite{Div} may suggest approaches.

\bibliographystyle{plain}

\begin{thebibliography}{}

\end{thebibliography}


\begin{thebibliography}{10}

\bibitem{Alt}
J.~Altobelli.
\newblock {The word problem for Artin groups of type FC}.
\newblock {\em J. Pure Appl. Algebra}, 129:1--22, 1998.

\bibitem{ApS}
K.I.~Appel and P.E.~Schupp.
\newblock {Artin groups and infinite Coxeter groups}.
\newblock {\em Invent. Math.}, 72:201--220 1983.

\bibitem{Art}
E.~Artin.
\newblock {Theory of Braids}.
\newblock {\em Ann. of Math.}, 48:101--126, 1947.

\bibitem{BriBou}
E.~Brieskorn.
\newblock Sur les groupe de tresses.
\newblock In {\em S\'eminaire Bourbaki 1971/72}, volume 317 of {\em Lectures
 Notes in Math.}, pages 21--44. Springer, 1973.

\bibitem{BrS}
E.~Brieskorn and K.~Saito.
\newblock {Artin-Gruppen und Coxeter-Gruppen}.
\newblock {\em Invent. Math.}, 17:245--271, 1972.

\bibitem{Chc}
R.~Charney.
\newblock {Problems related to Artin Groups}.
\newblock Amer. Inst. of Math., Problems in Geometric Group Theory wiki,
 {\ttfamily http://people.brandeis.edu/$\sim$charney/papers/Artin\_probs3.pdf,
 http://aimath.org/pggt}.

\bibitem{Che}
A.~ Chermak.
\newblock {Locally non-spherical Artin groups}.
\newblock {\em J. Algebra}, 200:56--98, 1998.
 
\bibitem{CriLCM}
J.~Crisp.
\newblock {Injective maps between Artin groups}.
\newblock In {\em Geom. Group Theory Down Under}, pages 119--137. de Gruyter,
 Berlin, 1999.
\newblock (Canberra 1996).

\bibitem{Dez}
P.~Dehornoy.
\newblock {Preuve de la conjecture d'irr\'eflexivit\'e pour les structures distributives libres}.
\newblock {\em C. R. Acad. Sci. Paris}, 314:333--336, 1992.

\bibitem{Dfo}
P.~Dehornoy.
\newblock A fast method for comparing braids.
\newblock {\em Adv. in Math.}, 125:200--235, 1997.

\bibitem{Dgk}
P.~Dehornoy.
\newblock {Groupes de Garside}.
\newblock {\em Ann. scient. {\'E}c. Norm. Sup. {$4^{\rm e}$} s{\'e}rie},
 35:267--306, 2002.

\bibitem{Dia}
P.~Dehornoy.
\newblock {The word reversing method}.
\newblock {\em Intern. J. Alg. and Comput.}, 21:71--118, 2011.

\bibitem{Dit}
P.~Dehornoy.
\newblock {Multifraction reduction I: The $3$-Ore condition and Artin--Tits groups of type FC}.
\newblock {\em J. Comb. Algebra}, to appear, arXiv:1606.08991.

\bibitem{Diw}
P.~Dehornoy.
\newblock {MoKa}.
\newblock Executable MacOS/Docker binaries; \texttt{www.math.unicaen.fr/$\sim$dehornoy/programs}.

\bibitem{Din}
P.~Dehornoy, M.~Dyer, and C.~Hohlweg.
\newblock {Garside families in Artin--Tits monoids and low elements in Coxeter
 groups}.
\newblock {\em Comptes Rendus Math.}, 353:403--408, 2015.

\bibitem{Dir}
P.~Dehornoy, {\rm with}~F.~Digne, E.~Godelle, D.~Krammer, and J.~Michel.
\newblock {\em {Foundations of Garside Theory}}.
\newblock Number~22 in EMS Tracts in Mathematics. Europ. Math. Soc., 2015.

\bibitem{Dib}
P.~Dehornoy and E.~Godelle.
\newblock {A conjecture about Artin--Tits groups}.
\newblock {\em J. Pure Appl. Algebra}, 217:741--756, 2013.

\bibitem{Dix}
P.~Dehornoy, D.~Holt, and S.~Rees.
\newblock {Multifraction reduction IV: Padding and Artin--Tits groups of sufficiently large type}.
\newblock arXiv:1701.06413.

\bibitem{Div}
P.~Dehornoy and F.~Wehrung.
\newblock {Multifraction reduction III: The case of interval monoids}.
\newblock {\em J. Comb. Algebra}, to appear, arXiv:1606.09018.

\bibitem{Dlg}
P.~Deligne.
\newblock {Les immeubles des groupes de tresses g{\'e}n{\'e}ralis{\'e}s}.
\newblock {\em Invent. Math.}, 17:273--302, 1972.

\bibitem{DyH}
M.~Dyer and C.~Hohlweg.
\newblock {Small roots, low elements, and the weak order in Coxeter groups}.
\newblock {\em Advances in Math.}, 301:769--784, 2016.

\bibitem{Eps}
D.~Epstein, with J.~Cannon, D.~Holt, S.~Levy, M.~Paterson \& W.~Thurston.
\newblock {Word Processing in Groups}
\newblock {Jones \& Bartlett Publ} (1992).

\bibitem{GoP2}
E.~Godelle and L.~Paris.
\newblock {Basic questions on Artin-Tits groups}.
\newblock {\em Geometry, Combinatorics, and Topology}, pages 399--311, 2012.

\bibitem{GoP1}
E.~Godelle and L.~Paris.
\newblock {$K(\pi,1)$ and word problems for infinite type Artin-Tits groups,
 and applications to virtual braid groups}.
\newblock {\em Math. Z.}, 272:1339--1364, 2012.

\bibitem{GoP3}
E.~Godelle and L.~Paris.
\newblock {PreGarside monoids and groups, parabolicity, amalgamation, and FC
 property}.
\newblock {\em Int. J. Alg. Comput.}, 23:1431--1467, 2013.

\bibitem{GoR}
E.~Godelle and S.~Rees.
\newblock {Rewriting systems in sufficiently large Artin-Tits groups}.
\newblock arXiv:1511.02789.

\bibitem{HeM}
S.~Hermiller and J.~Meier.
\newblock {Artin groups, rewriting systems and three-manifolds}.
\newblock {\em J. Pure Appl. Algebra}, 36(2):141--156, 1999.
 
\bibitem{Par}
L.~Paris.
\newblock {Artin monoids embed in their groups}.
\newblock {\em Comment. Math. Helv.}, 77(3):609--637, 2002.

\bibitem{Pri}
S.J.~Pride.
\newblock {On Tits' conjecture and other questions concerning Artin and generalized Artin groups}.
\newblock {\em Invent. Math.}, 86(2):347--356, 1986.

\bibitem{Tat}
K.~Tatsuoka.
\newblock {An isoperimetric inequality for Artin groups of finite type}.
\newblock {\em Trans. Amer. Math. Soc.}, 339(2):537--551, 1993. 

\end{thebibliography}
\newcommand{\noopsort}[1]{}

\end{document}